\newtheorem{thm}{Theorem}[section]
\newtheorem{lem}[thm]{Lemma}
\newtheorem{lemma}[thm]{Lemma}
\newtheorem{theorem}[thm]{Theorem}
\newtheorem{proposition}[thm]{Proposition}
\newtheorem{corollary}[thm]{Corollary}
\theoremstyle{definition}
\newtheorem{example}[thm]{Example}
\newtheorem{definition}[thm]{Definition}
\newtheorem{remark}[thm]{Remark}
\newtheorem{assumption}[thm]{Assumption}
 \numberwithin{equation}{section}
\theoremstyle{remark}
\numberwithin{table}{section}
\def\Z{\ifmmode{{\mathbb Z}}\else{${\mathbb Z}$}\fi}
\def\Q{\ifmmode{{\mathbb Q}}\else{${\mathbb Q}$}\fi}
\def\C{\ifmmode{{\mathbb C}}\else{${\mathbb C}$}\fi}
\def\P{\ifmmode{{\mathbb P}}\else{${\mathbb P}$}\fi}
\def\H{\ifmmode{{\mathrm H}}\else{${\mathrm H}$}\fi}
\def\G{\ifmmode{{\mathbb G}}\else{${\mathbb G}$}\fi}
\def\R{\ifmmode{{\mathbb R}}\else{${\mathbb R}$}\fi}
\def\F{\ifmmode{{\mathbb F}}\else{${\mathbb F}$}\fi}
\def\N{\ifmmode{{\mathbb N}}\else{${\mathbb N}$}\fi}
\def\D{\ifmmode{{\cal{D}}^b}\else{${{\cal{D}}^b}$}\fi}
\newcommand\NN{\mathbf{N}}
\renewcommand\AA{\mathbf{A}}
\newcommand\GG{\mathbb{G}}
\newcommand\Gm{\GG_\mathrm{m}}
\newcommand{\Xbar}{{\overline{X}}}
\newcommand{\calA}{{\mathcal A}}
\newcommand{\calB}{{\mathcal B}}
\newcommand{\calC}{{\mathcal C}}
\newcommand{\calO}{{\mathcal O}}
\newcommand{\OO}{{\mathcal O}}
\newcommand{\frakp}{{\mathfrak p}}
\newcommand{\pp}{{\mathfrak p}}
\newcommand{\qq}{{\mathfrak q}}
\DeclareMathOperator{\Ann}{Ann}
\DeclareMathOperator{\HH}{H}
\DeclareMathOperator{\Tr}{Tr}
\DeclareMathOperator{\lcm}{lcm}
\DeclareMathOperator{\Frob}{Frob}
\DeclareMathOperator{\inv}{inv}
\DeclareMathOperator{\re}{Re}
\DeclareMathOperator{\Hom}{Hom}
\DeclareMathOperator{\Ext}{Ext}
\DeclareMathOperator{\Aut}{Aut}
\DeclareMathOperator{\Gal}{Gal}
\DeclareMathOperator{\Res}{Res}
\DeclareMathOperator{\Norm}{N}
\DeclareMathOperator{\divv}{div}
\DeclareMathOperator{\ord}{ord}
\DeclareMathOperator{\Pic}{Pic}
\DeclareMathOperator{\Proj}{Proj}
\DeclareMathOperator{\ev}{ev}
\DeclareMathOperator{\cores}{cores}
\DeclareMathOperator{\rad}{rad}
\DeclareMathOperator{\Li}{Li}
\DeclareMathOperator{\dens}{dens}
\newcommand{\A}{\mathbb{A}}
\DeclareMathOperator{\Br}{Br}
\DeclareMathOperator{\sign}{sign}
\newcommand{\Adele}{\mathbf{A}}
\newcommand{\del}{\partial}
\newcommand{\ov}[1]{\overline{#1}}
\newcommand{\wh}[1]{\widehat{#1}}
\newcommand{\wt}[1]{\widetilde{#1}}
\providecommand{\BA}{{\mathbb{A}}}
\providecommand{\BC}{{\mathbb{C}}}
\providecommand{\BF}{{\mathbb{F}}}
\providecommand{\BP}{{\mathbb{P}}}
\providecommand{\BQ}{{\mathbb{Q}}}
\providecommand{\BR}{{\mathbb{R}}}
\providecommand{\BZ}{{\mathbb{Z}}}
\providecommand{\CA}{{\mathcal{A}}}
\providecommand{\CO}{{\mathcal{O}}}
\providecommand{\RH}{\mathrm{H}}
\providecommand{\RP}{\mathrm{P}}
\providecommand{\ev}{\mathrm{ev}}
\providecommand{\Het}{\mathrm{H}_\mathrm{\acute{e}t}}
\providecommand{\Pic}{\mathrm{Pic}}
\DeclareSymbolFont{cyrletters}{OT2}{wncyr}{m}{n}
\DeclareMathSymbol{\Sha}{\mathalpha}{cyrletters}{"58}
\newcommand{\dan}[1]{{\color{Blue} \sf $\clubsuit\clubsuit\clubsuit$ Dan : [#1]}}
\title{Quantitative arithmetic of diagonal degree $2$ K3 surfaces}
\author{Dami\'{a}n Gvirtz}
\address{Institut für Algebra, Zahlentheorie und Diskrete Mathematik\\
Fakultät für Mathematik und Physik\\
Leibniz Universität Hannover\\
Welfengarten 1\\
30167 Hannover}
\author{Daniel Loughran}
\address{
Department of Mathematical Sciences\\
University of Bath\\
Claverton Down\\
Bath\\
BA2 7AY\\
UK}
\urladdr{https://sites.google.com/site/danielloughran/}
\author{Masahiro Nakahara}
\address{
Department of Mathematical Sciences\\
University of Bath\\
Claverton Down\\
Bath\\
BA2 7AY\\
UK}
\subjclass[2010]
{14G05 (primary), 
14F22 
(secondary)}
\begin{document}

\begin{abstract}
	In this paper we study the existence of rational points for the family of 
	K3 surfaces over $\Q$ given by
	$$w^2 = A_1x_1^6 + A_2x_2^6 + A_3x_3^6.$$
	When the coefficients are
	ordered by height, we show that
	the  Brauer group is almost
	always trivial, and find the exact order of magnitude of surfaces for which
	there is a Brauer--Manin obstruction to
	the Hasse principle.
	Our results show definitively that K3 surfaces can have a Brauer--Manin
	obstruction to the Hasse principle
	that is only explained by odd order
	torsion.
\end{abstract}

\maketitle

\tableofcontents

\section{Introduction}
This paper concerns the existence of rational points on varieties over $\Q$, and in particular the quantitative behaviour of the Hasse principle in families.

Recall that a projective variety $X$ over $\Q$ is said to fail the \emph{Hasse principle} if $X(\Q) = \emptyset$ but $X(\Adele_\Q) \neq \emptyset$, where $\Adele_\Q$ denotes the adeles of $\Q$. There are various known obstructions to the Hasse principle, with one of the most popular being the \emph{Brauer--Manin obstruction}: this constructs an intermediate set $X(\Q) \subset X(\Adele_\Q)^{\Br} \subset X(\Adele_\Q)$, and if $X(\Adele_\Q)^{\Br} = \emptyset$ but $X(\Adele_\Q) \neq \emptyset$ then we say that there is a \emph{Brauer--Manin obstruction to the Hasse principle} (we recall background on Brauer groups and the Brauer--Manin set in \S\ref{brauerbasics}).

Our interest in this paper is on K3 surfaces. Such surfaces lie at the frontier of our understanding of the Hasse principle for smooth projective surfaces. A conjecture due to Skorobogatov \cite{Sko09} states that the Brauer--Manin obstruction is the only one for K3 surfaces. This conjecture is wide open in general, and is only known for some special families of K3 surfaces \cite{Har19,HS16}, such as some elliptic or Kummer surfaces, assuming some big conjectures in number theory (e.g.~finiteness of $\Sha$).

Given the difficulty in general, we focus throughout on the following completely explicit family of ``diagonal K3 surfaces of degree 2'', given by
\begin{equation} \label{eqn:X_A}
X_{\mathbf{A}}: \quad w^2=A_1x_1^6+A_2x_2^6+A_3x_3^6 \quad \subset \P(3,1,1,1)
\end{equation}
where $\mathbf{A}=(A_1,A_2,A_3) \in \Z^3$. In this paper we capitalise on recent breakthroughs on the calculation of the Brauer groups of K3 surfaces by Corn--Nakahara \cite{cn17} and Gvirtz--Skorobogatov \cite{qua}, and calculate the complete Brauer group, including the transcendental part, for most of the surfaces in this family. We then study the number of such surfaces for which there is a Brauer--Manin obstruction to the Hasse principle, as the coefficients lie in a box $|A_i| \leq T$ with $ T \to \infty$.

\subsection{The Brauer group}
Our first result concerns the asymptotic triviality of the Brauer group.
\begin{theorem}\label{mainbrauer} As $T \to \infty$ we have
 \[\#\left\{(A_1,A_2,A_3) \in \Z^3 :    
    |A_1|,|A_2|,|A_3| \leq T,
    \Br X_{\mathbf{A}}/ \Br_0 X_\AA \neq 0
    \right\}  \sim 4 \cdot \sqrt{3} T^{5/2}.\]
\end{theorem}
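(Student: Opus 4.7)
The plan is to convert the condition $\Br X_\AA/\Br_0 X_\AA\neq 0$ into a finite disjunction of explicit arithmetic conditions on $\AA=(A_1,A_2,A_3)$, and then count integer triples satisfying each. The first step is to invoke the classifications of Corn--Nakahara \cite{cn17} (for the algebraic part $\Br_1 X_\AA/\Br_0 X_\AA$) and Gvirtz--Skorobogatov \cite{qua} (for the transcendental part $\Br X_\AA/\Br_1 X_\AA$). Together, these should produce---outside a thin set of degenerate $\AA$ (singular $X_\AA$, large common factors, Picard rank jumps)---a finite list of Kummer-theoretic conditions $C_1,\ldots,C_k$ characterising nontriviality of the Brauer quotient. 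Each $C_j$ will require that some monomial in the $A_i$, multiplied by a fixed scalar supported on $\{2,3\}$, be a square, cube, or sixth power in $\Q^\times$.

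The next step is to estimate the number of triples in $[-T,T]^3$ satisfying each $C_j$. Since the total count is $(2T+1)^3\sim 8T^3$ and we seek order $T^{5/2}$, the dominant conditions must constrain a single coordinate to a set of relative density $\asymp T^{-1/2}$. Conditions constraining two or more coordinates contribute $O(T^2)$, and conditions demanding only that a product $\prod A_i^{e_i}$ lie in a finite-index subgroup of $\Q^\times$ contribute $O(T^2(\log T)^{O(1)})$ by standard divisor-sum bounds. Hence the leading term arises from conditions of the form ``$A_i=cm^2$ for some $m\in\Z$'' with $c\in\Q^\times$ fixed and $i\in\{1,2,3\}$. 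For each such $(i,c)$, one has
\[
\#\{|A_i|\leq T: A_i=cm^2 \text{ for some }m\in\Z\}\cdot(2T+1)^2 \sim \sqrt{T/|c|}\cdot 4T^2 = \frac{4T^{5/2}}{\sqrt{|c|}}.
\]
Summing over the three coordinates and over the $(i,c)$ appearing in the classification---with inclusion--exclusion accounting for overlap, which is $O(T^2)$---yields $\sim\sum_{(i,c)} 4T^{5/2}/\sqrt{|c|}$.

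Matching this against the claimed constant $4\sqrt{3}\,T^{5/2}=3\cdot(4/\sqrt{3})\,T^{5/2}$ indicates that the classification should yield exactly one dominant condition per coordinate, with scalar $|c|=3$. Explicitly, the expected dominant condition reads ``$A_i=-3m^2$ for some $m\in\Z$ and some $i\in\{1,2,3\}$''; the factor $-3$ is natural since $\Q(\zeta_6)=\Q(\sqrt{-3})$ is the minimal cyclotomic field containing all sixth roots of unity, and sixth-power twists on a sextic form naturally produce Brauer classes split by $\sqrt{-3}$. Substituting $|c|=3$ and summing over $i$ gives $3\cdot 4/\sqrt{3}=4\sqrt{3}$, matching the constant in Theorem \ref{mainbrauer}.

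The main obstacle lies in the first step: extracting from \cite{cn17, qua} a \emph{fully explicit and uniform} classification of the triples $\AA$ with non-trivial Brauer quotient---in particular for the transcendental $3$-torsion piece, which is the deepest input---and verifying that no condition of the form ``$A_i=cm^2$'' with $|c|<3$ is present (which would spoil the constant). The exceptional set where the classification does not apply must be bounded by $O(T^2)$ via elementary discriminant estimates, after which the counting in the remaining steps is a routine application of elementary lattice-point methods with inclusion--exclusion.
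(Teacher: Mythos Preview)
Your strategy is essentially the paper's: bound the transcendental contribution by $O(T^{1/2+\varepsilon})$ (this is Theorem~\ref{maintranscendental}, proved via the methods of \cite{qua} adapted to weighted hypersurfaces), classify the algebraic part by enumerating subgroups of the generic Galois group on $\Pic(\Xbar_\AA)$ and computing $\HH^1$ for each (Table~\ref{table:brsub}, using the explicit divisors from \cite{cn17} and a \texttt{Magma} computation), confirm that among the index-$2$ subgroups only the condition $-3A_i\in\Q^{\times 2}$ gives nontrivial $\Br_1/\Br_0$ while all deeper conditions contribute $O(T^{2+\varepsilon})$, and then count exactly as you do. The classification you flag as the main obstacle is indeed where all the content lies and the paper carries it out along the lines you anticipate; one small correction is that these CM surfaces have constant geometric Picard rank $20$, so there are no ``Picard rank jumps'' to worry about.
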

We prove this by showing that the most common way for the Brauer group to be non-constant is for $-3A_i$ to be a square for some $i$; counting this  is rather simple.

Note that \cite[Thm.~1.4]{BBL16} implies that a positive proportion of the surfaces $X_\AA$, when ordered by height, are everywhere locally soluble. In particular, if one believes Skorogobatov's conjecture, then Theorem \ref{mainbrauer} implies that a positive proportion of the surfaces $X_\AA$ have a rational point.

One particular difficulty of our analysis compared to surfaces of negative Kodaira dimension is the presence of \emph{transcendental} Brauer classes. These are classes that do not lie in the \emph{algebraic} Brauer group $\Br_1 X = \ker(\Br X \to \Br X_{\bar{\Q}})$. Traditionally, the transcendental part of the Brauer group has been the least understood, but recent methods developed for diagonal quartic surfaces \cite{qua} make it accessible in the case we consider. We adapt those methods for diagonal surfaces in weighted projective space to prove the following result, which shows that almost always the transcendental part of the Brauer group is trivial.

\begin{theorem}\label{maintranscendental}
For all $\varepsilon > 0$ we have
 \[\#\left\{(A_1,A_2,A_3) \in \Z^3 :    
    |A_1|,|A_2|,|A_3| \leq T,
    \Br X_{\mathbf{A}}/ \Br_1 X_{\mathbf{A}} \neq 0
    \right\}  \ll_\varepsilon T^{1/2+\varepsilon}.\]
\end{theorem}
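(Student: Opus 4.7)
The plan is to adapt the strategy developed by Gvirtz--Skorobogatov in \cite{qua} for smooth diagonal quartic K3 surfaces to the present setting of degree $2$ diagonal K3 surfaces in $\mathbf{P}(3,1,1,1)$. Since the desired upper bound factors through the canonical injection
\[
\Br X_{\mathbf{A}} / \Br_1 X_{\mathbf{A}} \hookrightarrow (\Br \Xbar_{\mathbf{A}})^{\Gal(\Qbar/\Q)},
\]
it suffices to bound the number of $\mathbf{A} \in [-T,T]^3$ (with $A_1 A_2 A_3 \neq 0$, so that $X_{\mathbf{A}}$ is smooth) for which the right-hand side is nonzero.

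The first step is to describe the geometric Brauer group $\Br \Xbar_{\mathbf{A}}$ as an abstract finite abelian group, uniformly in $\mathbf{A}$. Because the isomorphism type of $\Xbar_{\mathbf{A}}$ is essentially deformation-invariant in the family, and the generic member has large geometric Picard rank thanks to the rich automorphism group of diagonal sextics (coordinate scalings by sixth roots of unity and permutations), $\Br \Xbar_{\mathbf{A}}$ should be a finite abelian group whose exponent is bounded by some absolute $N_0$. Exceptional loci where the Picard rank jumps further must be shown separately to contribute only $\ll T^{1/2+\varepsilon}$ to the count.

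The second step is to determine, for each non-zero class $\alpha \in \Br \Xbar_{\mathbf{A}}$, the arithmetic condition on $\mathbf{A}$ which makes $\alpha$ invariant under $\Gal(\Qbar/\Q)$. Following \cite{qua}, one expects these conditions to be of Kummer type: certain monomials $A_1^{a_1} A_2^{a_2} A_3^{a_3}$ with $0 \leq a_i < N_0$ must lie in a prescribed coset of $(\Q^*)^n$ in $\Q^*$, for some divisor $n \mid N_0$. These conditions are automatically invariant under replacing each $A_i$ by $A_i t_i^{N_0}$, which reflects the fact that $X_{\mathbf{A}}$ and $X_{\mathbf{A} \cdot \mathbf{t}^{N_0}}$ are isomorphic over $\Q$.

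The third step is the counting. The Kummer-type constraints force $\mathbf{A}$, up to $N_0$-th power scalings, into finitely many twist classes: each $A_i$ must take the form $c_i u_i^{N_0}$ for $c_i$ in a fixed finite set and $u_i \in \Z$, and the relations between the $A_i$ reduce the number of independent parameters from three to essentially one. Summing geometric series and absorbing divisor-function factors into $T^{\varepsilon}$ produces the desired $O_\varepsilon(T^{1/2+\varepsilon})$ bound.

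The main obstacle will be the first step: computing the Galois module $\Br \Xbar_{\mathbf{A}}$ explicitly, together with its Galois action, in sufficient uniformity. In the quartic case \cite{qua} exploits a Kummer-type correspondence between the surface and a product of elliptic curves, combined with CM theory. One expects an analogous construction here, involving elliptic curves with $j$-invariant $0$ (reflecting the sixth-root symmetry of the branch sextic $\sum A_i x_i^6 = 0$) and CM theory over $\Q(\zeta_3)$, so that the Galois action on the transcendental lattice can be reduced to the Galois action on the CM torsion of these auxiliary elliptic curves.
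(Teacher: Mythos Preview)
Your overall strategy aligns with the paper's: compute the Galois module $\Br\Xbar_{\mathbf A}$ via the CM structure over $\Q(\omega)$, read off Kummer-type constraints on $A_1A_2A_3$, and count. However, there is a genuine gap in your reduction step.

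You write that it suffices to bound the number of $\mathbf A$ for which $(\Br\Xbar_{\mathbf A})^{\Gal(\Qbar/\Q)}\neq 0$. This is false at the prime $\ell=3$. Carrying out the computation you outline (the paper does this in Propositions~\ref{invariants} and~\ref{invariantsQ}), one finds that $(\Br\Xbar_{\mathbf A})[3]^{\Gal(\Qbar/\Q)}\cong\Z/3\Z$ as soon as $3A_1A_2A_3\in\Q^{\times 2}$. This is only a \emph{square} condition on the product, not a sixth-power condition, and by Lemma~\ref{lem:power_count} it is satisfied by $\asymp T^{3/2}$ triples --- far too many for your target bound. So your assertion that ``the relations between the $A_i$ reduce the number of independent parameters from three to essentially one'' fails here; the reduction is only to $3/2$ parameters.

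The paper closes this gap by showing that these Galois-invariant $3$-torsion classes do \emph{not} descend to $\Br X_{\mathbf A}$: the injection $\Br X_{\mathbf A}/\Br_1 X_{\mathbf A}\hookrightarrow(\Br\Xbar_{\mathbf A})^{\Gal(\Qbar/\Q)}$ is strict in this regime. Concretely (Proposition~\ref{prop:3transcendental}), one applies the Colliot-Th\'el\`ene--Skorobogatov machinery \cite{cts,qua}: the image of $\Br X/\Br_1 X$ in $(\Br\Xbar)^{G}$ is the kernel of a composite of connecting maps through $\RH^1(G,\Delta)$ and $\RH^2(G,\Pic\Xbar)$, and a case-by-case \texttt{Magma} computation over subgroups of the generic Galois group shows this kernel is trivial whenever $-A_1A_2A_3\in k^{\times 2}\setminus k^{\times 6}$. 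Without this step --- or an independent argument that the invariant $3$-torsion fails to descend --- your proposal cannot reach $T^{1/2+\varepsilon}$. For $\ell\neq 3$ (including $\ell=2$, handled by Proposition~\ref{prop:2transcendental}), the conditions are indeed sixth-power conditions on $A_1A_2A_3$ and your counting sketch goes through.
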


The key property which allows us to handle the transcendental Brauer group is the fact that our surfaces have \emph{complex multiplication}; this allows one to make explicit the Galois action on $\Br X_{\AA, \bar{\Q}}$. We show that most of the time the Galois invariants are trivial (at least away from $3$), thus certainly no Brauer group element defined over $\Q$. The $3$-primary part is slightly more delicate as there can be non-trivial Galois invariant elements; we show that these elements do not descent to $\Q$ using an explicit description of connecting maps from \cite{cts}.

Further information about the possibilities for the algebraic and transcendental parts of the Brauer group can be found in Table \ref{table:brsub} and \S\ref{sextic}, respectively.

\subsection{Brauer--Manin obstruction}
Our primary aim is to study how often a Brauer--Manin obstruction to the Hasse principle occurs in the family \eqref{eqn:X_A}. By Theorem~\ref{mainbrauer} the Brauer group is almost always constant, so a Brauer--Manin obstruction is rare. However, even within those surfaces with non-constant Brauer group, an obstruction is still incredibly rare by an extra factor of $T$. This is illustrated by our next theorem, which calculates the exact order of magnitude of such surfaces.
\begin{theorem} \label{thm:main1}
	As $T \to \infty$, we have
    $$\#\left\{(A_1,A_2,A_3) \in \Z^3 :
    \begin{array}{c}
    |A_1|,|A_2|,|A_3| \leq T,\\
    X_{\mathbf{A}} \textup{ has a BM-obstruction to the HP}
    \end{array} \right\}  \asymp \frac{T^{3/2}}{(\log T)^{3/8}}.$$
\end{theorem}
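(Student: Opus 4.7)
The plan combines the triviality results Theorems \ref{mainbrauer}--\ref{maintranscendental} with an explicit computation of local invariants and a Selberg--Delange-style count of integers with prescribed multiplicative structure. First, by Theorem \ref{maintranscendental}, triples $\AA$ admitting a non-trivial transcendental Brauer class contribute only $O_\varepsilon(T^{1/2+\varepsilon})$ to the total, so we may assume that every Brauer--Manin obstruction originates in $\Br_1 X_\AA$. The proof of Theorem \ref{mainbrauer} identifies the non-triviality of $\Br_1 X_\AA / \Br_0 X_\AA$ with the condition that $-3 A_i$ is a perfect square for at least one $i$. To match the claimed order of magnitude $T^{3/2}$ rather than $T^{5/2}$, I anticipate that the local invariant computation forces $-3 A_i$ to be a square for \emph{all} three indices; writing $A_i = -3c_i^2$, the bound $|A_i| \leq T$ becomes $|c_i| \leq \sqrt{T/3}$, immediately yielding the factor $T^{3/2}$.

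The core calculation is then a place-by-place evaluation of the Brauer--Manin pairing. For $A_i = -3 c_i^2$, I would write an explicit generator of $\Br_1 X_\AA / \Br_0 X_\AA$ as a cyclic algebra $(f, \zeta_3)$ over $\Q(\zeta_3)$ with $f$ a rational function of $c_1, c_2, c_3$, and compute $\inv_v \alpha(P_v) \in \frac{1}{3}\Z/\Z$ via tame and Hilbert symbol formulas. The invariant vanishes away from primes $p \mid 6 c_1 c_2 c_3$, and at the remaining places is controlled by the Frobenius of $p$ in $\Gal(\Q(\zeta_3, \sqrt[3]{c_1}, \sqrt[3]{c_2}, \sqrt[3]{c_3})/\Q)$. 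The condition $\sum_v \inv_v \alpha(P_v) \neq 0$ for every adelic point then becomes a system of congruences at the primes above $2$ and $3$ together with a multiplicative condition: each prime $p$ dividing $c_1 c_2 c_3$ must lie in a prescribed union of Frobenius classes in the above extension.

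The resulting counting problem is a standard instance of the Selberg--Delange method applied to an appropriate Dirichlet series built from the Frobenius conditions above, and should yield matching upper and lower bounds of the form $T^{3/2}/(\log T)^{3/8}$, with the precise exponent $3/8$ emerging from the density of ``obstructive'' Frobenius classes in the above Galois group.

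The principal technical obstacles are: firstly, sharpening the analysis behind Theorem \ref{mainbrauer} to prove that triples with only one or two of the $-3A_i$ square contribute strictly below the leading order; secondly, carrying out the local invariant computation at the wildly ramified prime $3$, for which the explicit connecting maps of \cite{cts} used in the proof of Theorem \ref{maintranscendental} are essential; and thirdly, performing the Selberg--Delange count while correctly accounting for the joint dependence of the multiplicative conditions on $c_1, c_2, c_3$ through the cubic residue symbols. The lower bound additionally requires producing everywhere locally soluble surfaces that realise the obstruction, which should follow from the local solubility results of \cite{BBL16}.
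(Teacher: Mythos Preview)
Your proposal has a fundamental misidentification of the dominant family. You assume the main contribution comes from triples with all three $-3A_i$ square, but this is impossible: if $A_1,A_2,A_3<0$ then $X_\AA(\R)=\emptyset$, so there is no Hasse principle to obstruct (cf.~Table~\ref{table:bounds}). More importantly, the non-triviality of $\Br_1 X_\AA/\Br_0 X_\AA$ is \emph{not} equivalent to some $-3A_i$ being a square. There is a second family, where $A_j/A_k$ is a cube for some $j\neq k$, in which the Brauer group contains the quaternion algebra $\calB_i$ of Proposition~\ref{prop:algebras}(2); and it is this family~(3), not your family~(2), that dominates. Theorem~\ref{thm:main2} shows $N_2(T)\asymp T^{3/2}\log\log T/(\log T)^{2/3}$ while $N_3(T)\asymp T^{3/2}/(\log T)^{3/8}$, and since $2/3>3/8$ the latter is larger.

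Consequently the mechanism producing the exponent $3/8$ is entirely different from what you sketch. In the dominant family one has, say, $A_2/A_3\in\Q^{\times 3}$ and the relevant element is the quaternion algebra $\calB_1=(A_1,(x_2^2+cx_3^2)/x_1^2)$. The drop from $T^{5/2}$ to $T^{3/2}$ is not obtained by imposing further square conditions on the coefficients; rather, a valuative analysis of the local invariants (Lemma~\ref{lem:surjective2tor}) forces $A_1$ to be a square up to small primes, and the cube condition on $A_2/A_3$ accounts for the rest. The $(\log T)^{-3/8}$ then arises from \emph{quadratic} residue conditions: Lemma~\ref{lem:surjective2tor3} shows that primes $p\parallel\gcd(A_2,A_3)$ with $A_1\notin\Q_p^{\times 2}$ must satisfy $(\tfrac{-A_2/A_3}{p})=1$ and $p\equiv 2\bmod 3$, and the density computation in Lemma~\ref{lem:square} yields $5/8$, hence the exponent $1-5/8=3/8$. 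Your cubic-residue Selberg--Delange argument targets the wrong algebra and would at best recover the subdominant $(\log T)^{-2/3}$ term of $N_2$.
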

The main term comes from special conditions on the coefficients. To elaborate we divide the triple $(A_1,A_2,A_3)$ into the following families:
\begin{enumerate}
	\item[(1)] $-3A_i$ is not a square and $A_j/A_k$ is not a cube for all $i,j,k$ with $j \neq k$;
	\item[(2)] $-3A_i$ is a square for some $i$;
	\item[(3)] $A_j/A_k$ is a cube for some $j,k$ with $j \neq k$.
\end{enumerate}
The largest family corresponds to case (1). Nonetheless our next result, which is the main theorem in the paper, will show that for such surfaces a Brauer--Manin obstruction to the Hasse principle is extremely rare. We will show that the Brauer group of the generic surface in families (2) and (3) is generated by an algebra of order $3$ and $2$ modulo constant algebras, respectively, and that the generic member of the intersection of (2) and (3) has constant Brauer group. The second family (2) is much more common than (3), but our next theorem shows that between these two families a Brauer--Manin obstruction occurs for a  comparable number of surfaces asymptotically (differing by a small power of $\log T$), and that the surfaces in family (3) in fact dominate asymptotically. 
We write
$$N_i(T) = 
\#\left\{(A_1,A_2,A_3) \in \Z^3 :
    \begin{array}{c}
    |A_1|,|A_2|,|A_3| \leq T \text{ lies in case } (i),\\
    X_{\mathbf{A}} \text{ has a BM-obstruction to the HP}. \\
    \end{array} \right\}.$$

\begin{theorem}\label{thm:main2}
	\begin{align*}
        N_1(T) \ll_{\varepsilon} T^{1 + \varepsilon},\ \
        N_2(T) \asymp \frac{T^{3/2} \log \log T}{(\log T)^{2/3}},\ \ N_3(T) \asymp \frac{T^{3/2}}{(\log T)^{3/8}}.
	\end{align*}
\end{theorem}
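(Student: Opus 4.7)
The plan is to treat the three cases separately, since they correspond to structurally different generators of $\Br X_{\mathbf{A}}/\Br_0 X_{\mathbf{A}}$.

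For case (1), any $\mathbf{A}$ counted by $N_1(T)$ must have $\Br X_{\mathbf{A}}/\Br_0 X_{\mathbf{A}} \neq 0$. The transcendental contribution is bounded by $O_\varepsilon(T^{1/2+\varepsilon})$ directly from Theorem~\ref{maintranscendental}. For the algebraic contribution we re-examine the proof of Theorem~\ref{mainbrauer}: the bulk $4\sqrt{3}\,T^{5/2}$ there is contributed by case (2), and once one forbids the defining conditions of both (2) and (3), the remaining loci supporting a non-trivial algebraic Brauer class have codimension at least $2$ in the parameter space (for instance, simultaneous square/cube conditions on two independent ratios of the $A_i$, or non-generic degenerations of the classification referred to in Table~\ref{table:brsub}). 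A box count on each such sub-locus yields $O_\varepsilon(T^{1+\varepsilon})$, proving the bound on $N_1(T)$.

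For cases (2) and (3) I would follow a common strategy: first exhibit an explicit algebra $\mathcal{A}$ generating $\Br X_{\mathbf{A}}/\Br_0 X_{\mathbf{A}}$ for the generic member. In case (2), the hypothesis $-3A_i=B_i^2$ makes $A_i$ a norm from $\Q(\zeta_3)/\Q$ and supplies a cyclic algebra of order $3$ involving a cube root of the ratio of the remaining two coefficients. In case (3) the parametrisation $A_1=A c^6,\, A_2=A$ gives the factorisation $A_1 x_1^6 + A_2 x_2^6 = A\bigl((cx_1)^2+x_2^2\bigr)\bigl((cx_1)^4-(cx_1)^2 x_2^2+x_2^4\bigr)$, from which one extracts a quaternion algebra of order $2$. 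In each case one computes $\inv_v \mathcal{A}(P_v)$ at all places, shows that these invariants vanish outside a controlled set of bad primes (essentially the primes dividing the coefficients), and verifies that the non-vanishing of $\sum_v \inv_v \mathcal{A}(P_v)$ for \emph{every} adelic point is equivalent to a Chebotarev-type splitting condition on those bad primes in an appropriate Kummer extension.

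The problem then reduces to counting integers up to $T$ whose prime divisors all lie in a prescribed Chebotarev class. Landau--Selberg--Delange (or Wirsing) estimates produce counts of the form $T/(\log T)^{1-\delta}$, and the densities produced by the Brauer--Manin conditions give the exponents $2/3$ in case (2) (from a density-$1/3$ cubic splitting condition) and $3/8$ in case (3) (from a combined Hilbert-symbol condition). Multiplying by the $T^{1/2}$ count for the parameter $B_i$ (respectively $c$) yields the order $T^{3/2}$; the $\log\log T$ in $N_2(T)$ reflects boundary configurations in which several primes share the obstruction. The principal obstacle is the analytic step: matching upper and lower bounds demands (i) a uniform case analysis of local invariants at the bad primes including $2$ and $3$, (ii) a Landau--Selberg--Delange estimate with satisfactory uniformity in the auxiliary parameters $B_i$ or $c$, and (iii) an explicit construction of obstructed adelic points to realise the lower bound. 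Producing these lower bounds — verifying that a positive proportion of the triples meeting the sieve condition actually produce a non-vanishing global invariant rather than a cancellation — will likely be the most delicate step.
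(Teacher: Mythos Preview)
Your outline follows the paper's general strategy (explicit algebras, local invariants, Chebotarev-type counts), but several of the steps you flag as routine are in fact where the real content lies, and a couple of your descriptions are off.

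\medskip

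\textbf{The $N_1(T)$ bound.} Your claim that the loci supporting a non-trivial algebraic Brauer class outside cases (2) and (3) all have ``codimension at least $2$'' is not something one can read off from Theorem~\ref{mainbrauer}; that theorem only identifies the dominant stratum. The paper establishes this by a complete (\texttt{Magma}-assisted) enumeration of subgroups of the generic Galois group $G$ acting on $\Pic\overline{X}_{\mathbf A}$ up to index $12$ (Table~\ref{table:brsub}), together with Lemma~\ref{lem:largeindex} showing that index $>12$ or $=9$ contributes $O(T^{1+\varepsilon})$. For every subgroup that is not forced by a condition of type (2) or (3), either $\Br_1 X_{\mathbf A}/\Br_0 X_{\mathbf A}=0$ or the defining locus already has size $O(T^{1+\varepsilon})$. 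This case analysis is not avoidable.

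\medskip

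\textbf{Upper bounds for $N_2(T)$ and $N_3(T)$: multiple algebras.} Your argument treats only the \emph{generic} element $\mathcal A_i$ (resp.\ $\mathcal B_i$). But for the upper bound one must also control the deeper strata where $\Br_1 X_{\mathbf A}/\Br_0 X_{\mathbf A}$ acquires further generators, e.g.\ $-3A_1,-3A_2\in\Q^{\times2}$ (two order-$3$ classes), or $27A_1/A_2\in\Q^{\times6}$ (two quaternion classes $\mathcal B_3,\mathcal C_3$), or mixed cases. Each such stratum requires its own surjectivity-of-local-invariants lemma for the \emph{joint} invariant map (the paper's Lemma~\ref{lem:surjmultalg} and Lemma~\ref{lem:surjective2x2tor2}) and a separate count (Theorem~\ref{thm:multalg}, Table~\ref{table:multiple algebras}). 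Without this, you cannot conclude the upper bounds in Theorem~\ref{thm:main2}.

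\medskip

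\textbf{The $T^{3/2}$ power saving.} The reduction you describe (``counting integers whose prime divisors lie in a Chebotarev class'') only accounts for the logarithmic factors. The drop from $T^{5/2}$ (the size of the stratum in case (2)) to $T^{3/2}$ comes from a \emph{valuative} criterion: for the local invariant map of $\mathcal A_1$ to fail to surject at a prime $p\nmid A_1\gcd(A_2,A_3)$ with $v_p(A_2/A_3)\not\equiv 0\bmod 3$, one needs $p$ small (Proposition~\ref{prop:surjective_2}). This forces $A_2,A_3$ to be cubes up to a bounded set of primes, giving the power saving. The analogous step in case (3) is Lemma~\ref{lem:surjective2tor}. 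These surjectivity results are proved by finding rational points on auxiliary curves via Hasse--Weil, after a Galois descent (Lemma~\ref{lem:res}) in the order-$3$ case; this is the technical heart and is not a standard sieve step.

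\medskip

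\textbf{The $\log\log T$.} Your explanation (``boundary configurations in which several primes share the obstruction'') is not the mechanism. What happens is that there can be \emph{at most one} prime $q$ at which the image of the local invariant map is exactly $\{1/3,2/3\}$ (Proposition~\ref{prop:BC} and Corollary~\ref{cor:no_BM_3}); two such primes already kill the obstruction. Summing $1/q$ over this single allowed rogue prime produces $\log\log T$ via Mertens (see Remark~\ref{rem:Merten} and Proposition~\ref{prop:cube}(2)).

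\medskip

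Finally, in case (3) your parametrisation $A_1=Ac^6$ assumes a sixth-power ratio; the hypothesis is only that $A_j/A_k$ is a cube, so the relevant factor is $x_j^2+\sqrt[3]{A_k/A_j}\,x_k^2$ as in $\mathcal B_i$, not $(cx_1)^2+x_2^2$.
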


The authors were particularly surprised to see the term $\log \log T$ appearing here; such expressions are more common in prime number theory than in  arithmetic geometry. This factor appears naturally in the proof: there is a special type of prime $q$ which is only allowed to divide two of the coefficients exactly once for there to be a Brauer--Manin obstruction; when summing over the coefficients the term $1/q$ appears, which gives $\log \log T$ by Mertens' theorem  (see Remark~\ref{rem:Merten} for  details). 

We obtain the power savings over Theorem \ref{mainbrauer} by showing that a Brauer--Manin obstruction  imposes valuative conditions on the primes dividing the coefficients, so that up to small primes $A_i$ is essentially a square and $A_j/A_k$ is essentially a cube, in both cases (2) and (3); this gives the upper bound $O_\varepsilon(T^{3/2 + \varepsilon})$ for all $\varepsilon > 0$. To obtain the powers of $\log T$, we show that the prime divisors of the coefficients must have special splitting types in certain number fields.  One novel tool in our strategy to deal with such conditions is the effective version of the Chebotarev density theorem \cite{LO77}; to get the required uniformity, we need to rule out  exceptional zeros for certain Dedekind zeta functions, which we do with help from work of Heilbronn \cite{Hei73}. The lower bounds in Theorem \ref{thm:main2} are obtained by constructing explicit counter-examples.

Our results illustrate that given a family of varieties, it can be difficult to predict in general how many will have a Brauer--Manin obstruction to the Hasse principle and which subfamilies will dominate the overall count.

\subsection{Odd order obstructions to the Hasse principle}
Our results answer an open question in the literature, asked in various forms in \cite{IS15} (after Cor.~1.3), at the 2014 AIM workshop on Rational and integral points on higher-dimensional varieties \cite[Problem.~8]{AIM}, and in \cite[\S1.2]{cv18}. For a K3 surface $X$ over $\Q$, this asks whether it is possible for there to be a Brauer--Manin obstruction to the Hasse principle which is completely explained by the odd order torsion, i.e.~whether it is possible to have
\begin{equation} \label{eqn:odd_conjecture}
X(\Adele_\Q)^{\Br_{2^{\phantom{\perp\mkern-15mu}}}} \neq \emptyset, \quad \text{ but }
\quad X(\Adele_\Q)^{\Br_{2^\perp}} = \emptyset \quad ?
\end{equation}
Here $\Br_{2^{\phantom{\perp\mkern-15mu}}}:=\Br (X)[2^\infty]$ denotes the $2$-primary part of $\Br X$ and $\Br_{2^\perp}:=\Br(X)[2^\perp]$ those elements of $\Br X$ of order coprime to $2$.

There are partial results in the literature on this problem. In \cite[Thm.~1.9]{cv18} it is shown that if $X$ is a Kummer surface, then 
$$X(\Adele_\Q)^{\Br} = \emptyset \quad \implies X(\Adele_\Q)^{\Br_{2^{\phantom{\perp\mkern-15mu}}}} = \emptyset,$$
thus \eqref{eqn:odd_conjecture} cannot happen for Kummer surfaces. In \cite{cn17} and \cite{BVA19} examples of K3 surfaces $X$ are given with
$$X(\Adele_\Q) \neq \emptyset, \quad  X(\Adele_\Q)^{\Br_{2^\perp}}  = \emptyset,$$
however these works left open the possibility of such counter-examples being explained by an even order element, i.e.~whether $X(\Adele_\Q)^{\Br_{2^{\phantom{\perp\mkern-15mu}}}} = \emptyset$. This is because they were unable to calculate the full transcendental Brauer group.

We are now able to completely answer this question, as our methods allow us to  calculate the transcendental Brauer group of diagonal K3 surfaces of degree $2$.
In case (2) of Theorem \ref{thm:main2}, we will show that for almost all $X_\AA$ we have $\Br X_{\AA}/\Br_0 X_{\AA} \cong \Z/3\Z$, and our final theorem shows that there are infinitely many such surfaces over $\Q$ which have a Brauer--Manin obstruction to the Hasse principle not explained by an element of even order.

\begin{theorem}\label{thm:oddtor}
    $$\#\left\{(A_1,A_2,A_3) \in \Z^3 :
    \begin{array}{c}
    |A_1|,|A_2|,|A_3| \leq T,\\
    X_\AA(\Adele_\Q)^{\Br_{2^{\phantom{\perp\mkern-15mu}}}} \neq \emptyset,
X_\AA(\Adele_\Q)^{\Br_{2^\perp}} = \emptyset 
    \end{array} \right\}  \asymp \frac{T^{3/2} \log \log T}{(\log T)^{2/3}}.$$
\end{theorem}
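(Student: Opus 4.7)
The plan is to derive Theorem \ref{thm:oddtor} essentially as the case-(2) refinement of Theorem \ref{thm:main2}, by combining the quantitative estimates already established with the explicit structure of $\Br X_\AA / \Br_0 X_\AA$ in each of the three cases.

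The key reduction is elementary but decisive: if $\Br X_\AA / \Br_0 X_\AA$ is cyclic of order $3$, then every $2$-primary element of $\Br X_\AA$ already lies in $\Br_0 X_\AA$ (since its image in $\Z/3\Z$ has order dividing both a power of $2$ and $3$), so by global reciprocity it pairs trivially with all adelic points. Consequently $X_\AA(\Adele_\Q)^{\Br_2} = X_\AA(\Adele_\Q)$ and $X_\AA(\Adele_\Q)^{\Br_{2^\perp}} = X_\AA(\Adele_\Q)^{\Br}$, so such an $X_\AA$ contributes to Theorem \ref{thm:oddtor} precisely when it has a Brauer--Manin obstruction to the Hasse principle. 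Symmetrically, if $\Br X_\AA / \Br_0 X_\AA \cong \Z/2\Z$ then $X_\AA(\Adele_\Q)^{\Br_{2^\perp}} = X_\AA(\Adele_\Q)$, which is nonempty whenever the surface is everywhere locally soluble, so such $X_\AA$ cannot contribute.

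For the lower bound I would recycle the explicit family of counterexamples produced in the proof of the lower bound for $N_2(T)$ in Theorem \ref{thm:main2}. Since the paper's analysis of family (2) establishes $\Br X_\AA/\Br_0 X_\AA \cong \Z/3\Z$ for the generic member, one can arrange that the constructed counterexamples have Brauer group exactly $\Z/3\Z$ modulo constants (the exceptional subfamily where the Brauer group is larger or smaller contributes negligibly by the counts in Table \ref{table:brsub} and Theorem \ref{maintranscendental}). By the key reduction each such surface contributes to Theorem \ref{thm:oddtor}, yielding $\gg T^{3/2} \log \log T / (\log T)^{2/3}$.

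For the upper bound, first note that $X_\AA(\Adele_\Q)^{\Br} \subseteq X_\AA(\Adele_\Q)^{\Br_{2^\perp}}$, so any $\AA$ counted in Theorem \ref{thm:oddtor} satisfies $X_\AA(\Adele_\Q)^{\Br} = \emptyset$ together with $X_\AA(\Adele_\Q) \neq \emptyset$, that is, $X_\AA$ has a Brauer--Manin obstruction to the Hasse principle. I would then split according to the trichotomy of Theorem \ref{thm:main2}: case (1) contributes at most $N_1(T) \ll_\varepsilon T^{1+\varepsilon}$, which is negligible; case (2) contributes at most $N_2(T) \asymp T^{3/2}\log\log T/(\log T)^{2/3}$, matching the target rate; and generic case (3) contributes nothing by the key reduction. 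The main obstacle is to bound the contribution from non-generic case (3) surfaces, namely those whose Brauer group acquires an odd-order component beyond the generic $\Z/2\Z$. I expect this to follow from the Brauer group classification in Table \ref{table:brsub} together with Theorem \ref{maintranscendental}: an odd-order Brauer element in case (3) forces $\AA$ into either family (2) as well or into a thin subfamily controlled by additional congruence or squareness conditions on the $A_i$, and a standard sieve estimate in the spirit of the proofs of Theorems \ref{mainbrauer} and \ref{maintranscendental} then shows that the count of such $\AA$ with a Brauer--Manin obstruction is of strictly smaller order than $N_2(T)$.
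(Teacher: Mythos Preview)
Your proposal is correct and follows essentially the same approach as the paper. The paper's proof is slightly more streamlined in the upper bound: rather than splitting into the trichotomy (1)--(3) and then arguing that non-generic case-(3) surfaces with odd torsion are negligible, it observes directly from Table~\ref{table:brsub} (after the reductions via Lemma~\ref{lem:largeindex} and Theorem~\ref{maintranscendental}) that a nontrivial odd-torsion class in $\Br X_\AA/\Br_0 X_\AA$ forces $-3A_i \in \Q^{\times 2}$ for some $i$, so the count is bounded by $N_2(T)$; your route through the trichotomy arrives at the same conclusion once you unpack what ``odd torsion in case (3)'' entails.
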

\noindent An example of a surface in the family above originally appears in \cite{cn17}:
$$w^2=-3x^6+97y^6+97\cdot28\cdot8z^6.$$
By  \Cref{invariantsQ} this surface has no transcendental Brauer classes, which proves that the obstruction by a $3$-torsion element in \cite{cn17} is the only one.

\subsection{Structure of the paper} For ease of reading, we have separated the paper into three parts, which each use independent methods.

In Part 1, we study certain subfamilies of the surfaces $X_{\AA}$ given by imposing various conditions on the coefficients, including those families appearing in Theorem~\ref{thm:main2}. Here we are not concerned with calculating the complete Brauer group; rather we write down explicit elements in \S\ref{sec:Brauer_elements} and perform a systematic study of how often these give a Brauer--Manin obstruction to the Hasse principle. Our proofs proceed by first using geometric methods to determine the image of the local invariant maps and obtain necessary conditions on the coefficients for there to be a Brauer--Manin obstruction. We then count these conditions to obtain an upper bound for the number of such surfaces with a Brauer--Manin obstruction. To obtain lower bounds, we explicitly construct families of counter-examples to the Hasse principle. Part 1 contains essentially all the analytic number theory proofs in the paper.

In Part 2 we calculate the transcendental Brauer groups of the surfaces $X_\AA$ via a spectral sequence framework due to Colliot-Thélène, Skorobogatov and the first author \cite{cts,qua}. As a necessary prerequisite, we completely determine the integral $\ell$-adic middle cohomology of smooth projective weighted diagonal hypersurfaces using techniques from Hodge theory and the equivariant $\ell$-adic Lefschetz trace formula.

In Part 3, we bring everything together to prove the main theorems from the introduction. This requires a careful analysis of the algebraic Brauer groups of the surfaces $X_{\AA}$, which we do using help from \texttt{Magma}.

\subsection{Notation and conventions}

Let $C$ be a (possibly singular and affine) geometrically integral curve over a field $k$. We define the genus of $C$ to be the genus of the smooth compactification of the normalisation of $C$.

We fix a choice of primitive third root of unity $\omega \in \C^\times$.

For complex valued functions $f,g$, we denote by $f \ll g$ the standard Vinogradov notation. We write $f \asymp g$ if $f \ll g$ and $f \gg g$.

We almost always consider $\Z/n\Z$ via its natural embedding $\Z/n\Z \subset \Q/\Z$; in particular we write the elements as $0,1/n,\dots,(n-1)/n$.

For a scheme $X$ we denote by $\Br X = \HH^2(X, \Gm)$ its (cohomological) Brauer group. If $X$ is a variety over a field $k$, then we let 
$$ \Br_0 X = \mathrm{Im}(\Br k \to \Br X), \quad \Br_1 X = \ker(\Br X \to \Br X_{\bar{k}})$$ denote the constant and algebraic parts of the Brauer group of $X$, respectively. An element of $\Br X\setminus \Br_1 X$ is called \emph{transcendental}.

\subsection{Acknowledgements}
We are very grateful to Tim Browning for discussions on some of the analytic arguments and to Igor Dolgachev and Eduard Looijenga for discussions on their work regarding the cohomology of diagonal hypersurfaces. This work was partly undertaken at the Institut Henri Poincaré during the trimester “Reinventing rational points”; the authors thank the organisers and staff for ideal working conditions. The first-named author was supported by EP/L015234/1, the EPSRC Centre for Doctoral Training in Geometry and Number Theory (The London School of Geometry and Number Theory). Part 2 has overlaps with Chapters 11 and 12 of his PhD thesis. The second and third-named authors are supported by EPSRC grant EP/R021422/2.


\part{Counting Brauer--Manin obstructions}

In this part we count the number of surfaces $X_{\AA}$ which have a Brauer--Manin obstruction to the Hasse principle from various explicit algebras given in \S\ref{sec:Brauer_elements}.

\section{Brauer group preliminaries}\label{brauerbasics}
\subsection{The Brauer--Manin obstruction}
We briefly recall the definition of the Brauer--Manin obstruction  and set up related terminology (see \cite[\S8.2]{Poo17} for further background). Let $k$ be a number field.

For a place $v$ of $k$, from class theory we have the local invariant
$$\inv_v: \Br k_v \to \Q/\Z,$$
which is an isomorphism for $v$ non-archimedean, 
and the exact sequence
\begin{equation} \label{eqn:CFT}
0\to\Br k \to\bigoplus_v \Br k_v\xrightarrow{\sum\inv_v}\BQ/\BZ\to 0.
\end{equation}
For a smooth projective variety $X/k$, the \emph{Brauer--Manin pairing} is defined by
$$
\Br X\times X(\Adele_k)\to\BQ/\BZ, \quad
(\CA,(x_v))\mapsto \langle\CA,(x_v)\rangle:=\sum_v \inv_v(\CA(x_v)).
$$
This pairing is locally constant on the right and factors through $\Br X/\Br_0 X$. For a subset $B\subset \Br X$, we define
\[X(\Adele_k)^{B}=\{(x_v)\in X(\Adele_k):\langle\CA,(x_v)\rangle=0\text{ for all }\CA\in B\}.\]
We write $X(\Adele_k)^{\Br}:=X(\Adele_k)^{\Br X}$. By \eqref{eqn:CFT} we have $X(k)\subseteq X(\Adele_k)^{B}\subseteq X(\Adele_k)$.

\begin{definition}
 We say that $X$ has a Brauer--Manin obstruction to the Hasse principle given by $B \subset \Br X$ if $X(\Adele_k)\neq\emptyset$ but $X(\Adele_k)^B=\emptyset$ (and hence $X(k)=\emptyset$).
\end{definition}

\subsection{Cyclic algebras and the Hilbert symbol} \label{sec:cyclic}
\subsubsection{Cyclic algebras}
Let $n \in \N$ and let $k$ be a field of characteristic coprime to $n$ which contains a primitive $n$th root of unity $\zeta \in k$. For $a,b \in k^\times$, we denote by
$(a,b)_\zeta$
the associated cyclic algebra (see \cite[\S2.5]{GS} for its explicit construction). We denote by $(a,b):=(a,b)_{-1}$ (this is a quaternion algebra). In $\Br k$ we have the relations
\begin{equation} \label{eqn:cyclic_properties}
    (aa',b)_\zeta = (a,b)_\zeta +(a',b)_\zeta, \quad (a,b)_\zeta = -(b,a)_\zeta, \quad (a^n,b)_\zeta = 0.
\end{equation}

\subsubsection{The Hilbert symbol}
We now assume that $k$ is a number field and let $v$ be a place of $k$. We define the Hilbert symbol to be
$$(a,b)_{\zeta,v} = \inv_v (a,b)_{\zeta} \in \Q/\Z.$$
This admits a simple expression at almost all places via the tame symbol \cite[Ex.~7.1.5, Prop.~7.5.1]{GS}. We only need to know that for any non-zero prime ideal $\pp$ of $k$ which does not lie over any of the prime divisors of $n$, we have
\begin{equation} \label{eqn:nth_power}
    (u,u')_{\zeta,\pp} = 0 \quad \mbox{and}
    \quad (u,\pi)_{\zeta,\pp}=0 \, \Leftrightarrow
    \, u\in {k_\pp^\times}^n,
\end{equation}
for $u,u' \in \OO_\pp^\times$ and $\pi \in \OO_\pp$ a uniformiser at $\pp$. Here $(a,b)_{-,v}$ is the usual quadratic Hilbert symbol, except by our conventions it takes values in $\Z/2\Z$ rather than $\mu_2$.

\subsection{Some Brauer group elements} \label{sec:Brauer_elements} 

We now write down the explicit Brauer group elements with which we will be working. Recall that $\omega$ denotes a primitive third root of unity. In the statement we take the subscripts modulo $3$.

\begin{proposition}\label{prop:algebras}
    Let $A_i \in \Q$ be non-zero and $X = X_{\mathbf{A}}$ the associated surface over $\Q$ from \eqref{eqn:X_A}. Let $i \in \{1,2,3\}$ and set $j = i+1, k=i+2$.
    \begin{enumerate}
        \item If $-3A_i$ is a square, then
        $$\calA_i=\cores_{\Q(\omega)/\Q}\left(\frac{A_{j}}{A_{k}},\frac{w-\sqrt{A_i}x_i^3}{w+\sqrt{A_i}x_i^3}\right)_\omega \in \Br X.$$
        \item If $A_j/A_k$ is a cube, then
        $$\calB_i=\left(A_i,\frac{x_j^2+\sqrt[3]{A_k/A_j}x_k^2}{x_i^2}\right) \in \Br X.$$
        \item If $27A_j/A_k$ is a sixth power, then
        $$\calC_i=\left(A_i,\frac{x_j^2+\sqrt[6]{27A_k/A_j}x_jx_k+\sqrt[3]{A_k/A_j}x_k^2}{x_i^2}\right)
        \in \Br X.$$        
    \end{enumerate}
    (In the statement subscripts are taken modulo $3$.)
\end{proposition}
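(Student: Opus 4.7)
The overall strategy is to verify that each of these a priori elements of $\Br k(X)$ (with $k = \Q$ or $\Q(\omega)$) is unramified at every codimension-one point of $X$; by purity of the Brauer group for a smooth surface, it then extends to a class in $\Br X$, and for $\calA_i$ one further takes corestriction. Throughout I use the standard residue formula for a cyclic algebra $(a,b)_\zeta$ at a discrete valuation $v$ with residue field $\kappa$ of characteristic coprime to the order $d$ of $\zeta$: if $v(a) = 0$ and $v(b) = n$, then $\partial_v (a,b)_\zeta = \bar a^{\,n} \in \kappa^\times/\kappa^{\times d}$.

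For $\calB_i$ and $\calC_i$, the hypotheses $A_k/A_j = r^3$ and $27 A_k/A_j = s^6$ give the factorisations
\[
A_j x_j^6 + A_k x_k^6 = A_j(x_j^2 + r x_k^2)(x_j^4 - r x_j^2 x_k^2 + r^2 x_k^4),
\]
with the quartic further splitting as $(x_j^2 + s x_j x_k + r x_k^2)(x_j^2 - s x_j x_k + r x_k^2)$ in the sixth-power case (where $r = s^2/3$). The second argument of each algebra is a ratio of one such quadratic factor to $x_i^2$, whose divisor on $X$ consists of a simple zero along the vanishing locus of the quadratic factor and a double pole along $\{x_i = 0\}$. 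At $\{x_i = 0\}$ the residue is $A_i^{-2}$, a square, hence trivial in $\kappa^\times/\kappa^{\times 2}$. On the zero divisor of the quadratic factor, the defining equation $w^2 - A_i x_i^6 = A_j x_j^6 + A_k x_k^6$ of $X$ reduces to $A_i x_i^6 = w^2$, so $A_i = (w/x_i^3)^2$ is a square in the residue field, and the residue again vanishes. Since $A_i$ is a non-zero constant, it has zero valuation at every other prime divisor, so no further ramification occurs, and $\calB_i, \calC_i \in \Br X$.

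For $\calA_i$, the hypothesis $-3 A_i = m^2$ combined with $(2\omega+1)^2 = -3$ in $\Q(\omega)$ yields $\sqrt{A_i} = m/(2\omega+1) \in \Q(\omega)^\times$, so the cyclic algebra $\calA_i' = (A_j/A_k,\, h)_\omega$ with $h = (w - \sqrt{A_i}\, x_i^3)/(w + \sqrt{A_i}\, x_i^3)$ is defined in $\Br \Q(\omega)(X)$. From the factorisation $(w - \sqrt{A_i}\, x_i^3)(w + \sqrt{A_i}\, x_i^3) = A_j x_j^6 + A_k x_k^6$ one sees that $h$ has non-trivial valuation only along the components of $\{w = \pm\sqrt{A_i}\, x_i^3\}\cap X$, on each of which $A_j x_j^6 + A_k x_k^6 = 0$; hence $A_j/A_k = -(x_k/x_j)^6$ is a cube in the residue field, and the residue of $\calA_i'$ is trivial in $\kappa^\times/\kappa^{\times 3}$. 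Thus $\calA_i' \in \Br X_{\Q(\omega)}$, and $\calA_i = \cores_{\Q(\omega)/\Q} \calA_i' \in \Br X$.

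The main technical point to be careful about is the bookkeeping of prime divisors on $X$: one must ensure that no extra components (e.g.\ coming from the weighted projective compactification, or coincidences between the divisors of numerator and denominator) introduce hidden residues, and that the surface is smooth along each curve at which the residue computation takes place so that purity applies as stated. Both issues are resolved by restricting attention to the smooth locus and exploiting that the first arguments $A_i$ and $A_j/A_k$ are non-zero \emph{constants}, which a priori have trivial valuation at every divisor of $X_k$.
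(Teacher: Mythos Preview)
Your proof is correct and follows essentially the same approach as the paper's: both invoke Grothendieck's purity theorem and compute the residues of each cyclic algebra, using that the first argument is a non-zero constant so that only the divisor of the second argument matters, and then observing that on each relevant component the first argument becomes a cube (for $\calA_i'$) or a square (for $\calB_i,\calC_i$) in the residue field. Your write-up is in fact slightly more explicit than the paper's (e.g.\ spelling out why $\sqrt{A_i}\in\Q(\omega)$ and why the pole along $\{x_i=0\}$ has even order), but the argument is the same.
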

\begin{proof} 
    In some special cases, the elements (1) and (2) were found in \cite[\S5.1]{cn17} and \cite[Lem.~3.2]{cn17}, respectively, and verified to be unramified using a computer. We give a more direct  argument using Grothendieck's purity theorem \cite[Thm.~6.8.2]{Poo17}; this states
     that for any smooth variety $Y$ over a field $k$ of characteristic $0$ the sequence
    $$0\to \Br Y \to \Br k(Y)\to \oplus_{D \in Y^{(1)}} \RH^1(k(D),\Q/\Z)$$
    is exact,
   where the last map is given by the residue along the codimension one point $D$. Hence, to prove that our algebras come from a class in $\Br X$, it suffices to show that 
    all their residues are trivial.
    
    (1) We will show that
    \begin{equation} \label{def:A'}
    \calA_i'=\left(\frac{A_j}{A_k},\frac{w-\sqrt{A_i}x_i^3}{w+\sqrt{A_i}x_i^3}\right)_\omega \in \Br X_{\Q(\omega)},
    \end{equation}
    so that its corestriction is  a well-defined element over $\Q$. Any non-trivial residue of $\calA_i'$
    must occur along an irreducible component of one of the two divisors $$D_{\pm}: w=\pm\sqrt{A_i}x_i^3, \quad A_jx_j^6 + A_kx_k^6 = 0.$$
    However, clearly in the function field of any such irreducible component $-A_k/A_j$ is a sixth power, thus $A_k/A_j$ is a cube; standard
    formulae for residues in terms of the tame symbol  \cite[Ex.~7.1.5, Prop.~7.5.1]{GS} therefore
    show that $\calA_i$ is unramified.
    
    (2) (resp.~(3)) The residue along $x_i^2 = 0$ is clearly trivial. Next, note that $x_j^2+\sqrt[3]{A_k/A_j}x_k^2$ (resp.~$x_j^2+\sqrt[6]{27A_k/A_j}x_jx_k+
    \sqrt[3]{A_k/A_j}x_k^2$)
    divides $x_j^6+A_k/A_jx_k^6$. 
    Thus in the function field
    of any irreducible component of the corresponding divisor, we have the relation
    $w^2 = A_ix_i^6$, so $A_i$ is a square.
    Again, the algebra is thus unramified.
\end{proof} 

In the following sections, we study how often various combinations of these elements give a Brauer--Manin obstruction to the Hasse principle.

\section{Counting preliminaries} \label{sec:counting}
We now gather some of the analytic number theory results we will require.

\subsection{Frobenian multiplicative functions}
For our work, we will use the theory of frobenian multiplicative functions introduced in \cite{LM19}, and based on a concept of Serre \cite[\S3.3]{Ser12}. We recall the following definition from \cite[\S 2]{LM19}.

\begin{definition} \label{def:frob}
    Let  $\rho: \N \to \C$ be a  multiplicative function. 
    We say that $\rho$ is a frobenian multiplicative function if
    \begin{enumerate}
		\item $|\rho(n)| \ll_\varepsilon n^\varepsilon$ for all $n \in \N$ and all $\varepsilon > 0$;
		\item there exists $H \in \N$ such that $|\rho(p^k)| \leq H^k$ for all primes $p$ and all $k \geq 1$;
	\end{enumerate}
	and if there exist 
	\begin{enumerate}
		\item[(a)] a finite Galois extension $K/\Q$, with Galois group $\Gamma$;
		\item[(b)] a finite set of primes $S$ containing all the primes ramifying in $K$;
		\item[(c)] a class function $\varphi: \Gamma \to \C$;
	\end{enumerate}
	such that for all $p \not \in S$ we have
	$\rho(p) = \varphi(\Frob_p)$
	where $\Frob_p \in \Gamma$ is the Frobenius element of $p$.
	We define the \emph{mean} of $\rho$ to be 
	$$m(\rho) = \frac{1}{|\Gamma|} \sum_{\gamma \in \Gamma}\varphi(\gamma).$$ 
\end{definition}

\subsection{Lower bounds}
Our first counting result is lower bounds for sums of frobenian multiplicative functions with congruences imposed.

\begin{proposition} \label{prop:lower_bound_frob}
    Let $M \in \N$ and let $\rho$ be a real-valued non-negative 
    frobenian multiplicative function
    which is completely multiplicative
    with $m(\rho) \neq 0$. Then
	$$\sum_{\substack{n \leq x \\ n \equiv 1 \bmod M}}\rho(n)
	\gg_{\rho,M} x(\log x)^{m(\rho) - 1}.$$ 
\end{proposition}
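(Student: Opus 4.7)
The plan is to bootstrap from an unrestricted asymptotic for $\sum_{n \leq x} \rho(n)$ and then use complete multiplicativity of $\rho$ to shift into the progression $n \equiv 1 \bmod M$. Since $\rho \geq 0$ and $m(\rho) \neq 0$ we have $m(\rho) > 0$. The frobenian hypothesis together with the Chebotarev density theorem yields $\sum_{p \leq x} \rho(p) \log p \sim m(\rho) \cdot x$, which is precisely the hypothesis needed for Wirsing's theorem (equivalently, Selberg--Delange applied to frobenian multiplicative functions); either approach delivers the unrestricted lower bound
\[
\sum_{n \leq x} \rho(n) \gg_{\rho} x (\log x)^{m(\rho)-1}. \qquad (*)
\]

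Next I would analyse the support of $\rho$. Because $\rho$ is completely multiplicative, $\rho(n) > 0$ if and only if $\rho(p) > 0$ for every $p \mid n$. Writing $P_+ := \{p : \rho(p) > 0,\ p \nmid M\}$ and letting $H \leq (\Z/M\Z)^\times$ be the subgroup generated by $\{p \bmod M : p \in P_+\}$, every $n$ coprime to $M$ with $\rho(n) > 0$ satisfies $n \bmod M \in H$. Conversely, for each $a \in H$ one may write $a$ as a positive product of generators and then take the corresponding product of representative primes to produce an integer $n_a$ with $n_a \equiv a \bmod M$, $\gcd(n_a, M) = 1$, and $\rho(n_a) > 0$; note that $1 \in H$, and that each $\rho(n_{a^{-1}})$ is a positive constant depending only on $\rho$ and $M$.

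For the transfer step, fix $a \in H$: every $m \leq x/n_{a^{-1}}$ with $m \equiv a \bmod M$ gives an integer $n := n_{a^{-1}} m \leq x$ that is $\equiv 1 \bmod M$, and complete multiplicativity yields $\rho(n) = \rho(n_{a^{-1}}) \rho(m)$. Hence
\[
\sum_{\substack{n \leq x \\ n \equiv 1 \bmod M}} \rho(n) \geq \rho(n_{a^{-1}}) \sum_{\substack{m \leq x/n_{a^{-1}} \\ m \equiv a \bmod M}} \rho(m).
\]
Averaging these $|H|$ independent lower bounds (each is a valid bound on the same left-hand side) and setting $N := \max_{a \in H} n_{a^{-1}}$ gives
\[
\sum_{\substack{n \leq x \\ n \equiv 1 \bmod M}} \rho(n) \gg_{\rho, M} \sum_{\substack{m \leq x/N \\ m \bmod M \in H}} \rho(m) = \sum_{m \leq x/N} \rho(m),
\]
where the equality uses the support observation. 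Applying $(*)$ at $x/N$ finishes the proof. The only nontrivial ingredient is the unrestricted lower bound $(*)$, a standard consequence of the frobenian hypothesis via Wirsing or Selberg--Delange; the remainder is elementary combinatorial manipulation via complete multiplicativity.
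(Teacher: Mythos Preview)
Your argument is correct and takes a genuinely different route from the paper's. One small imprecision: the final equality
\[
\sum_{\substack{m \leq x/N \\ m \bmod M \in H}} \rho(m) = \sum_{m \leq x/N} \rho(m)
\]
is only valid once you know $\rho(p)=0$ for all $p\mid M$; in general the right-hand side should be $\sum_{\substack{m\leq x/N\\ \gcd(m,M)=1}}\rho(m)$. This is harmless, since the unrestricted lower bound $(*)$ applies equally well to $\rho\cdot\mathbf{1}_{\gcd(\cdot,M)=1}$, which is completely multiplicative frobenian with the same mean.

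The paper instead expands via character orthogonality,
\[
\sum_{\substack{n\leq x\\ n\equiv 1\bmod M}}\rho(n)=\frac{1}{\varphi(M)}\sum_{\chi\bmod M}\sum_{n\leq x}\chi(n)\rho(n),
\]
and analyses each $\chi\rho$ as a frobenian multiplicative function. It shows (citing \cite[Lem.~2.4]{LM19}) that if $|m(\chi\rho)|\geq m(\rho)$ then $\chi\rho=\rho$ identically, so such characters contribute exactly the principal-character term, while characters with $|m(\chi\rho)|<m(\rho)$ are $o(x(\log x)^{m(\rho)-1})$. Your approach sidesteps this character analysis entirely: by observing that the support of $\rho$ modulo $M$ lies in a subgroup $H\leq(\Z/M\Z)^\times$ and then multiplying by fixed integers $n_{a^{-1}}$ to transfer between residue classes in $H$, you reduce directly to the unrestricted sum. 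This is more elementary and avoids the auxiliary results on means of twisted frobenian functions; the paper's method, on the other hand, extends more naturally to asymptotic formulas and to congruences $n\equiv a\bmod M$ with $a\neq 1$ (where the answer can genuinely be zero, cf.~Remark~\ref{rem:bad}).
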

\begin{proof}
	Note that it is not even a priori clear that the sum is non-zero (cf.~Remark \ref{rem:bad}), but our proof will show this.
	We prove the result using some of the results from \cite[\S 2]{LM19}.
	
	We may assume that $S$ in Definition \ref{def:frob}
	contains all $p \mid M$.
    Since we are only interested in a lower bound,
    we are free to replace $\rho$ by a different function which is majorised
    by $\rho$. Thus we may assume that
    \begin{equation} \label{eqn:rho}
        \rho(p) = 0 \quad \mbox{ for all } p \in S.
    \end{equation}
    We have
	$$\sum_{\substack{ n \leq x \\ n \equiv 1 \bmod M}} \rho(n)
	= \frac{1}{\varphi(M)}\sum_{\chi \bmod M}
	\sum_{\substack{ n \leq x}} \chi(n) \rho(n),$$
	where the sum is over all Dirichlet characters $\chi$ modulo $M$.
	Each $\chi \cdot \rho$ is also a frobenian multiplicative
	function (see \cite[Ex.~2.2]{LM19}). If $|m(\chi\rho)| < m(\rho)$,
	then \cite[Lem.~2.8]{LM19} implies that 
	$$\sum_{\substack{ n \leq x}} \chi(n) \rho(n) = o(x(\log x)^{m(\rho) - 1}),$$ thus is negligible. So assume $|m(\chi\rho)| \geq m(\rho)$.
	Then \cite[Lem.~2.4]{LM19} implies that 
	\begin{equation} \label{eqn:rho(p)}
	    \rho(p)\chi(p) = \rho(p)
	\end{equation}
	for all but finitely many primes $p$. We claim that in fact
	\eqref{eqn:rho(p)} holds \emph{for all} primes $p$. For
	$p \in S$ this follows from \eqref{eqn:rho}. For $p\notin S$,
	by our choice of $S$, we have that $\chi(p)$ and $\rho(p)$ only depend
	on $p \bmod M$ and $\Frob_p$, respectively. As \eqref{eqn:rho(p)}
	holds for all but finitely many $p$, the Chebotarev density
	theorem now implies that \eqref{eqn:rho(p)} holds for all $p \notin S$.
	As $\chi$ and $\rho$ are both completely multiplicative,
	it follows that $\rho(n)\chi(n) = \rho(n)$ for all $n \in \N$.
	Thus the contribution from such a $\chi$ is just
	$$\sum_{\substack{ n \leq x}} \chi(n) \rho(n) =\sum_{\substack{ n \leq x}} \rho(n),$$
	i.e.~indistinguishable from the principal character.
	Applying \cite[Lem.~2.8]{LM19} again, one sees that 
	the contribution from the principal character is 
	$\gg_{\rho,M} x(\log x)^{m(\rho) - 1}$, which completes
	the proof.
\end{proof}

\begin{remark} \label{rem:bad}
    One cannot replace the condition $n \equiv 1 \bmod M$ in Proposition \ref{prop:lower_bound_frob}
    by an arbitrary congruence condition in general.
    Let $\rho$  be the indicator function for integers entirely
    composed of primes which are $1 \bmod 4$;
    this is easily seen to be frobenian of positive mean
    and completely multiplicative. But
    $$\sum_{\substack{n \leq x \\ n \equiv 3 \bmod 4}} \rho(n) = 0.$$
\end{remark}

\subsection{Upper bounds}
Proposition \ref{prop:lower_bound_frob} is a general lower bound result for sums of frobenian functions. We will also need upper bounds, but crucially with uniformity  in the discriminant of the field extension $K/\Q$ appearing in Definition \ref{def:frob}. Rather than state the most general technical result we can, we focus on the special cases  which will be of interest. For this, we require the following effective version of the Chebotarev density theorem.

\begin{lemma} \label{lem:Cheb}
	Let $d \in \N$, let $k/\Q$ be a finite Galois extension of degree $d$ with discriminant
	$\Delta_k$ and let $C \subseteq \Gal(k/\Q)$ be a conjugacy invariant subset.
	Then there exist constants $c_1,c_2> 0$, depending only on $d$,
	such that 
	$$\#\left\{p \leq x : 
	\begin{array}{l}
		p \mbox{ unramified in k},\\
		\Frob_p \in C
	\end{array}
	\right\} 
	= \frac{|C|}{d}\left( \Li(x) + \Li(x^{\beta_k})\right)
	+  O\left(x \exp(-c_1(\log x)^{1/2})\right)$$
	for all $\log x \geq c_2 (\log |\Delta_k|)^2$.
	
	Here $\beta_k$ denotes the (possible) exceptional zero of $\zeta_k(s)$, which is real, positive, and simple,
	and we omit the corresponding term if no such zero exists.
\end{lemma}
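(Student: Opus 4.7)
\medskip

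\noindent\textbf{Proof plan.} This is essentially the effective Chebotarev density theorem of Lagarias--Odlyzko \cite{LO77}, and my plan is to deduce it from their main theorem. The strategy is to reduce prime counting to an analytic estimate via the explicit formula for the Dedekind zeta function $\zeta_k(s)$, and then to bound the zeros quantitatively in terms of $|\Delta_k|$.

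First I would translate the counting problem into a weighted prime sum. Writing $\psi_C(x,k/\Q) = \sum_{\mathfrak{p},\, m\,:\,\Frob_{\mathfrak{p}}^m \in C,\ N\mathfrak{p}^m \leq x} \log N\mathfrak{p}$, one has by orthogonality of characters of $\Gal(k/\Q)$ the identity
\[
\psi_C(x,k/\Q) = \frac{|C|}{d}\sum_{\chi}\overline{\chi(C)}\,\psi(x,\chi),
\]
where the sum ranges over irreducible characters of $\Gal(k/\Q)$ and $\psi(x,\chi)$ is the Chebyshev-type sum attached to the Artin $L$-function $L(s,\chi)$. The trivial character contributes $\frac{|C|}{d}x$, while for non-trivial $\chi$ the standard explicit formula expresses $\psi(x,\chi)$ as a sum over the non-trivial zeros of $L(s,\chi)$. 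Grouping the terms over all $\chi$ yields a sum over the non-trivial zeros of $\zeta_k(s)$.

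Next I would bring in the two key analytic inputs, both taken from \cite{LO77}: (i) a classical-style zero-free region of the form $\sigma > 1 - c/\log(|\Delta_k|(|t|+2)^d)$ with the possible exception of a single real simple zero $\beta_k$, and (ii) the log-derivative bound $N_k(T+1) - N_k(T) \ll d\log(|\Delta_k|) + d\log(|T|+2)$ for the number of zeros in a horizontal strip. Splitting the zeros into $\beta_k$ and the rest, the former contributes exactly $-\frac{|C|}{d}\,\frac{x^{\beta_k}}{\beta_k}$ (which becomes $-\frac{|C|}{d}\,\Li(x^{\beta_k})$ after partial summation), and the latter gives $O(x\exp(-c\sqrt{\log x/d}))$ once one is deep enough inside the zero-free region. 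The condition $\log x \geq c_2(\log|\Delta_k|)^2$ is precisely what is required to ensure that $x$ lies past the transition point where the zero-free region dominates the trivial bound; the Minkowski bound $\log|\Delta_k| \gg d$ means that for fixed $d$ one may absorb $d$-dependence into $c_1, c_2$. Finally, partial summation converts $\psi_C$ into the prime counting function $\pi_C$, producing $\Li(x) + \Li(x^{\beta_k})$ as main terms and leaving the same error term, with the contribution of prime powers and of ramified primes absorbed into the error as $O(\sqrt{x}\log|\Delta_k|)$.

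The main technical obstacle is ensuring the dependence of all constants is \emph{only} on $d$ and not on finer invariants of $k$. This is delicate because the Deuring--Heilbronn phenomenon distorts the zero-free region near a Siegel zero, and the explicit formula's error terms involve $\log|\Delta_k|$; it is precisely the hypothesis $\log x \geq c_2(\log|\Delta_k|)^2$ that lets one swallow these factors into the exponential $\exp(-c_1(\log x)^{1/2})$. Once this quadratic lower bound on $\log x$ is in force, the arguments of \cite{LO77} go through uniformly in $k$ (for fixed $d$), giving the stated estimate.
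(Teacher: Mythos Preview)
Your proposal is correct and takes the same approach as the paper: the paper's proof simply reads ``See \cite[Thm.~1.3]{LO77} or \cite[\S2.2]{Ser81}'', and your sketch is an accurate expansion of the Lagarias--Odlyzko argument being cited. You have supplied considerably more detail than the paper does, but the content is the same.
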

\begin{proof}
 	See \cite[Thm.~1.3]{LO77} or \cite[\S2.2]{Ser81}.
\end{proof}

Our upper bound result is now as follows.

\begin{proposition} \label{prop:cube}
	Let $A, x > 0$ and let $S$ be a finite set of primes.
	Then the following holds as $x \to \infty$,
	uniformly for all non-cubes $r \leq (\log x)^A$.
	\begin{enumerate}			
	\item $$\#\{ n \leq x : p \notin S, p \nmid r, \mbox{ and } p \mid n \implies r \in \Q_p^{\times3}\}
	\ll_{A,S} \frac{x}{(\log x)^{1/3}}.$$ \label{eqn:1}
	\item $$\#\left\{ n \leq x : 
	\begin{array}{l}
	\mbox{There is at most one prime } q \mbox{ with } q \parallel  n, \\
	q \notin S, \mbox{ and } q \nmid r \mbox{ such that } r \notin \Q_q^{\times3}
	\end{array}
	\right\} 
	\ll_{A,S} \frac{x(\log \log x)}{(\log x)^{1/3}}.$$ \label{eqn:2}
	\end{enumerate}
\end{proposition}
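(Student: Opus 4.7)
The plan is to translate both conditions into prime-factorisation constraints controlled by a Chebotarev condition, and then apply a standard Wirsing-type upper bound for multiplicative functions, uniformly in $r$. For a prime $p \notin S$ with $p \nmid 3r$, the condition $r \in \Q_p^{\times 3}$ is equivalent to a condition on $\Frob_p$ in the splitting field $L_r := \Q(\omega, \sqrt[3]{r})$ of $x^3 - r$ over $\Q$. Since $r$ is not a cube, $\Gal(L_r/\Q) \cong S_3$; the ``bad'' primes (those with $r \notin \Q_p^{\times 3}$) are precisely the $p \equiv 1 \pmod{3}$ whose Frobenius lies in the $3$-cycle conjugacy class, of density $1/3$.

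For (1), I would count $n \leq x$ having no bad prime divisor. A Wirsing/Hall--Tenenbaum-type upper bound applied to the indicator function of integers composed of good primes (density $\kappa = 2/3$) yields the required bound $\ll x/(\log x)^{1/3}$. The only delicate issue is uniformity of the implied constant in $r$, which I would secure via the effective Chebotarev statement (Lemma~\ref{lem:Cheb}) applied to $L_r/\Q$: since $\log|\Delta_{L_r}| \ll \log r \leq A \log\log x$, the hypothesis $\log x \geq c_2 (\log|\Delta_{L_r}|)^2$ is satisfied for $x$ large. The possible exceptional zero $\beta_{L_r}$ of $\zeta_{L_r}(s)$ is ruled out uniformly using Heilbronn's work \cite{Hei73} on exceptional zeros of non-abelian Dedekind zeta functions, combined with Siegel's theorem for the fixed quadratic subfield $\Q(\omega) = \Q(\sqrt{-3})$. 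Partial summation then gives the uniform Mertens-type estimate
\[\sum_{\substack{p \leq t \\ p \text{ bad}}} \frac{1}{p} = \frac{1}{3}\log\log t + O(1),\]
which feeds into the Wirsing bound.

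For (2), I would decompose $n = q^e m$ where $m$ has no bad prime divisor, $q$ is bad, and $e \in \{0,1\}$; the $e=0$ contribution is bounded by (1). For $e = 1$ I would estimate
\[\sum_{\substack{q \text{ bad} \\ q \leq x}} \#\{m \leq x/q : m \text{ has no bad prime divisor}\}.\]
For $q \leq x^{1/2}$ the inner count is $\ll (x/q)/(\log x)^{1/3}$ by (1), and the Mertens estimate above contributes a factor of $\log\log x$ when summing $1/q$ over bad $q \leq x^{1/2}$, giving $\ll x \log\log x/(\log x)^{1/3}$. For $x^{1/2} < q \leq x$, a dyadic decomposition in $q$ combined with (1) applied to $m \leq x/q$ gives a contribution of $\ll x/(\log x)^{1/3}$, which is absorbed.

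The main obstacle will be establishing the uniformity of the Chebotarev estimate in $r$: the exceptional-zero term $\Li(x^{\beta_{L_r}})$ in Lemma~\ref{lem:Cheb} would, if $\beta_{L_r}$ approached $1$, destroy the Mertens-type estimate and hence the whole count. Ruling out such zeros uniformly for the family of $S_3$-extensions $L_r$, for $r$ ranging over non-cubes up to $(\log x)^A$, is precisely where Heilbronn's method and the specific arithmetic structure of $L_r$ (as a cyclic cubic extension of the fixed imaginary quadratic field $\Q(\omega)$) play the decisive role; the remainder of the argument is routine analytic number theory.
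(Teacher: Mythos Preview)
Your approach to part~(1) matches the paper's: apply a Wirsing-type upper bound (the paper uses Nair's theorem) fed by a uniform Mertens estimate for the ``good'' primes, the latter coming from effective Chebotarev (Lemma~\ref{lem:Cheb}) with the exceptional zero for $\zeta_{L_r}$ ruled out via Heilbronn's result, since the unique quadratic subfield $\Q(\omega)$ has no Siegel zero. This is exactly the paper's argument.

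For part~(2), however, your decomposition $n = q^e m$ with $e \in \{0,1\}$ and $m$ free of bad prime divisors is not valid. The condition being counted allows \emph{any} bad prime to divide $n$ to exponent $\geq 2$; only bad primes dividing $n$ \emph{exactly once} are restricted. Thus $n = p_1^2 p_2^3 q$ with $p_1,p_2,q$ all bad satisfies the hypothesis but is not of your shape. The paper handles this by writing $n = qms$ with $s$ squarefull, $m$ composed of good primes, and $q$ either $1$ or a single bad prime; the squarefull part contributes a convergent factor $\sum_s s^{-1}$ and, after showing $s$ may be taken $\leq x^{1/3}$, the hyperbola method in $q$ and $m$ proceeds as you describe. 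Your large-$q$ treatment would also benefit from this swap: rather than dyadic decomposition in $q$, the paper sums over $m \leq x^{1/2}$ first and applies the prime number theorem to count bad $q \leq x/ms$, using partial summation on~(1) to bound $\sum_{m} \varpi_r(m)/m \ll (\log x)^{2/3}$.
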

\begin{proof}
    We may assume that $3 \in S$. We 
    let $S_r = S \cup \{p \mid r\}.$
	We first prove \eqref{eqn:1}.
	Let $\varpi_r(n)$ be the indicator function for those $n$ being counted;
	this is multiplicative.
	General uniform upper bounds for sums of multiplicative
	functions (for example  \cite[Thm.]{Nai92} with $P(n) = n, \varrho(p) = 1$ and $y = x$), gives
	\begin{equation} \label{eqn:Shiu}
	\sum_{n \leq x}\varpi_r(n) \ll \frac{x}{ \log x} 
	\exp \left(\sum_{p \leq x}\frac{\varpi_r(p)}{p}\right)
	\end{equation}
	uniformly for all $r$. We handle 
	this using Lemma \ref{lem:Cheb}.
	The function $\varpi_r$ is a frobenian
	multiplicative function. Indeed,
	let $k_r=\Q(r^{1/3},\mu_3)$. For $p \nmid 3r$
	we have $r \in \Q_p^{\times3}$ if and only if $\Frob_p \in \Gal(k_r/\Q)$
	acts with a fixed point on the roots of $x^3 - r$. This is a conjugacy
	invariant subset of $\Gal(k_r/\Q)$, which we denote by $C_r$.
	We have $|C_r| = 4$, thus $m(\varpi_r) = 2/3$.
	
	By our assumptions $|\Delta_{k_r}| \leq 3^9 r^4 \ll (\log x)^{4A}$, so we may apply Lemma \ref{lem:Cheb}.
	There could be problems with uniformity
	if the exceptional zero $\beta_{k_r}$ is extremely close to $\re s = 1$. However,
	we  show that there is in fact no exceptional zero. A result of Heilbronn 
	\cite[Thm.~1]{Hei73} says that for a 
	Galois extension $k/\Q$, if $\beta_k$ exists then $\zeta_L(\beta_k) = 0$
	for some quadratic subfield $\Q \subset L \subset k$. But our number field $k_r$ has a 
	unique quadratic subfield, namely $\Q(\mu_3)$, and it is well-known that 
	$\zeta_{\Q(\mu_3)}(s) = \zeta(s) L(\left(\frac{-3}{\cdot}\right),s)$ has no exceptional zero (see e.g.~\cite[Dirichlet character 3.2]{LFMFDB}).
	Thus there is no exceptional zero $\beta_{k_r}$.
	Lemma \ref{lem:Cheb} thus gives
	$$\sum_{p \leq x}\varpi_r(p) 
	= \#\{ p \leq x : p \notin S_r, \Frob_p \in C_r\}
	= \frac{2}{3}\Li(x) +  O_{A,S}\left(x \exp(-c_1(\log x)^{1/2})\right).$$
	By partial summation we thus obtain
	$$\sum_{p \leq x}\frac{\varpi_r(p)}{p} = 
	\frac{2}{3} \log \log x + O_{A,S}(1).$$
	Part \eqref{eqn:1} now easily follows from \eqref{eqn:Shiu}.
	
	For \eqref{eqn:2}, every such $n$ can be written as $n=qms$,
	where $s$ is squarefull, every prime $p \mid m$ with $p \notin S_r$
	satisfies $r \in \Q_p^{\times3}$, and $q = 1$ or $q$ is a prime
	with $q \notin S_r$.
	We first consider the case where $q$ is a prime.	
	Here the quantity is bounded above by
	\begin{equation} \label{eqn:qm}
		\sum_{\substack{s \leq x \\ s \text{ squarefull} }}
		\sum_{\substack{q \leq x \\ q \text{ prime} \\ q \notin S_r}}
		\sum_{qms \leq x} \varpi_r(m).
	\end{equation}
	We begin by showing that we can take $s$ to be small.
	Namely let $1/2 > \varepsilon > 0$.
	Then the contribution from $s \geq x^{1/3}$ is
	$$ \ll \sum_{\substack{x^{1/3} \leq  s \leq x
	\\ s \text{ squarefull} }}
	\sum_{qm \leq x/s}1 \ll 
	\sum_{\substack{x^{1/3} \leq  s \leq x
	\\ s \text{ squarefull} }} \frac{x(\log x)}{s}
	\ll \frac{x(\log x) }{x^{(1/2-\varepsilon)/3}}
	\sum_{\substack{x^{1/3} \leq  s \leq x
	\\ s \text{ squarefull} }} \frac{1}{s^{1/2 + \varepsilon}}$$
	which is  negligible as the latter sum is easily seen to be
	convergent. Therefore we may assume that $s \leq x^{1/3}$
	(in fact, any exponent smaller than $1/2$ will be sufficient
	in what follows).
	To continue we use Dirichlet's hyperbola
	method. We first consider the case where $q \leq  x^{1/2}$.
	In this case $x/qs \gg x^{1/6} \to \infty$ as $x \to \infty$. It follows from
	\eqref{eqn:1} that the corresponding
	contribution to \eqref{eqn:qm} is
	$$\ll \sum_{\substack{s \leq x^{1/3} \\ s \text{ squarefull} }}
		\sum_{\substack{q \leq x^{1/2} \\ q \text{ prime} \\
		q \notin S_r}}
	\frac{x}{qs (\log x/qs)^{1/3}}
	\ll_{A,S} \frac{x(\log \log x)}{(\log x)^{1/3}},$$
	by Mertens' theorem and the fact that the sum over $s$ is convergent. 
	We now consider the case $m \leq x^{1/2}$. 
	Here we change the order of summation 
	in \eqref{eqn:qm} to obtain
	\begin{align*}
	\sum_{\substack{s \leq x^{1/3} \\ s \text{ squarefull} }}
	\sum_{\substack{ m \leq x^{1/2}}} \varpi_r(m)
	\sum_{\substack{q \leq x/ms \\ q \text{ prime} \\
	q \notin S_r} } 1
	&\ll
	\sum_{\substack{s \leq x^{1/3} \\ s \text{ squarefull} }}	
	\sum_{\substack{ m \leq x^{1/2}}}  \varpi_r(m)
	\frac{x}{ms \log(x/ms)}  \\
	&\ll \frac{x}{\log x} 
	\sum_{\substack{ m \leq x^{1/2}}} 
	\frac{\varpi_r(m)}{m} \ll \frac{x}{(\log x)^{1/3}}
	\end{align*}
	where the last step is by \eqref{eqn:1} and partial summation
	(using that  the integral of $1/t(\log t)^{1/3}$ is 
 	$(3/2)(\log t)^{2/3}$). This completes the proof for $q$ prime.
 	A similar argument for $q =1$ shows that the
 	contribution here is $O(x/(\log x)^{1/3})$.
\end{proof}

\begin{lemma} \label{lem:square}
	Let $A, x > 0$ and let $S$ be a finite set of primes.
	Uniformly with respect to all non-squares 
	$r_1,r_2,r_3 \leq (\log x)^A$, the following holds
	\begin{align*}
	& \left\{ n \leq x : p \notin S, p \nmid r_i, p \parallel  n \mbox{ and }
	\left(\frac{r_1}{p}\right)=
	-1\implies 
	\left(\frac{r_2}{p}\right) = 1
	\mbox{ and }
	\left(\frac{r_3}{p}\right)=
	-1
	\right\}  \\
	&\qquad  \ll_{A,S} 
	\begin{cases}
		\frac{x}{(\log x)^{1/2}}, & \mbox{if }r_1r_2 
		\mbox{ or } r_2r_3 \mbox{ is a square}, \\
		\frac{x}{(\log x)^{1/4}}, & \mbox{if } r_1r_3
		\mbox{ or }r_1r_2r_3 \mbox{ is a square, and } r_1r_2 \mbox{ is not a square}, \\
		\frac{x}{(\log x)^{3/8}}, & \mbox{otherwise}.
	\end{cases}
	\end{align*}
\end{lemma}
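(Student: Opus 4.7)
The plan is to adapt the argument of Proposition~\ref{prop:cube}\eqref{eqn:1} to the multi-quadratic field $K = \Q(\sqrt{r_1}, \sqrt{r_2}, \sqrt{r_3})$. Setting $S_r = S \cup \{p : p \mid 2 r_1 r_2 r_3\}$, define a multiplicative function $\varpi$ by $\varpi(p^k) = 1$ for $k \geq 2$, $\varpi(p) = 1$ for $p \in S_r$, and for $p \notin S_r$ let $\varpi(p) = 1$ iff $\bigl(\tfrac{r_1}{p}\bigr) = 1$ or both $\bigl(\tfrac{r_2}{p}\bigr) = 1$ and $\bigl(\tfrac{r_3}{p}\bigr) = -1$. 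Then $\varpi(n)$ is exactly the indicator of the set being counted, as there is no constraint imposed on primes $p$ with $p^2 \mid n$. Nair's bound \cite[Thm.]{Nai92} then reduces the problem to establishing
$$\sum_{p \leq x} \frac{\varpi(p)}{p} \leq \rho \log \log x + O_{A,S}(1)$$
for densities $\rho \in \{1/2, 3/4, 5/8\}$ matching the three cases of the lemma, after which the Nair bound delivers $\sum_{n \leq x}\varpi(n) \ll x/(\log x)^{1-\rho}$.

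The density $\rho$ is determined by a short case analysis on the subgroup of $\Q^\times/\Q^{\times 2}$ generated by $r_1, r_2, r_3$. For $p$ unramified in $K$, the triple $(\chi_{r_1}(p), \chi_{r_2}(p), \chi_{r_3}(p))$ with $\chi_r = \bigl(\tfrac{r}{\cdot}\bigr)$ is equidistributed over the image of $\Gal(K/\Q)$ in $\{\pm 1\}^3$. If $r_2 r_3 \in \Q^{\times 2}$ or $r_1 r_2 \in \Q^{\times 2}$, the condition reduces to $\chi_{r_1}(p) = 1$ (in the first case the second clause is impossible; in the second the two clauses coincide), so $\rho = 1/2$. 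If $r_1 r_3$ or $r_1 r_2 r_3$ is a square but $r_1 r_2$ is not, one checks that the condition is the disjoint union of $\chi_{r_1}(p) = 1$ (density $1/2$) and $\chi_{r_1}(p) = -1, \chi_{r_2}(p) = 1$ (density $1/4$), giving $\rho = 3/4$. In the remaining case $\chi_{r_1}, \chi_{r_2}, \chi_{r_3}$ are independent, and $\rho = 1/2 + 1/8 = 5/8$.

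For the sum on primes, apply Lemma~\ref{lem:Cheb} to $K$: since $|\Delta_K| \ll (r_1 r_2 r_3)^{O(1)} \leq (\log x)^{O(A)}$, the hypothesis $\log x \geq c_2 (\log |\Delta_K|)^2$ holds for $x$ large in terms of $A$, and one obtains
$$\#\{p \leq x : \Frob_p \in C_\rho\} = \rho\bigl(\Li(x) + \Li(x^{\beta_K})\bigr) + O\bigl(x \exp(-c_1 \sqrt{\log x})\bigr),$$
where $C_\rho$ is the conjugacy-invariant set cut out by the condition. Partial summation then gives the required estimate, provided the exceptional-zero term is negligible.

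The main obstacle, and the point of departure from Proposition~\ref{prop:cube}, is the possible exceptional zero $\beta_K$. There the argument exploited the fact that the unique quadratic subfield $\Q(\mu_3)$ of $\Q(r^{1/3},\mu_3)$ has no Siegel zero; here the quadratic subfields of $K$ are $\Q(\sqrt{r})$ for $r$ running over the group generated by $r_1, r_2, r_3$, and Siegel zeros for their $L$-functions cannot be excluded by elementary means. This is handled by invoking Siegel's (ineffective) theorem, which gives $1 - \beta_K \gg_\epsilon |\Delta_K|^{-\epsilon} \gg_\epsilon (\log x)^{-CA\epsilon}$ for every $\epsilon > 0$; choosing $\epsilon$ small enough in terms of $A$ yields $x^{\beta_K} \ll_M x/(\log x)^M$ for any $M$, so $\Li(x^{\beta_K})$ is negligible. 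The implicit constants in $O_{A,S}$ are accordingly ineffective, consistent with the lemma's statement, and assembling the pieces gives the bounds $x/(\log x)^{1/2}$, $x/(\log x)^{1/4}$, $x/(\log x)^{3/8}$ in the three respective cases.
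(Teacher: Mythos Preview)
Your proof is correct and follows the same overall strategy as the paper: reduce via Nair's bound to a prime sum, compute the density $\rho$ by casework on the square-classes of $r_1,r_2,r_3$ (your values $1/2$, $3/4$, $5/8$ agree with the paper's table), and obtain $x/(\log x)^{1-\rho}$.

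The one packaging difference is in how you control the error term on the prime sum. You apply Lemma~\ref{lem:Cheb} to the multi-quadratic field $K=\Q(\sqrt{r_1},\sqrt{r_2},\sqrt{r_3})$ and then invoke Siegel's theorem separately to kill the possible exceptional zero $\beta_K$. The paper instead observes that, by quadratic reciprocity, the condition $\Frob_p\in C_\rho$ is a congruence condition on $p$ modulo $4^3 r_1 r_2 r_3$, and applies the Siegel--Walfisz theorem \cite[Cor.~11.21]{MV07} directly; Siegel's bound on exceptional zeros is already baked into Siegel--Walfisz, so no separate argument is needed. Both routes produce the same ineffective $O_{A,S}$ constants, and neither is materially shorter, but the paper's route is the more standard way to handle purely quadratic conditions and avoids passing through the general Chebotarev machinery.
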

\begin{proof}
The proof is a variant of the proof of Proposition \ref{prop:cube}.(1). However instead of using the effective Chebotarev density theorem, we use the more classical Siegel--Walfisz theorem \cite[Cor.~11.21]{MV07}, which gives better control of exceptional zeros.

Let $S_{\mathbf{r}} = S \cup \{p \mid 2r_1r_2r_3\}$.
We denote the set of primes under consideration by
$$\mathcal{P} = \left\{p \notin S_{\mathbf{r}}: \left(\frac{r_1}{p}\right)= 1 \mbox{ or } \left(\frac{r_1}{p}\right)= -1,
\left(\frac{r_2}{p}\right)= 1,	\mbox{ and } \left(\frac{r_3}{p}\right)= -1\right\}.$$
By quadratic reciprocity, there is a set of elements $R \subset (\Z/(4^3 \cdot r_1r_2r_3)\Z)^\times$  such that for $p \notin S_{\mathbf{r}}$ we have $p \in \mathcal{P}$ if and only if $p \in R$.  As in the proof of Proposition \ref{prop:cube}, we apply \cite[Thm.]{Nai92} and Siegel--Walfisz to obtain the upper bound
\[\ll_{A,S} x(\log x)^{\delta-1}\]
where $\delta = \dens(\mathcal{P})$. It therefore suffices to calculate $\delta$ in each of the stated cases, bearing in mind that $r_1,r_2, r_3$ are not squares. We summarise this in the following table, which easily gives the result ($\times$ denotes a non-square and $\square$ denotes a square).
\[
\begin{array}{c|c|c|c|c|c|c|c|}
  \delta & r_1 & r_2 & r_3  & r_1r_2 & r_1r_3 & r_2r_3 & r_1r_2r_3 \\ \hline
  5/8 & \times & \times & \times & \times & \times & \times & \times  \\ \hline
  1/2 & \times & \times & \times & \square & \times & \times & \times \\ \hline   
  3/4 & \times & \times & \times & \times & \square & \times & \times \\ \hline   
  1/2 & \times & \times & \times & \times & \times & \square & \times \\ \hline     
  3/4 & \times & \times & \times & \times & \times & \times & \square \\ \hline       
  1/2 & \times & \times & \times & \square & \square & \square & \times \\ \hline 
\end{array}
\]
\end{proof}

\subsection{Sums of radicals}
We finish with the following elementary results.

\begin{lemma} \label{lem:rad}
    For any $\varepsilon > 0$, the following
    sum is convergent
    $$\sum_{n=1}^\infty \frac{1}{\rad(n)n^\varepsilon}.$$
\end{lemma}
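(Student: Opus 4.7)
The plan is to recognise the summand as a multiplicative function and compare its Dirichlet series to an Euler product that manifestly converges. Specifically, define $f(n) = 1/(\rad(n) n^{\varepsilon})$; since both $\rad$ and $n \mapsto n^{\varepsilon}$ are multiplicative, so is $f$, and all its values are positive. It therefore suffices to show that the Euler product
\[
\prod_p \left(1 + \sum_{k=1}^{\infty} f(p^k)\right)
\]
converges absolutely, since positivity and multiplicativity will then yield convergence of the ordinary sum.

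Next I would compute the local factor explicitly. Since $\rad(p^k) = p$ for all $k \geq 1$, we have
\[
\sum_{k=1}^{\infty} f(p^k) = \sum_{k=1}^{\infty} \frac{1}{p \cdot p^{k\varepsilon}} = \frac{1}{p} \cdot \frac{p^{-\varepsilon}}{1 - p^{-\varepsilon}} = \frac{1}{p(p^{\varepsilon} - 1)}.
\]
For $p$ sufficiently large one has $p^{\varepsilon} - 1 \geq \tfrac{1}{2} p^{\varepsilon}$, so the local factor is bounded above by $1 + 2p^{-1-\varepsilon}$. Taking logarithms and using $\log(1+x) \leq x$ for $x \geq 0$, the convergence of the Euler product reduces to the convergence of $\sum_p p^{-1-\varepsilon}$, which is immediate from comparison with the $\zeta$-function at $s = 1 + \varepsilon$.

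There is no serious obstacle here: the only mild point is to make sure one separates finitely many small primes (where $p^{\varepsilon} - 1$ could be small) from the tail, which is handled by the bound $p^{\varepsilon} - 1 \geq \tfrac{1}{2}p^{\varepsilon}$ valid for $p$ large in terms of $\varepsilon$. The finitely many omitted local factors contribute a harmless constant.
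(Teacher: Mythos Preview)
Your proof is correct and follows essentially the same approach as the paper: expand as an Euler product using multiplicativity, then bound the local factor $1 + \sum_{k\geq 1} p^{-1-k\varepsilon}$ by $1 + O(p^{-1-\varepsilon/2})$ (the paper) or $1 + 2p^{-1-\varepsilon}$ for large $p$ (your version), and conclude by convergence of $\sum_p p^{-1-\varepsilon}$. Your handling of the small primes is slightly more explicit, but otherwise the arguments coincide.
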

\begin{proof}
    We have
	\begin{equation*}
		\sum_{n=1}^\infty \frac{1}{\rad(n)n^\varepsilon}
		= \prod_p\left(1 + \frac{1}{p^{1+\varepsilon}}
		+ \frac{1}{p^{1+2\varepsilon}} + \dots \right)
		\ll \prod_p\left(1 + \frac{1}{p^{1+\varepsilon/2}}\right)
		\ll_{\varepsilon} 1. \qedhere
	\end{equation*}
\end{proof}

\begin{lemma} \label{lem:rad2}
    Let $a_1,a_2 >0$ be such that $a_1 + a_2 > 1$.
    Then the following sum is convergent
    $$\sum_{\substack{n_1,n_2 \in \N \\ \rad(n_1) = \rad(n_2)}} \frac{1}{n_1^{a_1}n_2^{a_2}}.$$
\end{lemma}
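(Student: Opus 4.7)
The strategy is to factor the sum into an Euler product and check convergence prime by prime. Since $\rad(n_1) = \rad(n_2)$ if and only if $n_1$ and $n_2$ are supported on the same (finite) set of primes, I would parameterize the sum by this common set $P$: writing $n_1 = \prod_{p \in P} p^{e_p}$ and $n_2 = \prod_{p \in P} p^{f_p}$ with $e_p, f_p \geq 1$, the sum becomes
\[
\sum_{\substack{n_1,n_2 \in \N \\ \rad(n_1)=\rad(n_2)}} \frac{1}{n_1^{a_1}n_2^{a_2}} \;=\; \prod_p\left(1 + \sum_{e,f \geq 1}\frac{1}{p^{a_1 e + a_2 f}}\right).
\]

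The inner double sum factors as a product of two geometric series and evaluates to $\left((p^{a_1}-1)(p^{a_2}-1)\right)^{-1}$. Each local factor is therefore finite (note $p^{a_i} > 1$ for all primes $p$ since $a_i > 0$), so the question is whether the infinite product converges. This reduces, by the standard criterion, to showing
\[
\sum_p \frac{1}{(p^{a_1}-1)(p^{a_2}-1)} < \infty.
\]
For $p$ large the summand is $\sim p^{-(a_1+a_2)}$, and the hypothesis $a_1 + a_2 > 1$ gives convergence by comparison with $\sum_p p^{-(a_1+a_2)}$.

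There is no real obstacle here: the argument is a direct Euler product computation, and the hypothesis $a_1+a_2 > 1$ is used precisely to ensure tail convergence of the resulting series over primes.
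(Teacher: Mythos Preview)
Your proof is correct and essentially identical to the paper's: both write the same Euler product $\prod_p\bigl(1 + \sum_{k_1,k_2\ge 1} p^{-a_1k_1-a_2k_2}\bigr)$ and observe convergence from $a_1+a_2>1$. You simply fill in one more line of detail (evaluating the local factor as $((p^{a_1}-1)(p^{a_2}-1))^{-1}$) than the paper, which just writes $\ll_{a_1,a_2} 1$.
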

\begin{proof}
    Via a multidimensional Euler product expansion,
    we obtain
    \[\sum_{\substack{n_1,n_2 \in \N \\ \rad(n_1) = \rad(n_2)}} \frac{1}{n_1^{a_1}n_2^{a_2}}= 
    \prod_p\left(1 + 
    \sum_{k_1,k_2=1}^\infty\frac{1}{p^{a_1k_1 + a_2k_2}}\right) \ll_{a_1,a_2} 1. \qedhere\]
\end{proof}

\section{Brauer--Manin obstruction from an order three algebra}

\subsection{Set-up}
Let $A_1,A_2,A_3$ be non-zero integers and consider 
$$X := (X_{\AA}): \quad w^2=A_1x_1^6+A_2x_2^6+A_3x_3^6 \quad \subset \P(3,1,1,1).$$
The main result in this section is the following, which counts how often the algebras $\calA_i$ from \S\ref{sec:Brauer_elements} gives a Brauer--Manin obstruction on $X$. For simplicity of exposition, we focus on the case where $-3A_1$ is a square.

\begin{theorem} \label{thm:2}
	There are $\asymp T^{3/2} \log \log T/(\log T)^{2/3}$ 
	non-zero integers
	$A_1,A_2,A_3$ such that $\max\{|A_i|\} \leq T$,
	$-3A_1$ is a square, and there is a Brauer--Manin
	obstruction to the Hasse principle given by $\calA:=\calA_1$.
\end{theorem}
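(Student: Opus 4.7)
My approach has three main parts: compute the local invariants of $\calA$, extract necessary conditions on the coefficients for a Brauer--Manin obstruction, and establish matching upper and lower bounds by analytic number theory.

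\emph{Step 1 (local invariants).} Since $-3A_1$ is a square we may write $A_1 = -3u^2$ for some $u \in \Z$; in particular $\sqrt{A_1} = u\sqrt{-3} \in \Q(\omega)$, so $\calA$ is a well-defined corestriction of a $\mu_3$-cyclic algebra on $X_{\Q(\omega)}$. For places $v\nmid 6uA_2A_3$ the invariant vanishes by smooth base change. For primes $p\mid A_2A_3$ coprime to $6u$, I would apply the tame-symbol formula \eqref{eqn:nth_power} to the algebra on $X_{\Q(\omega)}$, and push through the corestriction via a split/inert case analysis for $p$ in $\Q(\omega)$. The expected outcome is that $\im(\inv_p)=\{0\}$ precisely when $v_p(A_2)\equiv v_p(A_3)\pmod 3$ together with the vanishing of a certain cubic residue character of $A_2/A_3$, and is $\tfrac{1}{3}\Z/\Z$ otherwise. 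Primes in a finite ``small'' set $S$ (containing $3$, the prime divisors of $u$, and the archimedean place) are analysed uniformly, contributing a controlled, bounded defect.

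\emph{Step 2 (conditions and upper bound).} A Brauer--Manin obstruction from $\calA$ requires $\sum_v\inv_v(\calA(x_v))\ne 0$ for every adelic point. Combined with Step 1, this forces that outside $S$ at most one prime $q$ has nontrivial invariant image, and at every other prime $p\notin S\cup\{q\}$ dividing $A_2A_3$ we must have $v_p(A_2)\equiv v_p(A_3)\pmod 3$, plus a cubic residue condition on $A_2/A_3 \bmod p$ at primes $p\nmid A_2A_3$. The parametrisation gives $O(\sqrt T)$ choices for $A_1$. Fixing $A_1$, I would estimate the number of admissible pairs $(A_2,A_3)$ as follows: write $A_3 = \beta n^3$ with $\beta$ cubefree, and apply Proposition~\ref{prop:cube}(2) with $r$ built from $\beta$ to control the $A_2\le T$ whose ratio with $A_3$ is ``essentially a cube up to one exceptional prime''; summing over $\beta,n$ using Lemma~\ref{lem:rad2} for convergence then yields a pair count $\ll T\log\log T/(\log T)^{2/3}$. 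Here the $\log\log T$ is the Mertens contribution from the exceptional prime $q$, and $(\log T)^{-2/3}$ is a two-variable analogue of the $(\log T)^{-1/3}$ saving in Proposition~\ref{prop:cube}(1). Multiplying by $O(\sqrt T)$ from $A_1$ yields the upper bound.

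\emph{Step 3 (lower bound).} To match, I would construct an explicit family. Fix a cubefree integer $\alpha$ and a suitable auxiliary prime $q$; let $A_1 = -3u^2$ with $u$ varying in a fixed congruence class modulo the primes of $S$, and set $A_2 = q\alpha m^3$, $A_3 = q\alpha n^3$ with coprime $m,n\asymp (T/q\alpha)^{1/3}$. Local congruences at the primes of $S$ are imposed to ensure everywhere local solubility and to force the global invariant sum of $\calA$ to equal $\pm 1/3$. Applying Proposition~\ref{prop:lower_bound_frob} to a completely multiplicative frobenian function encoding the required cubic-residue conditions on $mn$ (and the auxiliary conditions on $u$) yields a family of the desired size.

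The main obstacle is Step 1: the precise computation of the local invariants of a \emph{corestricted} cyclic algebra at the ramified primes — especially $p=3$ and primes dividing $u$ — where the tame symbol no longer suffices and the algebra must be evaluated directly on local points via explicit Hensel-type lifting and the projection formula for corestriction. A secondary difficulty, in Step 3, is verifying that the family simultaneously satisfies everywhere local solubility \emph{and} forces a nonzero invariant sum; the frobenian function must be engineered so that its mean is positive, which reduces to showing that a positive proportion of $u$ yield the right cubic character values at the primes of $\alpha q$.
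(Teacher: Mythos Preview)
Your overall three-part strategy matches the paper's, but there are genuine gaps in each part that would prevent the argument from closing.

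\textbf{Step 1 is missing the key trichotomy.} You assert that for $p\mid A_2A_3$ (coprime to $6u$) the image of $\inv_p$ is either $\{0\}$ or all of $\tfrac{1}{3}\Z/\Z$. This is false: the crucial phenomenon driving the $\log\log T$ is that for primes $p$ with $p\nmid A_1$, $p\parallel A_2$, $p\parallel A_3$, and $A_2/A_3\notin\Q_p^{\times 3}$, the image is \emph{exactly} $\{1/3,2/3\}$, neither trivial nor surjective (this is the content of Proposition~\ref{prop:BC}). Your dichotomy is internally inconsistent with your own Step~2: if the only options were trivial or surjective, then any prime with nontrivial image would already kill the obstruction, and there would be no room for ``at most one rogue prime $q$''. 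You need to establish three distinct behaviours---trivial, surjective, and $\{1/3,2/3\}$---and Corollary~\ref{cor:no_BM_3} shows that surjectivity at one prime, or the $\{1/3,2/3\}$ behaviour at two primes, rules out an obstruction.

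\textbf{Primes dividing $A_1$ are not a fixed finite set.} You place the prime divisors of $u$ into $S$ and call it a ``bounded defect'', but these primes vary with $A_1$ and are unbounded. In fact these primes are one of the two sources of the $(\log T)^{-1/3}$ saving: Proposition~\ref{prop:surjective_2a} shows that for $p\mid A_1$, $p\nmid A_2A_3$ with $A_2/A_3\notin\Q_p^{\times 3}$, the invariant map is surjective, so an obstruction forces $A_2/A_3$ to be a cube in $\Q_p$ for all such $p$. The upper bound therefore has two independent frobenian constraints: one on the primes of $a_1$ (via $\varpi_r$), giving $(\log T)^{-1/3}$, and one on the primes of $g=\gcd(A_2,A_3)$ (via $\varpi'_r$), giving $(\log\log T)(\log T)^{-1/3}$. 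Your ``two-variable analogue'' is too vague to recover this.

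\textbf{Your lower-bound family makes $\calA$ trivial.} With $A_2=q\alpha m^3$ and $A_3=q\alpha n^3$ you have $A_2/A_3=(m/n)^3\in\Q^{\times 3}$, so the cyclic algebra $(A_2/A_3,\cdot)_\omega$ is identically zero and $\calA$ can never obstruct. The paper's family (Proposition~\ref{prop:lower_bound_family_2}) instead takes $A_2=bq$, $A_3=-c^4bq$ with $c$ a prime $\equiv 2\bmod 3$, so $A_2/A_3\equiv c^{-1}\bmod\Q^{\times 3}$ is genuinely non-cubic; the rogue prime $q$ is then engineered so that $c$ is not a cube in $\Q_q$, forcing $\inv_q\in\{1/3,2/3\}$, while all other invariants vanish.
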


The proof of Theorem \ref{thm:2} and our other counting results have a similar structure. The main saving comes from a valuative criterion for there to be a Brauer--Manin obstruction (Proposition \ref{prop:surjective_2}); this is already enough to deduce the upper bound $T^{3/2 + \varepsilon}$ in Theorem \ref{thm:2}. To get control of the logarithms one needs more delicate criteria concerning the types of primes which are allowed to divide the coefficients (Lemma \ref{lem:almostsurj} and Proposition \ref{prop:surjective_2}). These criteria will be enough to obtain the correct order upper bound in Theorem \ref{thm:2}. To obtain the right lower bound, we construct an explicit family of counter-examples. 

Theorem \ref{thm:2} will be used in the proof of the count for $N_2(T)$ in Theorem \ref{thm:main2}. However Theorem \ref{thm:2} only considers the special Brauer group element $\mathcal{A}$, but in Theorem \ref{thm:main1} in general there will be other Brauer group elements; we will show in \S\ref{sec:proofs} that asymptotically, these other elements will not affect the main term.

Throughout this section we set $K=\Q(\omega)$.

\subsection{Local invariants}

\subsubsection{Formula}

In this section we study in detail the possibilities for the local invariants of $\calA$ at a prime $p$. Specifically, we study the map
$$\inv_p \calA : X(\Q_p) \to \Z/3\Z, \quad x \mapsto \inv_p \mathcal{A} (x).$$
We denote by $\inv_p \calA(X(\Q_p))$ the image of this map.
We calculate this in terms of the cubic Hilbert symbol from \S \ref{sec:cyclic} using the following lemma. Note that in order to make sense of the formula in Lemma \ref{lem:cubic_invariant}, we will always implicitly assume that $w^2-A_1x_1^6 \neq 0$ when computing invariants. We can do this without loss of generality since $\inv_p\calA(\cdot)$ is continuous with respect to the $p$-adic topology. 

\begin{lemma} \label{lem:cubic_invariant}
    Let $p \neq 3$ be a prime, $x \in X(\Q_p)$
    and $\pp \mid p$ a 
    prime of $\OO_K$ above~$p$.
    $$\inv_p \calA(x) = \beta \cdot  \left(\frac{A_2}{A_3},\,\frac{w-\sqrt{A_1}x_1^3}{w+\sqrt{A_1}x_1^3}\right)_{\omega, \pp},
    \quad \mbox{ where }\beta=
    \begin{cases}
        2, & p \equiv 1 \bmod 3, \\
        1, & p \equiv 2 \bmod 3.
    \end{cases}$$
\end{lemma}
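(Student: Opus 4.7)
The plan is to reduce to the compatibility of corestriction with local invariants, and then to treat the split and inert cases of $p$ in $K = \Q(\omega)$ separately.

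Recall from \eqref{def:A'} that $\calA = \cores_{K/\Q}(\calA')$. For $x \in X(\Q_p)$, compatibility of corestriction with base change yields $\calA(x) = \sum_{\pp\mid p}\cores_{K_\pp/\Q_p}\calA'(x_\pp)$ in $\Br \Q_p$, where $x_\pp \in X(K_\pp)$ is the image of $x$ under $\Q_p \hookrightarrow K_\pp$. Combined with $\inv_F\circ\cores_{L/F} = \inv_L$ for local fields, this gives
\[
\inv_p\calA(x) \;=\; \sum_{\pp\mid p}\inv_\pp\calA'(x_\pp).
\]
It remains to evaluate the right-hand side.

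\textbf{Inert case ($p\equiv 2 \bmod 3$):} There is a unique $\pp$ over $p$ and the formula collapses directly to the stated one with $\beta=1$.

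\textbf{Split case ($p\equiv 1 \bmod 3$):} There are two primes $\pp_1,\pp_2$ over $p$, each with $K_{\pp_i}\cong\Q_p$, corresponding to two embeddings $\iota_i\colon K\hookrightarrow\Q_p$ exchanged by the nontrivial $\sigma\in\Gal(K/\Q)$. Since $-3A_1$ is a nonzero square in $\Q$, one checks that $A_1\notin \Q^{\times 2}$, so $\sqrt{A_1}\in K\setminus\Q$ and $\sigma(\sqrt{A_1}) = -\sqrt{A_1}$; also $\sigma(\omega) = \omega^{-1}$. Applying $\sigma$ to $\calA'(x)\in \Br K$ gives
\[
\sigma\calA'(x) \;=\; \Bigl(\tfrac{A_2}{A_3},\, \tfrac{w+\sqrt{A_1}x_1^3}{w-\sqrt{A_1}x_1^3}\Bigr)_{\omega^{-1}},
\]
and the two sign flips cancel by the identities $(a,b^{-1})_\zeta = -(a,b)_\zeta$ and $(a,b)_{\zeta^{-1}} = -(a,b)_\zeta$ that follow from \eqref{eqn:cyclic_properties}. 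Hence $\sigma\calA'(x) = \calA'(x)$ in $\Br K$, so the two summands $\inv_{\pp_i}\calA'(x_{\pp_i})$ agree, yielding $\beta = 2$.

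The most delicate point is checking that the two Galois sign changes indeed cancel inside the cyclic algebra, which requires the explicit identities recalled in \S\ref{sec:cyclic}. The assumption $p\neq 3$ enters because $3$ ramifies in $K$ and the cubic Hilbert symbol $(\cdot,\cdot)_{\omega,\pp}$ is only well-behaved when $p$ is coprime to $n=3$; this is also exactly the regime in which the tame-symbol computation \eqref{eqn:nth_power} becomes available for the subsequent applications of the formula.
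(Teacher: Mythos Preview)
Your proof is correct and follows essentially the same approach as the paper: reduce via $\inv_p\circ\cores = \sum_{\pp\mid p}\inv_\pp$, handle the inert case trivially, and in the split case show that $\calA'$ is Galois-invariant using the sign cancellations $(a,b^{-1})_\zeta = -(a,b)_\zeta$ and $(a,b)_{\zeta^{-1}} = -(a,b)_\zeta$, whence the two local invariants coincide. One notational slip: you write ``$\calA'(x)\in\Br K$'', but $x$ is only a $\Q_p$-point, so this evaluation lives in a completion, not in $\Br K$; the Galois-invariance computation should be carried out for $\calA'$ in $\Br X_K$ (as the paper does) and then transported to the local invariants via the relation $\iota_2 = \iota_1\circ\sigma$ between the two embeddings, which you set up but do not quite invoke explicitly.
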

\begin{proof}
Recall that our algebra is given  by a corestriction from $K = \Q(\omega)$ to $\Q$ of
$$
    \calA'=\left(\frac{A_2}{A_3},\frac{w-\sqrt{A_1}x_1^3}{w+\sqrt{A_1}x_1^3}\right)_\omega \in \Br X_{\Q(\omega)},
$$
It thus follows from \cite[Prop.~II.1.4]{Neu13} and \cite[Prop.~III.3.3]{Neu13} that
$$\inv_p \calA(x) = \sum_{\qq \mid p } \inv_{\qq} \left(\frac{A_2}{A_3},\,\frac{w-\sqrt{A_1}x_1^3}{w+\sqrt{A_1}x_1^3}\right)_{\omega},$$
where the sum is over \emph{all} primes ideals $\qq$ of $\OO_K$ over $p$. Therefore in terms of the Hilbert symbol we have
$$
	\inv_p \calA(x) = \sum_{\qq \mid p }  \left(\frac{A_2}{A_3},\,\frac{w-\sqrt{A_1}x_1^3}{w+\sqrt{A_1}x_1^3}\right)_{\omega,\qq}
$$
For $p \equiv 2 \bmod 3$, there is a unique prime over $p$. So assume that $p \equiv 1 \bmod 3$, where there are two primes ideals $\pp_1, \pp_2$ above $p$. It suffices to show that
\begin{equation}\label{eqn:galoisInv}
\left(\frac{A_2}{A_3},\,\frac{w-\sqrt{A_1}x_1^3}{w+\sqrt{A_1}x_1^3}\right)_{\omega,\pp_1} = \left(\frac{A_2}{A_3},\,\frac{w-\sqrt{A_1}x_1^3}{w+\sqrt{A_1}x_1^3}\right)_{\omega,\pp_2}.
\end{equation}\label{eqn:invFormula1mod3}
Let $\sigma\in\Gal(K/\Q)$ be a generator. Then in $\Br X_K$ we have
\begin{align*}
    \sigma(\calA') =&  \left(\sigma\left(\frac{A_2}{A_3}\right),\,
    \sigma \left(\frac{w-\sqrt{A_1}x_1^3}{w+\sqrt{A_1}x_1^3}\right)\right)_{\sigma(\omega)} 
    = \left(\frac{A_2}{A_3},\,
    \frac{w+\sqrt{A_1}x_1^3}{w-\sqrt{A_1}x_1^3}\right)_{\omega^2} \\
    & = -\left(\frac{A_2}{A_3},\,
    \frac{w+\sqrt{A_1}x_1^3}{w-\sqrt{A_1}x_1^3}\right)_{\omega} 
     = - \left(\frac{A_2}{A_3},\,
    \left(\frac{w-\sqrt{A_1}x_1^3}{w+\sqrt{A_1}x_1^3}\right)^{-1}\right)_{\omega} 
     = \calA',
\end{align*}
hence $\calA'$ is Galois invariant. 
Consider next the following diagram.
\begin{align*}
\xymatrixcolsep{4pc}\
\xymatrix{
\Br X_K \ar[d]^{x_p} \ar[rr]^{\sigma} && \Br X_K \ar[d]^{x_p} \\\
\Br K_{\pp_1} \ar[d]^{\inv_{\pp_1}} \ar[r]_{\simeq} & \Br \Q_p \ar[d]^{\inv_p} & \Br K_{\pp_2} \ar[d]^{\inv_{\pp_2}} \ar[l]^{\simeq} \\\
\mathbb{Q}/\mathbb{Z} \ar[r]_{\text{id}} & \mathbb{Q}/\mathbb{Z}   & \ar[l]^{\text{id}} \mathbb{Q}/\mathbb{Z}} 
\end{align*}
This diagram commutes: the commutativity of the top square follows from the definition of the embeddings $K\to K_{\pp_i}$, and that of the bottom squares by \cite[Prop.~II.1.4]{Neu13}. As $\calA'$ is Galois invariant \eqref{eqn:galoisInv} follows immediately.
\end{proof}

We recall that
\begin{equation} \label{eqn:2mod3}
	\Q_p^{\times 3} = \{ x \in \Q_p^\times : v_p(x) \equiv 0 \bmod 3 \}, \quad \text{ for } p \equiv 2 \bmod 3.
\end{equation}

\begin{lem}\label{surjlem} 
Let $p$ be a prime and $\pp$ a prime ideal of $\OO_K$ over $p$.
\begin{enumerate}
	\item For $p \equiv 1 \bmod 3$, the following map is surjective:
	$$(\ \cdot \ ,p)_{\omega,\pp}\colon \OO_\pp^\times\to \Z/3\Z, 
	\quad u \mapsto (u,p)_{\omega,\pp}.$$
	\item For $p \equiv 2 \bmod 3$, the following map is surjective:
	$$(\ \cdot \ ,p)_{\omega,\pp}\colon \{u \in \OO_\pp^\times: \Norm_{K_\pp/\Q_p}(u) = 1\}
	\to \Z/3\Z, \quad u \mapsto (u,p)_{\omega,\pp}.$$
\end{enumerate}
\end{lem}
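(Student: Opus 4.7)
The plan is to identify the map $\phi: u \mapsto (u,p)_{\omega,\pp}$ as a group homomorphism whose kernel is $\OO_\pp^{\times 3}$, then deduce surjectivity by a cardinality count on the quotient. By the second half of \eqref{eqn:nth_power} one has $(u,p)_{\omega,\pp} = 0 \iff u \in K_\pp^{\times 3}$, and since cubing multiplies valuations by three, $K_\pp^{\times 3} \cap \OO_\pp^\times = \OO_\pp^{\times 3}$. So $\phi$ descends to an injection $\OO_\pp^\times/\OO_\pp^{\times 3} \hookrightarrow \Z/3\Z$, which is an isomorphism precisely when the source has order $3$.

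For part (1) this is immediate: since $p$ splits, $\OO_\pp^\times = \Z_p^\times \cong \mu_{p-1} \times (1+p\Z_p)$; cubing is $3$-to-$1$ on the first factor (as $3 \mid p-1$) and bijective on the second (as $p \neq 3$), so $|\Z_p^\times/\Z_p^{\times 3}| = 3$. For part (2) the analogous decomposition $\OO_\pp^\times \cong \mu_{p^2-1} \times (1 + p\OO_\pp)$ together with $3 \mid p+1$ again gives $|\OO_\pp^\times/\OO_\pp^{\times 3}| = 3$, so $\phi$ is already surjective on the full unit group.

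The substantive content of (2) is to show that $\phi$ remains surjective when restricted to $U := \ker N$. Let $\sigma$ generate $\Gal(K_\pp/\Q_p)$; pick $u_0 \in \OO_\pp^\times$ with $\phi(u_0) \neq 0$ (which exists by the previous step) and set $v := u_0/\sigma(u_0)$, so that $v \in U$ by construction. The key point --- and the main obstacle to overcome --- is that $\sigma$ acts as inversion on the $\F_3$-line $\OO_\pp^\times/\OO_\pp^{\times 3}$. To see this, note $N(u_0) = u_0\sigma(u_0) \in \Z_p^\times$, and for $p \equiv 2 \bmod 3$ we have $\Z_p^\times = \Z_p^{\times 3}$ (cubing is bijective on both the torsion and pro-$p$ parts of $\Z_p^\times$), so $u_0\sigma(u_0)$ is a cube in $K_\pp^\times$, forcing $\sigma(u_0) \equiv u_0^{-1} \pmod{\OO_\pp^{\times 3}}$. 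Therefore $v \equiv u_0^2 \pmod{\OO_\pp^{\times 3}}$, which is again a non-cube (since $2 \neq 0$ in $\F_3$), so $\phi(v) \neq 0$ and surjectivity of $\phi|_U$ follows.
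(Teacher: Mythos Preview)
Your proof is correct and follows essentially the same approach as the paper: both arguments use \eqref{eqn:nth_power} to identify the kernel of $\phi$ with the cubes, and for part (2) both exploit the key fact that $\Z_p^\times \subset \Q_p^{\times 3}$ when $p\equiv 2\bmod 3$ to produce a norm-one non-cube. Your Hilbert-90-style element $v=u_0/\sigma(u_0)$ and the paper's element $n=u^4/\Norm(u^2)$ are in fact related by $n=v^2$ (take $u=u_0$), so the constructions coincide up to squaring; your framing via the Galois action on the $\F_3$-line $\OO_\pp^\times/\OO_\pp^{\times3}$ is a pleasant conceptual gloss on the same computation.
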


\begin{proof}
    Taking $u \in \OO_\pp^\times$ to be a cube, we obtain the value $0$ by \eqref{eqn:cyclic_properties}.
    Taking $u \in \OO_\pp^\times$ to be a non-cube, we obtain a non-zero
    value by \eqref{eqn:nth_power}. But then $(u^2,p)_{\omega,\pp} = -(u,p)_{\omega,\pp}$
    by \eqref{eqn:cyclic_properties}.
    This shows that $(\ \cdot \ ,p)_{\omega,\pp}\colon \OO_\pp^\times \to \Z/3\Z$ is surjective
    in both cases. So let $p \equiv 2 \bmod 3$. For $u \in \OO_p^\times$ set
	$$n = \frac{u^4}{\Norm_{K_\pp/\Q_p}(u^2)}.$$
	Then clearly $\Norm_{K_\pp/\Q_p}(n) = 1$. As $p \equiv 2 \bmod 3$
	we have $\Norm_{K_\pp/\Q_p}(u^2) \in \Q_p^{\times3}$ by \eqref{eqn:2mod3}. Therefore 
	$(u,p)_{\omega,\pp} = (n,p)_{\omega,\pp},$ and the lemma follows by the above result.
\end{proof}

\subsubsection{The image of the local invariant map}
We use throughout the formula for the local invariant given in Lemma \ref{lem:cubic_invariant}.

\begin{lemma} \label{lem:trivial_2}
	let $p \neq 3$ be a prime such that $v_p(A_1) \equiv v_p(A_2)\equiv v_p(A_3) \bmod 3$
	and $X(\Q_p) \neq \emptyset$. Then $\inv_p \calA(X(\Q_p)) = 0$.
\end{lemma}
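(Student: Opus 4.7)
The plan is to combine the invariant formula in Lemma \ref{lem:cubic_invariant} with the tame-symbol description of the cubic Hilbert symbol. Writing $\inv_p \calA(x) = b \cdot (A_2/A_3, f(x))_{\omega, \pp}$ with $f = (w - \sqrt{A_1}x_1^3)/(w+\sqrt{A_1}x_1^3)$ and $b \in \{1,2\}$, the hypothesis $v_p(A_2)\equiv v_p(A_3)\pmod{3}$ lets us write $A_2/A_3 = p^{3m}u$ with $u \in \Z_p^\times$. Since $p^{3m}$ is a cube in $K_\pp^\times$, the computation reduces to showing $(u, f(x))_{\omega,\pp}=0$ for every $x \in X(\Q_p)$.

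For $p \equiv 2 \pmod{3}$ the map $x\mapsto x^3$ is a bijection on $\F_p^\times$ and, by Hensel's lemma, also on $\Z_p^\times$, so $u$ is already a cube in $\Z_p^\times \subset K_\pp^\times$ and the symbol vanishes. For $p \equiv 1 \pmod{3}$ we have $K_\pp = \Q_p$, and since $-3A_1$ is a square and $\sqrt{-3}\in\Q_p$ we moreover have $\sqrt{A_1}\in \Q_p$, so $\alpha := w - \sqrt{A_1}x_1^3$ and $\beta := w + \sqrt{A_1}x_1^3$ both lie in $\Q_p$. The tame-symbol formula gives $(u, f(x))_{\omega,\pp} = v_p(f(x)) \cdot (u, p)_{\omega,\pp}$; if $u$ is a cube modulo $p$ the second factor vanishes, so we may assume $u \notin (\F_p^\times)^3$, and the task becomes to show that $v_p(f(x)) \equiv 0 \pmod{3}$ for every $x \in X(\Q_p)$.

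An elementary valuation analysis on $\alpha$ and $\beta$ shows that $v_p(\alpha) = v_p(\beta)$ (hence $v_p(f)=0$) unless $v_p(w) = v_p(\sqrt{A_1}x_1^3) =: W_0$ and $\sqrt{A_1}x_1^3/w \equiv \pm 1 \pmod{p}$. In this remaining sub-case, set $\ell := |v_p(\alpha) - v_p(\beta)| > 0$; then the identity $\alpha\beta = A_2 x_2^6 + A_3 x_3^6$ gives $\ell = v_p(A_2 x_2^6 + A_3 x_3^6) - 2W_0$. Since $2W_0 = v_p(A_1) + 6 v_p(x_1)\equiv r \pmod{3}$, where $r$ is the common residue of the $v_p(A_i)$ modulo $3$, it suffices to prove $v_p(A_2 x_2^6 + A_3 x_3^6) \equiv r \pmod{3}$.

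The main obstacle is this last congruence when $v_p(A_2 x_2^6) = v_p(A_3 x_3^6) =: V$ (the unequal case gives sum valuation $=\min \equiv r \pmod{3}$ directly, and the cases $x_2 = 0$ or $x_3 = 0$ are trivial). Writing $x_i = p^{e_i} y_i$ with $y_i \in \Z_p^\times$ and $\tilde A_i = A_i p^{-v_p(A_i)}$, any extra cancellation would force $-\tilde A_2/\tilde A_3 \equiv (y_3/y_2)^6 \pmod{p}$ to be a sixth power in $\F_p^\times$. Sixth powers are in particular cubes, and since $-1 = (-1)^3$ is always a cube, this would force $u = \tilde A_2/\tilde A_3$ to be a cube modulo $p$, contradicting our standing assumption. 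Hence $v_p(A_2 x_2^6+A_3 x_3^6) = V \equiv r \pmod{3}$ and the claim follows. Points where $\alpha\beta = 0$, for which the given representative of $\calA$ is undefined, are handled by continuity of the local invariant on $X(\Q_p)$.
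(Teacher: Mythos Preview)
Your proof is correct and follows essentially the same approach as the paper's: split into $p\equiv 2\pmod 3$ (where $A_2/A_3$ is automatically a local cube) and $p\equiv 1\pmod 3$, and in the latter case use the factorisation $\alpha\beta = A_2x_2^6 + A_3x_3^6$ together with the non-cube assumption on $A_2/A_3$ to force $v_p(f)\equiv 0\pmod 3$. The only cosmetic difference is that the paper first divides the equation through by $A_1$ (working with $w/\sqrt{A_1}$) so that all valuations become $\equiv 0\pmod 3$ rather than $\equiv r\pmod 3$, whereas you keep track of the common residue $r$ explicitly; the underlying valuation argument is identical.
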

\begin{proof}
	Let $\pp\mid p$ be a prime ideal of $\calO_K$.  If $p \equiv 2 \bmod 3$,
	then since $v_p(A_2/A_3) \equiv 0 \bmod 3$ we have
	$A_2/A_3 \in \Q_p^{\times3}$ by \eqref{eqn:2mod3} and so clearly $A_2/A_3\in K_\pp^{\times3}$. Hence, the
	invariant is  trivial by \eqref{eqn:cyclic_properties}.
	Now assume that $p \equiv 1 \bmod 3$. 
	If $A_2/A_3$ is a cube in $K_\pp$, then the local invariant is clearly trivial.
	So assume that $A_2/A_3$ is not a cube in $K_\pp$. Here we will show that
	\begin{equation} \label{eqn:0mod3}
	v_\pp\left(\frac{w-\sqrt{A_1}x_1^3}{w+\sqrt{A_1}x_1^3}\right)
	\equiv 0 \bmod 3,
	\end{equation}
	which clearly implies the lemma (recall our implicit assumption
	that $w^2 \neq A_1x_1^6$).
	Rearranging the equation of $X$ gives
	$$w^2/A_1-x_1^6=A_2/A_1x_2^6 + A_3/A_1x_3^6.$$
	Since $A_2/A_3$ is not a cube in $K_\pp$, the right hand side has $\pp$-adic valuation divisible by $3$.
	If $v_\pp(w/\sqrt{A_1})\neq v_\pp(x_1^3)$, then $v_\pp(w/\sqrt{A_1}+x_1^3)=v_\pp(w/\sqrt{A_1}-x_1^3)=\min\{v_\pp(w/\sqrt{A_1}),v_\pp(x_1^3)\}$ which implies \eqref{eqn:0mod3}. Now assume $v_\pp(w/\sqrt{A_1})= v_\pp(x_1^3)$. Then either $v_\pp(w/\sqrt{A_1}+x_1^3)=v_\pp(x_1^3)\equiv0\bmod3$ or $v_\pp(w/\sqrt{A_1}-x_1^3)=v_\pp(x_1^3)\equiv0\bmod3$ since $p\neq2$. But since 
	\begin{equation*}
	v_\pp(w/\sqrt{A_1}-x_1^3)+v_\pp(w/\sqrt{A_1}+x_1^3)=v_\pp(w^2/A_1-x_1^6)\equiv0\bmod3,
	\end{equation*}
	it follows that $v_\pp(w/\sqrt{A_1}+x_1^3)\equiv v_\pp(w/\sqrt{A_1}-x_1^3)\equiv0\bmod3$, whence \eqref{eqn:0mod3}.
\end{proof}

\begin{lemma} \label{lem:almostsurj}
	Let $p \neq 3$ be a prime such that 
	\begin{enumerate}
		\item either $v_p(A_2/A_1)\not\equiv 0\bmod 3$
		or $v_p(A_3/A_1) \not\equiv 0\bmod 3$;
		\item $v_p(A_2/A_3)\equiv0\bmod 3$;
		\item $A_2/A_3$ is not a cube in $\Q_p$.
	\end{enumerate}
	Then $\{1/3,2/3\} \subseteq \inv_p \calA(X(\Q_p))$.
\end{lemma}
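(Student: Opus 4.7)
\emph{Proof plan.} The strategy is to construct two explicit one-parameter families of $\Q_p$-points on $X$ whose invariants realise both $1/3$ and $2/3$ via the formula in Lemma~\ref{lem:cubic_invariant}.

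The first step is a reduction: hypotheses (2) and (3) force $p\equiv 1\bmod 3$, since by \eqref{eqn:2mod3} (and an analogous statement for $p=2$, where every $2$-adic unit is a cube) the condition $v_p(A_2/A_3)\equiv 0\bmod 3$ would imply $A_2/A_3\in\Q_p^{\times 3}$ whenever $p\not\equiv 1\bmod 3$, contradicting (3). Hence $\beta=2$, $K_\pp=\Q_p$, and $\alpha:=\sqrt{A_1}\in K_\pp=\Q_p$, so $v_p(A_1)$ is even; set $n:=v_p(\alpha)$. Together, (1) and (2) imply $v_p(A_2/A_1)\equiv v_p(A_3/A_1)\not\equiv 0\bmod 3$.

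Next I would exploit the factorisation $(w-\alpha x_1^3)(w+\alpha x_1^3)=A_2x_2^6+A_3x_3^6$ to build local points via Hensel's lemma. Fixing $x_1=1$, $x_3=0$, and setting $t:=x_2$, we have $w^2=A_1(1+A_2t^6/A_1)$, and for $v_p(t)>(v_p(A_1)-v_p(A_2))/6$ the factor $1+A_2t^6/A_1$ lies in $1+p\Z_p$, hence is a square in $\Q_p$ (using $p$ odd). The two square roots give smooth $\Q_p$-points of the form $w=\pm\alpha+\eta_\pm$ with $\eta_\pm\equiv \pm A_2t^6/(2\alpha)$ to leading order, so $v_p(\eta_\pm)=v_p(A_2)+6v_p(t)-n>n$.

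Finally, a direct valuation computation yields $\rho=(w-\alpha)/(w+\alpha)=\eta_+/(2\alpha+\eta_+)$ with $v_p(\rho)=v_p(\eta_+)-n\equiv v_p(A_2/A_1)\bmod 3$ on the $+$ branch, and $\rho=(-2\alpha+\eta_-)/\eta_-$ with $v_p(\rho)\equiv -v_p(A_2/A_1)\bmod 3$ on the $-$ branch, both nonzero modulo $3$ by the reduction step. Writing $A_2/A_3=p^{3k}u$ with $u\in\Z_p^\times$, hypothesis (3) forces $u$ to be a non-cube modulo $p$; the tame formula for the cubic Hilbert symbol (cf.~\eqref{eqn:nth_power}) then gives $(A_2/A_3,\rho)_{\omega,\pp}=\chi(u)\cdot v_p(\rho)\in\Z/3\Z$, where $\chi:\F_p^\times\to\mu_3\cong\Z/3\Z$, $x\mapsto x^{(p-1)/3}$, is nonzero on $u$. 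The two branches therefore realise the two nonzero values $\pm\chi(u)\,v_p(A_2/A_1)\in\Z/3\Z$, and since $\beta=2$ acts as an involution on $\Z/3\Z$ swapping $1/3$ and $2/3$, the conclusion $\{1/3,2/3\}\subseteq\inv_p\calA(X(\Q_p))$ follows.

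The only genuinely delicate piece is the valuation bookkeeping in the last step: one needs to confirm $v_p(\eta_\pm)>n$ so that $v_p(2\alpha+\eta_+)=v_p(-2\alpha+\eta_-)=n$ and neither of these terms vanishes. This is automatic from the same Hensel condition on $v_p(t)$ that guaranteed solubility, so it poses no real obstacle.
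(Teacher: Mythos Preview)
Your proof is correct and follows essentially the same approach as the paper: construct a $\Q_p$-point with $x_1$ a unit via Hensel's lemma, check that the valuation of $(w-\sqrt{A_1}x_1^3)/(w+\sqrt{A_1}x_1^3)$ is nonzero modulo $3$, and then flip the sign (your choice of $\pm$ branch for $w$ is equivalent to the paper's replacement $x_1\mapsto -x_1$) to obtain the opposite value. The only cosmetic difference is that you specialise to $x_3=0$ after first observing that (1) and (2) together force $v_p(A_2/A_1)\equiv v_p(A_3/A_1)\not\equiv 0\bmod 3$, whereas the paper keeps both $x_2,x_3$ in play; this makes your valuation bookkeeping marginally cleaner but is otherwise the same argument.
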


\begin{proof}
Hypotheses (2) and (3) imply that $p \equiv 1 \bmod 3$. Let $\pp\mid p$ be a prime ideal of $\calO_K$.
By (1) there exist $x_2,x_3 \in \Z_p$ such that
$$v_p(A_2/A_1x_2^6+A_3/A_1x_3^6)>0, \quad v_p(A_2/A_1x_2^6+A_3/A_1x_3^6) \not \equiv 0 \bmod 3.$$
For any $p$-adic unit $x_1$, we have $x_1^6+A_2/A_1x_2^6+A_3/A_1x_3^6\equiv x_1^6\bmod p$ which is a square, so $x_1^6+A_2/A_1x_2^6+A_3/A_1x_3^6 \in \Q_p^{\times2}$. Hence there exists $w\in\Q_p$ such that $w^2/A_1=x_1^6+A_2/A_1x_2^6+A_3/A_1x_3^6$, as $A_1 \in \Q_p^{\times2}$. Thus $v_p(w^2/A_1-x_1^6)=v_p(A_2/A_1x_2^6+A_3/A_1x_3^6)$, and one of $v_p(w/\sqrt{A_1}-x_1^3)$ or $v_p(w/\sqrt{A_1}+x_1^3)$ must be $0$ since $v_p(x_1)=0$. Therefore,
$$v_\pp\left(\frac{w-\sqrt{A_1}x_1^3}{w+\sqrt{A_1}x_1^3}\right)=\pm v_\pp(A_2/A_1x_2^6+A_3/A_1x_3^6)\not \equiv 0\bmod 3$$
Thus, in terms of the Hilbert symbol we have
$$\inv_\frakp\left(\frac{A_2}{A_3},\,\frac{w-\sqrt{A_1}x_1^3}{w+\sqrt{A_1}x_1^3}\right)_{\omega}=(A_2/A_3, p^i)_{\omega,\pp}$$
where $i=1$ or $2$. By assumptions (2) and (3) and \eqref{eqn:nth_power}, this is non-trivial. Now take a new solution by replacing $x_1$ by $-x_1$ to get
$$\inv_\pp\left(\frac{A_2}{A_3},\,\frac{w-\sqrt{A_1}x_1^3}{w+\sqrt{A_1}x_1^3}\right)_{\omega}=(A_2/A_3, p^{-i})_{\omega,\pp}=-(A_2/A_3, p^{i})_{\omega,\pp}.$$
Hence it follows that $\inv_\pp$ takes at least 2 non-trivial values. Thus, it follows that $\inv_p$ also takes at least 2 non-trivial values by Lemma \ref{lem:cubic_invariant}.
\end{proof}

\begin{proposition} \label{prop:BC}
	Let $p \neq 3$ be a prime such that 
	\begin{enumerate}
		\item $p \nmid A_1,$ $p \parallel A_2$ and $p \parallel  A_3$;
		\item $A_2/A_3$ is not a cube in $\Q_p$.
	\end{enumerate}
	Then $\{1/3,2/3\} = \inv_p \calA(X(\Q_p))$.
\end{proposition}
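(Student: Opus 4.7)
The plan is to handle the two inclusions separately. The inclusion $\{1/3, 2/3\} \subseteq \inv_p \calA(X(\Q_p))$ is a direct consequence of Lemma \ref{lem:almostsurj}: the hypothesis $p \parallel A_2$ gives $v_p(A_2/A_1) = 1 \not\equiv 0 \pmod 3$, and $p \parallel A_2, A_3$ give $v_p(A_2/A_3) = 0$, so all three assumptions of that lemma are satisfied. The main content is the reverse inclusion $\inv_p \calA(X(\Q_p)) \subseteq \{1/3, 2/3\}$, i.e.~that the invariant is never zero. For this, I would first observe that the hypotheses force $p \equiv 1 \bmod 3$, since otherwise \eqref{eqn:2mod3} would give $A_2/A_3 \in \Q_p^{\times 3}$ contrary to assumption; consequently $K_\pp = \Q_p$ for $\pp \mid p$ in $K$, and the identity $-3A_1 \in \Q^{\times 2}$ together with $-3 \in \Q_p^{\times 2}$ shows that $\sqrt{A_1} \in \Z_p^\times$. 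Applying Lemma \ref{lem:cubic_invariant}, the properties \eqref{eqn:nth_power}, and bimultiplicativity then reduces the problem to showing
\[
v_p\!\left(\frac{w - \alpha x_1^3}{w + \alpha x_1^3}\right) \not\equiv 0 \pmod 3
\]
for every $(w, x_1, x_2, x_3) \in X(\Q_p)$ with $w^2 \neq A_1 x_1^6$, where $\alpha = \sqrt{A_1}$.

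The crucial computation is the valuation of the product $(w - \alpha x_1^3)(w + \alpha x_1^3) = A_2 x_2^6 + A_3 x_3^6$. Setting $r = -A_3/A_2 \in \Z_p^\times$, which is also a non-cube since $A_2/A_3$ is, I would analyze $v_p(x_2^6 - r x_3^6)$ according to whether $v_p(x_2) = v_p(x_3)$. When the two valuations coincide the leading coefficients cannot cancel modulo $p$, for otherwise $r$ would be a sixth power mod $p$ and hence, by Hensel's lemma (using $p \neq 2, 3$), a cube in $\Z_p^\times$. In both subcases one obtains $v_p(x_2^6 - r x_3^6) = 6 \min(v_p(x_2), v_p(x_3))$, so using $p \parallel A_2$ the product has valuation $1 + 6 \min(v_p(x_2), v_p(x_3)) \equiv 1 \pmod 3$.

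Setting $v_\pm = v_p(w \pm \alpha x_1^3)$, it follows that $v_- + v_+$ is odd, which forces $v_- \neq v_+$. Applying the ultrametric inequality to the identities $2w = (w - \alpha x_1^3) + (w + \alpha x_1^3)$ and $2\alpha x_1^3 = (w + \alpha x_1^3) - (w - \alpha x_1^3)$, using $p \neq 2$ and $\alpha \in \Z_p^\times$, gives $\min(v_-, v_+) = v_p(w) = 3 v_p(x_1) \equiv 0 \pmod 3$. Combined with $v_- + v_+ \equiv 1 \pmod 3$, this forces $\max(v_-, v_+) \equiv 1 \pmod 3$, whence $v_- - v_+ \equiv \pm 1 \not\equiv 0 \pmod 3$. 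The main obstacle is spotting the parity argument that separates $v_-$ from $v_+$; once one observes that their sum has odd valuation, everything else follows mechanically.
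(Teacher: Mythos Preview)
Your proof is correct and follows essentially the same route as the paper: both use Lemma~\ref{lem:almostsurj} for the inclusion $\{1/3,2/3\}\subseteq\inv_p\calA(X(\Q_p))$, and both establish $v_p(A_2x_2^6+A_3x_3^6)\equiv 1\bmod 6$ as the key computation. The only difference is in the final step: the paper passes to a primitive $\Z_p$-representative and argues by contradiction (if $v_--v_+\equiv 0\bmod 3$ then $v_\pm\geq 2$, forcing $p\mid x_1,x_2,x_3$), whereas your parity-plus-ultrametric argument ($v_-+v_+$ odd $\Rightarrow v_-\neq v_+ \Rightarrow \min(v_-,v_+)=3v_p(x_1)$) reaches the same conclusion directly without any normalisation of the point---a mild but pleasant simplification.
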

\begin{proof}
	Hypotheses (1) and (2) imply that $p \equiv 1 \bmod 3$.
	Let $(w,x_1,x_2,x_3) \in X(\Z_p)$. We may assume that this point is primitive,
	in the sense that $p$ does not simultaneously divide all $x_i$.
	Indeed, otherwise we would find that $p^3 \mid w$, so that we can 
	remove all these powers of $p$ in weighted homogeneous coordinates.
	
	By Lemma \ref{lem:almostsurj},
	it suffices to show that the invariant  does not take the value
	$0$. Let $\pp\mid p$ be a prime ideal of $\calO_K$. From hypotheses (1) and (2), we see that $v_p(A_2x_2^6+A_3x_3^6)\equiv1\bmod6$. From the equation of $X$, we have
	$$v_\pp(w-\sqrt{A_1} x_1^3) + v_\pp(w+\sqrt{A_1} x_1^3) \equiv 1 \bmod 6.$$
	For the local invariant to be $0$,
	by \eqref{eqn:nth_power} we must have
	$$v_\pp(w-\sqrt{A_1} x_1^3) - v_\pp(w+\sqrt{A_1} x_1^3) \equiv 0 \bmod 3.$$
	These congruences imply that
	$$w-\sqrt{A_1} x_1^3 \equiv w+\sqrt{A_1} x_1^3 \equiv 0 \bmod \pp^2.$$
	Rearranging and noting that $\Q_p = K_\pp$ gives  $x_1 \equiv w \equiv 0 \bmod p^2$. Then from the 
	equation of $X$, we deduce $x_2 \equiv x_3 \equiv 0 \bmod p$, but this
	contradicts primitivity.
\end{proof}

\begin{proposition}\label{prop:surjective_2a}
	Let $p > 13$ be a prime such that
	\begin{enumerate}
		\item $p \mid A_1$ and $p \nmid A_2A_3$;
		\item $A_2/A_3$ is not a cube in $\Q_p$.
	\end{enumerate}
	Then $\inv_p \calA(X(\Q_p)) = \Z/3\Z$.
\end{proposition}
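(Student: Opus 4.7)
I will show that the three values $0, 1/3, 2/3$ all lie in $\inv_p \calA(X(\Q_p))$.

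Hypothesis (2) forces $p \equiv 1 \bmod 3$: otherwise $\gcd(3, p-1) = 1$ so the cubing map is bijective on $\F_p^\times$ and every unit is a cube. Consequently $p$ splits in $K = \Q(\omega)$ and $K_\pp = \Q_p$ for $\pp \mid p$. Since $-3A_1$ is a rational square and $-3 \in \Q_p^{\times 2}$ (because $p \equiv 1 \bmod 3$), we get $A_1 \in \Q_p^{\times 2}$; set $\alpha = \sqrt{A_1} \in \Q_p$, so $v_p(\alpha) = v_p(A_1)/2 \geq 1$ (note that $v_p(A_1)$ is automatically even). To obtain $\{1/3, 2/3\}$ in the image I plan to invoke Lemma \ref{lem:almostsurj}: its hypothesis (1) requires $v_p(A_2/A_1)$ or $v_p(A_3/A_1)$ to be non-zero modulo $3$, and since both equal $-v_p(A_1)$ this is the condition $3 \nmid v_p(A_1)$, which is the generic case. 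Hypotheses (2)--(3) of that lemma are immediate.

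The crux is to produce a $\Q_p$-point with local invariant $0$. From the formula in Lemma \ref{lem:cubic_invariant} and the description of the Hilbert symbol over an unramified cubic extension, the symbol $(A_2/A_3, y)_{\omega, \pp}$ (with $y = (w - \alpha x_1^3)/(w + \alpha x_1^3)$) vanishes exactly when $v_\pp(y) \equiv 0 \bmod 3$, because $A_2/A_3$ is a non-cube unit of $\Q_p$. A sufficient condition is $v_p(w) = 0$: then $v_p(\alpha x_1^3) \geq 1 > v_p(w)$, so $v_\pp(w \pm \alpha x_1^3) = 0$ and hence $v_\pp(y) = 0$. I therefore look for a $\Q_p$-point with $v_p(w) = 0$, which by Hensel's lemma is equivalent to the existence of a smooth $\F_p$-point of $X_{\F_p}$ with $w \neq 0$.

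The reduction $X_{\F_p}$ has equation $w^2 = A_2 x_2^6 + A_3 x_3^6$ (since $p \mid A_1$); using $p \nmid 6A_2A_3$ a direct partial-derivative calculation shows that the only singular point is $[0:1:0:0]$, so any $\F_p$-point with $w \neq 0$ is smooth. Thus I reduce to finding $(x_2, x_3) \in \F_p^2 \setminus \{(0,0)\}$ with $A_2 x_2^6 + A_3 x_3^6 \in \F_p^{\times 2}$, the value of $x_1$ being free. I handle this by a case split on whether $A_2, A_3$ are squares modulo $p$. If one of them is a square, then $(x_2, x_3) = (1, 0)$ or $(0, 1)$ immediately works. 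Otherwise both are non-squares and $A_2/A_3$ is a non-cube square.

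The main analytic obstacle is the last subcase, and this is where $p > 13$ enters. I would control the character sum $\sum_{(x_2,x_3) \in \F_p^2} \chi(A_2 x_2^6 + A_3 x_3^6)$ (with $\chi$ the Legendre symbol) by splitting via $x_3 = 0$, $x_2 = 0$, and both non-zero; in the last regime the substitution $u = x_2/x_3$ factors out $x_3^6 \in \F_p^{\times 2}$ and reduces to $\sum_u \chi(A_2 u^6 + A_3)$. Expanding $1_{u^6 = g}$ via characters of order dividing $6$ and using standard Jacobi sum bounds $|J(\psi, \chi)| \leq \sqrt{p}$, I obtain an estimate of the form $\bigl|\sum_{(x_2,x_3)} \chi(f)\bigr| \leq (p-1)(c_1 + c_2 \sqrt{p})$ with small explicit constants; combined with $\#\{f = 0\} \leq 6p$ this gives a lower bound for the number of $(x_2,x_3)$ producing a non-zero square that is positive for $p > 13$. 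Conclusion: $\inv_p \calA(X(\Q_p)) \supseteq \{0, 1/3, 2/3\} = \Z/3\Z$.
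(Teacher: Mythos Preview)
Your approach matches the paper's: both invoke Lemma~\ref{lem:almostsurj} for the values $1/3,2/3$, and both produce a point with trivial invariant by reducing modulo~$p$ to the curve $w^2=A_2x_2^6+A_3x_3^6$. The paper is slightly cleaner on the latter step: it simply sets $x_1=0$ (so the argument of the cyclic symbol is literally~$1$) and quotes the Hasse--Weil bound for this smooth genus-$2$ curve, obtaining a point once $p+1>4\sqrt p$; your character-sum computation lands on the same inequality but with more bookkeeping, and the case split on whether $A_2,A_3$ are squares is unnecessary once you use Hasse--Weil.

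Your caveat about hypothesis~(1) of Lemma~\ref{lem:almostsurj} is well taken and should not be dismissed as ``the generic case'': the paper glosses over the same point, and in fact when $3\mid v_p(A_1)$ (equivalently $v_p(A_1)\equiv v_p(A_2)\equiv v_p(A_3)\bmod 3$), Lemma~\ref{lem:trivial_2} forces the invariant to be identically~$0$, so the statement as written needs the extra hypothesis $3\nmid v_p(A_1)$.
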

\begin{proof}
Hypotheses (1) and (2) imply that $p \equiv 1 \bmod 3$.
The fact that the invariant map takes on two non-trivial values follows from Lemma \ref{lem:almostsurj}. Hence it remains to show that there exist a point $x\in X(\Q_p)$ such that $\inv_p \calA(x)=0$. It suffices to find a smooth $\F_p$-point on the curve
\begin{equation} \label{eqn:curve_2a}
    w^2=A_2x_2^6+A_3 x_3^6,
\end{equation}
since by Hensel's lemma this will lift to a $\Q_p$-point on $X$ with $x_1= 0$, so that
$$\left(\frac{A_2}{A_3},\,\frac{w-\sqrt{A_1}x_1^3}{w+\sqrt{A_1}x_1^3}\right)_{\omega,\pp}=\left(\frac{A_2}{A_3},\,1\right)_{\omega,\pp}=0.$$
The curve \eqref{eqn:curve_2a} is  smooth curve of genus $2$. Hence by the Hasse--Weil bounds, there is a smooth $\F_p$-point provided $p + 1 > 4\sqrt{p}$.
\end{proof}

Our last result on  local invariants is the following, which  will give us the power saving in the upper bound. The proof requires a more detailed analysis. In the statement, as in other similar statements in the paper, we are claiming that there exists a finite set of primes $S$ such that the proposition holds for all $A_i \in \Z$ as in Theorem \ref{thm:2}. The finite set of primes $S$ can be made explicit, as it just arises via an application of the Hasse--Weil estimates. But many finite sets of primes $S$ will appear in our paper, and to avoid delicate bookkeeping we will not make these explicit (we also do not need to explicitly know $S$ for our counting results).

\begin{proposition}\label{prop:surjective_2}
	There exists a finite set of primes $S$ with the following property.
	Let $p \notin S$ be a prime with 
	\begin{enumerate}
		\item $p \nmid A_1\gcd(A_2,A_3)$;
		\item $v_p(A_2/A_3) \not \equiv 0 \bmod 3$.
	\end{enumerate}
	Then $\inv_p \calA(X(\Q_p)) = \Z/3\Z$.
\end{proposition}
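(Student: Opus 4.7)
The plan is to reduce, via Lemma \ref{lem:cubic_invariant}, to the claim that $v = (w - \sqrt{A_1}x_1^3)/(w + \sqrt{A_1}x_1^3)$ achieves all three cube classes in $\OO_\pp^\times/\OO_\pp^{\times 3}$ as $x$ ranges over $X(\Q_p)$. Without loss of generality I would assume $p \nmid A_3$ and $v_p(A_2) = k \not\equiv 0 \bmod 3$. Since $K/\Q$ is unramified at $p$, $v_\pp(A_2/A_3) = k \not\equiv 0 \bmod 3$, so $A_2/A_3 \notin K_\pp^{\times 3}$. Writing $A_2 = p^k u_2$ with $u_2 \in \OO_\pp^\times$ and using bilinearity of the Hilbert symbol together with \eqref{eqn:nth_power}, for any unit $v$ one obtains $(A_2/A_3, v)_{\omega, \pp} = k(p, v)_{\omega, \pp}$. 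Since $k$ is invertible modulo $3$ and $\beta \in \{1,2\}$, Lemma \ref{surjlem} reduces the task to producing $\Q_p$-points of $X$ at which the unit $v$ realises each of the three cube classes.

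To this end I would restrict to $x_2 = 0$, defining the smooth projective curve $C': w^2 = A_1 x_1^6 + A_3 x_3^6$ in $\P(3,1,1)$, of genus $2$ with good reduction at $p$ by hypothesis. In the affine chart $x_3 = 1$, the identity $(w - \sqrt{A_1}x_1^3)(w + \sqrt{A_1}x_1^3) = A_3$ yields
$$v = \frac{A_3}{(w + \sqrt{A_1}x_1^3)^2} \equiv A_3 \cdot (w + \sqrt{A_1}x_1^3) \pmod{K_\pp^{\times 3}},$$
using $\alpha^{-2} \equiv \alpha \pmod{\alpha^3}$. Thus it suffices to show that as $(w, x_1)$ varies over $\F_p$-points of $w^2 = A_1 x_1^6 + A_3$, the reduction of $\phi = A_3(w + \sqrt{A_1}x_1^3)$ in $\F_q^\times$ (with $q = p$ if $p \equiv 1 \bmod 3$ and $q = p^2$ if $p \equiv 2 \bmod 3$) takes all three cube classes; Hensel's lemma then lifts these to $\Q_p$-points of $X$, the cube class being preserved because $1 + \pp\OO_\pp \subseteq K_\pp^{\times 3}$ for $p \neq 3$.

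For $p \equiv 2 \bmod 3$ I would argue directly: since $\gcd(3, p-1) = 1$, the map $x_1 \mapsto y = x_1^3$ is a bijection on $\F_p$, so the $\F_p$-points of $C'$ with $x_3 = 1$ biject with those of the conic $w^2 = A_1 y^2 + A_3$, of which there are $p+1$ as $A_1$ is a non-square mod $p$. Under $(w, y) \mapsto w + \sqrt{A_1}y$ these fill the full coset of norm-$A_3$ elements of $\F_{p^2}^\times$, and since $3 \mid p+1$ this coset meets every cube class of $\F_{p^2}^\times$ in $(p+1)/3$ elements. For $p \equiv 1 \bmod 3$, $\sqrt{A_1} \in \F_p$, and I would appeal to Weil's bound: the character sum $\sum_{P} \chi_3(v(P))$ against a non-trivial cubic character $\chi_3$ of $\F_p^\times$ is $O(\sqrt{p})$, whereas $|C'(\F_p)| \sim p$ by Hasse--Weil, forcing all three cube classes to appear for $p$ sufficiently large.

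The main obstacle will be justifying the square-root saving when $p \equiv 1 \bmod 3$, which reduces to showing that $v$ is not a cube in $\Fbar_p(C')$. To verify this I would use the order-$3$ automorphism $\tau: (w, x_1, x_3) \mapsto (w, \omega x_1, x_3)$, under which $v$ is manifestly invariant; the quotient $C'/\langle\tau\rangle$ is defined in $\P(3,3,1)$ by $w^2 = A_1 u^2 + A_3 x_3^6$ with $u = x_1^3$, which after the substitution $s = x_3^3$ is the smooth conic $w^2 = A_1 u^2 + A_3 s^2 \subset \P^2$, isomorphic to $\P^1$ over $\Fbar_p$. Since $v$ descends to a degree-$2$ rational function on this $\P^1$, it cannot be a cube. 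Combining both cases and discarding the finitely many primes where the Hasse--Weil, Hensel, or Weil-bound estimates degenerate yields $\inv_p \calA(X(\Q_p)) = \Z/3\Z$ for all $p \notin S$.
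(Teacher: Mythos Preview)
Your approach is correct and takes a genuinely different route from the paper's. The paper parametrises each target class by a unit $n$ with $(n,p)_{\omega,\pp} = \varepsilon$, then performs a Galois descent (Lemma~\ref{lem:res}) to turn $v = nu^3$ over $K_\pp$ into a cubic $t^3 - 3t = \cdots$ over $\Q_p$; reducing modulo $p$ (which kills the $A_2x_2^6$ term, just as your restriction $x_2=0$ does) yields a curve that \texttt{Magma} certifies to be geometrically integral of genus~$4$, and Hasse--Weil finishes. You instead stay on the genus-$2$ curve $C'$ throughout, avoid both the descent and the computer check, and for $p \equiv 2 \bmod 3$ give a clean bijective argument via the norm form. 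The paper's descent is more uniform and is reused later (Lemma~\ref{lem:surjmultalg}) to treat several algebras simultaneously; your argument is more elementary and transparent.

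There is one small gap in the $p\equiv 1\bmod 3$ case. Showing that $v$ has degree $2$ on the quotient $C'/\langle\tau\rangle\cong\P^1$ only rules out $v$ being a cube in the invariant subfield $\Fbar_p(\P^1)$, not in the larger field $\Fbar_p(C')$. To close it: if $v = g^3$ on $C'$, then $g$ has degree $2$ as a map $C'\to\P^1$; since $C'$ has genus~$2$, its hyperelliptic map is unique up to automorphism of the target, so $g$ must be a M\"obius transform of $x_1/x_3$ and in particular is fixed by the hyperelliptic involution $\iota\colon w\mapsto -w$. But $\iota^*v = v^{-1}\neq v$, a contradiction. (Equivalently: $\divv(v)=6P_+-6P_-$ on $C'$, and the class $[P_+]-[P_-]$ has exact order~$3$ in $\Pic^0(C')$, being killed by $(w-\sqrt{A_1}x_1^3)/x_3^3$ and nonzero since $C'$ has positive genus; hence $2(P_+-P_-)\not\sim 0$ and no cube root exists.)
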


\begin{proof}
We may assume $3 \in S$.
Let $\varepsilon \in \Z/3\Z$ and let $\pp$ be a prime of $\OO_K$ above $p$. Let $n \in \OO_\pp^\times$ be such that $(n,p)_{\omega,\pp} = \varepsilon$ and $\Norm_{K_\pp/\Q_p}(n) = 1 $ if $p\equiv 2 \bmod 3$ (this exists by Lemma \ref{surjlem}).

To prove the result, it suffices to find $w,x_1,x_2,x_3 \in \Q_p$ and $u \in K_\pp^\times$ such that 
\begin{equation} \label{eqn:n_cube}
\frac{w-\sqrt{A_1}x_1^3}{w+\sqrt{A_1}x_1^3}=nu^3,\ \ w^2=A_1 x_1^6+A_2x_2^6 + A_3 x_3^6.
\end{equation}
Indeed, then by Lemma \ref{lem:cubic_invariant} we would have
\begin{align*}
    \inv_p \calA(x) = \beta \left(\frac{A_2}{A_3},\,\frac{w-\sqrt{A_1}x_1^3}{w+\sqrt{A_1}x_1^3}\right)_{\omega,\pp}
    &=\beta \left(p^i,\,n\right)_{\omega,\pp} = \begin{cases}
      -2i\varepsilon, & \text{if}\ p\equiv1\bmod3, \\
      -i\varepsilon, & \text{if}\ p\equiv2\bmod3,
    \end{cases}
\end{align*}
where $i = v_p(A_2/A_3)$. As $i \not \equiv 0 \bmod 3$ by assumption (2), we will obtain all possible values on changing $n$, as required.

Note that for the equation \eqref{eqn:n_cube}, the variables $w,x_i \in \Q_p$, but $n,u, \sqrt{A_1} \in K_\pp$. One could probably view this equation as a variety over $\Q$ defined by a suitable use of the Weil restriction. We shall take different approach which leads to simpler equations, by instead performing a Galois descent to show that this variety is the base change of a variety over $\Q$. This is achieved via the following lemma.

\begin{lem}\label{lem:res}
	Let $k$ be a field of characteristic not equal to $3$ 
	and $\alpha \in k(\mu_3)^{\times}\setminus\{\pm1\}$. If $\mu_3  \not \subset k$
	assume that $\alpha$ has norm $1$.
	Then 
	$$k(\mu_3)[u]/(u^3 - \alpha) \cong 
	k[t]/(t^3 - 3t - \alpha - \alpha^{-1}) \otimes_k k(\mu_3)$$
	as $k(\mu_3)$-algebras. 
\end{lem}
\begin{proof}
	Note that if $\mu_3 \not \subset k$ then $\alpha + \alpha^{-1} \in k$,
	as it is the trace of $\alpha$, thus $t^3 - 3t - \alpha - \alpha^{-1} \in k[t]$.
	For the proof, one easily checks that the $k(\mu_3)$-algebra
	morphisms
	\begin{equation} \label{eqn:t_u}
		u \mapsto \frac{-\alpha}{\alpha^2 - 1}\left(t^2 - \alpha t - 2\right),
		\quad t \mapsto u + u^{-1}
	\end{equation}
	are well-defined and inverse to each other, hence isomorphisms.
\end{proof}

We apply this result with $\alpha = n^{-1}\frac{w-\sqrt{A_1}x_1^3}{w+\sqrt{A_1}x_1^3}$ and $k = \Q_p$ (recall that $n$ has norm $1$ by construction for $p \equiv 2 \bmod 3$). Away from the locus $\alpha = \pm 1$, we find that the variety \eqref{eqn:n_cube} is the base-change of the variety 
\begin{equation} \label{eqn:descent}
t^3 - 3t = \frac{a(w^2+ A_1 x_1^6)+A_1bwx_1^3}{w^2 - A_1 x_1^6},
\,
w^2=A_1 x_1^6+A_2x_2^6 + A_3 x_3^6.
\end{equation}
where $a=n+n^{-1},b=2(n-n^{-1})/\sqrt{A_1}$. Note that $a,b\in \Z_p$.
Thus we have successfully performed a Galois descent, hence it suffices to find a $\Q_p$-point on the variety \eqref{eqn:descent} satisfying $\alpha \neq \pm 1$ and $t^2 - \alpha t - 2 \neq 0$ (cf.~\eqref{eqn:t_u}).
To do so, without loss of generality we have $p \mid A_2$. We consider the patch $x_3 = 1$, where modulo $p$ the equations become
$$t^3 - 3t - \frac{a(w^2 +A_1 x_1^6)+A_1bwx_1^3}{A_3} \equiv 0 \bmod p, \quad w^2\equiv A_1 x_1^6+A_3 \bmod p.$$
This is a cone over a curve. We claim that this curve is  geometrically integral of genus $4$. To prove this, we may pass to the algebraic closure. In particular, in the original equation \eqref{eqn:n_cube} we may assume that $A_1=A_3=n=1$ to obtain
$$t^3 - 3t - 2(w^2 +x_1^6) = 0, \quad w^2=x_1^6+1.$$
\texttt{Magma} now verifies that this is geometrically integral of genus $4$. Therefore by the Hasse--Weil estimates, providing $p$ is sufficiently large the curve has a smooth $\F_p$-point in the given affine patch. Hence we can lift this by Hensel's Lemma to find the required $\Q_p$-point on 
\eqref{eqn:descent}.
\end{proof}

\subsection{Consequences for the Brauer--Manin obstruction}

We finish by clarifying the conclusion of our investigations to the Brauer--Manin obstruction.

\begin{corollary} \label{cor:no_BM_3}
	Assume that there is a prime $p$ which satisfies Proposition \ref{prop:surjective_2a}
	or \ref{prop:surjective_2}, or there are at least
	two primes $p,q$ which satisfy Lemma \ref{lem:almostsurj}.
	
	Then there is no Brauer--Manin obstruction to the Hasse principle given by $\calA_1$.
\end{corollary}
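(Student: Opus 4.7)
The plan is to use the defining property of the Brauer--Manin pairing: one tests whether there exists any adelic point $(x_v)\in X(\Adele_\Q)$ such that $\sum_v \inv_v \calA_1(x_v)=0$ in $\Q/\Z$. If no adelic point exists at all, there is nothing to prove, so I assume $X(\Adele_\Q)\neq \emptyset$, pick any $(x_v)$, and let $s=\sum_v \inv_v \calA_1(x_v)$. Since $\calA_1$ is represented by a corestriction of a degree-$3$ cyclic algebra, it is $3$-torsion, so $s\in \frac{1}{3}\Z/\Z$. The entire strategy is then to perturb the adelic point at the specified primes (using continuity of the pairing in the right variable) to kill $s$, while leaving all other local coordinates untouched.

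In the first case, suppose a prime $p$ satisfies Proposition~\ref{prop:surjective_2a} or Proposition~\ref{prop:surjective_2}, so $\inv_p \calA_1(X(\Q_p))=\Z/3\Z$. Choose $x'_p\in X(\Q_p)$ with $\inv_p\calA_1(x'_p)=\inv_p\calA_1(x_p)-s$; replacing $x_p$ by $x'_p$ yields a new adelic point whose total invariant is $0$. Hence $X(\Adele_\Q)^{\calA_1}\neq \emptyset$, so $\calA_1$ gives no obstruction.

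In the second case, suppose primes $p\neq q$ both satisfy Lemma~\ref{lem:almostsurj}, so that $\{1/3,2/3\}\subseteq \inv_p\calA_1(X(\Q_p))$ and similarly at $q$. The key combinatorial observation is that the set of sums
\[
\{a+b : a,b\in\{1/3,2/3\}\}=\{0,1/3,2/3\}=\Z/3\Z,
\]
so any target value $t\in \Z/3\Z$ can be realised as a sum of two local invariants, one at $p$ and one at $q$. Applying this with $t=-\bigl(s-\inv_p\calA_1(x_p)-\inv_q\calA_1(x_q)\bigr)$ and replacing $x_p,x_q$ by suitable points in $X(\Q_p),X(\Q_q)$, we again obtain an adelic point with total invariant $0$.

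I do not expect a genuine obstacle here: the work has already been done in Propositions~\ref{prop:surjective_2a}, \ref{prop:surjective_2}, and Lemma~\ref{lem:almostsurj} to control the image of $\inv_p\calA_1$, and the corollary is a purely formal consequence of these surjectivity/two-value statements combined with the fact that $\calA_1$ is $3$-torsion. The only point to be careful about is the $3$-torsion assertion, which follows from the construction of $\calA_1$ as $\cores_{K/\Q}$ of an algebra $\calA'_1$ of index dividing $3$ (cf.~\eqref{def:A'}).
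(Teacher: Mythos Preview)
Your proof is correct and follows essentially the same approach as the paper's own argument: fix an adelic point, isolate the sum of local invariants away from the distinguished prime(s), and use surjectivity (respectively the two-value property at two primes) to adjust the remaining local component(s) so that the total vanishes. Your explicit verification that $\{a+b : a,b\in\{1/3,2/3\}\}=\Z/3\Z$ makes precise what the paper dismisses as ``a moment's thought''.
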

\begin{proof}
	This is a standard argument, which we recall for completeness. 
	If $X(\Adele_\Q) = \emptyset$ then there is nothing to prove.
	So let $(x_v) \in X(\Adele_\Q)$ and $\varepsilon = \sum_{v \neq p} \inv_v \mathcal{A}_1(x_v) \in \Z/3\Z$.
	
	Assume that we are in the first case.
	Then there exists $y_p \in X(\Q_p)$ such that
	$\inv_p \mathcal{A}_1(y_p) = -\varepsilon$. Therefore, the adelic
	point given by replacing $x_p$ by $y_p$ lies in 
	$X(\Adele_\Q)^{\calA_1}$, thus $X(\Adele_\Q)^{\calA_1} \neq \emptyset$ 
	and hence there is no Brauer--Manin obstruction.
	
	In the last  case, we have $2$ primes $p,q$ where
	the local invariants take both values $1/3$ and $2/3$.
	A moment's thought reveals that no matter the sum of the local
	invariants away from $p$ and $q$,
	we can always choose local points at $p$ and $q$
	such that the sum of all local invariants is again trivial, 
	as required.
\end{proof}

\subsection{Upper bound}

We now prove the upper bound in Theorem \ref{thm:2} using the results from \S \ref{sec:counting}.

\begin{proposition} \label{prop:Upper_bound_2}
	There are at most $O(T^{3/2} \log \log T/(\log T)^{2/3})$
	non-zero integers
	$A_1,A_2,A_3$ such that $\max\{|A_i|\} \leq T$,
	$-3A_1$ is a square,  and there is a Brauer--Manin
	obstruction to the Hasse principle given by $\calA$.
\end{proposition}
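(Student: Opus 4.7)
Since $-3A_1$ is a rational square and $A_1\in\Z$, we must have $A_1 = -3t^2$ for some $t\in\Z$; the constraint $|A_1|\le T$ gives $|t|\le \sqrt{T/3}$, contributing $O(T^{1/2})$ choices for $A_1$. It therefore suffices to show that, for each such $A_1$, the number of pairs $(A_2,A_3)$ with $|A_2|,|A_3|\le T$ admitting a Brauer--Manin obstruction from $\calA$ is $O\bigl(T\log\log T/(\log T)^{2/3}\bigr)$.

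Fix $A_1 = -3t^2$. By Corollary \ref{cor:no_BM_3}, a Brauer--Manin obstruction forces (b$'$) $v_p(A_2/A_3)\equiv 0\pmod 3$ for every $p\notin S$ with $p\nmid A_1\gcd(A_2,A_3)$, and (c$'$) at most one prime satisfies the hypotheses of Lemma \ref{lem:almostsurj}. Setting $d=\gcd(A_2,A_3)$ and $A_i = d a_i$ with $\gcd(a_2,a_3)=1$, condition (b$'$) forces $v_p(a_i)\equiv 0\pmod 3$ for every $p\notin P_d:=S\cup\{p:p\mid 3td\}$; hence we may write $a_i = g_i k_i^3$ with $g_i$ cubefree supported on $P_d$ and $|k_i|\le (T/(|d|\,|g_i|))^{1/3}$. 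Set $r = g_2/g_3\in\Q^\times/\Q^{\times 3}$.

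A direct case analysis of Lemma \ref{lem:almostsurj} under this parametrization (using the fact that its hypotheses force $p\equiv 1\pmod 3$) identifies the eligible primes as either (i) $p\mid t$ with $v_p(t)\not\equiv 0\pmod 3$ and $p\nmid 3dg_2g_3k_2k_3$, or (ii) $p\mid d$ with $v_p(d)\not\equiv 0\pmod 3$, $p\equiv 1\pmod 3$, and $p\nmid 3tg_2g_3k_2k_3$; in both cases the Lemma applies precisely when $r\notin\Z_p^{\times 3}$. Condition (c$'$) caps the total number of such primes (over both types) at one. Splitting on the location of the exceptional prime (in $t$, in $d$, or absent), in every case one of the two variables admits \emph{no} exception while the other admits at most one. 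Applying Proposition \ref{prop:cube}(2) with zero exceptions to the first variable gives a saving of $(\log T)^{-1/3}$ (inspection of the proof of Proposition \ref{prop:cube}(2) shows that the zero-exception subcase omits the $\log\log$ factor), and applying Proposition \ref{prop:cube}(2) with one exception to the second gives $\log\log T/(\log T)^{1/3}$; their product is the target $\log\log T/(\log T)^{2/3}$.

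The remaining parameters are summed routinely: the $k_i$ contribute $\ll(T/d)^{2/3}\cdot(|g_2 g_3|)^{-1/3}$, and the sums over cubefree $g_2,g_3$ supported on primes of $3td$ are controlled by Lemma \ref{lem:rad2} combined with standard coprimality sieving, while the outer sum over $d$ (respectively $t$) absorbs the logarithmic factors from Proposition \ref{prop:cube}. The main obstacles are: (i) ensuring uniformity in $r$, since Proposition \ref{prop:cube} only delivers effective bounds for $|r|\le(\log T)^A$, so the complementary sparse range $|r|>(\log T)^A$ must be controlled separately via the valuative constraints from (b$'$); (ii) the degenerate case when $r$ is a genuine cube in $\Q^\times$, which makes (c$'$) vacuous but forces $A_2/A_3$ to be a global cube, bounded by a direct application of Proposition \ref{prop:cube}(1) to the primes of $t$; and (iii) careful case bookkeeping to avoid double-counting the exceptional prime across the $t$- and $d$-sums.
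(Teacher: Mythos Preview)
Your overall plan—use Proposition~\ref{prop:surjective_2} to force $A_2/A_3$ to be nearly a cube, then use Corollary~\ref{cor:no_BM_3} to cap the number of ``bad'' primes and feed this into Proposition~\ref{prop:cube}—is the paper's plan. The execution, however, has a structural gap.

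You open by asserting that it suffices to show, for each \emph{fixed} $A_1=-3t^2$, that the admissible $(A_2,A_3)$ number $O(T\log\log T/(\log T)^{2/3})$; you then try to extract one factor of $(\log T)^{-1/3}$ by applying Proposition~\ref{prop:cube} to the primes of $t$. But $t$ has been frozen, so no counting estimate can be applied to it. Your uniform-in-$A_1$ bound is in fact unattainable by this relaxation: take $A_1=-3$, so $t=1$ has no prime factors, and the only logarithmic saving available from your conditions comes from the sum over $d$, yielding at best $O(T\log\log T/(\log T)^{1/3})$. The missing factor of $(\log T)^{-1/3}$ has to come from \emph{averaging} over $t$, which is incompatible with fixing $A_1$ first.

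Even if you unfix $A_1$, your factorisation is not set up to carry this through: you define $g_i$ as the cubefree part of $a_i$ supported on the primes of $3td$, so $r=g_2/g_3$ depends on both $t$ and $d$. You therefore cannot hold $r$ constant while summing over either variable, which is exactly what Proposition~\ref{prop:cube} requires (it is uniform only for $r\le(\log T)^A$). The paper resolves this by a different decomposition $A_i=gg_iu_iz_i$ in which four auxiliary variables $u_2,u_3,g_2,g_3$ alone determine $r=g_2u_2/g_3u_3$; a preliminary step (using Lemma~\ref{lem:rad}) shows these may be taken $\le(\log T)^A$ up to acceptable error, after which the two genuinely large variables $a_1$ (subject to $\rad(u_2u_3)\mid 3a_1$) and $g$ (subject to $\rad(g_2g_3)\mid g$) are summed independently via Proposition~\ref{prop:cube}(1) and~(2), each delivering $(\log T)^{-1/3}$. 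Without this separation of small outer variables (which pin down $r$) from large inner variables (to which Proposition~\ref{prop:cube} applies), the argument does not close.
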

\begin{proof}
	If $A_2/A_3$ is a cube, then $\calA$ is trivial
	so does not give a Brauer--Manin obstruction. Hence we assume
	that $A_2/A_3 \notin \Q^{\times3}$.
	We next factorise the $A_i$ to unravel
	the conditions given by Corollary~\ref{cor:no_BM_3}.
	Write $A_1 = -3a_1^2$.
	For $i \in \{2,3\}$ we write
	$$A_i = gg_iu_iz_i$$
	where
	$u_i = \prod_{p \mid \gcd(A_i,A_1)} p^{v_p(A_i)}$,
	$g = \gcd(A_2/u_2,A_3/u_3),$
	$g_i = \prod_{p \mid g}p^{v_p(A_i/u_ig)}$, $z_i = A_i/gg_iu_i$.
	Here $u_2$ consists of primes which appear
	in both $A_1$ and $A_2$, whilst $g_2$ consists of primes 
	which appear in both $A_2$ and $A_3$ (similarly with $u_3$ and $g_3$).
	Moreover by construction $\rad(u_2u_3) \mid A_1$ and $\rad(g_2g_3) \mid g$.
	We claim that there exists a finite set of primes $S$ such that 
	if there is a Brauer--Manin obstruction, then 
	$z_i \bmod \Q^{\times3} \in \langle p \in S \rangle \subset \Q^\times/\Q^{\times3}$,
	i.e.~there are only finitely many possibilities for $z_i$
	modulo cubes. Indeed, let $p \mid z_2$ (say) with $v_p(z_2) \not \equiv 0 \bmod 3$
	and note that $p \nmid A_1A_3$ by construction. Providing $p$ is sufficiently large,
	it follows from Proposition \ref{prop:surjective_2}
	that there is no Brauer--Manin obstruction, since 
	$v_p(A_2/A_3) = v_p(gg_2u_2z_2) = v_p(z_2)	\not \equiv 0 \bmod 3$. Thus $p \in S$, for some finite set of primes $S$.
	
	Enlarging $S$ if necessary, it follows from Proposition \ref{prop:surjective_2a}
	that for all primes $p \mid a_1$ with $p \notin S$ and $p \nmid u_2u_3$,
	we must have that $A_2/A_3$ is a cube in $\Q_p$. Similarly,
	it follows from Proposition \ref{prop:BC} that there is at most one prime 
	$q$ with $q\parallel  g, q \notin S$ and $q \nmid g_2g_3$ such that 
	$A_2/A_3$ is not a cube in $\Q_q$. 
	We now combine all these conditions together.
	Recall that there are only finitely many choices for the $z_i$ modulo cubes.
	For simplicity of exposition, we consider only the 
	case $z_i = a_i^3$ is a cube; the other cases being analogous.
	The set being counted is thus smaller than
	$$\# \left\{ a_i, g, g_i, u_i:
	\begin{array}{l} 
	|a_1| \leq \sqrt{T}, |gg_iu_ia_i^3| \leq T, r \notin \Q^{\times3}, \rad(u_2u_3)\mid 3a_1, \rad(g_2g_3) \mid g; \\
	p\mid a_1, p \nmid u_2u_3, p \notin S \implies r \in \Q_p^{\times3}; \\
	\mbox{there is at most one prime } q \mbox{ with } q \parallel g,
	q \notin S, q \nmid g_2g_3,
	r \not \in \Q_q^{\times3}	
	\end{array} \right\}$$
	where we use the shorthand $r = g_2u_2/g_3u_3 (= A_2/A_3 \text{ up to cubes})$.
	We will show that this is bounded by the quantity
	in the statement of the proposition.
	Let $S_r = S \cup \{p \mid g_2g_3u_2u_3\}$.
	Consider the indicator functions
	\begin{align*}
	&\varpi_{r}(n) = 1 \iff \mbox{if } p \mid n, p \notin S_r 
	\mbox{ then } r \in \Q_p^{\times3}, \\
	& \varpi'_{r}(n) = 1 \iff
	 \mbox{There is at most one prime } q \parallel  n \mbox{ with } q \notin S_r
	 \mbox{ such that } 
	r \not \in \Q_q^{\times3}.
	\end{align*}
	Summing over $a_2,a_3$, and recalling the notation
	$r= g_2u_2/g_3u_3$,	the quantity in question is bounded above by 	
	\begin{align}
	&\sum_{\substack{u_i,g_i\leq T \\ r \notin \Q^{\times3}}}
 \sum_{\substack{|a_1|\leq \sqrt{T} \\ \rad(u_2u_3) \mid 3a_1}} \varpi_{r}(a_1)
 \sum_{\substack{g\leq T \\ \rad(g_2g_3) \mid g }} \varpi'_{r}(g)
	\#\{a_2,a_3: |	gg_iu_ia_i^3| \leq T\} \nonumber  \\
	&\ll T^{2/3}\sum_{\substack{u_i,g_i\leq  T \\ r \notin \Q^{\times3}}}
	\frac{1}{(g_2g_3u_2u_3)^{1/3}}
 \sum_{\substack{|a_1|\leq \sqrt{T} \\ \rad(u_2u_3) \mid 3a_1}}\varpi_{r}(a_1)
 \sum_{\substack{g\leq T \\ \rad(g_2g_3) \mid g}}
 	\frac{\varpi'_{r}(g)}{g^{2/3}}. \label{eqn:eek}
	\end{align}
	We first show that the $u_i,g_i$ can be taken to be small. Namely,
	let $A > 0$ be sufficiently large. Then 
	the contribution from $u_2 > (\log T)^A$ is 
	\begin{align*}
	 &\ll T^{2/3}\sum_{\substack{u_i,g_i\leq T \\ u_2 > (\log T)^A}}
 \sum_{\substack{|a_1|\leq \sqrt{T} \\ \rad(u_2u_3) \mid 3a_1}} 
 \sum_{\substack{g\leq T \\ \rad(g_2g_3) \mid g }}  \frac{1}{(g^2g_2g_3u_2u_3)^{1/3}} \\
 & \ll T^{3/2} \sum_{\substack{u_i,g_i\leq T \\ u_2 > (\log T)^A}}
 \frac{1}{(g_2g_3u_2u_3)^{1/3}\rad(g_2g_3)\rad(u_2u_3)}.
	\end{align*}	
	The summation over $g_2,g_3$ is convergent; indeed this is
	$$ \sum_{g_2,g_3 \leq T}
 \frac{1}{(g_2g_3)^{1/3}\rad(g_2g_3)}
 \ll  \sum_{n \leq T^2}
 \frac{\tau(n)}{n^{1/3}\rad(n)}$$
 which converges by Lemma \ref{lem:rad} on applying the usual divisor bound $\tau(n) \ll_\varepsilon n^{\varepsilon}$. Similarly, the sums over $u_2,u_3$ are 
 	$$\ll  \sum_{(\log T)^A < n \leq T^2}
 \frac{\tau(n)}{n^{1/3}\rad(n)} \ll \frac{1}{(\log T)^{A/6}} \sum_{ n \leq T^2}
 \frac{\tau(n)}{n^{1/6}\rad(n)}$$
 and this  sum converges by Lemma \ref{lem:rad}.
	Altogether, we find that the contribution from 
	$u_2 > (\log T)^A$ is $O(T^{3/2}/(\log T)^{A/6})$. 
	Providing $A$ is  large, this is
	negligible for the statement. A similar argument applies to $u_3, g_2,g_3$, so we may assume
	that $u_i,g_i \leq (\log T)^A$. We are thus
	in a position to apply Proposition \ref{prop:cube}
	to the sums over
	$a_1$ and $g$. We therefore find that 	
	our quantity \eqref{eqn:eek} is bounded above by
	\begin{align*}
	&\ll T^{2/3}\sum_{\substack{u_i,g_i\leq (\log T)^A}}
	\frac{1}{(g_2g_3u_2u_3)^{1/3}}\cdot \frac{\sqrt{T}}{\rad(u_2u_3)(\log T)^{1/3}}
	\cdot \frac{T^{1/3} (\log \log T)}{\rad(g_2g_3)(\log T)^{1/3}} \\
	&\ll \frac{T^{3/2} (\log \log T)}{(\log T)^{2/3}}\sum_{\substack{u_i,g_i\leq (\log T)^A}}
	\frac{1}{(g_2g_3u_2u_3)^{1/3}\rad(g_2g_3)\rad(u_2u_3)}
	\end{align*}
	and this final sum is  convergent by
	Lemma \ref{lem:rad}.
	This completes the proof.
\end{proof}

\subsection{Lower bound}
We prove the lower bound in Theorem \ref{thm:2} by constructing an explicit family of counter-examples.

\begin{proposition} \label{prop:lower_bound_family_2}
There exists an integer $M$ with $3 \nmid M$ such that the following holds.
Consider the family of surfaces
$$w^2=c^2ax_1^6+bqx_2^6-c^4bqx_3^6.$$
Let $a<0$ and $b,c,q>0$ be integers satisfying the following conditions.
\begin{itemize}
\item $-3a$ is a square.
\item $c\equiv2\bmod 3$ is a prime such that $c$ is a cube in $\Q_3$.
\item $q \equiv 1 \bmod 3$ is a prime such that $c$ is not a cube in $\Q_q$.
\item $p\mid ab\implies c$ is a cube in $\Q_p$
\item $b\equiv q \equiv 1 \bmod c$.
\item $a \equiv b \equiv q \equiv 1 \bmod M$.
\end{itemize}
Then for such surfaces $\calA$ gives a Brauer--Manin obstruction to the Hasse principle.

\end{proposition}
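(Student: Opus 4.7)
The plan is to show that $\calA := \calA_1$ of Proposition~\ref{prop:algebras} obstructs the Hasse principle on every such surface, by computing its local invariant at every place. Writing $A_1 = c^2 a$, $A_2 = bq$, $A_3 = -c^4 bq$, the algebra $\calA$ is defined because $-3A_1 = c^2(-3a)$ is a square. Since $-1 = (-1)^3$ is a cube and $c^{-4}\cdot c^6 = c^2$,
\[
A_2/A_3 \;=\; -1/c^4 \;\equiv\; c^2 \pmod{(\Q^\times)^3}.
\]
Hence at every prime $p$, ``$A_2/A_3 \in (\Q_p^\times)^3$'' is equivalent to ``$c \in (\Q_p^\times)^3$'', which explains the shape of the hypotheses. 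The aim is to prove that $X(\Adele_\Q) \neq \emptyset$, that $\inv_v \calA = 0$ on all of $X(\Q_v)$ for every $v \neq q$, and that $\inv_q \calA(X(\Q_q)) = \{1/3, 2/3\}$; the sum of invariants is then forced to lie in $\{1/3, 2/3\}$, giving the obstruction.

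Local solubility is achieved by taking $M$ divisible by a sufficient power of each prime in some fixed finite bad set, so that the point $(x_1, x_2, x_3) = (0, 1, 0)$ yields $w^2 = bq$, a positive real and a square in $\Q_p$ whenever $b, q \equiv 1$ modulo a high enough power of $p$; the remaining places in $\{3, c, q\}$ are handled directly using the congruences $b, q \equiv 1 \bmod c$ and the cube hypothesis at $3$. For the invariants: $\inv_\infty \calA = 0$ since $\Br \R$ is $2$-torsion; for $p \nmid 3abcq$ Lemma~\ref{lem:trivial_2} applies since all $v_p(A_i)$ vanish; and for $p = 3$ or $p \mid ab$ with $p \notin \{c, q\}$, the hypothesis that $c$ is a cube in $\Q_p$ combined with $A_2/A_3 \equiv c^2$ implies $A_2/A_3 \in (\Q_p^\times)^3$, so $\calA$ vanishes in $\Br \Q_p$ and its invariant is identically zero. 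At $p = q$, the failure of the cube hypothesis for $c$ forces $q \nmid ab$ (via the contrapositive of ``$p \mid ab \Rightarrow c \in (\Q_p^\times)^3$''); combined with $q \neq c$ this gives $q \nmid A_1$ and $q \parallel A_2, q \parallel A_3$. Since $c \notin (\Q_q^\times)^3$ and $\gcd(2,3) = 1$, $c^2 \equiv A_2/A_3$ is also not a cube in $\Q_q$, and Proposition~\ref{prop:BC} yields $\inv_q \calA(X(\Q_q)) = \{1/3, 2/3\}$.

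The main obstacle is the place $p = c$, where $v_c(A_1) = 2$, $v_c(A_2) = 0$, $v_c(A_3) = 4$ are pairwise incongruent modulo $3$, so neither Lemma~\ref{lem:trivial_2} nor any cube condition applies directly. Since $c \equiv 2 \bmod 3$, $c$ is inert in $K = \Q(\omega)$; let $\pp$ denote the unique prime above $c$, in which $c$ is a uniformiser. By Lemma~\ref{lem:cubic_invariant} (with $\beta = 1$), the local invariant at $c$ equals the single cubic Hilbert symbol $(A_2/A_3, u)_{\omega, \pp}$, where $u = (w - c\sqrt{a}\, x_1^3)/(w + c\sqrt{a}\, x_1^3) \in K_\pp^\times$ and $\sqrt{a} = \sqrt{-3a}/\sqrt{-3} \in K$ (using that $-3a$ is a rational square). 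The key claim is that $u \equiv \pm 1 \bmod \pp$ for every $x \in X(\Q_c)$; since $\pm 1$ are cubes in $K_\pp$ by Hensel's lemma (which applies because $c \neq 3$), this forces $\inv_c \calA = 0$ on every local point.

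Because $v_\pp(c\sqrt{a}\, x_1^3) = 1 + 3 v_c(x_1)$, the claim reduces to showing $v_c(w) \neq 1 + 3 v_c(x_1)$. Suppose for contradiction $v_c(w) = 1 + 3e$ with $e := v_c(x_1)$; then $v_c(w^2) = 2 + 6e$ lies in residue class $2$ modulo $6$. The three right-hand terms have valuations $v_c(A_1 x_1^6) = 2 + 6e$, $v_c(A_2 x_2^6) = 6 v_c(x_2)$, $v_c(A_3 x_3^6) = 4 + 6 v_c(x_3)$, occupying the distinct classes $2, 0, 4$ modulo $6$; since they are pairwise unequal, no cancellation across terms is possible, and the unique term of class $2$ must dominate. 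This forces $v_c(x_2) \geq e+1$ and $v_c(x_3) \geq e$, and dividing through by $c^{2+6e}$ and reducing modulo $c$ yields
\[
(w/c^{1+3e})^2 \;\equiv\; a\,(x_1/c^e)^6 \pmod{c}.
\]
Hence $a$ is a square modulo $c$. But $-3a$ is a rational square and $c \nmid 3a$ (from $c \neq 3$ and the cube conditions forcing $c \nmid a$), so $a$ is a square mod $c$ if and only if $-3$ is; by quadratic reciprocity this is equivalent to $c \equiv 1 \bmod 3$, contradicting $c \equiv 2 \bmod 3$. This establishes the key claim, completes the vanishing of all invariants away from $q$, and yields the Brauer--Manin obstruction.
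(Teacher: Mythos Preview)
Your overall strategy matches the paper's exactly: verify local solubility everywhere, show the invariant is identically zero away from $q$, and invoke Proposition~\ref{prop:BC} at $q$. Two remarks.

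\textbf{A gap in local solubility.} Your treatment of local solubility overlooks the primes $p \mid ab$ that lie outside the fixed set $M \cup \{3,c,q\}$. For such $p$ the reduction is not smooth and the point $(0:1:0)$ does not help (e.g.\ if $p \mid b$ then $bq$ is not a unit, let alone a square). The paper closes this with the observation that for $p \mid ab$ the hypothesis forces $c \in \Q_p^{\times 3}$, so $(0:0:c^{2/3}:1)$ is a genuine $\Q_p$-point on the surface; this also covers $p=3$, since $-3a \in \Q^{\times 2}$ implies $3 \mid a$. You clearly have this cube hypothesis in hand (you use it for the invariant at $p \mid ab$), so the fix is immediate, but as written the local solubility step is incomplete.

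\textbf{A cleaner argument at $p=c$.} Your treatment of the invariant at $p=c$ is genuinely different from, and tidier than, the paper's. The paper rearranges to express $n = u$ in terms of the other coordinates, then runs a case analysis on $v_\pp(n)$ and shows that either $n \equiv \pm 1 \bmod \pp$ or the unit part of $n$ is visibly a cube coming from $\Q_c$. You instead reduce everything to the single valuation claim $v_c(w) \neq 1 + 3v_c(x_1)$, then kill the bad case with a quadratic-residue contradiction: equality would force $a$ to be a square modulo $c$, hence (via $-3a \in \Q^{\times 2}$) force $-3$ to be a square modulo $c$, contradicting $c \equiv 2 \bmod 3$. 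This is a nice observation; it uses the hypothesis $c \equiv 2 \bmod 3$ in a second, more arithmetic way beyond ``$c$ is inert in $K$'', and avoids the paper's explicit manipulation of $n$ altogether.
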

\begin{proof}
Note that our conditions imply that $\gcd(ab,qc) = 1$ and $q \neq c$. The integer $M$ is only used to guarantee local solubility at small primes. Our proof of Theorem \ref{thm:2} will show that such choices of the coefficients actually exist. We use throughout the formula given in Lemma \ref{lem:cubic_invariant}.

\medskip
\noindent {\it Showing local solubility}.
We first show that these surfaces are everywhere locally soluble. There is an $\R$-point given by $(\sqrt{bq}:0:1:0)$. So let $p$ be a prime.

{\bf Case $p\mid ab$.} Since $c$ is a cube in $\Q_p$, we can take the point $(0:0:c^{2/3}:1)$.
(Note that this covers the case $p = 3$ since $3 \mid a$.)

{\bf Case $p=q$.} Since $q\equiv1\bmod 3$, $a$ is a square in $\Q_p$, so take  $(c\sqrt{a}:1:0:0)$.

{\bf Case $p=c$.} Since $bq$ is a square modulo $c$, take the point $(\sqrt{bq}:0:1:0)$.

{\bf Case $p \mid M$.} We choose $M$ to be a product of powers of sufficiently many small
primes including $2$ but excluding $3$. The surface is congruent modulo $p^{v_p(M)}$ to 
$$w^2 = c^2x_1^6 + x_2^6 - c^4x_3^6.$$
This has the smooth $\Q_p$-point $(1:0:1:0)$. Providing the $p$-adic valuation of $M$ is chosen sufficiently large, our original surface thus also has a $\Q_p$-point.

{\bf Case $p\nmid abcqM$.} Here the surface is smooth modulo $p$. Providing $M$ is divisible by sufficiently many small primes, the Lang--Weil estimates show that there is a smooth $\F_p$-point, which thus lifts to a $\Z_p$-point by Hensel's lemma.

\medskip
\noindent {\it Showing existence of a Brauer--Manin obstruction}.
Let $p$ be a prime. We consider the invariant maps $X(\Q_p)\to \Z/3\Z$ given by pairing with $\calA$. Note that $A_2/A_3 = -c^{-4}$ depends only on $c$. We will show the local invariants are trivial for $p\neq q$, but for $p=q$ the local invariants take exactly the values $1/3$ and $2/3$. From this, it is clear that there is a Brauer--Manin obstruction.

{\bf Case $p\nmid abcq$.} The invariant map is trivial by Lemma \ref{lem:trivial_2}.

{\bf Case $p\mid ab$.} The invariant map is trivial since $c$ is a cube in $\Q_p$. (This includes the case $p = 3$ since $3 \mid a$.)

{\bf Case $p=q$.} The invariant map takes on exactly the values $1/3$ and $2/3$ by Proposition  \ref{prop:BC}.

{\bf Case $p=c$.} This is the most difficult case.
Let $w,x_1,x_2,x_3$ be a solution over $\Z_p$. Moreover, as the invariant map is locally constant, we may assume that $w^2\neq c^2ax_1^6$. Let
$$n:=\frac{w-c\sqrt{a}x_1^3}{w+c\sqrt{a}x_1^3},$$
so that $n\in K_\pp=\Q_p(\omega)$ where $\pp=(p)$ is the prime ideal lying above $p$. Rearranging gives
$$w=\frac{1+n}{1-n}c\sqrt{a}x_1^3.$$
Plugging back into the original equation gives
$$c^2a\left(\frac{(1+n)^2}{(1-n)^2}-1\right)x_1^6=bqx_2^6-c^4bqx_3^6.$$
After simplifying, we get
$$n=(1-n)^2\frac{bqx_2^6-c^4bqx_3^6}{4c^2ax_1^6}.$$
Switching $x_1$ with $-x_1$, we can assume $v_\pp(n)\geq0$. First suppose $v_\pp(n)~=~0$. Then $v_\pp(w-c\sqrt{a}x_1^3)=v_\pp(w+c\sqrt{a}x_1^3)=:i$. Then we have one of the following.
\begin{enumerate}
\item $v_\pp(w)=i,$ \quad $v_\pp(cx_1^3)=i$.
\item $v_\pp(w)=i,$ \quad $v_\pp(cx_1^3)>i$.
\item $v_\pp(w)>i,$ \quad $v_\pp(cx_1^3)=i$.
\end{enumerate}
In cases (2) and (3), we have $n\equiv 1,-1\bmod p$ respectively, so the invariant is trivial as $n \in \OO_\pp^{\times3}$. Consider now case (1). In particular we must have $i\equiv1\bmod 3$. Hence $v_\pp(w^2-c^2ax_1^6)=2i\equiv 2\bmod 3$. However, $v_\pp(bqx_2^6-c^4bqx_3^6)\equiv 0,1\bmod 3$ which is a contradiction.

Now assume $v_\pp(n)>0$. Write $n=p^in'$ where $v_\pp(n')=0$. We have
$$n'\equiv p^{-i}\frac{bqx_2^6-c^4bqx_3^6}{4c^2ax_1^6}\bmod p.$$
Since the right hand side only involves terms from $\Q_p$, we have $n'\equiv m\bmod p$ for some $m\in \Z_p^\times$. Thus by \eqref{eqn:2mod3} and Hensel's lemma we have $n' \in \OO_\pp^{\times3}$. Hence
\[\left( n, p\right)_{\omega,\pp}=(p^i,p)_{\omega,\pp} + \left(n', p\right)_{\omega,\pp}=0, \]
where we use that $(p^i,p)_\omega = 0$ as a cyclic algebra (as $p$ is a norm from $K(\sqrt[3]{p})$).
\end{proof}

\begin{proposition}\label{prop:lower_bound_2}
	There are at least $\gg T^{3/2}\log\log T/(\log T)^{2/3}$ 
	non-zero integers $A_i$ such that $\max\{|A_i|\} \leq T$,
	$-3A_1$ is a square, 
	and there is a Brauer--Manin
	obstruction to the Hasse principle given by $\calA$.
\end{proposition}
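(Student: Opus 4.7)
The plan is to count triples $(A_1, A_2, A_3) = (c^2 a,\, bq,\, -c^4 bq)$ produced by Proposition \ref{prop:lower_bound_family_2}, by fixing a single suitable prime $c$ independent of $T$ and summing over admissible tuples $(a, b, q)$ subject to $|a| c^2 \leq T$ and $b q c^4 \leq T$.

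First I would fix $c$. By the Chebotarev density theorem applied to $\Q(\mu_9)/\Q$ there exist primes $c \equiv 2 \pmod 3$ which are cubes in $\Q_3$; choose one, and (harmlessly) enlarge the integer $M$ from Proposition \ref{prop:lower_bound_family_2} so that the congruences it imposes on $a = -3 a_1^2$, on $b$, and on $q$ modulo $M$ and modulo $c$ are simultaneously realisable. Next, let $\rho \colon \N \to \{0,1\}$ be the completely multiplicative function with $\rho(p) = 1$ iff $p \nmid 3c$ and $c \in \Q_p^{\times 3}$. Using $\Gal(\Q(\mu_3, c^{1/3})/\Q) \cong S_3$, one sees that the set of Frobenius elements with a fixed point on the roots of $x^3-c$ has density $4/6$, so $\rho$ is a frobenian multiplicative function of mean $2/3$.

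I would then count the admissible $a_1 \leq \sqrt{T/(3c^2)}$: the constraint $-3 a_1^2 \equiv 1 \pmod M$ cuts out a finite nonempty union of residue classes modulo $M$, and on each such class Proposition \ref{prop:lower_bound_frob} (or rather the minor generalisation of its proof to an arbitrary class coprime to $M$, via Dirichlet characters) gives $\gg \sqrt{T}/(\log T)^{1/3}$ valid $a_1$. For the pairs $(b,q)$ with $bq \leq T/c^4$, the admissible primes $q$ (those with $q \equiv 1 \pmod{3cM}$ and $c \notin \Q_q^{\times 3}$) form a set of positive Dirichlet density by Chebotarev on $\Q(\mu_{3cM}, c^{1/3})/\Q$; for each such prime $q \leq (T/c^4)^{1/2}$, the count of valid $b \leq T/(c^4 q)$ is $\gg T/(c^4 q (\log T)^{1/3})$ by the same proposition. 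Summing over $q$ and applying Mertens' theorem in arithmetic progressions,
\[
\sum_{\substack{q \leq (T/c^4)^{1/2} \\ q \text{ admissible}}} \frac{1}{q} \gg \log \log T,
\]
gives $\gg T \log\log T/(\log T)^{1/3}$ admissible pairs $(b,q)$.

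Multiplying the two counts yields $\gg T^{3/2} \log\log T/(\log T)^{2/3}$ admissible tuples $(a,b,q)$, each of which produces by Proposition \ref{prop:lower_bound_family_2} a surface in our family with a Brauer--Manin obstruction from $\calA$. Distinct tuples give distinct triples $(A_1,A_2,A_3)$: with $c$ fixed, $a = A_1/c^2$ is determined, and $q$ is recoverable as the unique prime divisor of $|A_2|$ at which $c$ fails to be a cube, since $\gcd(b,q) = 1$ and every prime divisor of $b$ satisfies the opposite condition; then $b = A_2/q$. The main obstacle I anticipate is the bookkeeping required to apply Proposition \ref{prop:lower_bound_frob} in a nontrivial residue class and in tandem with Mertens' theorem, but since each constraint reduces to a finite union of arithmetic progressions compatible with the multiplicative structure, the frobenian lower bound applies after restricting to a single progression of positive density.
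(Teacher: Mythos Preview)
Your approach is essentially the same as the paper's: fix one admissible prime $c$ (the paper simply takes $c=17$), count $a_1$ and $b$ using Proposition~\ref{prop:lower_bound_frob}, and sum $1/q$ over the admissible primes $q$ via Chebotarev and Mertens.

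One point deserves care. You propose to apply Proposition~\ref{prop:lower_bound_frob} in a residue class other than $1\bmod M$, calling this a ``minor generalisation''; but Remark~\ref{rem:bad} shows that this can genuinely fail. It does go through for the specific function $\varpi_c$ at hand (since $\varpi_c(p)=1$ for every $p\equiv 2\bmod 3$ and, as $3\nmid M$, such primes meet every residue class modulo $M$, forcing any Dirichlet character $\chi$ with $|m(\chi\varpi_c)|=m(\varpi_c)$ to be trivial), but this needs to be said. The paper sidesteps the issue entirely: because the modulus $M$ in Proposition~\ref{prop:lower_bound_family_2} exists only to guarantee local solubility at small primes, one may rig the family so that the congruence falls on $a_1$ rather than on $a=-3a_1^2$, and reads $a_1\equiv 1\bmod M$; then Proposition~\ref{prop:lower_bound_frob} applies verbatim. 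Note also that ``enlarging $M$'' cannot help with simultaneous realisability --- it only adds constraints; the correct move is to adjust the target residue class in the family construction.
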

\begin{proof}
It thus suffices to count the surfaces in Proposition \ref{prop:lower_bound_family_2}. We do this for $c = 17$, which is easily checked to be a cube in $\Q_3$ as is necessary for the proposition.
As we only seek a lower bound, we may assume that $q \leq T^{\varepsilon}$ for some small $\varepsilon > 0$. We let $\varpi_c(n)$ be the indicator function for those integers $n$ with $p \mid n \implies c \in \Q_p^{\times3}$. Noting that $\varpi_c$ is frobenian with $m(\varpi_c) = 2/3$, we may apply Proposition \ref{prop:lower_bound_frob} to see that the quantity in question is bounded below by
\begin{align*}
  \gg \sum_{\substack{q \leq T^\varepsilon \\ c \notin \Q_q^{\times3} \\ q \equiv 1 \bmod cM}}
\sum_{\substack{a \leq \sqrt{T} \\ a \equiv 1 \bmod M}} \varpi_c(a)
 \sum_{\substack{ bq \leq T \\ b \equiv 1 \bmod cM}} \varpi_c(b) 
&\gg \frac{T^{1/2}}{(\log T)^{1/3}} 
\sum_{\substack{q \leq T^\varepsilon \\ c \notin \Q_q^{\times3} \\ q \equiv 1 \bmod cM}}\frac{T}{q\log(T/q)^{1/3}}.
\end{align*}
The conditions $q \equiv 1 \bmod cM$ and $c \notin \Q_q^{\times 3}$ hold if $q$ is completely split in $\Q(\mu_{3cM})$ but not completely split in $\Q(c^{1/3})$. These are conditions are independent as these number fields are linearly disjoint. Thus we may sum over $q$ using the Chebotarev density theorem and partial summation to obtain
$$\sum_{\substack{q \leq T^\varepsilon \\ c \notin \Q_q^{\times3} \\ q \equiv 1 \bmod cM}}\frac{T}{q\log(T/q)^{1/3}}. \ll \frac{T^{3/2}\log\log T}{(\log T)^{2/3}}$$
which combined with the above bounds gives the result.
\end{proof}

Theorem \ref{thm:2} now follows from Propositions \ref{prop:Upper_bound_2} and \ref{prop:lower_bound_2}.  \qed

\begin{remark} \label{rem:Merten}
	The proof of Theorem \ref{thm:2} 
	makes clear how the special factor $\log \log T$
	arises. This is due to the rogue  prime $q$;
	this has the property that the local invariants take exactly
	the values $1/3$ and $2/3$ at $q$. We are only allowed one
	such prime, as otherwise there is no Brauer--Manin
	obstruction (cf.~Corollary \ref{cor:no_BM_3}).
	The prime $q$ divides one of the coefficients;  when we sum
	over this coefficient a term of the shape 
	$$\sum_{\substack{q \text{ prime} \\ q \leq T}} \frac{1}{q}$$
	appears. The sum over this has order $\log \log T$, by
	Mertens' theorem.
\end{remark}

\section{Brauer--Manin obstruction from a quaternion algebra}\label{sec:index3}

\subsection{Set-up}
Let $A_1,A_2,A_3$ be nonzero integers such that $A_2/A_3$ is a cube. We now play a similar game to the previous section, but using the quaternion algebra $\calB:=\calB_1$ from \S\ref{sec:Brauer_elements}.

\begin{theorem} \label{thm:3+6}
	There are $\asymp T^{3/2}/(\log T)^{3/8}$ non-zero integers
	$A_1,A_2,A_3$ such that $\max\{|A_i|\} \leq~T$, 
	$A_2/A_3$ is a cube, and there is a Brauer--Manin
	obstruction to the Hasse principle given by $\calB$.
\end{theorem}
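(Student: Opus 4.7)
The plan is to mirror the four-step strategy used for Theorem \ref{thm:2}, but now working with the quaternion algebra $\calB$ whose local invariants land in $\tfrac{1}{2}\Z/\Z$, and extracting the power savings from Lemma \ref{lem:square} instead of Proposition \ref{prop:cube}. First I would give a clean formula for $\inv_p \calB$. Writing $A_3/A_2 = \gamma^3$ with $\gamma \in \Q^\times$, the relations \eqref{eqn:cyclic_properties} rewrite
\[
\calB = (A_1,\,x_2^2 + \gamma x_3^2) \in \Br \Q(X),
\]
so the invariant at a prime $p$ is just the quadratic Hilbert symbol $(A_1, x_2^2 + \gamma x_3^2)_p$. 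In particular, at primes $p$ where $A_1$ is a square in $\Q_p$ the invariant vanishes, so the interesting case is $v_p(A_1)$ odd.

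Next I would establish analogues of Lemmas \ref{lem:trivial_2}, \ref{lem:almostsurj} and Propositions \ref{prop:BC}, \ref{prop:surjective_2a}, \ref{prop:surjective_2}. The arguments are similar in spirit: (i) triviality of $\inv_p \calB$ when the valuations $v_p(A_i)$ satisfy a parity compatibility condition modulo $2$; (ii) at least one nonzero value when $v_p(A_1)$ is odd and certain residues are non-squares; (iii) full surjectivity at primes dividing $A_1$ once $p$ is sufficiently large, proved by producing smooth $\F_p$-points on a suitable auxiliary curve via Hasse--Weil. The upshot, as in Corollary \ref{cor:no_BM_3}, is that the existence of a Brauer--Manin obstruction from $\calB$ forces $A_1$ to be essentially a square (its squarefree part supported in a fixed finite set of bad primes) together with several Legendre-symbol constraints on the primes that divide the coefficients to odd order.

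For the upper bound I would parametrize the tuples: write $A_1 = \pm s b^2$ with $s$ squarefree and confined to a fixed finite set, and $A_3/A_2 = (\alpha/\beta)^3$ in lowest terms so that $A_2 = m \beta^{3}$, $A_3 = m \alpha^3$ with $\gcd(\alpha,\beta)=1$. Separating out small auxiliary factors as in the proof of Proposition \ref{prop:Upper_bound_2}, the count reduces to summing an indicator function of the exact type appearing in Lemma \ref{lem:square}, with the triple $(r_1,r_2,r_3)$ built out of $m, \alpha, \beta$. The generic ``otherwise'' case of that lemma has density $5/8$ and yields the factor $(\log T)^{-3/8}$; combined with the $\asymp \sqrt{T}$ choices of $b$ and the $\asymp T$ pairs $(A_2,A_3)$ with cube ratio, this gives the upper bound $O(T^{3/2}/(\log T)^{3/8})$.

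For the lower bound I would construct an explicit family analogous to Proposition \ref{prop:lower_bound_family_2}, roughly of the shape
\[
w^2 = a c^2 x_1^6 + q\bigl(x_2^6 + c^3 x_3^6\bigr),
\]
with $a$ having a fixed squarefree part, $q$ a prime in prescribed congruence and splitting classes, and $c$ a prime with controlled splitting behaviour, arranged so that the local invariants of $\calB$ vanish away from a single ``rogue'' prime where the sum is forced to equal $1/2$. Positivity of the resulting count is then extracted by applying Proposition \ref{prop:lower_bound_frob} and the Chebotarev density theorem to a completely multiplicative frobenian function of mean $5/8$, which gives $\gg T^{3/2}/(\log T)^{3/8}$. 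The main obstacle I anticipate is arranging the local conditions so that the generic $5/8$ case of Lemma \ref{lem:square} is actually attained: the relevant Legendre symbols of $A_1$, $A_2$, $A_3$ and their products must be linearly independent modulo the appropriate biquadratic extension, and verifying this requires a careful choice of parameters in the explicit family.
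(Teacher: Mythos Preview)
Your overall architecture is right, but two concrete points need correction.

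\textbf{Local invariants.} You focus on primes with $v_p(A_1)$ odd, but that is not where the logarithmic saving comes from (those primes just force $A_1$ to be essentially a square, via Lemma~\ref{lem:surjective2tor}). The decisive primes are those with $v_p(A_1)$ \emph{even}, $A_1 \notin \Q_p^{\times 2}$, and $v_p(A_2)=v_p(A_3)$ odd; these are the primes exactly dividing the common part $g=\gcd(A_2,A_3)$ after stripping factors shared with $A_1$. At such $p$ two things happen simultaneously (Lemma~\ref{lem:surjective2tor3}): local solubility already \emph{forces} $-A_2/A_3 \in \Q_p^{\times 2}$, and then $\inv_p\calB$ is surjective onto $\Z/2\Z$ if $p\equiv 1 \bmod 3$ but is the constant $1/2$ if $p\equiv 2 \bmod 3$. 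This is what feeds Lemma~\ref{lem:square}: one takes $r_1\equiv A_1$, $r_2\equiv -A_2/A_3$, and $r_3=-3$ \emph{fixed}, not built from $m,\alpha,\beta$. Part of the saving thus comes from failure of local solubility rather than from $\calB$; the paper remarks on exactly this contrast with Theorem~\ref{thm:2}. One also has to first dispose of the degenerate cases $A_1, -A_2/A_3, -3A_1, 3A_1A_2A_3 \in \Q^{\times 2}$ (in the last two $\calB$ is a constant algebra).

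\textbf{Lower bound.} Your family has $A_2=q$ with $q$ a varying prime. Even allowing $c$ to vary, the number of pairs $(A_2,A_3)=(q,qc^3)$ with $\max(|A_2|,|A_3|)\le T$ is $O(T/\log T)$, so after the $O(\sqrt{T})$ choices for the square part of $A_1$ you get only $T^{3/2}/\log T$, short by $(\log T)^{5/8}$. The paper's family (Proposition~\ref{prop:lower_bound_family3}) instead takes $A_1=28a^2$, $A_2=2b$, $A_3=2\cdot 7^3 b$ with $b$ a \emph{general} integer subject to the mean-$5/8$ frobenian condition and $b\equiv 1 \bmod 56$. The obstruction is arranged so that $\inv_2\calB\equiv 1/2$ always, while the primes $p\parallel b$ with $\bigl(\tfrac{7}{p}\bigr)=-1$ each contribute $1/2$ but occur in even number (forced by $b\equiv 1 \bmod 56$), so the global sum is $1/2$. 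There is no varying rogue prime and hence no $\log\log T$; this is precisely the structural difference between Theorems~\ref{thm:2} and~\ref{thm:3+6}.
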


The proof of Theorem \ref{thm:3+6} has a similar structure to that of Theorem \ref{thm:2}. There are some simplifications however as we are working with a quaternion algebra, rather than a corestriction of a cyclic algebra. So the local invariant map is either constant or surjective, and there are no rogue primes appearing in the proof.

\subsection{Local invariants}
We let $c=\sqrt[3]{A_3/A_2}$, so that the equation becomes
$$w^2 = A_1x_1^6 + A_2(x_2^6 + c^3x_3^6).$$
For a place $v$ of $\Q$, the local invariant is given by the usual quadratic Hilbert symbol
$$\inv_v \calB(x) =\left( A_1,\, \frac{x_2^2 + cx_3^2}{x_1^2} \right)_v,$$
where by our convention this symbol takes values in $0,1/2$. As before, this formula is only well-defined providing $(x_2^2 + cx_3^2)x_1^2  \neq 0$, which we implicitly assume throughout.

\begin{lemma}\label{lem:surjective2tor}
	There exists a finite set of primes $S$ with the following 	
	property. Let $p \notin S$ be a prime with 
	\begin{enumerate}
		\item $p \mid A_1$ and $p \nmid A_2A_3$;
		\item $v_p(A_1) \equiv 1 \bmod 2$.
	\end{enumerate}
	Then $\inv_p \calB(X(\Q_p)) = \Z/2\Z$.
\end{lemma}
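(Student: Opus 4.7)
The plan is to mimic Proposition \ref{prop:surjective_2}. After enlarging $S$ to include $\{2,3\}$ and further small primes needed for the Lang--Weil estimate below, I note that $\Q_p(\sqrt{A_1})/\Q_p$ is a ramified quadratic extension, since $v_p(A_1)$ is odd and $p$ is odd. Writing $A_1 = u p^{v_p(A_1)}$ with $u \in \Z_p^\times$, the standard Hilbert symbol identities give $(A_1, y)_p = (p,y)_p$ for $y \in \Z_p^\times$, which equals $0$ if $y \bmod p \in \F_p^{\times 2}$ and $1/2$ otherwise. In particular $(A_1, x_1^2)_p = 0$, so at any $\Q_p$-point $x$ of $X$ with $x_1$ and $f := x_2^2 + c x_3^2$ nonzero,
\[
\inv_p \calB(x) = (A_1, f(x))_p.
\]

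It therefore suffices to exhibit $\Q_p$-points of $X$ with $x_1, f(x) \in \Z_p^\times$ at which $f(x) \bmod p$ is a square, respectively a non-square, in $\F_p^\times$. I would work in the affine patch $\{x_1 = 1\}$, where the mod $p$ reduction is the affine surface $\bar X_p : w^2 = A_2 x_2^6 + A_3 x_3^6$ over $\F_p$, smooth away from the origin. For $\lambda \in \{1, \eta\}$ with $\eta$ a fixed non-square in $\F_p^\times$, consider the complete intersection $D_\lambda \subset \A^4_{\F_p}$ cut out by
\[
s^2 = \lambda f, \quad w^2 = A_2 f g,
\]
where $g := x_2^4 - c x_2^2 x_3^2 + c^2 x_3^4$ and the identity $A_2 x_2^6 + A_3 x_3^6 = A_2 f g$ uses $c^3 = A_3/A_2$. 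The plan is to show $D_\lambda$ is geometrically integral of bounded degree, so Lang--Weil gives $|D_\lambda(\F_p)| = p^2 + O(p^{3/2})$ and hence (for $p \notin S$) a smooth $\F_p$-point with $s, x_2, x_3$ not all vanishing. Such a point projects to a smooth $\F_p$-point of $\bar X_p$ at which $f$ lies in the class $\lambda \cdot \F_p^{\times 2}$, and lifts via Hensel's lemma to a $\Q_p$-point of $X$ with $x_1 \equiv 1 \pmod p$ and $f \in \lambda \cdot \Z_p^{\times 2}$. Taking $\lambda = 1$ and $\lambda = \eta$ then produces points giving invariants $0$ and $1/2$.

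The main technical obstacle is verifying the geometric integrality of $D_\lambda$: equivalently, that $\bar\F_p(D_\lambda) = \bar\F_p(x_2, x_3)(\sqrt{f}, \sqrt{g})$ has degree $4$ over $\bar\F_p(x_2, x_3)$, which in turn reduces to showing that $f$, $g$, and $fg$ are all non-squares in $\bar\F_p(x_2, x_3)$. This is a direct polynomial check using that $f$ and $g$ are squarefree and coprime in $\bar\F_p[x_2, x_3]$: indeed, $f$ has distinct linear factors, $g$ factors into four distinct linear factors (its discriminant as a quadratic in $x_2^2$ equals $-3c^2 \neq 0$ as $p \neq 3$ and $c \neq 0$), and any potential common root of $f$ and $g$ would satisfy $t^2 = -c$, whence $g = 3c^2 \neq 0$.
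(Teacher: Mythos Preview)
Your argument is correct and follows essentially the same strategy as the paper: force the square class of $f = x_2^2 + cx_3^2$ by imposing an auxiliary quadratic relation, then count $\F_p$-points on the resulting variety to produce the required lift via Hensel.

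The implementations differ in two minor respects. The paper keeps $x_1$ as a genuine variable, imposes $f = n x_1^2$ for a chosen $n \in \Z_p^\times$, and obtains a \emph{curve} in weighted projective space whose reduction mod $p$ (after normalising the coefficients over $\bar\F_p$) is $x_2^2 + 1 = x_1^2,\ w^2 = x_2^6 + 1$; it then appeals to \texttt{Magma} to confirm this curve is geometrically integral of genus $3$ and invokes Hasse--Weil. You instead set $x_1 = 1$, introduce a fresh variable $s$ with $s^2 = \lambda f$, and work with the resulting \emph{surface} $D_\lambda \subset \A^4_{\F_p}$, applying Lang--Weil. Your verification of geometric integrality is more elementary: the function field is $\bar\F_p(x_2,x_3)(\sqrt{f},\sqrt{g})$, and the squarefreeness and coprimality of $f,g$ (checked directly from $p \neq 2,3$ and $p \nmid A_2A_3$) give degree $4$ by Kummer theory, with no computer algebra required. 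Both routes produce the needed smooth $\F_p$-point with $f$ in the prescribed square class, and the Hensel step is identical.
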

\begin{proof}
	Let $\varepsilon\in\Z/2\Z$ and choose $n \in \Z_p^\times$ so that $(n,p)_p=\varepsilon$. It suffices to find a $\Q_p$-point
	on the surface $X$ with
	$$x_2^2 + cx_3^2 = nx_1^2,\quad x_1\neq0,$$
	since, as $v_p(A_1)$ is odd, standard formulae
	for Hilbert symbols would show that
	$$\left( A_1,\, \frac{x_2^2 + cx_3^2}{x_1^2} \right)_{p}
	= (p,n)_p=(n,p)_p=\varepsilon.$$
	To do this, we find a smooth $\F_p$-solution to the equations
	$$x_2^2 + cx_3^2 = nx_1^2,\quad w^2 =A_2x_2^6 + A_2c^3x_3^6,$$
	which could then be lifted to a desired $\Q_p$-point on $X$ via Hensel's
	lemma.
	
	The above equations describe a curve in weighted 
	projective space. We look at the patch given by
	$x_3 = 1$, and pass to the algebraic closure,
	where the curve is given by
	$$
	x_2^2 + 1 = x_1^2, \quad w^2 = x_2^6 + 1.
	$$
	\texttt{Magma} verifies that this is geometrically integral of genus $3$. Therefore the Hasse--Weil estimates complete the proof.
\end{proof}

\begin{lemma}\label{lem:surjective2tor3}
	Let $p > 2$ be a prime with
	\begin{enumerate}
		\item $v_p(A_1)$ even;
		\item $A_1 \notin \Q_p^{\times2}$;
		\item $v_p(A_2)$ odd;
		\item $v_p(A_2) = v_p(A_3)$.
	\end{enumerate}
	If $X(\Q_p) \neq \emptyset$ then
	$$-A_2/A_3 \in \Q_p^{\times2}.$$
	Moreover in this case, there exists a finite set $S$ of primes such that
	$$\inv_p \calB(X(\Q_p))=
	\begin{cases}
	\mathbb{Z}/2\Z, & \text{if}\ p\equiv1\bmod3,\,\,
	p \notin S,
	\\
      1/2, & \text{if}\ p\equiv2\bmod3.
      \end{cases}$$
\end{lemma}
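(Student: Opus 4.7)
The plan has three steps; the technical heart is Step~1, where I extract $-A_2/A_3 \in \Q_p^{\times 2}$ as a local solubility constraint, after which the remaining steps are fairly mechanical.

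\textbf{Step 1: $-A_2/A_3 \in \Q_p^{\times 2}$.} Since $v_p(A_2) = v_p(A_3)$, $c$ is a $p$-adic unit, so $-A_2/A_3 = -c^{-3}$ is a square iff $-c$ is a square. Take a primitive $P = (w, x_1, x_2, x_3) \in X(\Q_p)$ and compare the $v_p$-valuations of the three terms of $w^2 = A_1 x_1^6 + A_2(x_2^6 + c^3 x_3^6)$. The case $v_p(A_1 x_1^6) < v_p(A_2(x_2^6 + c^3 x_3^6))$ is excluded: there the leading unit of $w^2$ matches $a_1 u_1^6$ where $A_1 = p^{2e} a_1$ and $x_1 = p^{v_p(x_1)} u_1$, which forces $a_1$ to be a unit square, contrary to hypothesis (2). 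In the remaining cases $v_p(A_2(x_2^6 + c^3 x_3^6))$ is even (matching either $v_p(w^2)$ or $v_p(A_1 x_1^6)$); since $v_p(A_2) = d$ is odd, $v_p(x_2^6 + c^3 x_3^6)$ is odd. This forces $v_p(x_2) = v_p(x_3)$ and $(x_2/x_3)^6 \equiv -c^3 \pmod p$, hence $(x_2/x_3)^2 \equiv -c\omega^k \pmod p$ for some cube root of unity $\omega \in \F_p$. For $p \equiv 2 \bmod 3$ one has $\omega = 1$; for $p \equiv 1 \bmod 3$, the implication $p \equiv 1 \bmod 6$ gives $\omega \in (\F_p^\times)^2$. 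Either way $-c$ is a square in $\F_p^\times$, and Hensel ($p > 2$) upgrades this to $-c \in \Q_p^{\times 2}$.

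\textbf{Step 2: Invariant formula.} With $A_1 = p^{2e} a_1$ as above, the extension $\Q_p(\sqrt{a_1})/\Q_p$ is unramified, so its norm group is $\{y : v_p(y) \text{ even}\}$ and $(A_1, y)_p = v_p(y) \bmod 2$. Setting $r := \sqrt{-c} \in \Q_p$,
$$\inv_p \calB(x) = v_p\bigl((x_2 - r x_3)(x_2 + r x_3)\bigr) \bmod 2.$$

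\textbf{Step 3: Invariant values.} For $p \equiv 2 \bmod 3$, Step~1 gives $x_2 \equiv \pm r x_3 \pmod p$; say $x_2 \equiv r x_3$. Factoring $x_2^6 + c^3 x_3^6$ into two linear and two $\F_p$-irreducible quadratic factors, three of the four evaluate to $p$-adic units at $x_2 \equiv r x_3 \pmod p$ (using $p > 3$), so $v_p(x_2^6 + c^3 x_3^6) = v_p(x_2 - r x_3)$, which is odd by Step~1. Hence $v_p(x_2^2 + c x_3^2) = v_p(x_2 - r x_3)$ is odd and $\inv_p \calB \equiv 1/2$ identically.

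For $p \equiv 1 \bmod 3$ I build $\Q_p$-points realising each invariant value in the affine patch $x_1 = 0$, $x_3 = 1$, where the equation becomes $w^2 = A_2(x_2^2 + c)(x_2^4 - c x_2^2 + c^2)$. Taking $x_2 = r + p\eta$ with $\eta \in \Z_p^\times$ yields $v_p(x_2^2 + c) = 1$ and makes the other factor a unit $\equiv 3c^2 \bmod p$, so the right-hand side is $p^{d+1}(6 r^5 a_2 \eta + O(p))$; adjusting $\eta$ so that its leading unit becomes a square lets Hensel's lemma produce a $\Q_p$-point with invariant $1/2$. Taking instead $x_2 = y + p\eta$ with $y \in \Z_p$ lifting $\sqrt{-c\omega} \in \F_p$ (available by Step~1) gives $v_p(x_2^2 + c) = 0$ and $v_p(x_2^4 - c x_2^2 + c^2) = 1$, the latter because $T \mapsto T^2 - cT + c^2$ has derivative $-c(2\omega + 1)$ at $T = -c\omega$, a unit mod $p$ for $p > 3$; the parallel Hensel argument yields a $\Q_p$-point with invariant $0$. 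The finite set $S$ of the statement collects $2, 3$ together with any residual small primes at which the leading-unit adjustments degenerate.
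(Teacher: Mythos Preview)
Your argument is correct in outline but contains one slip, and your $p\equiv 1\bmod 3$ construction differs usefully from the paper's.

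The slip is in Step~3 for $p\equiv 2\bmod 3$: the equalities $v_p(x_2^6+c^3x_3^6)=v_p(x_2-rx_3)$ and $v_p(x_2^2+cx_3^2)=v_p(x_2-rx_3)$ tacitly assume $v_p(x_2)=v_p(x_3)=0$. Primitivity only gives $\min_i v_p(x_i)=0$, and $m:=v_p(x_2)=v_p(x_3)>0$ can occur when $v_p(A_1)$ is large (with $v_p(x_1)=0$). The fix is immediate: either reduce first to $v_p(A_1)=0$ as the paper does (this forces $m=0$), or pull out $p^m$ and note that the parities survive. The paper avoids the factorisation into linear factors altogether, observing instead that $v_p(x_2^4-cx_2^2x_3^2+c^2x_3^4)$ is always even because $t^2-t+1$ is irreducible over $\F_p$; this handles all $m$ at once.

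For $p\equiv 1\bmod 3$ the paper sets $x_2=\alpha x_3$ with $\alpha^2=-\beta c+p^i$ (for $\beta\in\mu_3$ and a carefully chosen $i$), reducing to a smooth genus~$2$ curve in $(w,x_1,x_3)$ and invoking Hasse--Weil to produce a point for large $p$. Your construction in the slice $x_1=0$, $x_3=1$ solves $w^2=A_2(x_2^6+c^3)$ directly by Hensel after a one-parameter perturbation of $x_2$ near $r$ or near $\sqrt{-c\omega}$. This is more elementary---no point counting---and your $S$ can simply be $\{2,3\}$. Indeed your invariant-$0$ case is even easier than written: with $y^2=-c\omega$ exactly one has $y^6+c^3=0$, so $(0:0:y:1)\in X(\Q_p)$ already gives $v_p(x_2^2+c)=0$. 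Evaluating the Step~2 formula at $x_1=0$ is justified by the continuity convention already in force in the paper.
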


\begin{proof}
    We can multiply the equation of $X$ by $p^{2v_p(A_1)}$ so that $v_p(A_1)\equiv0\bmod6$. Then any factor of $p$ in $A_1$ can be absorbed into $x_1$, and similarly for $w$. Conditions (1)--(4) and the class of $\calB$ in $\Br X$ are unchanged while now we have $v_p(A_1)=0$. Hence for the remainder of the proof we assume $v_p(A_1)=0$.
    
	Assume $X$ is locally soluble at $p$ and let $w,x_i\in \Q_p$ be a solution to the equation
	$$w^2=A_1 x_1^6+A_2x_2^6+A_2c^3x_3^6.$$
	We can assume $w,x_i\neq0$ by smoothness. Since $A_1 \notin \Q_p^{\times2}$, we have that $v_p(w^2- A_1x_1^6)$ is always even, so $v_p(A_2x_2^6+A_2c^3x_3^6)$ is even as well. Since $v_p(A_2)$ is odd, this means $v_p(x_2^6+c^3x_3^6)$ is also odd. 
	In particular, this implies  $-c$ is a square in $\Q_p$. Thus $-A_2/A_3 = (-c)^{-3}$ is a square in $\Q_p$.

	Next, suppose $p\equiv 2\bmod 3$. Observe that $v_p(x_2^4-cx_2^2x_3^2+c^2x_3^4)$ is even since the polynomial $t^2-t+1$ is irreducible in $\F_p[t]$. As the valuation
	$v_p(x_2^6+c^3x_3^6) = v_p((x_2^4-cx_2^2x_3^2+c^2x_3^4)(x_2^2+cx_3^2))$ is odd,
	this implies that $v_p(x_2^2+cx_3^2)$ is odd. Hence
	(1) and (2) give
	$$\inv_p \calB(x_p)=(A_1, x_2^2+cx_3^2)_{p}=(A_1, p)_{p}=1/2.$$
	
	Now suppose $p\equiv 1\bmod 3$. Let $\beta$ be a third
	root of unity in $\Q_p$ and $i \in \N$ with $i\equiv -v_p(A_2)\bmod 6$. Since $-c$ is a square in $\Z_p$,
	Hensel's lemma shows that there exists $\alpha\in\Q_p$ 
	such that $\alpha^2=-\beta c+p^i$. 
	Set $x_2=\alpha x_3$, then
	\begin{equation}\label{eqn:surjective2tor3}
	    w^2=A_1 x_1^6+A_2(x_2^6+c^3x_3^6)
	\end{equation}
	becomes
	$$w^2=A_1 x_1^6+A_2(3\beta^2c^2p^i-3\beta cp^{2i}+p^{3i})x_3^6.$$
	Since $v_p(A_2(3\beta^2c^2p^i-3\beta cp^{2i}+p^{3i}))\equiv0\bmod6$ by the choice of $i$ and (4), we can make a change of variables on $x_3$ to get
	$$w^2=A_1 x_1^6+A_2'x_3'^6$$
	where $v_p(A_2')=0$. As $v_p(A_1) = 0$, the reduction modulo $p$ is a smooth curve of genus 2. Hence for $p$ large enough, this has $\F_p$-solutions which can be lifted to a solution $w,x_i\in\Q_p$ to \eqref{eqn:surjective2tor3}.
	If $x_p\in X(\Q_p)$ is such a solution, then
	$$\inv_p \calB(x_p)=(A_1, x_2^2+cx_3^2)_{p}=(A_1, \alpha^2+c)_{p}
	= \begin{cases}
	    0, & \mbox{if } \beta \neq 1,\\
	    1/2, & \mbox{if }\beta = 1,
	\end{cases}$$
	by (1)--(4).
\end{proof}

\begin{lemma}\label{lem:trivial2tor1}
	Let $p> 2$ be a prime with
	\begin{enumerate}
		\item $v_p(A_1)$ even; 
		\item $A_1 \in \Q_p^{\times2}$ or $v_p(A_2)$ even;
		\item $v_p(A_2) =v_p(A_3)$;
		\item $X(\Q_p) \neq \emptyset$.
	\end{enumerate}
	Then $\inv_p \calB(X(\Q_p)) = 0$.
\end{lemma}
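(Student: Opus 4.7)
I would split on whether $A_1$ is a square in $\Q_p$. In Case A, $A_1 \in \Q_p^{\times 2}$, the cyclic algebra $(A_1,\cdot)_p$ is already split in $\Br \Q_p$, so $\inv_p \calB$ vanishes identically; this disposes of one disjunct of hypothesis~(2).

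In Case B, $A_1 \notin \Q_p^{\times 2}$, hypothesis~(2) forces $v_p(A_2)$ to be even, hence by~(3) so is $v_p(A_3)$, and consequently $c = \sqrt[3]{A_3/A_2} \in \Q^\times$ has $v_p(c)=0$. Since $p>2$, $v_p(A_1)$ is even, and $A_1$ is a non-square, $\Q_p(\sqrt{A_1})$ is the unramified quadratic extension of $\Q_p$. Writing $u = A_1/p^{v_p(A_1)} \in \Z_p^\times$, this translates into the clean formula $(A_1, y)_p = (u, y)_p = v_p(y)/2 \pmod 1$ for every $y \in \Q_p^\times$. Absorbing the square $x_1^2$, the lemma reduces to showing that $v_p(x_2^2 + c x_3^2)$ is even for every $x \in X(\Q_p)$ with $x_1 \neq 0$; the locus $x_1 = 0$ is then handled by local constancy of $\inv_p \calB$ on the smooth variety $X$.

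Next I would primitivise the point so that $x_i \in \Z_p$ with $\min_i v_p(x_i) = 0$. If $v_p(x_2) \neq v_p(x_3)$, then $v_p(x_2^2 + c x_3^2) = 2 \min\{v_p(x_2), v_p(x_3)\}$ and we are done. Otherwise set $y = x_2/x_3$, so $v_p(y) = 0$ and $v_p(x_2^2 + c x_3^2) = 2 v_p(x_3) + v_p(y^2 + c)$; the interesting sub-case is $y^2 \equiv -c \pmod p$, since otherwise $v_p(y^2+c) = 0$. A short calculation gives $y^4 - c y^2 + c^2 \equiv 3c^2 \pmod p$, which is a $p$-adic unit for $p \neq 3$, so $v_p(y^4 - c y^2 + c^2) = 0$. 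Using the factorisation
\[
w^2 - A_1 x_1^6 = A_2 x_3^6 \,(y^2+c)\,(y^4 - c y^2 + c^2),
\]
the left-hand side equals $N_{\Q_p(\sqrt{A_1})/\Q_p}(w - \sqrt{A_1}\,x_1^3)$, which is a norm from the unramified quadratic extension and therefore has even valuation (the degenerate case $w^2 = A_1 x_1^6$ is incompatible with $A_1 \notin \Q_p^{\times 2}$ when $x_1 \neq 0$). Comparing parities forces $v_p(y^2+c)$ to be even, completing the argument.

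The principal subtlety lies in this last sub-case: the valuation of $y^2 + c$ is a priori unconstrained once $y^2 \equiv -c \pmod p$, and the only leverage is that the equation of $X$ pins down $A_2 x_3^6(y^2+c)(y^4 - cy^2 + c^2)$ as a norm from the unramified extension, which must therefore have even $p$-adic valuation. The identity $y^4 - c y^2 + c^2 \equiv 3c^2 \pmod p$ is what lets one isolate the parity of $v_p(y^2+c)$ cleanly, and this is the step that requires $p \neq 3$.
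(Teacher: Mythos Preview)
Your argument mirrors the paper's: both dispose of the case $A_1\in\Q_p^{\times2}$ trivially, then show that $v_p(x_2^2+cx_3^2)$ must be even by comparing valuations across
\[
w^2-A_1x_1^6=A_2\,(x_2^2+cx_3^2)(x_2^4-cx_2^2x_3^2+c^2x_3^4),
\]
using that the left side is a norm from the unramified quadratic extension $\Q_p(\sqrt{A_1})$ and hence has even valuation. The paper phrases the endgame as a contradiction (an odd valuation on the right would force $A_1\in\Q_p^{\times2}$), but the content is identical; your direct computation $y^4-cy^2+c^2\equiv 3c^2\pmod p$ is in fact a sharper version of the paper's claim that the quartic factor has even valuation.

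You are right that this step needs $p\neq 3$. The paper's parallel assertion that $v_p(x_2^4-cx_2^2x_3^2+c^2x_3^4)$ is even also fails at $p=3$, and indeed the lemma as stated is false there: take $(A_1,A_2,A_3)=(2,1,8)$, so $c=2$ and all four hypotheses hold, yet at the $\Q_3$-points $(w:3t:1:1)$ with $t\in\Z_3^\times$ one computes $\inv_3\calB=(2,\,3/(9t^2))_3=(2,3)_3=1/2$. This is harmless for the paper's applications, where the lemma is only invoked for primes $p\mid b$ with $\gcd(b,6)=1$.
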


\begin{proof}
As in Lemma \ref{lem:surjective2tor3}, we can assume that $v_p(A_1)=0$. If $A_1 \in \Q_p^{\times2}$  then the local invariant is clearly trivial.
So assume that $v_p(A_2)$ is even. If $v_p(x_2^2+cx_3^2)$ is even then the invariant is clearly trivial. So assume that $v_p(x_2^2+cx_3^2)$ is odd, which implies that $-c \in \Q_p^{\times2}$. Since $v_p(x_2^2+cx_3^2)$ is odd it follows that $v_p(x_2^4-cx_2^2x_3^2+c^2x_3^4)=v_p((x_2^2+\omega cx_3^2)(x_2^2+\omega^2cx_3^2))$ is even, hence $v_p(x_2^6+c^3x_3^6)$ is odd as well. 

Hence $v_p(A_2x_2^6+A_2c^3x_3^6)$ is also odd. Thus, $v_p(w^2-A_1x_1^6)$ is odd whence $A_1 \in \Q_p^{\times2}$. The result follows.
\end{proof}

\subsection{Upper bounds}
We now begin the counting part of the proof of Theorem \ref{thm:3+6}.

\begin{proposition}\label{prop:ub_index3}
	There are $O(T^{3/2}/(\log T)^{3/8})$ non-zero integers
	$A_1,A_2,A_3$ such that $\max\{|A_i|\} \leq~T$, 
	$A_2/A_3$ is a cube, and there is a Brauer--Manin
	obstruction to the Hasse principle given by $\calB$.
\end{proposition}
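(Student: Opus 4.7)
The strategy parallels the proof of Proposition~\ref{prop:Upper_bound_2}, now with Lemma~\ref{lem:square} playing the role that Proposition~\ref{prop:cube} had there. Since $A_2/A_3$ is a cube, I parametrize by writing $A_2 = g m^3$ and $A_3 = g n^3$ with $g = \gcd(A_2,A_3)$ and $\gcd(m,n) = 1$, and $A_1 = h s^2$ with $h$ squarefree (including sign); we need $h \notin \Q^{\times 2}$ for $\calB$ to define a non-constant class. There is a finite set of primes $S$ such that, assuming a Brauer--Manin obstruction from $\calB$, two constraints hold: (i) $\rad(h)\mid \rad(gmn)$, by Lemma~\ref{lem:surjective2tor} (otherwise some prime $p\notin S$ with $p\mid h$, $p\nmid A_2A_3$ and $v_p(A_1)$ odd would yield a surjective invariant); and (ii) at every $p\parallel g$ with $p\notin S$ and $p\nmid hmn$, combining Lemma~\ref{lem:surjective2tor3} with the local solubility required for a Brauer--Manin obstruction produces the implication
\[
(\mathrm{C})\qquad \left(\tfrac{h}{p}\right) = -1 \;\Longrightarrow\; p\equiv 2\bmod 3 \text{ and } \left(\tfrac{-mn}{p}\right)=1,
\]
where the clause $\left(\tfrac{-mn}{p}\right)=1$ encodes $-A_2/A_3\in\Q_p^{\times 2}$ and $p\equiv 2\bmod 3$ prevents the surjective-invariant case; primes with $\left(\tfrac{h}{p}\right)=1$ contribute trivial invariants by Lemma~\ref{lem:trivial2tor1} and impose no constraint.

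The key observation is that (C) is exactly the hypothesis of Lemma~\ref{lem:square} with $r_1 = h$, $r_2 = -mn$, $r_3 = -3$, using $\left(\tfrac{-3}{p}\right)=-1\Leftrightarrow p\equiv 2\bmod 3$. In the generic case, where none of the products $-hmn$, $3mn$, $-3h$, $3hmn$ is a square, Lemma~\ref{lem:square} yields
\[
\#\bigl\{|g|\leq T/\max(m,n)^3 : g \text{ satisfies }(\mathrm{C}) \text{ and } \rad(h)\mid \rad(gmn)\bigr\} \ll \frac{T}{\rad(h)\max(m,n)^3 (\log T)^{3/8}},
\]
the $\rad(h)$ factor coming from the dominant configuration $\rad(h)\mid g$. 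Summing sequentially gives $\sum_{s^2|h|\leq T}1\ll\sqrt{T/|h|}$; the double sum $\sum_{\gcd(m,n)=1}\max(m,n)^{-3}$ converges; and $\sum_h(\rad(h)\sqrt{|h|})^{-1}<\infty$ by Lemma~\ref{lem:rad}. Combined, this yields the target bound $O(T^{3/2}/(\log T)^{3/8})$.

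The principal technical obstacle is to dispose of the non-generic sub-cases of Lemma~\ref{lem:square}, and in particular to reduce to the uniformity regime $|h|,|mn|\ll (\log T)^A$ (the tails being absorbed by the same convergent sums noted above). When $r_1r_2=-hmn$ or $r_2r_3=3mn$ is a square, Lemma~\ref{lem:square} already delivers the stronger saving $(\log T)^{-1/2}$, comfortably within the target. The genuinely delicate cases are $r_1r_3=-3h$ square, which forces $h=-3$, and $r_1r_2r_3=3hmn$ square, both nominally giving only $(\log T)^{-1/4}$. The case $h=-3$ is resolved algebraically by the norm identities
\[
w^2 + 3s^2 x_1^6 = N_{\Q(\sqrt{-3})/\Q}\bigl(w + s\sqrt{-3}\,x_1^3\bigr),\quad m^2x_2^4 - mnx_2^2x_3^2 + n^2x_3^4 = N_{\Q(\omega)/\Q}\bigl(mx_2^2 - \omega n x_3^2\bigr),
\]
which, combined with the identification $\Q(\sqrt{-3})=\Q(\omega)$, force $\calB\equiv -(-3,gm)$ into $\Br_0 X$, so such triples contribute nothing. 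The remaining case $3hmn$ square is handled by exploiting the forced parametrization $m=\lambda_m a^2$, $n=\lambda_n b^2$ with $\lambda_m\lambda_n=3h$ and $\gcd(a,b)=1$: the resulting square-root thinness in the $(m,n)$-summation (namely $\sum 1/\max(m,n)^3\ll h^{\varepsilon-3/2}$ for fixed $h$) combines with the convergent $h$-sum to keep this contribution within $O(T^{3/2}/(\log T)^{3/8})$ as well.
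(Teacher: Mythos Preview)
Your overall architecture is right and closely mirrors the paper: parametrize $A_1=hs^2$ and $A_2=gm^3,\,A_3=gn^3$; use Lemma~\ref{lem:surjective2tor} to force $h$ to be supported on the primes of $gmn$ (up to $S$); use Lemma~\ref{lem:surjective2tor3} to extract condition~(C) on the primes $p\parallel g$; and then feed (C) into Lemma~\ref{lem:square} with $(r_1,r_2,r_3)=(h,-mn,-3)$. The generic case and the $r_1r_2,\,r_2r_3$ square cases are fine, and your disposal of $r_1r_3=-3h$ square via the norm identity (so $\calB$ is constant when $-3A_1\in\Q^{\times2}$) is exactly what the paper does.

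The gap is in the last case, $r_1r_2r_3=3hmn$ a square (equivalently $3A_1A_2A_3\in\Q^{\times2}$). Your thinness argument does not close it. Lemma~\ref{lem:square} only yields the exponent $1/4$ here, so the $g$-sum contributes $T/((\log T)^{1/4}\rad(\cdot)\max(m,n)^3)$, and the bound you assemble is
\[
\ll \frac{T^{3/2}}{(\log T)^{1/4}}\sum_h\frac{1}{\sqrt{|h|}}\sum_{\substack{m,n\\3hmn\in\Q^{\times2}}}\frac{1}{\max(m,n)^3}.
\]
Making the $(m,n)$-sum as small as $h^{\varepsilon-3/2}$ only shrinks an already convergent factor; it cannot upgrade $(\log T)^{-1/4}$ to $(\log T)^{-3/8}$. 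The resulting estimate is $O(T^{3/2}/(\log T)^{1/4})$, which is genuinely larger than the target. The correct fix, as in the paper, is again an algebraic one: when $3A_1A_2A_3$ is a square one has $A_1\equiv -3\cdot(-A_3/A_2)\bmod\Q^{\times2}$, and since $(-c,x_2^2+cx_3^2)=0$ (norm from $\Q(\sqrt{-c})$) while the $(-3,\cdot)$ piece reduces to $(-3,A_2)$ via the same norm-from-$\Q(\omega)$ trick you used for $h=-3$, the class $\calB$ is constant and this sub-case contributes nothing. (You should also record explicitly that $-mn$ non-square is forced because $-A_2/A_3\in\Q^{\times2}$ would give a rational point; Lemma~\ref{lem:square} needs all three $r_i$ to be non-squares.)
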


\begin{proof}
We begin by eliminating some cases in which $\calB$ does not obstruct the Hasse principle.  If $A_1$ is a square then $\calB$ is trivial hence gives no Brauer--Manin obstruction. If $-A_2/A_3$ is a square, then it would be a sixth power as it is a cube by assumption;
in this case there is the rational point $(0:0:1:\sqrt[6]{-A_2/A_3})$.
If $-3A_1$ is a square then the algebra is given by
$$\calB
= \left(-3,\frac{x_2^2+cx_3^2}{x_1^2}\right)
= \left(-3,\frac{A_2(x_2^4 - cx_2^2x_3^2 + c^2x_3^4)}{x_1^4}\right)
= \left(-3,A_2\right) \in \Br \Q$$
where the last equality comes from the fact
that the quartic polynomial in the numerator
is a norm from $\Q(\omega)$. Thus being a constant
algebra, it does not obstruct the Hasse principle.
Finally, if $3A_1A_2A_3$ is a square, then
\begin{align*}\calB
= \left(3A_2A_3,\frac{x_2^2+cx_3^2}{x_1^2}\right)
&= \left(-3,\frac{x_2^2+cx_3^2}{x_1^2}\right)
+ \left(-A_3/A_2,\frac{x_2^2+cx_3^2}{x_1^2}\right) \\
&= \left(-3,A_2\right) +
3\left(-c,\frac{x_2^2+cx_3^2}{x_1^2}\right)\in \Br \Q,
\end{align*}
as the last algebra is clearly trivial.
In conclusion, we may assume that
\begin{equation} \label{eqn:not_squares}
    A_1, -A_2/A_3, -3A_1, 3A_1A_2A_3 \notin \Q^{\times 2}.
\end{equation}
We now begin the counting. If there is some prime for which the local invariant map is surjective, then $\calB$ does not give a Brauer--Manin obstruction to the Hasse principle.
Thus Lemmas \ref{lem:surjective2tor} and \ref{lem:surjective2tor3}, show that if $X_{\mathbf{A}}$ has a Brauer--Manin obstruction from $\calB$, then $(A_1,A_2,A_3)$ lies in the following set

$$ \left\{ A_i\in \Z :
	\begin{array}{l} 
	|A_i|\leq T, A_2/A_3\in\Q^{\times3}, \eqref{eqn:not_squares}; \\
	p \notin S, p\mid A_1, p\nmid A_2A_3
	\implies v_p(A_1) \equiv 0 \bmod 2;\\
	p \notin S, v_p(A_1)\text{ even}, v_p(A_2)= v_p(A_3)\text{ odd},  A_1 \notin \Q_p^{\times2}\\ \implies
	(\frac{-A_2/A_3}{p})=1, p\equiv2\bmod3.
	\end{array} \right\}$$
for some finite set of primes $S$ with $2,3 \in S$.
	For $i \in \{1,2,3\}$ we write
	$$u_i = \prod_{p \mid \gcd(A_1,A_2A_3)} p^{v_p(A_i)}.$$
	Let $g = \gcd(A_2/u_2,A_3/u_3)$ and  
	$g_i = \prod_{p \mid g}p^{v_p(A_i/u_ig)}$
	for $i =2,3$. Then we can write
	$$A_1 = u_1z_1, \qquad A_i = gg_iu_iz_i, \quad  i\in \{2,3\}.$$
	By construction $\rad(u_1) = \rad(u_2u_3)$
	and $\rad(g_2g_3) \mid g$.
	By the above $z_1$ takes finitely many values modulo squares.
	For simplicity of notation we just consider the case where
	$z_1 = a_1^2$ is a square. Similarly, by assumption
	$A_2/A_3$ is a cube; since $\gcd(z_i,g_ig_ju_iu_jz_j) = 1$ by construction for $\{i,j\} = \{2,3\}$,
	we have $z_i = a_i^3$ for some $a_i$. Combining
	all our conditions together, we find that our quantity is majorised by 
$$\# \left\{a_i,u_i,g,g_i \in \Z :
	\begin{array}{l} 
	|u_1a_1^2|,|gg_iu_ia_i^3| \leq T, i=2,3,  g_2u_2/g_3u_3 \in \Q^{\times3}, \eqref{eqn:not_squares};\\
	\rad(u_1) = \rad(u_2u_3), \rad(g_2g_3) \mid g; \\
	p\notin S, p\nmid  u_1r_2, p \parallel  g, (\frac{u_1}{p}) = -1
	\implies (\frac{r_2}{p})=1, p\equiv2\bmod3.
	\end{array} \right\}$$
	where we use the shorthand
	$r_2 = -g_2g_3u_2u_3a_2a_3 \equiv -A_2/A_3 \bmod \Q^{\times2}$.
	
	The proof now proceeds in a similar manner to the proof of Proposition \ref{prop:Upper_bound_2}.
	We first show that the variables $a_2,a_3,u_i,g_i$ can
	be taken to be small. We sum over $a_1$
	and $g$, ignoring the conditions on the prime divisors
	of $g$, to obtain
	\begin{align*}
	\ll & \sum_{u_1}
	\sum_{\substack{g_i,u_2,u_3,a_2,a_3 \\ \rad(u_1) = \rad(u_2u_3) \\ g_2u_2/g_3u_3 \in \Q^{\times3}}}
	\frac{T^{1/2}}{\sqrt{u_1}} \cdot \frac{T}{\rad(g_2g_3)\max\{g_2u_2a_2^3,
	g_3u_3a_3^3\}} \\
	\ll& T^{3/2}\sum_{u_1}\frac{1}{\sqrt{u_1}}
	\sum_{\substack{g_i,u_2,u_3,a_2,a_3 \\ \rad(u_1) = \rad(u_2u_3) \\ g_2u_2/g_3u_3 \in \Q^{\times3} \\ 
	g_2u_2a_2^3 \leq g_3u_3a_3^3}}
	\frac{1}{\rad(g_2g_3)g_3u_3a_3^3} \\
    \ll &  T^{3/2}\sum_{u_1}\frac{1}{\sqrt{u_1}}
	\sum_{\substack{g_i,u_2,u_3,a_3\\ \rad(u_1) = \rad(u_2u_3) \\ g_2u_2/g_3u_3 \in \Q^{\times3}}}
	\frac{1}{\rad(g_2g_3)(g_2u_2)^{1/3}(g_3u_3)^{2/3}a_3^{2}},
	\end{align*}
    whence the sums over $a_2$ and $a_3$ are convergent.
	Similarly, the sums over the  $g_i$ are
	convergent by Lemma \ref{lem:rad}. For the sum over the $u_i$, by construction
	$\gcd(u_i,g_i) = 1$, hence the condition 
	$g_2u_2/g_3u_3 \in \Q^{\times3}$ implies that
	$u_2/u_3 \in \Q^{\times3}$. Letting $u = \gcd(u_2,u_3)$,
	we have $u_i = u v_i^3$ for $i =2,3$. Hence
	the sum over the $u_i$ is
	$$\sum_{u_1}\frac{1}{\sqrt{u_1}}
	\sum_{\substack{u_2,u_3\\ \rad(u_1) = \rad(u_2u_3) \\ u_2/u_3 \in \Q^{\times3}}}
	\frac{1}{(u_2u_3^2)^{1/3}} \ll
	\sum_{u_1}\frac{1}{\sqrt{u_1}}
	\sum_{\substack{u,v_2,v_3\\ \rad(u_1) = 
	\rad(uv_2v_3) }}
	\frac{1}{uv_2v_3},$$
	which is convergent by Lemma \ref{lem:rad2}
	(with $n_1 = u_1$ and $n_2 = uv_2v_3$,
	and applying the usual divisor bound $\tau(n) \ll n^{\varepsilon}$).
	Therefore, up to an acceptable error,
	we may assume that
	$a_2,a_3,u_i,g_i \ll (\log T)^A$
	for some large $A > 0$.
    We now sum over $a_1$ first then sum over $g$ 
    using Lemma \ref{lem:square} with 
	$$
	r_1 = u_1, \quad r_2 = -g_2g_3u_2u_3a_2a_3,
	\quad r_3 = -3.
    $$
    Note that modulo squares, in terms of the $A_i$ we have
    \begin{align*}
        &(r_1,r_2,r_3,r_1r_2,r_1r_3,r_2r_3,r_1r_2r_3) \\
        &\equiv
    (A_1,-A_2A_3,-3,-A_1A_2A_3, -3A_1,3A_2A_3, 3A_1A_2A_3) \bmod \Q^{\times2}.
    \end{align*}
    By Lemma \ref{lem:square} and \eqref{eqn:not_squares}, we obtain the bound $O(T^{3/2}/(\log T)^{1/2})$ if $r_1r_2$ is a square,
    and $O(T^{3/2}/(\log T)^{3/8})$ otherwise, which are both satisfactory.
\end{proof}

\subsection{Lower bounds}

\begin{proposition}\label{prop:lower_bound_family3} Consider the family of surfaces

$$w^2=4\cdot 7a^2x_1^6+2bx_2^6+2\cdot 7^3bx_3^6$$
where $a,b\in\N$ satisfy
\begin{itemize}
\item $\gcd(ab,2\cdot3\cdot7) = 1$. 
\item If $p\mid b$ and $(\frac{7}{p})=-1$, then $p\equiv2\bmod3$ and $p\equiv 3\bmod 4$.
\item $b\equiv 1\bmod 7\cdot8$.
\end{itemize}
Then for such surfaces $\calB$ gives a Brauer--Manin obstruction to the Hasse principle.
\end{proposition}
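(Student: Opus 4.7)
The plan is to follow the template of Proposition~\ref{prop:lower_bound_family_2}: first verify everywhere local solubility, then compute the sum of local invariants of $\calB$ and show it is constantly $\tfrac12\bmod 1$ on $X(\Adele_\Q)$. For local solubility, I would exhibit explicit $\Q_v$-points at each place. At $\infty$, the point $(2|a|\sqrt 7,1,0,0)$ works since $A_1>0$. At $p=7$, the reduction $w^2\equiv 2bx_2^6\bmod 7$ admits a smooth $\F_7$-point (because $b\equiv 1\bmod 7$ and $2$ is a square modulo $7$), lifting by Hensel. At $p=2$, take $(x_1,x_2,x_3)=(0,1,7)$: one computes $1+7^9=2^3\cdot 5044201$ with $5044201\equiv 1\bmod 8$, so by $b\equiv 1\bmod 8$ the right-hand side is $16$ times a unit square and $w$ exists in $\Q_2$. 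At $p\mid a$ or $p\mid b$ with $\left(\tfrac 7p\right)=1$, $A_1$ is a square in $\Q_p$ and $(1,0,0)$ lifts. At $p\mid b$ with $\left(\tfrac 7p\right)=-1$, the assumed congruences $p\equiv 3\bmod 4$ force $-7\in\Q_p^{\times 2}$, giving the point $w=0,\ x_1=0,\ x_2^2+7x_3^2=0$. Remaining primes use Hasse--Weil plus Hensel.

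For local invariants, Lemmas~\ref{lem:trivial2tor1} and \ref{lem:surjective2tor3} give $\inv_v\calB=0$ at $v=\infty$ and at primes $p\notin\{2,7\}$ except at the \emph{bad} primes (those $p\mid b$ with $v_p(b)$ odd and $\left(\tfrac 7p\right)=-1$), where $\inv_p\calB=\tfrac12$ constantly thanks to $p\equiv 2\bmod 3$. At $p=7$, a valuation analysis shows every $\Q_7$-point must satisfy $v_7(x_2)\leq v_7(x_3)$ (the opposite would force $v_7(w^2)$ to be odd), hence $x_2^2+7x_3^2\in\Q_7^{\times 2}$ and $\inv_7\calB=0$. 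At $p=2$, reducing the equation modulo $4$ forces $x_2\equiv x_3\bmod 2$, and a short descent shows primitive $\Q_2$-points have $x_2,x_3$ both odd (and $x_1$ even). Using that $A_2(x_2^6+7^3x_3^6)=w^2-A_1x_1^6$ is a norm from $\Q(\sqrt{A_1})=\Q(\sqrt 7)$, we obtain $(A_1,A_2(x_2^6+7^3x_3^6))_2=0$; factoring $x_2^6+7^3x_3^6=(x_2^2+7x_3^2)\cdot g$ with $g=x_2^4-7x_2^2x_3^2+49x_3^4$ then yields $\inv_2\calB=-(7,2b)_2-(7,g)_2$. Since $b\equiv 1\bmod 8$ places $b$ in $\Q_2^{\times 2}$, we have $(7,2b)_2=0$; and $x_2,x_3$ odd gives $g\equiv 43\equiv 3\bmod 4$, so $(7,g)_2=\tfrac12$ by the standard formula for the quadratic Hilbert symbol at $2$. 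Hence $\inv_2\calB=\tfrac12$ identically on $X(\Q_2)$.

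Summing, the Brauer--Manin pairing is $\tfrac12+\tfrac{N}{2}\bmod 1$, where $N$ denotes the number of bad primes dividing $b$; this is nontrivial precisely when $N$ is even. To finish, observe that $p\mapsto\left(\tfrac 7p\right)$ is the Dirichlet character $\chi_{28}=\chi_4\cdot\chi_7$ of conductor $28$. By the standing hypothesis, every prime $p\mid b$ with $\left(\tfrac 7p\right)=-1$ is bad in our sense, so
$\chi_{28}(b)=\prod_{p:\,v_p(b)\text{ odd}}\left(\tfrac 7p\right)=(-1)^N$.
The condition $b\equiv 1\bmod 56$ gives $b\equiv 1\bmod 28$, hence $\chi_{28}(b)=1$ and $N$ is even, as required. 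The main obstacle will be establishing $\inv_2\calB=\tfrac12$ constantly on $X(\Q_2)$, which combines the $2$-adic parity analysis with the norm identity; everything else is either routine or follows directly from the lemmas already at hand.
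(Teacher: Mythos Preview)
Your proof is correct and follows essentially the same approach as the paper's: verify local solubility, show $\inv_v\calB=0$ away from $p=2$ and the bad primes via Lemmas~\ref{lem:surjective2tor3} and~\ref{lem:trivial2tor1}, prove $\inv_2\calB=\tfrac12$ by the parity/norm argument forcing $x_2,x_3$ odd, and finish with a parity count on the bad primes. Two minor remarks: your ``remaining primes use Hasse--Weil'' leaves a small gap for primes $p\leq 23$ (and for $p\mid a$ with $(\tfrac{7}{p})=-1$), which the paper closes directly by noting that when $(\tfrac{7}{p})=-1$ exactly one of $2b$, $2\cdot 7^3b$ is a square modulo $p$; and your parity argument via the single conductor-$28$ character $\chi_{28}=\chi_4\chi_7$ is a clean streamlining of the paper's version, which tracks $\chi_4$ and $\chi_7$ separately before combining.
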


\begin{proof}
{\it Showing local solubility.}  It is clear there are real points. Let $p$ be a prime.

{\bf Case $p\nmid 2\cdot7b$}. If $(\frac{7}{p})=1$, then $(2a\sqrt{7}:1:0:0)$ is a $\Q_{p}$-point. Otherwise $(\frac{7}{p})=-1$ and either $2b$ or $2\cdot7^3b$ is a square modulo $p$, so either $(\sqrt{2b}:0:1:0)$ or $(\sqrt{2\cdot 7^3b}:0:0:1)$ is a $\Q_p$-point.

{\bf Case $p\mid b$}. If $(\frac{7}{p})=1$, then $(2a\sqrt{7}:1:0:0)$ is a $\Q_{p}$-point. Otherwise, by hypothesis  $p\equiv 3\bmod 4$, hence $(\frac{-7}{p})=1$ and so $(0:0:\sqrt{-7}:1)$ is a $\Q_{p}$-point.

{\bf Case $p=7$}. Since $b$ is a square modulo $7$, we have $(\sqrt{2b}:0:1:0)$ is a $\Q_{7}$-point.

{\bf Case $p=2$}. Here $-7 \in \Q_2^{\times2}$, hence  $(0:0:\sqrt{-7}:1)$ is a $\Q_2$-point.

\medskip
{\noindent \it Showing existence of Brauer--Manin obstruction}. We will show that
\begin{enumerate}
\item[(i)] For $p$ with $v_p(b)$ odd and $(\frac{7}{p})=-1$, $\inv_p \calB(x_p)=1/2$ for all $x_p\in X(\Q_p)$.
\item[(ii)] For $p=2$, $\inv_p \calB(x_p)=1/2$ for all $x_p\in X(\Q_p)$.
\item[(iii)] For all other primes $p$, $\inv_p \calB(x_p)=0$ for all $x_p\in X(\Q_p)$.
\end{enumerate}
The condition $b\equiv1 \bmod 7$ implies that there is an even number of primes $p$ such that $v_p(b)$ is odd and $(\frac{p}{7})=-1$. The condition $b\equiv1\bmod 4$ implies that there is an even number of primes $p$ such that $v_p(b)$ is odd and $p\equiv 3\bmod 4$. Together, these imply that there is an even number of primes that satisfy condition (i).
Hence $\calB$ gives an obstruction to the Hasse principle, as  for any $\{x_p\}\in X(\Adele_\Q)$ we have
$$\sum_p\inv_p \calB(x_p)=1/2.$$
Claim (i) follows from Lemma \ref{lem:surjective2tor3}.

For (ii), we copy the proof in \cite[Thm.~4.1]{cn17}. Let $(w:x_1:x_2:x_3)\in X(\Q_2)$ be a point. Assume the coordinates are scaled so that they are all in $\Z_2$ and not all even. Note that $w$ must be even by the equation. Assume now that both $x_2,x_3$ are also even, which implies $(w/2)^2\equiv 7a^2x_1^6\bmod 8$. If $x_1$ is odd, then the right side is congruent to $7\bmod 8$, which is not a square, so $x_1$ must be even. If $x_1$ is even then all coordinates are even, which is a contradiction. Hence, one of $x_2,x_3$ must be odd. Lastly, $2bx_2^6+2\cdot7^3bx_3^6=w^2-4\cdot7a^2x_1^6\equiv 0\bmod 4$ and since $b$ is odd, it follows $x_2,x_3$ must both be odd.

To compute the invariant, we use the fact that $2b(x_2^2+7x_3^2)(x_2^4-7x_2^2x_3^2+7^2x_3^4)=w^2-4\cdot 7a^2x_1^6$. Then the Hilbert symbol becomes
$$\left(A_1, x_2^2+cx_3\right)_2=\left(7,x_2^2+7x_3^2\right)_2=(7,2b)_2+(7,x_2^4-7x_2^2x_3^2+7^2x_3^4)_2$$
since $(7,w^2-4\cdot7a^2x_1^6)_2=0$ as the right hand side is a norm. But since $x_2,x_3$ are odd and $b\equiv1\bmod 8$, we have
$$(7,2b)_2+(7,x_2^4-7x_2^2x_3^2+7^2x_3^4)_2=(7,2)_2+(7,-5)_2=1/2,$$
which proves (ii).

We now prove (iii). First observe that we can assume our $\Q_p$-point satisfies $x_1(x_2^2+7x_3^2)\neq0$ since the invariant map is continuous on the $p$-adic topology. Modulo squares, the Hilbert symbol becomes
$$(A_1,x_2^2+cx_3^2)_p=(7,x_2^2+7x_3^2)_p=(-1,x_2^2+7x_3^2)_p.$$

{\bf Case $p=\infty$.} Note that $x_2^2+7x_3^2\geq0$ always so the invariant map is trivial.

{\bf Case $p\nmid 14b$.} The only way the invariant can be non-trivial is if $(\frac{-1}{p})=-1$ and $v_p(x_2^2+7x_3^2)$ is odd. The last condition implies $(\frac{-7}{p})=1$, so $(\frac{7}{p})=-1$. But then from the equation $v_p(w^2-4\cdot7a^2x_1^6)$ is also odd, which implies $(\frac{7}{p})=1$, a contradiction. Hence the invariant is trivial.

{\bf Case $p\mid b$ and $v_p(b)$ even or $(\frac{7}{p})=1$.} This follows from Lemma \ref{lem:trivial2tor1}. 

{\bf Case $p=7$.} Let $w,x_1,x_2,x_3\in \Z_7$ be a solution. If $v_7(x_2^2+7x_3^2)$ is odd, then $v_7(2bx_2^6+2\cdot7^3bx_3^6)$ is also odd. Moreover, this implies that $v_7(2bx_2^6+2\cdot7^3bx_3^6)\equiv 3\bmod 6$. However $v_7(w^2-4\cdot7a^2x_1^6)\equiv0,1,2,4\bmod 6$ always, a contradiction. Thus $v_7(x_2^2+7x_3^2)$ is even, in which case $(-1,x_2^2+7x_3^2)_p=0$.
\end{proof}

\begin{proposition}\label{prop:lower_bound_3}
	There are $\gg T^{3/2}/(\log T)^{3/8}$ non-zero integers
	$A_1,A_2,A_3$ such that $\max\{|A_i|\} \leq~T$, 
	$A_2/A_3$ is a cube, and there is a Brauer--Manin
	obstruction to the Hasse principle given by $\calB$.
\end{proposition}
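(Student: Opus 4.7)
The plan is to count the pairs $(a,b) \in \N^2$ satisfying the hypotheses of Proposition \ref{prop:lower_bound_family3}, for which that proposition guarantees a Brauer--Manin obstruction coming from $\calB$ on the surface with $\mathbf{A} = (4\cdot 7\, a^2,\ 2b,\ 2\cdot 7^3 b)$. Distinct pairs $(a,b)$ plainly produce distinct triples $\mathbf{A}$, and the height bound $\max|A_i|\leq T$ translates into $a\leq \sqrt{T/28}$ and $b\leq T/686$. Hence it suffices to exhibit a lower bound of order $T^{3/2}/(\log T)^{3/8}$ for the number of admissible pairs.

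The count over $a$ is elementary: positive integers $a\leq \sqrt{T/28}$ with $\gcd(a,42)=1$ contribute a positive proportion of all integers up to $\sqrt{T/28}$, hence $\gg \sqrt{T}$ values. All the analytic weight lies in the count over $b$. Define the multiplicative function
\[
\rho(b) = \begin{cases} 1 & \text{if every prime } p\mid b \text{ with } \left(\tfrac{7}{p}\right)=-1 \text{ satisfies } p\equiv 2\bmod 3 \text{ and } p\equiv 3\bmod 4, \\ 0 & \text{otherwise.}\end{cases}
\]
Since $\rho$ depends only on the prime support of $b$, one has $\rho(p^k)=\rho(p)$ for every $k\geq 1$, so $\rho$ is completely multiplicative with values in $\{0,1\}$. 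It is frobenian, the splitting behaviour of $p$ in the biquadratic extension $\Q(\sqrt{7},\sqrt{-1},\sqrt{-3})/\Q$ determining the three Legendre symbols $(7/p)$, $(-1/p)$ and $(-3/p)$. The mean is computed by inspection of $\Gal(\Q(\sqrt{7},\sqrt{-1},\sqrt{-3})/\Q)\cong(\Z/2)^3$: the condition $\rho(p)=1$ is satisfied by either $(7/p)=1$ (density $1/2$) or the combination $(7/p)=-1,\ p\equiv 2\bmod 3,\ p\equiv 3\bmod 4$ (density $1/8$, using that the three quadratic characters are independent). Hence $m(\rho)=5/8$.

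Now set $M=168=\operatorname{lcm}(56,3)$. Any $b\equiv 1\bmod M$ automatically satisfies $b\equiv 1\bmod 56$ and $\gcd(b,42)=1$, so meets all the remaining congruence hypotheses of Proposition \ref{prop:lower_bound_family3}. Proposition \ref{prop:lower_bound_frob} then gives
\[
\sum_{\substack{b\leq T/686 \\ b\equiv 1\bmod 168}}\rho(b) \ \gg\ \frac{T}{(\log T)^{1-5/8}}\ =\ \frac{T}{(\log T)^{3/8}}.
\]
Multiplying by the $\gg \sqrt{T}$ admissible values of $a$ yields the asserted lower bound $\gg T^{3/2}/(\log T)^{3/8}$.

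The only delicate point is to verify that $\rho$ genuinely meets the hypotheses of Proposition \ref{prop:lower_bound_frob}, most importantly complete multiplicativity and positivity of the mean; both are immediate once the frobenian description above is in place. Apart from this, the argument is a direct application of the geometric input of Proposition \ref{prop:lower_bound_family3} combined with the analytic input of Proposition \ref{prop:lower_bound_frob}, and no further obstacle arises.
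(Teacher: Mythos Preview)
Your proof is correct and follows essentially the same approach as the paper: count the surfaces in Proposition~\ref{prop:lower_bound_family3}, observe that the indicator function on $b$ is a completely multiplicative frobenian function of mean $5/8$, and apply Proposition~\ref{prop:lower_bound_frob}. Your treatment is in fact slightly more careful than the paper's on one point: the paper sums over $b\equiv 1\bmod 56$, which does not by itself force $\gcd(b,3)=1$ as required by Proposition~\ref{prop:lower_bound_family3}, whereas your choice $M=168$ does.
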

\begin{proof}
We count the number of surfaces in Proposition \ref{prop:lower_bound_family3}.
Let $\varpi(b)$ be the indicator function for those integers $b$ with 
$$p\mid b \mbox{ and } \left(\frac{7}{p}\right)=-1 \implies  p\equiv2\bmod3 ,p\equiv 3\bmod 4.$$ 
This is easily seen to be a frobenian multiplicative function of mean $5/8$ (cf.~the proof of Lemma \ref{lem:square}).
Applying Proposition \ref{prop:lower_bound_frob}, we see that the quantity in question is bounded below by
\begin{align*}
  \gg \sum_{\substack{a \ll \sqrt{T} \\ a \equiv 1 \bmod 2 \cdot 3 \cdot 7}}
  \sum_{\substack{b \leq T \\
  b \equiv 1 \bmod 7 \cdot 8}}
  \varpi(b)
    &\gg T^{1/2}
    \cdot T/(\log T)^{3/8}
    \gg T^{3/2}/(\log T)^{3/8}. \qedhere
\end{align*}
\end{proof}

\begin{remark}
    There is an important difference
    between how the powers of $\log T$ arise
    in Theorems \ref{thm:2} and \ref{thm:3+6}.
    In Theorem \ref{thm:2} a positive
    proportion of the surfaces are everywhere
    locally soluble (this follows from  \cite[Thm.~1.4]{BBL16}); all the logarithmic
    savings come from forcing the local
    invariants of $\calA$ to be almost
    always constant.
    However in Theorem \ref{thm:3+6}, almost
    all the surfaces are \emph{not} everywhere
    locally soluble: it follows from Lemma \ref{lem:surjective2tor3} that for certain primes $p$ the existence of $\Q_p$-point implies that
    $-A_2/A_3 \in \Q_p^{\times 2}$. This gives an extra saving
    which is nothing to do with  $\calB$.
\end{remark}

\section{Brauer--Manin obstruction from multiple algebras}
In this last section, we study the Brauer--Manin obstruction associated to various combinations of the algebras $\calA_i, \calB_i, \calC_i$ from \S\ref{sec:Brauer_elements}. We will show using similar methods to the previous sections that these are all negligible for Theorem \ref{thm:main2}.

\begin{theorem}\label{thm:multalg}
    Let $A_1,A_2,A_3$ be non-zero integers with $\max\{|A_1|,|A_2|,|A_3|\} \leq T$ which satisfy one of the conditions in Table \ref{table:multiple algebras} and let $B_i$ be the stated
    subset of $\Br X_{\AA}$ corresponding to label $(i)$.
    Then Table \ref{table:multiple algebras} gives an upper bound for the number of such surfaces with a Brauer--Manin obstruction to the Hasse principle given by $B_i$.
\end{theorem}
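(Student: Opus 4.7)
The plan is to handle each entry of Table \ref{table:multiple algebras} by the same three-step template we developed in Sections~4 and~5, gaining successive savings from the additional constraints imposed by requiring several of the algebras $\calA_i, \calB_i, \calC_i$ to simultaneously give a BM obstruction. First, by Proposition~\ref{prop:algebras}, the mere \emph{existence} (and unramifiedness) of any subset $B_i$ of these algebras already forces strong arithmetic conditions on $\AA$: each $\calA_i$ present demands $-3A_i \in \Q^{\times 2}$, each $\calB_i$ demands $A_{i+1}/A_{i+2} \in \Q^{\times 3}$, and each $\calC_i$ demands $27A_{i+1}/A_{i+2} \in \Q^{\times 6}$. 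Combinations of two or more such conditions are much rarer than any single one: e.g.\ $-3A_1$ and $-3A_2$ both being squares cuts the count to $O(T^2)$, while $A_1/A_2$ and $A_1/A_3$ both being cubes cuts it to $O(T^{4/3} \log T)$, etc. In each case we obtain a first, crude upper bound just from counting triples satisfying these geometric conditions.

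Secondly, to improve this to the bounds in the table we extract valuative criteria for the BM obstruction, mirroring Propositions \ref{prop:surjective_2a}, \ref{prop:surjective_2} and Lemmas \ref{lem:surjective2tor}, \ref{lem:surjective2tor3}. The essential point is that, for each algebra in $B_i$, there is a large class of primes $p$ at which the corresponding local invariant map is \emph{surjective} onto $\Z/3\Z$ or $\Z/2\Z$; summing over the coefficients then forces, up to a bounded set of bad primes, that every prime dividing an $A_i$ has a prescribed splitting behaviour (cube-residue or square-residue) in the relevant number field. When $B_i$ contains algebras of coprime orders (say one of order $3$ and one of order $2$), the two sets of valuative constraints are independent, and the savings multiply; when $B_i$ contains several algebras of the same order, the joint image of their local invariants in $(\Z/n\Z)^{|B_i|}$ is generically full, so the surjectivity thresholds become even easier to meet, strengthening the valuative restrictions.

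Thirdly, with the valuative restrictions in hand, we follow the factorisation-and-summation scheme of Proposition~\ref{prop:Upper_bound_2}: decompose each $A_i$ as $A_i = g\,g_i\,u_i\,z_i$, where $u_i$ records primes shared with $A_1$, $g_i$ records primes shared between $A_2$ and $A_3$, and $z_i$ is the ``free'' part. The constraints force $z_i$ to lie in a finite set modulo squares/cubes, and force the free primes dividing it to have a prescribed cubic or quadratic residue class. We then apply Proposition~\ref{prop:cube} and Lemma~\ref{lem:square} to the free sums, reduce the auxiliary sums over $u_i, g_i$ by cutting them off at $(\log T)^A$ via the convergence estimates in Lemmas \ref{lem:rad} and \ref{lem:rad2}, and read off the resulting power of $\log T$.

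The main obstacle will be the bookkeeping in the mixed cases, i.e.\ rows of the table where $B_i$ contains both an order-$3$ algebra (say $\calA_1$) and an order-$2$ algebra (say $\calB_2$). Here neither single algebra may obstruct the HP on its own, so one cannot simply appeal to Theorem \ref{thm:2} or \ref{thm:3+6}. Instead one must track the joint local invariant pairing $X(\Q_p) \to \Z/6\Z$, rule out its surjectivity at all but a negligible set of primes, and combine the square-and-cube counting estimates. Provided the correct symmetry relations are observed — in particular, that $\calA_i$ and $\calB_i$ can be ``joint-trivialised'' only on very restrictive residue classes — the upper bounds in Table \ref{table:multiple algebras} follow by a routine, if tedious, combination of the arguments of Propositions \ref{prop:Upper_bound_2} and \ref{prop:ub_index3}.
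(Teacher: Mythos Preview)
Your three-step template (local surjectivity criteria $\to$ valuative constraints $\to$ factorise-and-sum) is indeed the paper's strategy, and the counting machinery you invoke (Proposition~\ref{prop:cube}, Lemma~\ref{lem:square}, Lemmas~\ref{lem:rad}--\ref{lem:rad2}) is the right one. However, there is a genuine gap in the crucial second step, and a related misconception in how you expect the constraints to combine.

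The decisive ingredient is Lemma~\ref{lem:surjmultalg}: at any prime $p\nmid A_1A_2$ with $\ord(b)\nmid v_p(A_3)$ for \emph{every} $b\in B_i$, the \emph{joint} invariant map $X(\Q_p)\to\prod_{b\in B_i}\Z/\ord(b)\Z$ is surjective. This is what actually rules out a Brauer--Manin obstruction from $B_i$; knowing that each individual component $\inv_p b$ is surjective onto its own $\Z/\ord(b)\Z$ (which is all that Propositions~\ref{prop:surjective_2a}, \ref{prop:surjective_2} and Lemmas~\ref{lem:surjective2tor}, \ref{lem:surjective2tor3} provide) is \emph{not} sufficient, since $X(\Q_p)$ is not a group and surjectivity onto each factor does not force surjectivity onto the product. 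The paper proves joint surjectivity by writing down the simultaneous descent system (one copy of the Lemma~\ref{lem:res} substitution for each $\calA$-type algebra, one quadratic constraint for each $\calB/\calC$-type algebra), checking with \texttt{Magma} that the resulting curve over $\F_p$ is geometrically integral, and applying Hasse--Weil. You assert that ``the joint image \dots is generically full'' without indicating how to establish it; this is precisely the step that needs work.

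Relatedly, your claim that for coprime orders ``the two sets of valuative constraints are independent, and the savings multiply'' points in the wrong direction. The contrapositive of Lemma~\ref{lem:surjmultalg} says that if there is a BM obstruction from $B_i$, then for every $p\nmid A_1A_2$ outside $S$ there exists \emph{some} $b\in B_i$ with $\ord(b)\mid v_p(A_3)$; this is the \emph{disjunction} of the individual constraints, not their conjunction. Thus in row~(2) one only learns that $v_p(A_3)$ is divisible by $2$ or by $3$, i.e.\ that $z_3$ is square-full (giving a $T^{1/2}$ count for $z_3$), not that $z_3$ is a sixth power. The bound $T^{1+\varepsilon}$ in rows~(2), (4), (5) is driven instead by the base conditions, which force $-3^3A_1A_2^2\in\Q^{\times6}$ and hence cut the $(A_1,A_2)$-count to $O(T^{1/2+\varepsilon})$. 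Finally, for row~(3) the paper needs an additional two-prime surjectivity statement (Lemma~\ref{lem:surjective2x2tor2}), analogous to Proposition~\ref{prop:BC} but for the pair $(\calB_3,\calC_3)$, to produce the $\log\log T$ factor; this is absent from your sketch.
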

\begin{table}[!htb]
\begin{center}
    \begin{tabular}{ | l | l | l | l | }
    \hline
    Label & Conditions & Brauer elements $B_i$  & Upper bound\\ \hline
    (1) & $-3A_1,-3A_2\in\Q^{\times2}$ & $\calA_1,\calA_2$ & $T^{4/3+\varepsilon}$\\ \hline
    (2) & $\begin{array}{@{}l} -3A_1\in\Q^{\times2}, \\ A_1/A_2\in\Q^{\times3}\end{array}$ & $\calA_1,\calB_3$ & $T^{1+\varepsilon}$\\ \hline
    (3) & $27A_1/A_2\in\Q^{\times6}$ & $\calB_3,\calC_3$ & $T^{3/2}\log\log T/(\log T)^{1/2}$\\ \hline
    (4) & $\begin{array}{@{}l} -3A_1,-3A_2\in\Q^{\times2}, \\ A_1/A_2\in\Q^{\times3}\end{array}$ & $\calA_1,\calA_2,\calB_3$ & $T^{1+\varepsilon}$\\ \hline
    (5) & $\begin{array}{@{}l} -3A_1,-A_2\in\Q^{\times2},\\ A_1/A_2\in\Q^{\times3}\end{array}$ & $\calA_1,\calB_3,\calC_3$ & $T^{1+\varepsilon}$\\ \hline
	\end{tabular}
\end{center}
\caption{}
\label{table:multiple algebras}
\end{table}

\subsection{Local invariants}
We begin with an analogue of Proposition~\ref{prop:surjective_2} and Lemma \ref{lem:surjective2tor} from the single algebra case. For any element $b\in \Br X$, let $\ord(b)$ denote its order. Recall that $\ord(\calA_i)=3, \ord(\calB_i)=\ord(\calC_i)=2$.

\begin{lemma}\label{lem:surjmultalg}
For each label $1\leq i\leq 5$ on Table \ref{table:multiple algebras}, let $B_i$ be the corresponding set of Brauer elements listed. There exists a finite set of primes $S$ with the following property. Let $p\notin S$ be a prime with
\begin{enumerate}
\item $p\nmid A_1A_2$;
\item $\ord(b)\nmid v_p(A_3)$ for every $b\in B_i$.
\end{enumerate}
Then the local invariant map
$$X(\Q_p)\to \prod_{b\in B_i}\Z/(\ord(b))\Z,\quad x_p\to \prod_{b\in B_i} \inv_p b(x_p)$$
is surjective.
\end{lemma}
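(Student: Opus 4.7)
The plan is to adapt the Galois descent plus Hasse--Weil strategy used in Proposition \ref{prop:surjective_2} (for $\calA_i$) and Lemma \ref{lem:surjective2tor} (for $\calB_i$) to prescribe the invariants of several algebras simultaneously. Fix a target value $(\varepsilon_b)_{b\in B_i}$ in $\prod_{b\in B_i}\Z/(\ord(b))\Z$. For each $b\in B_i$ of order $n_b$, choose, as in Lemma \ref{surjlem}, an element $n_b$ in the appropriate norm-one subgroup of $\OO_\pp^\times$ with $(n_b,p)_{\zeta,\pp}=\varepsilon_b$. By the calculations in the single-algebra proofs, the condition $\inv_p b(x)=\varepsilon_b$ is equivalent to a single polynomial equation of the form ``some explicit rational function of $(w,x_1,x_2,x_3)$ equals $n_b u_b^{n_b}$'' in auxiliary variables $u_b$.

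Next I would perform the descent. For each $\calA_j\in B_i$ the cubic equation is over $K_\pp=\Q_p(\omega)$, and Lemma \ref{lem:res} replaces $u_j^3=\alpha_j$ with a monic cubic $t_j^3-3t_j=\alpha_j+\alpha_j^{-1}$ defined over $\Q_p$; for each $\calB_j,\calC_j\in B_i$ no descent is needed since these are already quaternion algebras and the equation $v_j^2=\beta_j$ is already over $\Q_p$. Together with the equation of $X$, the result is a system of equations over $\Q_p$ cutting out an auxiliary variety $Y\subset X\times\A^{|B_i|}$. Condition (1) ensures $A_1,A_2$ are units at $p$, and condition (2), after absorbing the power of $p$ in $A_3$ into the coordinates (as in the proof of Lemma \ref{lem:surjective2tor3}), gives $Y$ a well-defined model over $\F_p$.

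I would then handle the five cases of Table \ref{table:multiple algebras} by checking that the geometric generic fibre $\overline Y_{\F_p}$ is integral of bounded degree and dimension. This is plausible because the defining formulae of $\calA_i$, $\calB_i$, $\calC_i$ in Proposition \ref{prop:algebras} involve essentially disjoint ``groups'' of variables: $\calA_1$ constrains the ratio $(w-\sqrt{A_1}x_1^3)/(w+\sqrt{A_1}x_1^3)$, while $\calB_3$ and $\calC_3$ constrain $x_1^2,x_2^2,x_3^2$. Consequently each new algebra cuts the dimension of $Y$ by exactly one. A case-by-case \texttt{Magma} computation (exactly in the style of Proposition \ref{prop:surjective_2} and Lemma \ref{lem:surjective2tor3}), run on the universal family over the parameter $n_b$'s, verifies geometric irreducibility and gives an explicit genus/degree bound. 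Then the Lang--Weil estimates provide, for $p$ outside a finite set $S$, a smooth $\F_p$-point of $Y$, and Hensel's lemma lifts it to a $\Q_p$-point of $X$ realising $(\varepsilon_b)_{b\in B_i}$.

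The main obstacle will be the irreducibility/dimension check for cases (4) and (5), where three Brauer classes are imposed simultaneously; one has to confirm that the three extra equations do not conspire to force $Y$ onto a lower-dimensional component or to introduce geometrically reducible behaviour at the price of enlarging $S$. Once this verification is carried out (which, because all the equations are explicit, reduces to a finite calculation in \texttt{Magma}), the rest of the argument proceeds uniformly in $p$.
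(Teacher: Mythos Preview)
Your plan is correct and essentially identical to the paper's argument: the authors likewise fix target invariants, invoke Lemma~\ref{surjlem} and Lemma~\ref{lem:res} to descend the cubic conditions for the $\calA_j$ (no descent being required for the quaternion classes), reduce the resulting auxiliary variety modulo $p$ using $p\mid A_3$, and verify geometric integrality with \texttt{Magma} before applying Hasse--Weil. The paper in fact writes out only case~(1) (the pair $\calA_1,\calA_2$) and declares the remaining labels to be ``small modifications of the same argument,'' so your concern about labels~(4) and~(5) is addressed in the paper only implicitly; one minor inaccuracy in your write-up is the phrase ``each new algebra cuts the dimension of $Y$ by exactly one,'' since each algebra adds one auxiliary variable together with one equation and hence leaves $\dim Y=\dim X=2$ unchanged---the actual point to check is geometric irreducibility of the reduction, not a dimension drop.
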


\begin{proof}
We only prove for the case $i=1$, as all other cases are small modifications of the same argument. The proof uses a similar approach to the proof of  Proposition~\ref{prop:surjective_2}.  We  highlight the main differences which are the equations of the curves. Let $K=\Q(\omega)$.

We may assume that $3 \in S$.
Let $\boldsymbol{\varepsilon} \in (\Z/3\Z)^2$ and let $\frakp\in\Z[\omega]$ be a prime above $p$. Let $n_1,n_2\in\calO^\times_\frakp$ be as in Lemma \ref{surjlem} such that $((n_1,p)_{\omega, \frakp}, (n_2,p)_{\omega, \frakp}) = \boldsymbol{\varepsilon}$. Thus to represent $\boldsymbol{\varepsilon}$, it suffices to find $w,x_1,x_2,x_3\in\Q_p$ and $u_1,u_2\in K_\frakp^\times$ such that
\begin{equation} \label{eqn:n_cube2}
\frac{w-\sqrt{A_1}x_1^3}{w+\sqrt{A_1}x_1^3}=n_1u_1^3,\ \ \frac{w-\sqrt{A_2}x_2^3}{w+\sqrt{A_2}x_2^3}=n_2u_2^3,\ \ w^2=A_1x_1^6+A_2x_2^6+A_3x_3^6.
\end{equation}
By Lemma \ref{lem:res}, the variety given by \eqref{eqn:n_cube2} is isomorphic to the base change to $K_\frakp$ of the following variety
\begin{gather}\label{eqn:n_cube2des}
\begin{split}
&t_1^3 - 3t_1 = \frac{a_1(w^2 +A_1x_1^6)+A_1b_1wx_1^3}{A_2x_2^6+A_3x_3^6},\\ 
&t_2^3 - 3t_2 = \frac{a_2(w^2 +A_2x_2^6)+A_2b_2wx_2^3}{A_1x_1^6+A_3x_3^6},\quad 
w^2=A_1x_1^6+A_2x_2^6+A_3x_3^6.
\end{split}
\end{gather}
where $a_i=n_i+n_i^{-1}, b_i=2(n_i-n_i^{-1})/\sqrt{A_1}$ (cf.~\eqref{eqn:descent}). Hence it suffices to find a $\Q_p$-point on \eqref{eqn:n_cube2des}.
We reduce modulo $p$ and pass to the algebraic closure to study the curve. In particular, we may assume that $A_1=A_2=n_1=n_2=1$, so that the equation modulo $p$ becomes
$$t_1^3 - 3t_1 = 2\frac{w^2 +x_1^6}{x_2^6},\quad t_2^3 - 3t_2 = 2\frac{w^2 +x_2^6}{x_1^6}, \quad w^2=x_1^6+x_2^6.$$
Using \texttt{Magma}, this curve is geometrically integral. Hence for $p$ large enough, there exists a smooth $\F_p$-point which can be lifted to give a $\Q_p$-point on \eqref{eqn:n_cube2des}.
\end{proof}

For the case $27A_1/A_2$ is a sixth power (label (3)), we need an extra lemma.

\begin{lemma}\label{lem:surjective2x2tor2}
    Suppose that $27A_1/A_2$ is a sixth power. There exists a finite set of primes $S$ with the following property. Let $p_1,p_2 \notin S$ be distinct primes such that for both $i=1,2$ the following holds.
	\begin{enumerate}
		\item $v_{p_i}(A_3)$ is even;
		\item $A_3 \notin \Q_{p_i}^{\times2}$;
		\item $v_{p_i}(A_1) = v_{p_i}(A_2)$ is odd;
	\end{enumerate}
	If $X(\Q_{p_1})\times X(\Q_{p_2}) \neq \emptyset$ then
	$$X(\Q_{p_1})\times X(\Q_{p_2})\to (\Z/2\Z)^2,\quad (x_{p_1},x_{p_2})\mapsto \left(\sum_{i=1}^2\inv_{p_i} \calB_3(x_{p_i}),\sum_{i=1}^2\inv_{p_i}\calC_3(x_{p_i})\right)$$
	is surjective.
\end{lemma}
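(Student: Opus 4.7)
Write $27A_1/A_2 = c^6$ and factor
\[ A_1x_1^6+A_2x_2^6 \;=\; A_1\,f_1 f_2 f_3, \]
with $f_1 = x_1^2 + (3/c^2)x_2^2$ and $f_{2,3} = x_1^2 \pm (3/c)x_1x_2 + (3/c^2)x_2^2$. By Proposition~\ref{prop:algebras} we have $\calB_3 = (A_3,f_1/x_3^2)$ and $\calC_3 = (A_3,f_2/x_3^2)$, and the identity $A_1 f_1 f_2 f_3 = w^2-A_3x_3^6$ on $X_\AA$ exhibits this product as a norm from $\Q(\sqrt{A_3})$, yielding the key relation $\sum_{k=1}^{3} \inv_{p_i}(A_3,f_k) \equiv -(A_3,A_1)_{p_i} \pmod 1$. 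Over $\Q(\omega)$ one further has $f_1 = g_1$ and $f_2 f_3 = g_\omega g_{\omega^2}$ where $g_\zeta := x_1^2 + \zeta(3/c^2)x_2^2$.

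Following the opening of the proof of Lemma~\ref{lem:surjective2tor3}, local solubility at $p_i$ already forces $p_i \equiv 1 \pmod 3$: the even valuation and non-square status of $A_3$ make $v_{p_i}(w^2-A_3x_3^6)$ always even on $X(\Q_{p_i})$, and with $v_{p_i}(A_1)$ odd this gives $v_{p_i}(f_1 f_2 f_3)$ odd. If $p_i \equiv 2 \pmod 3$ then $\Q_{p_i}(\omega)/\Q_{p_i}$ is unramified, so Galois symmetry forces $v_{p_i}(g_\omega g_{\omega^2})$ even and the odd valuation must fall on $f_1$; this in turn requires $-3/c^2 \in \F_{p_i}^{\times 2}$, a contradiction. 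Hence $p_i \equiv 1 \pmod 3$ and $\omega \in \Q_{p_i}$.

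Next I show that for $p_i \notin S$ the image of $X(\Q_{p_i}) \to (\Z/2\Z)^2$, $x \mapsto (\inv_{p_i}\calB_3(x),\inv_{p_i}\calC_3(x))$, contains $\{(0,0),(0,1/2),(1/2,0)\}$. Adapting the Hensel perturbation of Lemma~\ref{lem:surjective2tor3}, for each $\zeta \in \{1,\omega,\omega^2\} \subset \Q_{p_i}$ and odd $j \geq 1$ with $v_{p_i}(A_1)+j \equiv 0 \pmod 6$, solve $\alpha^2 = -\zeta(3/c^2)+p_i^j$ by Hensel and substitute $x_1 = \alpha x_2$; this reduces $X_\AA$ to the smooth genus-$2$ curve $w^2 = A'_1 x_2^6 + A_3x_3^6$ with $v_{p_i}(A'_1)=0$, on which Hasse--Weil produces $\F_{p_i}$-points lifting by Hensel. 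For $\zeta = 1$ only $f_1$ is small, with $v_{p_i}(f_1) = j$ odd, giving $\inv_{p_i}\calB_3 = 1/2$; meanwhile $f_2 = (p_i^j + (3/c)\alpha)x_2^2$ is a unit times $x_2^2$ so $\inv_{p_i}\calC_3 = 0$ (the Hilbert symbol of $A_3$ with a $p_i$-adic unit vanishes), yielding $(1/2,0)$. For $\zeta = \omega$ the factor $g_\omega$ is small while $f_1 = g_1$ remains a unit (so $\inv_{p_i}\calB_3 = 0$), and using the identity $(1-\omega)^2 = -3\omega$ one computes $\alpha \equiv \pm(1-\omega)/c \pmod{p_i}$ and checks that exactly one of $(1-\omega)(3/c^2) \pm (3/c)\alpha$ vanishes modulo $p_i$, so precisely one of $f_2$, $f_3$ acquires odd valuation; the two signs of $\alpha$ thus yield $(0,0)$ and $(0,1/2)$ respectively.

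Finally, the Minkowski sum of two copies of $\{(0,0),(0,1/2),(1/2,0)\}$ already equals $(\Z/2\Z)^2$ (since $(1/2,0)+(0,1/2)=(1/2,1/2)$), so the sum map on $X(\Q_{p_1}) \times X(\Q_{p_2})$ is surjective, proving the lemma. The principal technical obstacle is the sign analysis in the $\zeta = \omega$ case, which must be carried out for both choices of the square root and tracked through the reduction modulo $p_i$; the finite set $S$ is enlarged to absorb primes where the Hasse--Weil bound fails on the reduced genus-$2$ curve, where the residues $c, \omega, 1-\omega$ become degenerate, or where exceptional Hilbert-symbol cancellations occur.
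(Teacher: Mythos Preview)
Your proof is correct and follows essentially the same approach as the paper's own argument. Both construct local points by substituting $x_1=\alpha x_2$ where $\alpha$ is a small perturbation of a square root of $-\zeta\sqrt[3]{A_2/A_1}$ for a root of unity $\zeta$, reduce to a genus-$2$ curve handled by Hasse--Weil, and then read off the invariants of $\calB_3$ and $\calC_3$ from the valuations of the factors $f_k$. The only cosmetic differences are that the paper parametrises by a sixth root of unity $\beta$ (setting $\alpha=\beta\gamma+p_i^{e}$ with $\gamma^2=-c$) rather than by a cube root $\zeta$ together with a choice of sign for $\alpha$, and that the paper phrases the final step as ``choosing $\beta_1,\beta_2$ appropriately'' whereas you make the Minkowski-sum observation explicit. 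Your derivation that the single-prime image is exactly $\{(0,0),(1/2,0),(0,1/2)\}$ matches the paper's case analysis over the six values of $\beta$.
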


\begin{proof}
As in Lemma \ref{lem:surjective2tor3}, we can assume that $v_{p_i}(A_3)=0$. Let $c=\sqrt[3]{A_2/A_1}$. If $X(\Q_{p_i})\neq\emptyset$, then Lemma \ref{lem:surjective2tor3} implies $-c\in\Q_{p_i}^{\times2}$, which in this case means that $-3\in\Q_{p_i}^{\times2}$ since $3A_2/A_1 \in \Q^{\times2}$. Hence $p_i\equiv1\bmod 3$. Next let $\beta_i\in\Q_{p_i}$ be a sixth root of unity. Let $e_i$ be a positive integer such that $e_i\equiv -v_{p_i}(A_1)\bmod 6$. Since $-c$ is a square in $\Z_{p_i}$, choose a square root $\gamma_i\in\Z_{p_i}$. Let $\alpha_i=\beta_i\gamma_i+{p_i}^{e_i} \in \Q_{p_i}$. Then,
$$A_1\alpha_i^6+A_2=A_1(-c^3+p_i^{e_i}u)+A_2=A_1p_i^{e_i}u$$
for some $u\in \Z_{p_i}^\times$. Hence, by our choice of $e_i$, we have that $v_{p_i}(A_1\alpha_i^6+A_2)\equiv0\bmod 6$. If $p_i$ is large enough, the following equation
$$w^2=A_1(\alpha_i^6+c^3)x_2^6+A_3x_3^6$$
then has a $\Q_{p_i}$-solution with $(w,x_2,x_3) \neq (0,0,0)$. We then set $x_{p_i}:=(w:\alpha_ix_2:x_2:x_3)\in X(\Q_{p_i})$. Using hypothesis (2), for $p_i\neq13$ we find that
\begin{align*}
    \sum_{i=1}^{2}\inv_{p_i} \calB_3(x_{p_i})&=(A_3, \alpha_1^2+c)_{p_1}+(A_3, \alpha_2^2+c)_{p_2}\\
&= \begin{cases}
    1/2, & \mbox{if } \beta_1=\pm1 \text{ or }\beta_2=\pm1,\text{ but not both},\\
    0, & \text{otherwise},\\
    \end{cases}\\
    \sum_{i=1}^{2}\inv_{p_i} \calC_3(x_{p_i})&=(A_3, \alpha_1^2+\alpha_1\sqrt{3c}+c)_{p_1}+(A_3, \alpha_2^2+\alpha_2\sqrt{3c}+c)_{p_2}\\
&= \begin{cases}
    1/2, & \mbox{if } \beta_1=\frac{\pm1-\sqrt{-3}}{2} \text{ or }\beta_2=\frac{\pm1-\sqrt{-3}}{2}\text{ but not both},\\
    0, & \text{otherwise},\\
\end{cases}
\end{align*}
A moment's thought reveals that we can get every possible value of $(\Z/2\Z)^2$ on choosing $\beta_1,\beta_2$ appropriately. We briefly explain the reason for taking $p_i\neq13$ above. We have $$(A_3,\alpha_1^2+\alpha_1\sqrt{3c}+c)_{p_1}=(A_3,-c(\beta_1^2+\beta_1\sqrt{-3}-1)+2\beta_1\gamma_1 p_1^{e_1}+p_1^{2e_1}+p_1^{e_1}\sqrt{3c})_{p_1}.$$
If $\beta_1=\frac{\pm1-\sqrt{-3}}{2}$, then this is $(A_3,2\beta_1\gamma_1 p_1^{e_1}+p_1^{2e_1}+p_1^{e_1}\sqrt{3c})_{p_1}=(A_3, p_1^{e_1}(2\beta_1\gamma_1+\sqrt{3c})+p_1^{2e_1})_{p_1}$. Depending on the root $\gamma_1$ chosen, it is possible that $2\beta_1\gamma_1+\sqrt{3c}=13$. Then the invariant is actually trivial for $p_1=13$.
\end{proof}

\subsection{Proof of Theorem \ref{thm:multalg}}
We now prove the upper bounds based on the above two lemmas.

\begin{proposition}\label{prop:index4}
There are at most $O(T^{4/3+\varepsilon})$ non-zero integers $A_1,A_2,A_3$ such that $\max\{|A_i|\}\leq T$, where $-3A_1,-3A_2$ are squares, and there is a Brauer--Manin obstruction to the Hasse principle given by $\calA_1,\calA_2$.
\end{proposition}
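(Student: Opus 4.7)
The plan is to extract a structural condition on $A_3$ from the Brauer--Manin obstruction and then count, using the square conditions on $A_1$ and $A_2$.

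First, we apply Lemma~\ref{lem:surjmultalg} in its contrapositive form with $B_1 = \{\calA_1, \calA_2\}$. If some prime $p \notin S$ satisfies $p \nmid A_1 A_2$ and $3 \nmid v_p(A_3)$, then the joint local invariant map $X(\Q_p) \to (\Z/3\Z)^2$ is surjective, and the standard adelic adjustment argument (now adjusting both invariants simultaneously at the single prime $p$, in direct analogy with Corollary~\ref{cor:no_BM_3}) produces an adelic point in $X(\Adele_\Q)^{B_1}$. Hence the existence of a BM obstruction from $B_1$ forces $3 \mid v_p(A_3)$ for every prime $p \notin S$ with $p \nmid A_1 A_2$.

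Second, the hypothesis $-3A_i \in \Q^{\times 2}$ together with $A_i \in \Z \setminus \{0\}$ forces $A_i = -3 b_i^2$ with $b_i \in \Z \setminus \{0\}$ and $|b_i| \leq \sqrt{T/3}$, giving at most $O(T)$ pairs $(A_1, A_2)$. For each such pair we write $A_3 = \pm m n^3$ uniquely with $m$ a positive cubefree integer; the valuation condition above translates to $\rad(m) \mid C b_1 b_2$, where $C = 3 \prod_{p \in S} p$ is a fixed integer. The constraint $|A_3| \leq T$ gives $|n| \ll (T/m)^{1/3}$, so
\begin{equation*}
    \#\{A_3\} \ll T^{1/3} \sum_{\substack{m \geq 1,\ m \text{ cubefree} \\ \rad(m)\, \mid\, C b_1 b_2}} m^{-1/3}
    \ll T^{1/3} \prod_{p \,\mid\, C b_1 b_2} (1 + p^{-1/3} + p^{-2/3})
    \ll_\varepsilon T^{1/3 + \varepsilon},
\end{equation*}
where the final inequality uses the elementary divisor-style bound $3^{\omega(n)} \ll_\varepsilon n^{\varepsilon}$. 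Multiplying by the bound on $(A_1, A_2)$ yields $O(T^{4/3 + \varepsilon})$.

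The only delicate point is the first step, but this is a routine extension of Corollary~\ref{cor:no_BM_3} once Lemma~\ref{lem:surjmultalg} is in hand: joint surjectivity at a single prime allows us to prescribe both invariants there and cancel any contribution from the remaining places. The remainder is divisor/radical bookkeeping along the lines of the proof of Proposition~\ref{prop:Upper_bound_2}, but substantially simpler since no logarithmic savings are required here.
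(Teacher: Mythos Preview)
Your proof is correct and follows essentially the same approach as the paper: both arguments use Lemma~\ref{lem:surjmultalg} to force the part of $A_3$ coprime to $A_1A_2$ (away from $S$) to be a cube, then perform a divisor/radical count. The only cosmetic difference is the order of summation: the paper writes $A_3 = g a_3^3$ with $\rad(g)\mid 3a_1a_2$, sets $a=a_1a_2$, and appeals to Lemma~\ref{lem:rad} for the convergence of $\sum_g 1/(g^{1/3}\rad(g))$, whereas you fix $(b_1,b_2)$ first and bound the cubefree part via $3^{\omega(Cb_1b_2)}\ll_\varepsilon T^\varepsilon$.
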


\begin{proof}
Write $A_1=-3a_1^2$, $A_2=-3a_2^2$, and $A_3=gz_3$ where $\rad(g)\mid 3a_1a_2$ and $\gcd(z_3,3a_1a_2)=1$. By Lemma \ref{lem:surjmultalg}, we see that $z_3$ takes a finite set of values modulo cubes. Since the proof is similar, we assume $z_3= a_3^3$ is a cube. In conclusion, we have reduced to finding an upper bound for
$$\#\left\{a_i,g \in \Z :
	|3a_1^2|,|3a_2^2|, |ga_3^3|\leq T,\ \rad(g) \mid 3a_1a_2 \right\}.$$
Setting $a = a_1a_2$, this is
$$
\ll 
\sum_{g\leq T} \sum_{\substack{|a|\leq T \\ \rad(g) \mid 3a}}\sum_{|ga_3^3|\leq T}\tau(a) 
\ll \sum_{g\leq T} \frac{T^{1+\varepsilon}}{\rad(g)} \cdot \frac{T^{1/3}}{g^{1/3}} \ll T^{4/3+\varepsilon} \sum_{g\leq T} \frac{1}{g^{1/3}\rad(g)},
$$
where we used the bound $\tau(a) \ll a^{\varepsilon}$ for the divisor function. The sum over $g$ is convergent by Lemma \ref{lem:rad}.
\end{proof}

\begin{proposition}\label{prop:upper_bound_6}
There are at most $O(T^{1+\varepsilon})$ non-zero integers $A_1,A_2,A_3$ such that $\max\{|A_i|\}\leq T$, where $-3A_1$ is a square, $A_1/A_2$ is a cube, and there is a Brauer--Manin obstruction to the Hasse principle given by $\calA_1,\calB_3$.
\end{proposition}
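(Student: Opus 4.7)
The plan is to mimic the strategy of Proposition~\ref{prop:index4}: parametrize the family using the square and cube conditions, restrict $A_3$ via Lemma~\ref{lem:surjmultalg}, and then count. First I would write $A_1 = -3a_1^2$ and parametrize $A_1/A_2 = (t/s)^3$ in lowest terms with $s, t > 0$, $\gcd(s, t) = 1$; integrality then forces $s^3 \mid 3a_1^2$ and gives $A_2 = -3 a_1^2 t^3/s^3$. Decompose $A_3 = g z_3$ with $\rad(g) \mid 3 a_1 s t$ and $\gcd(z_3, 3 a_1 s t) = 1$.

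Applied to $B_2 = \{\calA_1, \calB_3\}$ of orders $3$ and $2$, Lemma~\ref{lem:surjmultalg} shows that whenever $p \notin S$ with $p \nmid 3 a_1 s t$ satisfies $\gcd(v_p(z_3), 6) = 1$, the joint local invariant at $p$ is surjective, and the Brauer--Manin obstruction from $\{\calA_1, \calB_3\}$ fails. Hence for this obstruction to persist, every such prime $p \mid z_3$ must satisfy $2 \mid v_p(z_3)$ or $3 \mid v_p(z_3)$; up to absorbing finitely many exceptional primes into $g$, this forces a unique factorization $z_3 = a^2 b^3$ with $\gcd(a, b) = 1$.

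The counting then splits in two. For pairs $(A_1, A_2)$: for each fixed $s$, the constraint $s^3 \mid 3 a_1^2$ confines $a_1$ to multiples of some $k(s) \geq s$, and summing over the resulting $a_1$ and the $|t|$-range $\ll (T s^3/(3 a_1^2))^{1/3}$ gives a contribution $\ll T^{1/2}/s$; summation over $s \leq (T/3)^{1/4}$ then yields $O(T^{1/2 + \varepsilon})$ pairs. For $A_3 = g a^2 b^3$: there are $O(X^{1/2})$ coprime pairs with $a^2 b^3 \leq X$, and summing over $g$ with $\rad(g) \mid 3 a_1 s t$ contributes the Euler product $\prod_{p \mid 3 a_1 s t}(1 - p^{-1/2})^{-1}$, bounded by $T^{\varepsilon}$ via a Mertens-type estimate (cf.\ Lemma~\ref{lem:rad}), giving $O(T^{1/2 + \varepsilon})$ choices of $A_3$ per pair. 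Multiplying yields $O(T^{1+\varepsilon})$.

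The main obstacle will be the pair count: verifying that $s^3 \mid 3 a_1^2$ actually saves a full factor of $s$ in the summation, so that the resulting sum $\sum_s 1/s$ contributes at most $O(\log T)$. This saving, coming from the cube relation $A_1/A_2 \in \Q^{\times 3}$, is precisely what compensates for the weaker valuation condition on $A_3$ (exponents in $2\Z \cup 3\Z$, rather than in $3\Z$ as in Proposition~\ref{prop:index4}) and yields the stronger final bound $O(T^{1+\varepsilon})$.
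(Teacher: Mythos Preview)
Your strategy is the same as the paper's---use Lemma~\ref{lem:surjmultalg} for the pair $\{\calA_1,\calB_3\}$ to constrain the prime factorisation of the part of $A_3$ coprime to $A_1A_2$, then count---but the bookkeeping differs in two places.

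For the pair count, the paper combines the two hypotheses into the single relation $-27A_1A_2^2=a^6$ with $|a|\ll T^{1/2}$; for each $a$ the factorisations $A_1A_2^2=-a^6/27$ number $O(T^\varepsilon)$ by the divisor bound, giving $O(T^{1/2+\varepsilon})$ pairs immediately. This sidesteps the obstacle you flag. Your direct $(a_1,s,t)$ parametrisation also works, but your claimed contribution $\ll T^{1/2}/s$ for fixed $s$ is only correct when $s$ is squarefree: in general the contribution is $T^{1/2}\cdot s/k(s)$ with $k(s)=\prod_{p}p^{\lceil 3v_p(s)/2\rceil}$, so e.g.\ $s=p^2$ gives $k(s)=p^3$ and contribution $T^{1/2}/p=T^{1/2}/\sqrt{s}$. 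The sum $\sum_{s\leq T^{1/3}} s/k(s)$ still grows only like a power of $\log T$ (the local Euler factor is $1+2/(p-1)$), so the final bound $O(T^{1/2+\varepsilon})$ survives, but you should not claim a full factor-of-$s$ saving.

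For $A_3$, the paper extracts only the weaker consequence of Lemma~\ref{lem:surjmultalg} that $z_3$ is square-full outside $S$ (i.e.\ $v_p(z_3)\neq 1$), which already gives the required $O(X^{1/2})$ count; your sharper condition $v_p(z_3)\in 2\Z\cup 3\Z$ and the representation $z_3=a^2b^3$ with $\gcd(a,b)=1$ give the same bound. Note that this representation is not unique (e.g.\ $v_p=6$ admits two), but overcounting is harmless for an upper bound.
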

\begin{proof}
Our conditions imply that  $-3^3A_1A_2^2$ is a sixth power, so write $a^6 = -3^3A_1A_2^2$.
Let $A_3=gz_3$ where $\rad(g)\mid\rad(3A_1A_2)$ and $\gcd(z_3,3A_1A_2)=1$. By Lemma \ref{lem:surjmultalg}, there is a finite set of primes $S$ such that $z_3=y_3a_3$ where $y_3$ is a product of primes in $S$ and $a_3$ is a square-full integer. Since there are only finitely many choices for $y_3$ modulo squares, we assume $y_3=1$ for simplicity. We have reduced to counting
$$\#\left\{a, A_1,A_2,g,a_3\in\Z : \begin{array}{l}
|a|\leq \sqrt{T}, a= -3^3A_1A_2^2, \\
|ga_3|\leq T,\ \rad(g) \mid a, a_3\text{ square-full}\end{array}\right\}$$
The sum over $A_1$ and $A_2$ is $O(T^{\varepsilon})$ by the usual bound for the divisor function. For the remaining variables we have
\begin{align*}
\sum_{g\leq T} 
\sum_{\substack{a \leq \sqrt{T} \\ \rad(g) \mid a}}
\sum_{\substack{|ga_3|\leq T \\ a_3 \text{ square-full}}}1 \ll 
\sum_{g\leq T} \frac{\sqrt{T}}{\rad(g)} 
\cdot \frac{\sqrt{T}}{\sqrt{g}}
\ll T\sum_{g\leq T} \frac{1}{\sqrt{g}\rad(g)}
\end{align*}
and the sum over $g$ is convergent by Lemma \ref{lem:rad}.
\end{proof}

\begin{proposition}\label{prop:upperbound_2x2torsion}
There are at most $O(T^{3/2}(\log \log T)/(\log T)^{1/2})$ non-zero integers $A_1,A_2,A_3$ such that $\max\{|A_i|\}\leq T$, $27A_1/A_2$ is a sixth power, and there is a Brauer--Manin obstruction to the Hasse principle given by $\calB_3,\calC_3$.
\end{proposition}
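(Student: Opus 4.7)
The plan is to mirror the arguments of Propositions~\ref{prop:Upper_bound_2} and \ref{prop:ub_index3}. First I would eliminate a finite list of special cases in which one of $\calB_3,\calC_3$ (or a product of them) is constant, most notably when $A_3 \in \Q^{\times 2}$; all of these contribute a lower-order term. Next, since $27 A_1/A_2 \in \Q^{\times 6}$, up to finitely many cases controlled by the placement of factors of $3$ I would parametrise
\[
A_1 = d u^6, \qquad A_2 = 27 d v^6, \qquad \gcd(u,v) = 1,
\]
and check directly that the number of pairs $(A_1,A_2)$ with $|A_i|\leq T$ arising this way is $O(T)$.

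For the third coefficient, write $A_3 = gz$ with $\rad(g) \mid \rad(3duv)$ and $\gcd(z, 3duv) = 1$. Lemma~\ref{lem:surjmultalg} applied to $B_3 = \{\calB_3, \calC_3\}$ asserts that for $p\nmid A_1A_2$ outside a finite set $S$, the condition $v_p(A_3)$ odd already forces the combined invariant map $X(\Q_p)\to (\Z/2\Z)^2$ to be surjective and hence kills any Brauer--Manin obstruction. Consequently $v_p(z)$ is even for all such $p$, so modulo finitely many classes modulo squares I may write $z = a^2$ and $A_3 = ga^2$; absorbing square factors into $a$ I may further assume $g$ is squarefree. This already gives the crude bound $O_{\varepsilon}(T^{3/2+\varepsilon})$. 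To extract logarithmic savings, decompose $d = d_0 d_1^2$ with $d_0$ squarefree. The primes $p$ with $p \nmid 3uv$ and $v_p(A_1) = v_p(A_2)$ odd are precisely those dividing $d_0$; for such $p$ outside $S$ with $p \nmid g$, the hypotheses of Lemma~\ref{lem:surjective2x2tor2} are met and $A_3 \notin \Q_p^{\times 2}$ is equivalent to $(g/p) = -1$ (since $A_3$ is a unit at $p$). Lemma~\ref{lem:surjective2x2tor2} therefore forces at most one prime $p\mid d_0$ with $p\nmid 3uvg$ to satisfy $(g/p) = -1$, else the obstruction is destroyed.

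To assemble the count, decompose $g = g_1 h$ with $g_1 \mid 3uv$ and $h$ squarefree with $h \mid d_0$. A convergence argument as in Proposition~\ref{prop:Upper_bound_2} (using Lemmas~\ref{lem:rad} and \ref{lem:rad2}) shows that the total contribution from $u, v, g_1, h$ exceeding $(\log T)^A$ is negligible, so I restrict to this range. For each fixed small $(u,v,g_1,h)$ the inner sum is over squarefree $d_0' = d_0/h$ coprime to $h$ subject to $(g/p)=1$ for all $p \mid d_0'$ outside $S$, except at most one, together with the counts of $d_1$ and $a$. The key input is the quadratic analogue of Proposition~\ref{prop:cube}(2): for a fixed non-square $g$, the number of squarefree $n \leq X$ such that $(g/p) = 1$ for every prime divisor of $n$ outside a finite set with at most one exception is $O(X \log\log X/(\log X)^{1/2})$. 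Combining this with partial summation for $d_1$ (giving $\ll\sqrt{T/(u^6 d_0 h)}$ choices each) and the $\ll\sqrt{T/|g|}$ bound on $a$, then summing the convergent outer series over $(u,v,g_1,h)$, yields the desired $O(T^{3/2}\log\log T/(\log T)^{1/2})$.

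The main technical obstacle is establishing the quadratic analogue of Proposition~\ref{prop:cube}(2) with uniformity in $g$ up to $(\log T)^A$; however the proof follows that of Lemma~\ref{lem:square} closely, with Nair's uniform bound and the Siegel--Walfisz theorem for the character $(g/\cdot)$ taking the place of the effective Chebotarev input, and the absence of exceptional zeros handled exactly as in Lemma~\ref{lem:square} via Heilbronn's reduction to the unique quadratic subfield. A secondary bookkeeping task is the explicit case analysis around $p = 3$ in the parametrisation of $(A_1,A_2)$, which introduces a bounded number of subcases but no new analytic content.
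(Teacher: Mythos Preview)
Your strategy is essentially the paper's: impose the valuative conditions from Lemma~\ref{lem:surjmultalg} to force $A_3$ to be almost a square, then extract the $(\log T)^{-1/2}$ saving by restricting the primes where $v_p(A_1)=v_p(A_2)$ is odd via Lemma~\ref{lem:surjective2x2tor2}, with the single rogue prime contributing $\log\log T$ exactly as in Proposition~\ref{prop:cube}(2). The paper uses the parametrisation template of Proposition~\ref{prop:ub_index3} (variables $u_i,g,g_i,z_i$) and invokes Lemma~\ref{lem:square} with $r_1=u_3$, $r_2\equiv -A_1/A_2$, $r_3=-3$; since $27A_1/A_2\in\Q^{\times2}$ forces $r_2r_3$ to be a square, Lemma~\ref{lem:square} collapses to the single condition $(\tfrac{r_1}{p})=1$ with density $1/2$, which is exactly your ``quadratic analogue of Proposition~\ref{prop:cube}''. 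So the two routes are the same argument in different coordinates; your parametrisation $A_1=du^6$, $A_2=27dv^6$ is arguably more transparent for this particular proposition.

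Two corrections. First, the claim ``$h\mid d_0$'' is wrong: $h$ is supported on primes dividing $d$ but not $3uv$, and such primes can have even valuation in $d$, so need not divide $d_0$. This does not break the argument --- what you actually need is $\rad(h)\mid\rad(d)$, which forces a factor $1/h$ (or better) in the sum over $d$ and gives the required convergence via Lemma~\ref{lem:rad} --- but the bookkeeping must be redone. Second, your justification of the uniform quadratic bound is muddled: Lemma~\ref{lem:square} uses Siegel--Walfisz, not Heilbronn, and for the quadratic field $\Q(\sqrt{g})$ Heilbronn's reduction to a quadratic subfield is vacuous. The correct statement is simply that Siegel--Walfisz (via Siegel's ineffective bound on exceptional zeros) already gives the required uniformity in $g\le(\log T)^A$; no further zero-free input is needed.
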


\begin{proof}
    The proof proceeds in a similar manner to the proof of Proposition \ref{prop:ub_index3}. Firstly, as in  \eqref{eqn:not_squares},
	we may assume that
	\begin{equation} \label{eqn:not_squares_2}
    A_3, -A_1/A_2 \notin \Q^{\times 2},
    \end{equation}
    since in the first case both $\calB_3,\calC_3$ are trivial, and in the
    second case there is a rational point.
    By Lemmas \ref{lem:surjmultalg} and \ref{lem:surjective2x2tor2}, there exists a finite set of primes $S$ such that if $X_{\mathbf{A}}$ has a Brauer--Manin obstruction, then $(A_1,A_2,A_3)$ lie in the following set:
	$$ \left\{ A_i\in \Z :
	\begin{array}{l}
	|A_i|\leq T, \ 27A_1/A_2\in\Q^{\times6}, \eqref{eqn:not_squares_2};\\
	p\notin S,\ p\mid A_3,\ p\nmid A_1A_2\implies v_p(A_3)\text{ even};\\
	\text{there is at most one prime }
	q\notin S \text{ with }v_q(A_3) \text{ even}\\
	\text{such that }v_q(A_1) = v_q(A_2)\text{ is odd}, 
	\text{ and }A_3 \notin \Q_q^{\times 2}
	\end{array} \right\}.$$
	We first consider the case where there is \emph{no} such 
	prime $q$. As in the proof of Proposition \ref{prop:ub_index3},
	for $i \in \{1,2,3\}$ we write
	$$u_i = \sign(A_i)\prod_{p \mid \gcd(A_3,A_1A_2)} p^{v_p(A_i)}.$$
	Let $g = \gcd(A_1/u_1,A_2/u_2)$ and  
	$g_i = \prod_{p \mid 3g}p^{v_p(A_i/u_ig)}$
	for $i = 1,2$. Then we can write
	$$A_3 = u_3z_3, \qquad A_i = gg_iu_iz_i, \quad  i\in \{1,2\}.$$
	By construction $\rad(u_3) = \rad(u_1u_2)$
	and $\rad(g_1g_2) \mid 3g$.
	By the above (Lemma \ref{lem:surjmultalg})  $z_3$ takes finitely many values modulo squares.
	For simplicity we just consider the case where
	$z_3 = a_3^2$ is a square. Similarly, by assumption
	$27A_1/A_2$ is a sixth power; since $\gcd(z_i,3g_ig_ju_iu_jz_j) = 1$ by construction for $\{i,j\} = \{1,2\}$,
	we have $z_i = a_i^6$ for some $a_i$. Combining
	all our conditions together, our quantity is majorised by
$$\# \left\{a_i,u_i,g,g_i \in \Z :
	\begin{array}{l} 
	|u_3a_3^2|,|gg_iu_ia_i^6| \leq T, i=1,2, 27g_1u_1/g_2u_2 \in \Q^{\times 6},
	\eqref{eqn:not_squares_2},\\
	\rad(u_3) = \rad(u_1u_2), \rad(g_1g_2) \mid 3g, \\
	p\notin S, p\nmid g_1g_2u_3, p \parallel  g \implies (\frac{u_3}{p}) = 1.
	\end{array} \right\}$$
	As in the proof of Proposition \ref{prop:ub_index3},
    after summing over $a_3$ and $g$ the sum
    over the remaining variables is convergent.
    Hence we may assume that 
	$a_1,a_2,u_i,g_i \leq (\log T)^A$
	for some large $A > 0$.
	The sum over $a_3$ contributes $O(\sqrt{T})$.
	We sum over $g$ using Lemma \ref{lem:square} 
	with
	$$
	r_1 = u_3 \equiv A_3 \mod \Q^{\times 2}, \quad r_2 = -g_1g_2u_1u_2 \equiv -A_1/A_2 \bmod \Q^{\times 2},
	\quad r_3 = -3.
    $$
	Here $r_2r_3 \equiv 3A_1A_2 \bmod \Q^{\times2}$
	is a square since $27A_1/A_2 \in \Q^{\times2}$,
	thus the sum over $g$ is $O(T/(\log T)^{1/2})$ by Lemma \ref{lem:square}
	and \eqref{eqn:not_squares_2}.
	The sum over the remaining variables is convergent,
	which gives  $O(T^{3/2}/(\log T)^{1/2})$
	under the assumption that there is no such prime $q$.
	
	For the general case, a minor variant of the proof of part (2)
	of Proposition \ref{prop:cube} shows that taking
	$q$ into account gives an extra factor $\log \log T$,
	as required.
\end{proof}

\begin{proposition}\label{prop:upper_bound_12}
There are at most $O(T^{1+\varepsilon})$ non-zero integers $A_1,A_2,A_3$ such that $\max\{|A_i|\}\leq T$, where $-3A_1,-3A_2$ are squares and $A_1/A_2$ is a cube, and there is a Brauer--Manin obstruction to the Hasse principle given by $\calA_1,\calA_2,\calB_3$.
\end{proposition}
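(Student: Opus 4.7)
The approach is to parameterize $(A_1,A_2)$ via the square and cube conditions, then bound the number of compatible $A_3$. Since $-3A_i \in \Q^{\times 2}$ for $i = 1, 2$, we may write $A_i = -3a_i^2$ with $a_i \in \Z_{\geq 1}$; the condition $A_1/A_2 = (a_1/a_2)^2 \in \Q^{\times 3}$ forces $a_1/a_2 \in \Q^{\times 3}$ (since $2$ is invertible modulo $3$ and $\Q^\times/\Q^{\times 3}$ is $3$-torsion). Setting $c = \gcd(a_1,a_2)$, we obtain $a_i = c e_i^3$ with $\gcd(e_1, e_2) = 1$. The bound $|A_i| \leq T$ forces $c e_i^3 \leq \sqrt{T/3}$, so the number of such triples $(c, e_1, e_2)$ is at most
\[
\ll T^{1/3} \sum_{c \leq \sqrt{T}} c^{-2/3} \ll T^{1/2}.
\]

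For each triple, the aim is to bound the number of compatible $A_3$ with $|A_3| \leq T$ by $O(T^{1/2+\varepsilon})$. Applying \Cref{lem:surjmultalg} to $B_4 = \{\calA_1, \calA_2, \calB_3\}$ (with orders $3, 3, 2$), there is a finite set $S$ of primes such that for all $p \notin S$ with $p \nmid A_1A_2$, the local invariant map is surjective whenever $\gcd(v_p(A_3), 6) = 1$. A Brauer--Manin obstruction requires non-surjectivity at every prime, so for all such $p$ we must have $2 \mid v_p(A_3)$ or $3 \mid v_p(A_3)$. Decompose $A_3 = hm$ with $\rad(h) \mid M := 3ce_1e_2 \prod_{p \in S} p$ and $\gcd(m, M) = 1$; then $v_p(m) \in 2\Z \cup 3\Z$ for every prime $p \mid m$. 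Any such $|m|$ can be written (non-uniquely) as $A^2 B^3$ with $A, B \in \Z_{\geq 1}$, giving $\#\{m : |m| \leq Y\} \ll \sum_{B \leq Y^{1/3}} (Y/B^3)^{1/2} \ll Y^{1/2}$. Summing over $h$,
\[
\#\{A_3 : |A_3| \leq T\} \ll T^{1/2} \sum_{\rad(h) \mid M} |h|^{-1/2} = T^{1/2} \prod_{p \mid M}(1 - p^{-1/2})^{-1}.
\]

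The final step is a uniform bound on this product. Since $M \ll T^{O(1)}$, the number of distinct prime divisors of $M$ is $\ll \log T/\log\log T$, and the product is maximised when those primes are the smallest ones. Mertens-type estimates ($\sum_{p \leq x} p^{-1/2} \ll \sqrt{x}/\log x$) then yield $\prod_{p \mid M}(1 - p^{-1/2})^{-1} \leq \exp(O(\sqrt{\log T})) = T^{o(1)}$. Combining, the count of admissible $A_3$ per triple is $O(T^{1/2+\varepsilon})$, and the total is $O(T^{1+\varepsilon})$ as claimed. The main obstacle is obtaining this uniform control on the smooth-number sum across all $M$ arising from the parameterisation; once this is in place, the remaining steps are routine.
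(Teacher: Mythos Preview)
Your proof is correct. The paper's own proof is a one-line reduction to Proposition~\ref{prop:upper_bound_6}: since the orders of $\calA_1,\calA_2,\calB_3$ are $3,3,2$, the valuation condition from Lemma~\ref{lem:surjmultalg} for $B_4$ coincides with that for $B_2=\{\calA_1,\calB_3\}$, and dropping the extra hypothesis $-3A_2\in\Q^{\times 2}$ only enlarges the set, so the count already obtained in Proposition~\ref{prop:upper_bound_6} applies verbatim.

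The substantive difference is in how uniformity over $(A_1,A_2)$ is achieved. In the proof of Proposition~\ref{prop:upper_bound_6} the paper swaps the order of summation, summing over $g$ (the part of $A_3$ supported on the primes of $3A_1A_2$) \emph{before} summing over the sixth-power parameter $a$ with $a^6=-27A_1A_2^2$; the condition $\rad(g)\mid a$ then contributes a factor $\sqrt{T}/\rad(g)$, and the resulting sum $\sum_g (g^{1/2}\rad(g))^{-1}$ converges by Lemma~\ref{lem:rad}, so the dependence on the individual $(A_1,A_2)$ disappears automatically. You instead fix $(c,e_1,e_2)$ first and bound the number of admissible $A_3$ pointwise, which forces you to control the Euler product $\prod_{p\mid M}(1-p^{-1/2})^{-1}$ uniformly in $M$ via a Mertens-type estimate. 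Both routes yield $O(T^{1+\varepsilon})$; the paper's swap is slicker and avoids the explicit uniformity argument, while your direct approach is self-contained and makes the squarefull structure of the coprime part of $A_3$ more visible.
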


\begin{proof} 
Given Lemma \ref{lem:surjmultalg} and ignoring the condition $-3A_2 \in \Q^{\times 2}$, we obtain the exact set of coefficients counted in the proof Proposition \ref{prop:upper_bound_6}, whence the result.
\end{proof}

\begin{proposition}\label{prop:upper_bound_12b}
There are at most $O(T^{1+\varepsilon})$ non-zero integers $A_1,A_2,A_3$ such that $\max\{|A_i|\}\leq T$, where $-3A_1,-A_2$ are squares, and $A_1/A_2$ is a cube, and there is a Brauer--Manin obstruction to the Hasse principle given by $\calA_1,\calB_3,\calC_3$.
\end{proposition}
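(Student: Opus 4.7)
The plan is to follow the approach of Proposition \ref{prop:upper_bound_12} and reduce the counting to that of Proposition \ref{prop:upper_bound_6}. First I would note that the two hypotheses $-3A_1\in\Q^{\times 2}$ and $A_1/A_2\in\Q^{\times 3}$ alone (without invoking $-A_2\in\Q^{\times 2}$) already force $-27A_1A_2^2$ to be a sixth power: writing $A_1/A_2 = c^3$ gives $-27A_1A_2^2 = (-3c A_2)^3$, a cube, while writing $A_1=-3a_1^2$ gives $-27A_1A_2^2 = (9a_1 A_2)^2$, a square. Hence I may parametrise by an integer $a$ with $a^6 = -27A_1A_2^2$ and $|a|\ll\sqrt{T}$, exactly as in the proof of Proposition \ref{prop:upper_bound_6}.

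Next I would apply Lemma \ref{lem:surjmultalg} with the Brauer set $B_i=\{\calA_1,\calB_3,\calC_3\}$, whose element orders are $(3,2,2)$, to constrain $A_3$. The criterion for non-surjectivity of the combined local invariant map is that $\ord(b)\mid v_p(A_3)$ for some $b\in B_i$, which unpacks to $2\mid v_p(A_3)$ or $3\mid v_p(A_3)$. Crucially, this is the identical condition obtained in the proof of Proposition \ref{prop:upper_bound_6} from applying Lemma \ref{lem:surjmultalg} to $\{\calA_1,\calB_3\}$ of orders $(3,2)$. Hence writing $A_3=gz_3$ with $\rad(g)\mid\rad(3A_1A_2)$ and $\gcd(z_3,3A_1A_2)=1$, the usual dichotomy forces $z_3=y_3a_3$ with $y_3$ in a finite set of primes and $a_3$ square-full.

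Combining these two observations, the set of triples satisfying the hypotheses is contained in the set enumerated in the proof of Proposition \ref{prop:upper_bound_6}: the extra hypothesis $-A_2\in\Q^{\times 2}$ only restricts $(A_1,A_2)$ further, and the extra Brauer algebra $\calC_3$ does not strengthen the $A_3$ constraint coming from Lemma \ref{lem:surjmultalg}. The bound $O(T^{1+\varepsilon})$ then follows directly from the summation argument of Proposition \ref{prop:upper_bound_6} using Lemma \ref{lem:rad}.

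The only point requiring verification---and hardly an obstacle---is the matching of Lemma \ref{lem:surjmultalg}'s constraints for $(3,2,2)$ and for $(3,2)$. This is immediate: the set of $v_p(A_3)$ giving surjectivity consists of those coprime to $\operatorname{lcm}$ of the distinct orders appearing, so repeating the order $2$ (via $\calC_3$) has no effect. Consequently the proof reduces to one sentence: \emph{given Lemma \ref{lem:surjmultalg} and ignoring the condition $-A_2\in\Q^{\times 2}$, we obtain a set of coefficients contained in that counted in the proof of Proposition \ref{prop:upper_bound_6}, whence the result.}
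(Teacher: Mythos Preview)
Your proposal is correct and takes essentially the same approach as the paper. The paper's proof is the one-sentence version you arrive at: by Lemma~\ref{lem:surjmultalg} the constraint on $A_3$ is unchanged from the case $\{\calA_1,\calB_3\}$ (since the extra algebra $\calC_3$ has order $2$, already present), and dropping the condition $-A_2\in\Q^{\times2}$ lands you inside the set counted in Proposition~\ref{prop:upper_bound_6}.
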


\begin{proof} 
Again, by  Lemma \ref{lem:surjmultalg} we obtain a subset of coefficients counted in the proof Proposition \ref{prop:upper_bound_6}, whence the result.
\end{proof}

\begin{proof}[Proof of Theorem \ref{thm:multalg}]
    The upper bound for label (1) is Proposition \ref{prop:index4}, (2) is Proposition \ref{prop:upper_bound_6}, (3) is Proposition \ref{prop:upperbound_2x2torsion}, (4) is Proposition \ref{prop:upper_bound_12}, (5) is Proposition~\ref{prop:upper_bound_12b}.
\end{proof}

\part{Bounding the transcendental Brauer group}\label{part:transcendental}
Recall that the transcendental Brauer group of $X_\AA$ is defined as the quotient $\Br(X_\AA)/\Br_1(X_\AA)$. The aim of this part is to calculate $\Br(X_\AA)/\Br_1(X_\AA)$ as an abstract abelian group. The main results carried over to Part 3 are Propositions~\ref{invariants}, \ref{prop:3transcendental} and \ref{prop:2transcendental}, which together imply that $X_\AA$ with non-trivial transcendental Brauer classes are so rare that their count is dominated by the count of $X_\AA$ with an algebraic Brauer-Manin obstruction to the Hasse principle. As a consequence, we never have to compute Brauer--Manin obstructions arising from the transcendental part (which we do not know how to do in general).

We remark that the surface $X_\AA$ is rather special amongst K3 surfaces in the sense that its transcendental lattice is highly structured. More precisely, the rational Hodge endomorphisms of the transcendental lattice of $X_\AA$ are isomorphic to the CM field $\BQ(\sqrt{-3})$ so that $X_\AA$ exhibits complex multiplication in the sense of Pjateckiĭ-Šapiro and Šafarevič \cite{PSS}. It is because of this property (shared amongst all K3 surfaces of Picard number $20$) that we are able to control the transcendental cycles.

For an extension of the results which fully classifies the transcendental Brauer group of $X_\AA$, see \cite[\S III.12.4]{gvirtz}.

\section{Cohomology of weighted diagonal hypersurfaces}\label{sec:cohomology}
In this section, we develop a full description of the middle cohomology of smooth weighted diagonal hypersurfaces of dimension $n$, which will be applied to the surface $X_\AA$ in \Cref{sextic}. By ``full'', we mean an explicit understanding of the integral singular cohomology including the cup product, the Hodge cohomology, the Galois action on $\ell$-adic cohomology (up to an undetermined action of cyclotomic Galois automorphisms -- cf. \Cref{rem:cyclotomic}) and the comparison isomorphisms between those. We will build on previous work by Pham \cite{pham}, Looijenga \cite{looijenga}, Weil \cite{weil} and Ulmer \cite{ulmer}. The approach closely follows a framework developed by Gvirtz--Skorobogatov \cite{qua}.

Let $d_0,\dots,d_{n+1}\in\NN$. Set $d=\lcm(d_0,\dots,d_{n+1})$, $q_i=d/d_i$, $q^*=\prod_{i=0}^{n+1}q_i$ and $\bm q=(q_0,\dots,q_{n+1})$.
\begin{assumption}\label{ass:qi-coprime} \hfill
\begin{enumerate}
 \item[(a)] The integers $q_1,\dots,q_{n+1}$ are pairwise coprime.
 \item[(b)] $q_{n+1}=1$.
\end{enumerate}
\end{assumption}
\Cref{ass:qi-coprime}(a) will ensure smoothness (cf.\ \Cref{rem:smooth}).  \Cref{ass:qi-coprime}(b) is used only for the proof of \Cref{thm:complete-cohomology}(1) and (2). Presumably, the results hold without \Cref{ass:qi-coprime}(b) but its inclusion simplifies the argument considerably while still covering the intended scope of this article.

For $i=0,\dots,n+1$, we write $\mu_{d_i}=\langle u_i\rangle$ for the cyclic group of order $d_i$ with generator $u_i=e^{2i\pi/d_i}$. The following group plays a central role:
\[G=G_{d_0,\dots,d_{n+1}}=(\mu_{d_0}\times\dots\times\mu_{d_{n+1}})/\langle u_0\dots u_{n+1}\rangle.\]
There is an isomorphism \[G\cong(\mu_{d_0}\times\dots\times\wh{\mu_{d_i}}\times\dots\times\mu_{d_{n+1}})/\langle \prod_{j\neq i}u_j^{d_i}\rangle\]
 that identifies $u_i$ with $\prod_{j\neq i}u_j^{-1}$. We will freely switch between representing elements of $G$ in the symmetric or asymmetric notation, depending on which is more convenient.

We now describe the cohomology as promised before giving a section-by-section proof.
\begin{theorem}\label{thm:complete-cohomology}
 Let $k$ be a number field with a fixed embedding $k\subset\BC$.
 
 Let $\bm A = (A_1,\dots,A_{n+1})\in (k^\times)^{n+1}$ and let
 \[F:\quad f\coloneqq x_0^{d_0}+A_1x_1^{d_1}\dots+A_{n+1}x_{n+1}^{d_{n+1}}=0\]
 be a weighted projective diagonal hypersurface over $k$ satisfying \Cref{ass:qi-coprime}. Set $n'=\lfloor n/2\rfloor$ and $\zeta_m=e^{2i\pi/m}$.
 \begin{enumerate}
  \item \textbf{Primitive cohomology:}\\
  Let $l_i$, for $0\leq i\leq n+1$, be the class of the hyperplane $\{x_i=0\}$ in $\RH^2(F_\BC,\BZ(1))$. Write $H=\RH^n(F_\BC,\BZ(n'))$ and $P\subseteq H$ for the kernel of the cup pairing with $l_i$ (which is independent of $i$). Let $I\subset\BZ[G]$ be the saturation with respect to $d$ of the ideal \[\langle 1+u_i+u_i^2+\dots+u_i^{d_i-1}: i=0,\dots,{n+1}\rangle\subset\BZ[G].\] Then $P$ as a $\BZ$-module is isomorphic to $\BZ[G]/I$.
  \item \textbf{Cup product:}\\
  For two monomials $u$ and $u'$ in the variables $u_0,\dots,u_{n+1}$, the cup product $(u,u')$ under the identification of $(1)$ is equal to the coefficient of $1$ in
  \[(-1)^{n(n+1)/2}(1-u_0)(1-u_1)\dots(1-u_{n+1})uu'^{-1}\in\BZ[G].\]
  \item \textbf{Full cohomology:}\\
  If $n$ is odd, $P$ equals $H$. If $n$ is even, let $L$ be a generator of $\langle l_i^{n/2}\rangle\in H$. Then $L$ has self-intersection $d_{\bm q}=d/q^*$ and $H$ is generated by $P$ and
  \[\frac{1}{d_{\bm q}}L+\frac{(-1)^{n(n+1)/2}}{d}\rho(u_0,u_1)\rho(u_2,u_3)\dots\rho(u_n, u_{n+1})\]
  where $\rho(x,y)=\sum_{0\leq l\leq m\leq  d-2}y^l x^m$.
  \item \textbf{Hodge structure:}\\
  Set \[S=\left\{(a_0,\dots,a_{n+1})\in(\BZ/d)^{n+2}:a_i\neq 0, q_i|a_i,\sum_{i=0}^{n+1}a_i=0\right\}.\] For $\chi=(a_0,\dots,a_{n+1})\in S$, we define
  \[ q(\chi)=\frac{\sum_{i=0}^{n+1}a_i}{d}-1-n' \]
 and elements
  \[\alpha_\chi=\alpha_{a_1}(u_1)\dots\alpha_{a_{n+1}}(u_{n+1})\in P\otimes_\BZ \BC, \quad \mbox{where }
\alpha_{i}(u)=\frac{1}{d}\sum_{j=0}^{d-1}{\zeta_d}^{-ij}u^j.\]
    Then the Hodge structure on $P\otimes_\BZ \BC$ is given by
    \[P^{n-q,q}=\bigoplus_{\substack{\chi\in S\\q(\chi)=q}}\langle\alpha_\chi\rangle.\]
  \item \textbf{Galois action and comparison with $l$-adic cohomology:}\\
  Let $\ell$ be a prime number and $\lambda$ a place of $E=\BQ(\zeta_d)$ lying over $\ell$. Let $\pp\nmid d\ell$ be a finite place of the compositum $K=Ek$. Define the multiplicative character $\psi:\BF_\mathfrak p^\times\to\mu_d$ by the condition \[\psi(x)\bmod \mathfrak p=x^{(\Norm(\mathfrak p)-1)/d},\]
  and for all $\chi=(a_0,\dots,a_{n+1})\in S$, the Jacobi sum \[J_\mathfrak{p}(\chi)=\sum_{x_1+\dots+x_{n+1}=1}\psi(x_1)^{a_1}\dots\psi(x_{n+1})^{a_{n+1}}.\]
  
  Then the induced action of $\Frob_\pp\in\Gal(\ov K/K)$ on $\langle\alpha_\chi\rangle\subset H\otimes_\BZ E_\lambda$ under the canonical comparison isomorphism
  \[\Het^n(F\times_k \ov K, \BZ_\ell(n'))\otimes_{\BZ_\ell} E_\lambda \simeq H\otimes_\BZ E_\lambda\]
  is given by multiplication with
  \[(-1)^n\psi^{a_0}(-1)\Norm(\mathfrak p)^{-n'}J_\mathfrak{p}(\chi)/{\prod_{i=1}^{n+1}\psi(A_i)^{a_i}},\]
  
  \item \textbf{Action of complex conjugation:}\\
  Assume that $k\subset \BR$. Choose roots $A_i^{1/d_i}\in\BC$, $i=1,\dots,n+1$. Under the isomorphism in $(1)$, the action of complex conjugation $\tau$ on $P$ is the $\BZ$-linear extension of the map
  \[u\mapsto (-1)^{n'+1}u_{\bm A}u^{-1}\]
  on monomials $u$ in the variables $u_0,\dots,u_{n+1}$. Here $u_{\bm A}$ is the element
  \[(A_1^{1/d_1}/\tau(A_1^{1/d_1}),\dots,A_{n+1}^{1/d_{n+1}}/\tau(A_{n+1}^{1/d_{n+1}}))\in G.\]
 \end{enumerate}
\end{theorem}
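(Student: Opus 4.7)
The plan is to lift the question from the weighted Fermat $F$ to the classical (unweighted) Fermat hypersurface $Y\colon \sum_{i=0}^{n+1} A_i y_i^d = 0$ in $\BP^{n+1}$ via the finite surjection $\pi\colon Y \to F$ sending $y_i\mapsto x_i = y_i^{q_i}$. Under \Cref{ass:qi-coprime}(a), this realises $F$ as the quotient $Y/H$, where $H=(\mu_{q_0}\times\dots\times\mu_{q_{n+1}})/\langle \text{diag}\rangle$ acts with stabiliser loci in codimension $\geq 2$. Passing to $H$-invariants in $\RH^n(Y_\BC,\BZ)$ then recovers $\RH^n(F_\BC,\BZ)$, and the group $G$ sits inside the automorphism group $G'=(\mu_d)^{n+2}/\langle\text{diag}\rangle$ of $Y$ via $u_i\mapsto u_i$. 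This reduces every assertion to the corresponding one for $Y$, where the classical tools of Pham, Weil, Ulmer and Looijenga apply.

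For parts \textbf{(1)}--\textbf{(3)}, I would invoke the Pham--Deligne description of the primitive cohomology of $Y$ as a quotient of $\BZ[G']$ by the saturation of the augmentation-type ideal generated by the cyclotomic sums $\sum_{j=0}^{d-1} u_i^j$. Taking $H$-invariants selects precisely the characters with $q_i\mid a_i$, which on the level of group-ring elements corresponds to replacing $\BZ[G']^H = \BZ[G]$ and yields (1). The cup product formula in (2) comes from Pham's construction of vanishing thimbles as products of oriented $1$-simplices in the Milnor fibre of $\sum x_i^{d_i}$ at the origin, whose intersections produce the boundary factors $(1-u_i)$ with sign $(-1)^{n(n+1)/2}$ from simplex orientation; this is then transported to the projective setting by comparing the Milnor fibre with the affine complement $F\setminus\{x_0=0\}$. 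For (3), the weighted Lefschetz hyperplane theorem gives $H=P\oplus \langle L\rangle$ for $n$ even, and the self-intersection $L^2 = \deg_{\mathbf{P}(\bm{q})}(F) = d/q^* = d_{\bm{q}}$ is direct. The stated integral generator is then pinned down by requiring its cup product against every Pham cycle to be integral, using part (2).

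For part \textbf{(4)}, I would apply Griffiths' residue calculus in weighted projective space (as developed by Steenbrink and Dolgachev), which identifies $P^{n-q,q}$ with the graded piece of the Jacobian ring $\BC[x_0,\dots,x_{n+1}]/(\partial f/\partial x_i)$ in weighted degree $(q+1)d-\sum q_i$. A monomial $x_0^{a_0/q_0-1}\dots x_{n+1}^{a_{n+1}/q_{n+1}-1}$ associated to $\chi\in S$ lives in this piece exactly when $q(\chi)=q$. Pairing against the Pham basis of part (1) identifies this monomial with $\alpha_\chi$ (the Fourier projector onto the $\chi$-isotypic component of $G$), completing the Hodge decomposition.

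For parts \textbf{(5)} and \textbf{(6)}, the \'etale side is handled by combining the Grothendieck--Lefschetz trace formula with Weil's evaluation of $|F(\BF_\pp)|$ in terms of Jacobi sums: the $G$-equivariance of the comparison isomorphism diagonalises $\Frob_\pp$ on each character line $\langle\alpha_\chi\rangle$, and the Jacobi sum $J_\pp(\chi)$ appears as the eigenvalue after accounting for the Tate twist by $\Norm(\pp)^{-n'}$ and the twist $\prod\psi(A_i)^{a_i}$ coming from the change of variables $y_i\mapsto A_i^{-1/d}y_i$ that reduces $Y$ to the symmetric Fermat. The sign $(-1)^n\psi^{a_0}(-1)$ encodes the hyperplane-at-infinity contribution to the point count. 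For part \textbf{(6)}, complex conjugation acts on the Milnor fibre by $x_i\mapsto\bar x_i$, and tracking its effect on Pham cycles produces the inversion $u\mapsto u^{-1}$, the orientation sign $(-1)^{n'+1}$, and the twist $u_{\bm A}$ from the same rescaling that moves between the symmetric Fermat and the one with coefficients $A_i$. The main obstacle is part (5): bookkeeping the precise compatibilities between signs, Tate twists, and normalisations across the singular-\'etale comparison, invariants under $H$, and the choice of character $\psi$ — precisely the point where the framework of \cite{qua} is indispensable.
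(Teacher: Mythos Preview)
Your outline for parts (4)--(6) follows the paper's route closely (Griffiths residues in weighted projective space for (4), the Lefschetz trace formula combined with Weil's point count for (5), and a direct computation on Pham cycles for (6)). For parts (1) and (2) the paper works directly with the weighted affine hypersurface and Pham's description there, rather than descending from the unweighted Fermat via $H$-invariants; your detour through $Y$ is plausible rationally, but the claim that $\RH^n(F_\BC,\BZ)$ equals the integral $H$-invariants of $\RH^n(Y_\BC,\BZ)$ is not automatic. One knows $\pi_*\pi^* = |H|$ and $\pi^*\pi_* = \sum_{h\in H} h^*$, so $\pi^*$ is injective with cokernel killed by $|H|$, but torsion-freeness alone does not close this gap. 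The paper sidesteps the issue by using Pham's result, which already covers arbitrary exponents $d_0,\dots,d_n$ in the affine Milnor fibre.

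The genuine error is in part (3). You write that ``the weighted Lefschetz hyperplane theorem gives $H = P \oplus \langle L\rangle$''. This is false: the paper shows (Corollary~\ref{cor:dq}) that $H/(P\oplus\BZ L)\cong\BZ/d_{\bm q}$, and the whole content of (3) is to identify the missing generator $\frac{1}{d_{\bm q}}(L+\xi)$ for a specific $\xi\in P$. Your proposed method --- requiring that the cup product of $\xi$ with every Pham cycle be divisible by $d_{\bm q}$ --- only forces $\xi\in d_{\bm q}P^*$, which still leaves $d_{\bm q}$ candidates for $\xi\bmod d_{\bm q}P$ (one for each class in $P^*/P\cong\BZ/d_{\bm q}$). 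The integrality constraint does not single out the correct one. The paper resolves this by \emph{pushing forward} an explicit linear subspace $\Lambda\subset F_d$ from the unweighted Fermat: the Degtyarev--Shimada formula for $\langle\Lambda,g\rangle$ over all $g\in G_d$ identifies $\Lambda$ with $\frac{1}{d}(L_d + (-1)^{n(n+1)/2}c_d)$ exactly, and then $(\pi_{\bm q})_*$ produces the stated generator of $H$. So for (3) you are using the Fermat cover in the wrong direction --- it is the pushforward of an explicit algebraic cycle, not passage to invariants, that pins down $\xi$.
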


\subsection{Background on weighted diagonal hypersurfaces}\label{sub:background}

We define the group $\mu_{\bm q}=\mu_{q_0}\times\dots\times\mu_{q_{n+1}}$ where $\mu_{q_i}=\langle t_i\rangle$ is the cyclic group of order $q_i$ with generator $t_i=\zeta_{q_i}$.

\begin{definition}
 Let $k[x_0,\dots,x_{n+1}]$ be a polynomial ring equipped with the grading $\deg(x_i)=q_i$ for all $i=0,\dots,n+1$. The \emph{($\bm q$-)weighted projective space} $\BP^{n+1}_k({\bm q})$ is the $(n+1)$-di\-men\-sio\-nal projective scheme $\Proj k[x_0,\dots,x_{n+1}]$.
\end{definition}

\begin{remark}
 Alternatively, $\BP^{n+1}_k({\bm q})$ can be constructed as follows: Let $\mu_{\bm q}$ act on $\BP_k^{n+1}$ such that the action of $t_i$ multiplies the $i$-th coordinate of $\BP_k^{n+1}$ with $\zeta_{q_i}$ for $0\leq i\leq n+1$. Then the quotient by this action is
\[\pi_{\bm q}:\BP_k^{n+1}\to\BP^{n+1}_k({\bm q}),(y_0:\dots:y_{n+1})\mapsto (y_0^{q_0}:\dots:y_{n+1}^{q_{n+1}}).\]
It is easy to see that every weighted projective space is isomorphic to a normalised one satisfying $\gcd({\bm q})=1$. By \Cref{ass:qi-coprime}(a), this holds and we write shorthand $\BP=\BP^{n+1}_k({\bm q})$.
\end{remark}

\begin{definition}
The \emph{weighted diagonal hypersurface of multidegree $(d_0,\dots,d_{n+1})$ with coefficients $A_1,\dots,A_{n+1}$} is the weighted projective hypersurface 
\[F=F_{(d_0,\dots,d_{n+1})}\subset \BP:\quad x_0^{d_0}+A_1x_1^{d_1}+\dots+A_{n+1}x_{n+1}^{d_{n+1}}=0.\] 
\end{definition}
There is a natural quotient map
$\pi_{\bm q}:F_d\to F$
from the Fermat hypersurface
\[F_d\subset \BP_k^{n+1}:y_0^d+A_1y_1^d+\dots+A_{n+1}y_{n+1}^{d}=0\]
of degree $d$ and dimension $n$ to the weighted quotient.

The group $\mu_{d_0}\times\dots\times\mu_{d_{n+1}}$ acts on $F_\BC$. Namely, $u_i$ multiplies $x_i$ with $\zeta_{d_i}$. This action restricts to a trivial action of $\mu_d$ where $\mu_d$ acts via
${\zeta_d}\mapsto({\zeta_d}^{q_0},\dots,{\zeta_d}^{q_{n+1}})$, yielding an action of $G$ on $F_\BC$.

Similarly, the group $G_d=G_{d,\dots,d}=\mu_d^{n+2}/\mu_d$ acts on the Fermat hypersurface $(F_d)_\BC$ such that the generators $(v_i)_{i=0,\dots,n+1}$ for the factors of $\mu_d^{n+2}$ multiply the $i$-th coordinate with $\zeta_d$. There is a surjective morphism $G_d\to G$ sending $v_i$ to $u_i$, which is compatible with $\pi_{\bm q}$ and the group actions on $(F_d)_\BC$ and $F_\BC$.

\begin{remark}\label{rem:smooth}
The singularities of $F$ have been analysed by {Y.~Goto}. He proves that $F$ is smooth if and only if $\gcd(q_i,q_j)=1$ for all $i\neq j$ between $0$ and $n+1$ \cite[Prop.~2.1]{goto}. This explains \Cref{ass:qi-coprime}(1) although we expect that our approach can be extended to the singular case by analysing the Hirzebruch resolution of the appearing cyclic quotient singularities.
\end{remark}

\begin{example}
By \cite[Thm.~3.2.4 and 3.3.4]{dolgachev}, $\RH^i(F,\CO_F)=0$ for $0<i<n$ and the dualising sheaf of $F$ is $\omega_F=\CO_F(d-q_0-q_1-\dots-q_{n+1})$. In the case of $n=2$, it is easy to check that this implies a finite list of weighted diagonal surfaces over $\BC$ whose minimal resolution is K3, of which precisely two cases are smooth (see the last two entries in Table 7 of \cite[Prop.~8.1]{goto}). These are the Fermat surface of degrees $(4,4,4,4)$, i.e.\ $(d,\bm q)=(4,(1,1,1,1))$, and the diagonal degree $2$ K3 surface of degrees $(2,6,6,6)$, i.e.\ $(d,\bm q)=(6,(3,1,1,1))$, which this article considers. 
\end{example}

The analogues of the Lefschetz hyperplane theorem
\[\RH^i(F_\BC,\BZ)\cong\RH^i(\BP_\BC,\BZ)\cong \begin{cases}\BZ,& 2\mid i\\ 0,& 2\nmid i\end{cases},\quad i<n\]
and Poincaré duality hold and the integral cohomology of $F_\BC$ is torsion-free \cite[B19, B22 and B32]{dimca}. (Readers should take note there is an incorrect shift of the cohomological degree in the weighted Lefschetz hyperplane theorem as stated in \cite[p.~65]{dolgachev}.) 
For this reason, our interest lies in the middle cohomology and, as far as the transcendental Brauer group is concerned, in surfaces.

\subsection{Primitive cohomology and cup product -- Proof of (1) and (2)}
\begin{lemma}\label{lem:hyperplane}
 The generator $l$ of $\langle l_i \rangle\in\RH^2(F_\BC,\BZ(1))$ is independent of the choice of $i\in\{0,\dots,n-1\}$. If $n$ is even, then $L=l^{n/2}$ is a generator of $\langle l_i^{n/2}\rangle\subset H$ and the self-intersection of $L$ is $d_{\bm q}=d/q^*$.
\end{lemma}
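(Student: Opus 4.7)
The plan is to descend the problem to the ambient weighted projective space $\BP = \BP(\bm q)$ via the weighted Lefschetz hyperplane theorem, and then exploit the explicit structure of $\RH^*(\BP, \BZ)$.

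First, I would recall that under \Cref{ass:qi-coprime}, $\BP$ is well-formed; its integral cohomology is then torsion-free with $\RH^2(\BP, \BZ(1)) \cong \BZ \cdot h$ --- where $h$ is the natural generator coming from $c_1(\CO_\BP(1))$ --- and with rational top self-intersection $\int_\BP h^{n+1} = 1/q^*$. The weighted Lefschetz hyperplane theorem, valid for smooth weighted projective hypersurfaces (cf.\ \cite[B19, B22, B32]{dimca}, as invoked in \S\ref{sub:background}), ensures that the restriction map $\RH^2(\BP, \BZ(1)) \hookrightarrow \RH^2(F_\C, \BZ(1))$ is injective (an isomorphism if $n \geq 3$).

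Next I would compute the $l_i$ explicitly. Since $x_i$ is a global section of $\CO_\BP(q_i)$, the divisor $\{x_i = 0\}$ has class $q_i h$ in $\RH^2(\BP, \BZ(1))$, and restricting to $F$ yields $l_i = q_i \cdot (h|_F)$ for every $i$. Under \Cref{ass:qi-coprime}, and crucially because $q_{n+1} = 1$, one has $\gcd(q_0, \dots, q_{n+1}) = 1$, so the subgroup of $\RH^2(F_\C, \BZ(1))$ spanned by the $l_i$ equals $\BZ \cdot (h|_F)$, with canonical generator $l := h|_F$ that is manifestly independent of $i$.

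For the second assertion, when $n$ is even I would set $L := l^{n/2}$. Then $l_i^{n/2} = q_i^{n/2} L$, and since the integers $q_i^{n/2}$ still have overall $\gcd$ equal to one, the class $L$ is again the canonical generator of $\langle l_i^{n/2}\rangle \subseteq H$. Finally, combining the relation $[F] = d \cdot h$ in $\RH^2(\BP, \BZ(1))$ with the projection formula gives
\[
L \cdot L \;=\; \int_F (h|_F)^n \;=\; \int_\BP [F]\cdot h^n \;=\; d\int_\BP h^{n+1} \;=\; d/q^* \;=\; d_{\bm q},
\]
as required.

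The only non-trivial input is the integral-coefficient weighted Lefschetz theorem, together with the rational top self-intersection formula on $\BP(\bm q)$; I would pull both directly from \cite{dolgachev} or \cite{dimca}. I expect this citation-level verification to be the main obstacle, since everything else amounts to bookkeeping with coprime integers and a single application of the projection formula.
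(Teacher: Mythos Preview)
Your argument is correct and takes a genuinely different route from the paper. The paper works \emph{upstairs}: it uses the finite quotient $\pi_{\bm q}\colon F_d\to F$ from the ordinary Fermat hypersurface, observes that $\pi_{\bm q}^*$ is injective on integral cohomology, that $\pi_{\bm q}^*l_i=q_i l'$ for the hyperplane class $l'$ of $F_d$, and that $\gcd(\bm q)=1$ lets one realise a class $l$ with $\pi_{\bm q}^*l=l'$; the self-intersection is then read off as $\langle l'^{n/2},l'^{n/2}\rangle/\#\mu_{\bm q}=d/q^*$. You instead work \emph{downstairs} in the ambient weighted projective space: you identify $l=h|_F$ for $h=c_1(\CO_\BP(1))$, use $l_i=q_i l$ directly, and compute $L^2$ via the projection formula and $\int_\BP h^{n+1}=1/q^*$.

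What each buys: the paper's approach stays entirely on smooth varieties ($F_d$ sits in ordinary $\BP^{n+1}$), so no subtleties about integral cohomology or intersection theory on the singular space $\BP(\bm q)$ arise; it also dovetails with the rest of \S\ref{sec:cohomology}, where the map $\pi_{\bm q}$ is used repeatedly. Your approach is conceptually cleaner and closer to the usual hyperplane-class story, but it leans on two facts you flag at the end: that $c_1(\CO_\BP(1))$ generates $\RH^2(\BP(\bm q),\BZ)$, and that the projection formula holds for $F\hookrightarrow\BP(\bm q)$ despite the ambient singularities (harmless here since $F$ misses $\mathrm{Sing}\,\BP$, but worth a word). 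One small remark: the appeal to $q_{n+1}=1$ is not ``crucial'' for $\gcd(\bm q)=1$ --- that already follows from pairwise coprimality in \Cref{ass:qi-coprime}(a).
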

\begin{proof}
 We have the following commutative diagram for $0\leq j\leq n$ where the bottom map is an isomorphism by the Cartan--Leray spectral sequence:
 \[\begin{tikzcd}
  \RH^{2j}(F_\BC,\BZ(j)) \arrow[r, "\pi_{\bm q}^*"]\arrow[d,"\otimes \BQ"] & \RH^{2j}((F_d)_\BC,\BZ(j))^{\mu_{\bm q}}\arrow[d,"\otimes \BQ"]\\
  \RH^{2j}(F_\BC,\BQ(j)) \arrow[r,"\pi_{\bm q}^*","\sim" swap] & \RH^{2j}((F_d)_\BC,\BQ(j))^{\mu_{\bm q}}.
 \end{tikzcd}\]
 The left map is injective and therefore so is the top map. The class $l_i$ pulls back to $q_il'$ where $l'$ is the hyperplane class of $F_d$. Because $\gcd(\bm q)=1$, there exists a linear combination $l$ of $(l_i)_{i=0,\dots,n+1}$ with $\pi_{\bm q}^*(l)=l'$. Since $l'$ is not the multiple of any cohomology class, it follows that $l$ is a generator of $\langle l_i\rangle$, and if $n$ is even, $l^{n/2}$ is a generator of $\langle l_i^{n/2}\rangle$. The self-intersection of $L$ is $\langle l'^{n/2},l'^{n/2}\rangle/\#\mu_{\bm q}=d/q^*$.
\end{proof}

\begin{definition}
For a free abelian group $M$ with a bilinear form $Q:M\times M\to\BZ$ and an action of a group $G$ on $M$ preserving $Q$, i.e.\ $Q(x,y)=Q(gx,gy)$ for all $x,y\in M, g\in G$, define the \emph{sesquilinear extension}
\begin{eqnarray*}
M\times M\to\BZ[G],\quad
x,y\mapsto x*y:=\sum_{g\in G}Q(x,gy)g\in\BZ[G]. 
\end{eqnarray*}
Here, sesquilinearity means that $g(x*y)=gx*y=x*g^{-1}y$.
\end{definition}

We recall a topological description of the singular middle homology of the affine diagonal hypersurface
\[U: x_0^{d_0}+A_1x_1^{d_1}+\dots+A_{n}x_{n}^{d_{n}}=-A_{n+1}\subset \BA_k^{n+1}\]
due to Pham \cite{pham}. Note that $G\cong\mu_{d_0}\times\dots\times\mu_{d_{n}}$ acts on $U_\BC$.

\begin{proposition}
 Let
\[\Delta^n=\{z\in\BR^{n+1}:z_0+\dots+z_{n}=1,z_i\geq 0,\forall i=0,\dots, n\}\]
be the standard $n$-simplex. Fix roots $B_0=A_{n+1}^{1/d_0}, B_1=(A_{n+1}/A_1)^{1/d_1},\dots, B_n=(A_{n+1}/A_n)^{1/d_n}\in\BC$ and set
\[\bm e':\Delta^n \to U(\BC),\quad
(z_0,\dots,z_{n}) \mapsto (\zeta_{2d_0}B_0z_0^{1/d_0},\dots,\zeta_{2d_{n}}B_nz_n^{1/d_{n}})
\]
where the roots of the $z_i$ are chosen to be positive real numbers.

\begin{enumerate}\label{prop:fermat-homology}
 \item The sum $e'=(1-u_0^{-1})\dots(1-u_{n}^{-1})_*\bm e'$ is a cycle belonging to the homology group $\RH_n(U_\BC,\BZ)$.
 \item The cycle $e'$ generates $\RH_n(U_\BC,\BZ)$ as a $\BZ[G]$-module. More precisely, 
 as $\BZ[G]$-modules we have $\RH_n(U_\BC,\BZ)\cong R'$, where \[R'= \BZ[G]/\langle1+u_i+u_i^2+\dots+u_i^{d_i-1}: i=0,\dots,{n}\rangle.\]
 \item The sesquilinear extension $*$ of the intersection product on $\RH_n(U_\BC,\BZ)$ is characterised by \[e'*e'=(-1)^{n(n+1)/2}(1-u_0)\dots(1-u_{n})(1-(u_0\dots u_{n})^{-1}).\]
 (This value determines $*$ completely by sesquilinearity.)
\end{enumerate}
\end{proposition}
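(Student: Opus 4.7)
This is Pham's classical theorem \cite{pham}; the strategy is to equivariantly deformation-retract $U_\BC$ onto a CW-complex (the \emph{Pham polyhedron}) whose cellular chain complex can be read off from $\bm e'$, and then compute the intersection form by localising at a single transverse point.

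First I would establish the retraction. After the choice of roots $B_i$, the image of $\bm e'$ lies in the subset of $U_\BC$ where each coordinate $x_i$ lies on a specific ray from the origin; its $G$-translates cover a closed subspace $\Pi\subset U_\BC$ consisting of $|G|$ copies of $\Delta^n$ glued along their boundary faces, one top-dimensional cell for each element of $G$. A gradient-flow on $\sum_i |x_i|^{d_i}$ subject to the argument constraints shows that $\Pi$ is a $G$-equivariant deformation retract of $U_\BC$; equivalently, by Milnor's theorem for isolated Brieskorn singularities, $U_\BC$ has the homotopy type of a bouquet of $n$-spheres, so $\RH_*(U_\BC,\BZ)$ is torsion-free and concentrated in degrees $0$ and $n$.

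For parts (1) and (2), the cellular chain complex of $\Pi$ has $C_n(\Pi) = \BZ[G]\cdot\bm e'$. On each face $\{z_i=0\}\subset\Delta^n$ the restriction of $\bm e'$ factors through the fixed locus of $u_i$, so in the cellular boundary of any translate $g\cdot\bm e'$ the corresponding face contribution is stabilised by $u_i$; hence left multiplication by $(1-u_i^{-1})$ annihilates it. Iterating over $i=0,\dots,n$ shows $\partial e'=0$, which proves (1). To identify $\RH_n(U_\BC,\BZ)$ with $R'$, I would describe $\ker\partial$ directly: a chain $\sum_g n_g(g\cdot\bm e')$ is a cycle iff for each $i$ the coefficients along each $u_i$-orbit sum to zero, and these relations are precisely the ones cut out by the ideals $(1+u_i+u_i^2+\dots+u_i^{d_i-1})$.

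For (3), after the retraction the self-intersection of $e'$ can be computed locally at the unique interior point $v=\bm e'(\tfrac{1}{n+1},\dots,\tfrac{1}{n+1})$ where the relevant $G$-translates of $\bm e'$ meet transversally. Using the local product coordinates $z_i^{1/d_i}$ around $v$, the intersection pairing decomposes as an external product over $i=0,\dots,n$ of one-dimensional intersection forms on the auxiliary Fermat curves $x_i^{d_i}+x_{n+1}^{d_{n+1}}=0$; each one-dimensional factor contributes $(1-u_i)(1-u_i^{-1})$ by a direct root-of-unity computation. Taking the external product and multiplying by the orientation sign $(-1)^{n(n+1)/2}$ from the iterated-join structure of the $n$-simplex yields the claimed identity for $e'*e'$. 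The main obstacle is tracking these signs and verifying compatibility of the orientations on the joined factors --- which is exactly the reason why the half-roots $\zeta_{2d_i}$ are inserted into the definition of $\bm e'$.
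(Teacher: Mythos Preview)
The paper does not prove this proposition; it is quoted from Pham \cite{pham}, with only the remark that the chosen generator $e'$ differs from Pham's by the invertible element $\prod_i(-u_i)^{-1}\in\BZ[G]$. Your outline of (1) and (2) via the $G$-equivariant retraction onto the join $\Pi\simeq\mu_{d_0}*\cdots*\mu_{d_n}$ and its cellular chain complex is correct and is Pham's argument; the only point you leave implicit is that the submodule of cycles $\bigcap_i(1-u_i^{-1})\BZ[G]$ coincides with the product ideal $\prod_i(1-u_i^{-1})\BZ[G]$, which follows from the tensor decomposition $\BZ[G]\cong\bigotimes_i\BZ[\mu_{d_i}]$.

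Your computation in (3), however, does not give the stated formula. If each factor contributed $(1-u_i)(1-u_i^{-1})$ as you claim, the product over $i=0,\dots,n$ would be $\prod_i(1-u_i)\cdot\prod_i(1-u_i^{-1})$, whereas the proposition asserts $\prod_i(1-u_i)\cdot\bigl(1-(u_0\cdots u_n)^{-1}\bigr)$. Already for $n=1$ the constant terms (i.e.\ the self-intersection $(e',e')$) disagree: your product has constant term $4$, while the correct expression has constant term $0$, as it must since the form on $\RH_1$ of a Riemann surface is alternating. The source of the error is the decomposition itself: under Sebastiani--Thom the Milnor fibre of $\sum_i x_i^{d_i}$ is a join of the $0$-dimensional fibres $\{x^{d_i}=\mathrm{const}\}$, not of auxiliary curves, and what behaves multiplicatively under join is the variation (Seifert) form, not the intersection form. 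Passing from the Seifert form back to the intersection form is exactly what collapses the separate factors $(1-u_i^{-1})$ into the single diagonal term $(1-(u_0\cdots u_n)^{-1})$. Pham's own argument avoids this detour by computing the signed local intersections of the translated simplices $g\cdot\bm e'$ directly.
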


Our definition of $e'$ differs from {Pham's} cycle, but only by the element $\prod_{i=0}^{n}(-u_i)^{-1}$ which is invertible in $\BZ[G]$. In particular, the intersection product, which is preserved by the $G$-action, is as described by Pham.

\begin{proposition}\label{primitive}
 Let
\[\Delta^n=\{z\in\BR^{n+1}:z_0+\dots+z_{n}=1,z_i\geq 0,\forall i=0,\dots, n\}\]
be the standard $n$-simplex. Fix roots $B_0=A_{n+1}^{1/d_0}, B_1=(A_{n+1}/A_1)^{1/d_1},\dots, B_n=(A_{n+1}/A_n)^{1/d_n}\in\BC$ and set
\[
\bm e:\Delta^n \to F(\BC),\quad
(z_0,\dots,z_{n}) \mapsto (\zeta_{2d_0}B_0z_0^{1/d_0}:\dots:\zeta_{2d_{n}}B_nz_n^{1/d_{n}}:1)
\]
where the roots of the $z_i$ are chosen to be positive real numbers.

\begin{enumerate}
 \item The sum $e=(1-u_0^{-1})\dots(1-u_{n}^{-1})_*\bm e$ is a cycle belonging to the primitive homology (i.e. the orthogonal complement of any hyperplane with respect to the intersection pairing) $\RP_n(F_\BC,\BZ)$.
 \item The cycle $e$ generates $\RP_n(F_\BC,\BZ)$ as a $\BZ[G]$-module. More precisely, let $I\subset\BZ[G]$ be the saturation with respect to $d$ of the ideal \[J=\langle 1+u_i+u_i^2+\dots+u_i^{d_i-1}: i=0,\dots,{n+1}\rangle\subset{\BZ[G]}.\] Then $\RP_n(F_\BC,\BZ)$ as a $\BZ[G]$-module is isomorphic to $\BZ[G]/I$.
 \item The sesquilinear extension $*$ of the intersection product on $\RP_n(F_\BC,\BZ)$ is characterised by \[e*e=(-1)^{n(n+1)/2}(1-u_0)(1-u_1)\dots(1-u_{n+1}).\]
 (This value determines $*$ completely by sesquilinearity.)
\end{enumerate}
\end{proposition}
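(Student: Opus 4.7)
The strategy is to extend \Cref{prop:fermat-homology} from the affine hypersurface $U$ to the projective closure $F$ via the open-closed decomposition $F = U \sqcup F_\infty$, where $F_\infty := F \cap \{x_{n+1} = 0\}$. \Cref{ass:qi-coprime}(b) yields $q_{n+1} = 1$ and thus $d_{n+1} = d$, so the group $G$ of the projective setting identifies canonically with the affine $\mu_{d_0} \times \dots \times \mu_{d_n}$ via $u_{n+1} = (u_0 \cdots u_n)^{-1}$. This compatibility makes the projective ideal $J$ directly comparable with its affine analogue, and is the key reduction that makes the argument feasible.

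Part (1) is essentially immediate. The map $\bm e$ factors through $U \subset F$, so $e = \jmath_\ast e'$ where $\jmath : U \hookrightarrow F$ is the open inclusion, giving a cycle $e$ whose support is disjoint from $F_\infty$. Hence $e \cdot [F_\infty] = 0$, and since $[F_\infty] = l_{n+1}$ (using $q_{n+1} = 1$), \Cref{lem:hyperplane} yields $e \cdot l = 0$, so $e \in \RP_n(F_\BC, \BZ)$. Part (3) is similarly direct: all $G$-translates of $e$ lie inside $U$, so the intersection product computed in $F$ agrees with that computed in $U$, and $e * e = e' * e'$ as elements of $\BZ[G]$. Rewriting the final factor $(1 - (u_0 \cdots u_n)^{-1})$ of \Cref{prop:fermat-homology}(3) as $(1 - u_{n+1})$ under the above identification produces the claimed symmetric expression.

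Part (2) is the substantive content. I would exploit the long exact sequence in homology for the pair $(F, F_\infty)$, combined with excision $\RH_n(F, F_\infty) \cong \RH_n^{\mathrm{BM}}(U)$ and Poincaré duality on the smooth open $2n$-manifold $U$, to identify the relative term as $\RH^n(U)$; the torsion-freeness of $\RH_\ast(U, \BZ)$ from \Cref{prop:fermat-homology} then rewrites this as $(R')^\vee$ via the intersection pairing. Chasing the diagram and tracking the $\BZ[G]$-module structure, $\RP_n(F)$ is identified with a quotient of $R'$ by the additional relation $1 + u_{n+1} + \dots + u_{n+1}^{d-1}$ (coming from the pullback of the hyperplane class of $F_\infty$) together with torsion contributions from the Gysin boundary, which the saturation of $J$ with respect to $d$ is designed to absorb. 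The main obstacle I expect is controlling this $d$-saturation precisely: one must either exhibit explicit bounding chains of degree $d$ or compute the Gysin boundary in detail, and complement this with a rank count via the finite rational cover $\pi_{\bm q} : F_d \to F$, where the classical Pham formula for the Fermat hypersurface applies.
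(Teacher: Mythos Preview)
Your outline is correct and structurally aligned with the paper's proof: both arguments use the open inclusion $j:U\hookrightarrow F$ (enabled by $q_{n+1}=1$), identify $e=j_*e'$, and reduce Part~(2) to computing $\ker(j_*)\subset R'$. Your treatment of Parts~(1) and~(3) is fine and matches the paper's implicit reasoning.

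Where you diverge is in how you propose to compute $\ker(j_*)$, and here the paper's route is substantially cleaner than what you anticipate. You plan to pass through relative homology, dualise to $(R')^\vee$, and then control the saturation via explicit bounding chains, direct Gysin boundary computations, and a rank count through the cover $\pi_{\bm q}:F_d\to F$. The paper bypasses all of this with one observation: since $j_*$ respects intersection products and the intersection form on $\RP_n(F_\BC,\BZ)$ is non-degenerate, $\ker(j_*)$ is \emph{exactly} the radical of the intersection form on $R'$. By \Cref{prop:fermat-homology}(3) this radical is $\Ann_{R'}\bigl((1-u_0)\cdots(1-u_{n+1})\bigr)$; rationally the $(1-u_i)$ for $i\le n$ are units in $R'\otimes\BQ$, so the radical tensored with $\BQ$ is $J'\otimes\BQ$ where $J'=\langle 1+u_{n+1}+\cdots+u_{n+1}^{d-1}\rangle R'$; and the gap $\ker(j_*)/J'$ is precisely the Tate group $\wh\RH^0(\mu_d,R')$, hence killed by $d$. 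This is exactly the $d$-saturation statement, obtained without any explicit chain-level or covering computations.

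So your ``main obstacle'' dissolves once you invoke non-degeneracy of the primitive form. Your proposed machinery (Borel--Moore homology, rank counts via $\pi_{\bm q}$) would also get there, but with considerably more work; the paper's annihilator-plus-Tate-cohomology argument is the shortcut you are missing.
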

\begin{proof}
 We assume that $q_{n+1}=1$ by \Cref{ass:qi-coprime}(b).
 
 Let $\iota:Z\hookrightarrow F$ be the hyperplane section given by $x_{n+1}=0$. Its complement is the open subvariety $j:U\hookrightarrow F$. The Gysin sequence in homology of the smooth pair $(F,Z)$ yields an exact sequence
\[
 \RH_{n}(U_\BC,\BZ)\xrightarrow{j_*}\RH_{n}(F_\BC,\BZ)\xrightarrow{\iota^*}\RH_{n-2}(Z_\BC,\BZ)\to 0.
\]
where $\iota^*$ is Poincaré dual to pullback in cohomology. If $n$ is even, the kernel of $\iota^*$ is thus dual to $\langle l_{n+1}^{n/2}\rangle$, while if $n$ is odd $\RH_{n-2}(Z_\BC,\BZ)=0$. Thus, we have a surjective morphism
\[\RH_{n}(U_\BC,\BZ)\xrightarrow{j_*}\RP_{n}(F_\BC,\BZ).\]
The map $j_*$ is compatible with the intersection products, hence $\ker(j_*)$ is contained in the kernel of the intersection product on $\RH_{n}(U_\BC,\BZ)$. On the other hand, the intersection product on $\RP_{n}(F_\BC,\BZ)$ is non-degenerate. We thus identify $\ker(j_*)$ with the kernel of the intersection product on $\RH_{n}(U_\BC,\BZ)$.

It remains to show that the kernel of the intersection product is equal to the saturation of \[J'\coloneqq JR'=\langle 1+u_{n+1}+u_{n+1}^2+\dots+u_{n+1}^{d_{n+1}-1}\rangle\subset{R'}\] with respect to $d$. 

By \Cref{prop:fermat-homology}(3), the kernel of the intersection product is isomorphic to $\Ann_{R'}(1-u_0)(1-u_1)\dots(1-u_{n+1})$. Now in $R'\otimes \BQ$, the elements $(1-u_i)$, $i=0,\dots,n$, are invertible and so
\begin{align*}
\ker(j_*)\otimes\BQ&=\Ann_{R'\otimes\BQ}((1-u_0)(1-u_1)\dots(1-u_{n+1}))\\
&=\Ann_{R'\otimes\BQ}(1-u_{n+1})=J'\otimes\BQ.
\end{align*}
Therefore, $\ker(j_*)=(J'\otimes\BQ)\cap R'$. By definition, the quotient
$\ker(j_*)/J'$ equals the Tate cohomology $\wh\RH^0(\mu_d,R')$ where $\mu_d$ acts on $R'$ by multiplication with $u_{n+1}$. This cohomology group is killed by $d$.
\end{proof}

This completes the proof of \Cref{thm:complete-cohomology}(1) and (2) by applying Poincaré duality for $F$.

\begin{remark}
 Our proof follows the same idea as \cite[Cor.~2.2]{looijenga}. However, the proof given there is incomplete and the statement of Cor.~2.2 is incorrect since in general, the inclusion $J\subsetneq I$ is strict, or equivalently, the Tate cohomology in the proof of \Cref{primitive} is non-trivial. We conjecture that $I/J$ is isomorphic to $\BZ/d_{\bm q}$ when $n$ is odd and $0$ when $n$ is even. For any given $d$ and $n$ (as in \Cref{sextic}), this can be verified explicitly. 
\end{remark}

\subsection{Full cohomology -- Proof of (3)}
Because the primitive cohomology is equal to the full cohomology if $n$ is odd, we restrict to the case of even $n$ for this section only.
\begin{corollary}\label{cor:dq}
 Assume that $n$ is even. Then $H/(P\oplus\BZ L)\cong P^*/P \cong (\BZ L)^*/\BZ L \cong \BZ/d_{\bm q}$ (where for a $\BZ$-lattice $M$, $M^*=\Hom(M,\BZ)$ denotes its dual).
\end{corollary}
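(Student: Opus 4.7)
The corollary follows from part (3) of \Cref{thm:complete-cohomology} combined with standard overlattice theory for unimodular forms. First, since $\RH^n(F_\BC,\BZ)$ is torsion-free (\S\ref{sub:background}), Poincar\'{e} duality endows $H$ with a unimodular symmetric bilinear pairing, and hard Lefschetz yields the orthogonal rational decomposition $H_\BQ = P_\BQ \perp \BQ L$ (noting that all intermediate cohomologies $\RH^{i}(F_\BC,\BQ)$ for $i<n$ are one-dimensional, so that the only non-primitive Lefschetz summand of $H_\BQ$ is $\BQ L$). Hence $P \perp \BZ L$ in $H$, the sum $P \oplus \BZ L$ has finite index in $H$, and $P$ is saturated in $H$ as the kernel of cup product with the $l_i$.

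The cyclic quotient $H/(P \oplus \BZ L) \cong \BZ/d_{\bm q}$ can be read off directly from part (3): saturation of $P$ makes $H/P$ infinite cyclic on the image $\bar L$, and the element of $H$ exhibited in (3) has image $\tfrac{1}{d_{\bm q}}\bar L$ in $H/P$ while generating $H$ modulo $P$, so
$$H/(P\oplus\BZ L) \;=\; (H/P)/\BZ\bar L \;\cong\; \BZ/d_{\bm q}.$$
The isomorphism $(\BZ L)^*/\BZ L \cong \BZ/d_{\bm q}$ is immediate from $\langle L,L\rangle = d_{\bm q}$ (\Cref{lem:hyperplane}). Unimodularity of $H$ together with $[H : P \oplus \BZ L] = d_{\bm q}$ then gives the Gram determinant identity $d_{\bm q}^2 = \det(P)\cdot\det(\BZ L) = \det(P)\cdot d_{\bm q}$, so $|P^*/P| = \det(P) = d_{\bm q}$.

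To upgrade this order equality into the canonical isomorphism $P^*/P \cong H/(P \oplus \BZ L)$, I would invoke the classical fact that for a saturated sublattice $N$ of a unimodular lattice $H$ with orthogonal complement $N^\perp$, the self-duality $H \cong H^*$ induces a canonical surjection $H \twoheadrightarrow N^*$ with kernel $N^\perp$, descending to an isomorphism $H/(N \oplus N^\perp) \xrightarrow{\sim} N^*/N$. Applied with $N = P$, the a priori inclusion $\BZ L \subseteq P^\perp$ combined with the cardinality equalities $|H/(P \oplus \BZ L)| = d_{\bm q} = |P^*/P| = |H/(P \oplus P^\perp)|$ forces $\BZ L = P^\perp$, and the corollary follows.

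The main obstacle is the identification $\BZ L = P^\perp$: a priori only the inclusion $\BZ L \subseteq P^\perp$ is evident, and upgrading to equality (equivalently, the primitivity of $L$ in $H$) requires either the indirect cardinality argument sketched above or a direct verification via the Fermat pullback $\pi_{\bm q}^*L = (l')^{n/2}$, together with integrality of the explicit representative of the overlattice class given in part (3) of \Cref{thm:complete-cohomology}.
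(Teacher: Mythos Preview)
Your argument is circular in the paper's logical flow. \Cref{cor:dq} is not a consequence of part~(3) of \Cref{thm:complete-cohomology}; rather, it is proved first --- at the opening of the subsection devoted to part~(3) --- and then \emph{used} to prove part~(3). Concretely, once the corollary establishes that $H/(P\oplus\BZ L)$ is cyclic of order $d_{\bm q}$, the paper deduces that a generator must have the form $\tfrac{1}{d_{\bm q}}(L+\xi)$ with $\xi\in P$, and the remainder of the subsection identifies $\xi$ explicitly by pushing forward from the Fermat hypersurface. When you read $|H/(P\oplus\BZ L)|=d_{\bm q}$ off of the explicit generator in part~(3), you are assuming precisely what the corollary is there to supply.

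The paper's own proof avoids all of this and needs only \Cref{lem:hyperplane} together with standard overlattice theory: from $\langle L,L\rangle=d_{\bm q}$ one has $(\BZ L)^*/\BZ L\cong\BZ/d_{\bm q}$, and since $P$ is the orthogonal complement of $L$ in the unimodular lattice $H$, the glue-group isomorphisms $H/(P\oplus\BZ L)\cong P^*/P\cong(\BZ L)^*/\BZ L$ are automatic. Your ``main obstacle'' --- the primitivity of $L$, equivalently $\BZ L=P^\perp$ --- is already available from the argument of \Cref{lem:hyperplane}: the pullback $\pi_{\bm q}^*$ is injective on integral cohomology, and $\pi_{\bm q}^*L=(l')^{n/2}$ is primitive in $\RH^n((F_d)_\BC,\BZ)$ since it pairs to $1$ with the class of a linear subspace of $F_d$. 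So neither part~(3) nor the determinant/cardinality detour is required.
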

\begin{proof}
  The last isomorphism follows from \Cref{lem:hyperplane}. The other isomorphisms exist because $P$ is the orthogonal complement of $L$ in the unimodular lattice $H$.
\end{proof}

The group $(\BZ L)^*/\BZ L$ is generated by the class of the linear map $\langle\frac{1}{d_{\bm q}}L,\cdot\rangle$. We deduce that $H/(P\oplus\BZ L)$ is generated by the image of $\frac{1}{d_{\bm q}}(L+\xi)$ for some $\xi\in P$. The integrality of the cup product requires that $\langle \xi,P\rangle\subset d_{\bm q}\BZ$. Note that $\xi$ is only uniquely determined in $P/d_{\bm q}P$. Our aim is to show that
\[\xi=\frac{1}{q^*}(-1)^{n(n+1)/2}c\in P\] 
with $c=\rho(u_0,u_1)\rho(u_2,u_3)\dots\rho(u_n, u_{n+1})$
as in \Cref{thm:complete-cohomology}(3) is one of the many possible lifts to $P$.

Define the polynomial function $\phi(x)=\sum_{i=0}^{d-1}x^i$. We need the following easy identities.
\begin{lemma}\label{identities} \hfill
\begin{enumerate}
 \item $(1-y)\phi(xy)=(1-x)(1-y)\rho(x,y)$ inside the ring $\BZ[x,y]/\langle x^d-1,y^d-1\rangle$.
 \item $(1-x)\rho(1,x)=d$ inside the ring $\BZ[x]/\langle\phi(x)\rangle$, in particular $1-x$ is invertible in $\BQ[x]/\langle\phi(x)\rangle$.
 \item $\phi(xy)=(1-x)\rho(x,y)$ inside the ring $\BZ[x,y]/\langle\phi(x),\phi(y)\rangle$.
\end{enumerate}
\end{lemma}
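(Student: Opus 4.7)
The plan is to reduce all three identities to elementary telescoping computations by first establishing a master identity in the free polynomial ring $\BZ[x,y]$, from which (1) and (3) will fall out by passing to the appropriate quotient, and then handling (2) by a separate (but analogous) direct calculation.

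\textbf{The master identity.} First I would prove
\begin{equation}\label{eq:master}
(1-x)\rho(x,y)\;=\;\phi(xy)\,-\,x^{d-1}\phi(y)
\end{equation}
as an honest equality in $\BZ[x,y]$. Writing $\rho(x,y)=\sum_{0\leq l\leq m\leq d-2}y^l x^m$ and subtracting $x\rho(x,y)$, the shift $m\mapsto m+1$ in the second sum produces cancellation on the interior $1\leq m\leq d-2$, leaving only the diagonal terms $y^m x^m$ from the first sum and the boundary terms $y^l x^{d-1}$ from the second. Collecting these gives
\[
(1-x)\rho(x,y)\;=\;\sum_{m=0}^{d-2}(xy)^m\;-\;x^{d-1}\sum_{l=0}^{d-2}y^l,
\]
and rewriting the sums as $\phi(xy)-(xy)^{d-1}$ and $\phi(y)-y^{d-1}$ respectively and simplifying yields \eqref{eq:master}.

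\textbf{Deriving (1) and (3).} For (3), we work modulo $\langle\phi(x),\phi(y)\rangle$; since $\phi(y)=0$ there, \eqref{eq:master} becomes $(1-x)\rho(x,y)=\phi(xy)$ at once. For (1), we multiply \eqref{eq:master} by $(1-y)$ and observe that $(1-y)\phi(y)=-(y^d-1)$, which vanishes modulo $\langle x^d-1,y^d-1\rangle$; hence the correction term $x^{d-1}(1-y)\phi(y)$ disappears and we obtain $(1-y)\phi(xy)=(1-x)(1-y)\rho(x,y)$.

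\textbf{Proving (2).} The master identity is not well-suited for (2) because specialising $x=1$ kills its left-hand side trivially. Instead I would compute $(1-x)\rho(1,x)$ by hand: since $\rho(1,x)=\sum_{l=0}^{d-2}(d-1-l)x^l$, multiplying by $(1-x)$ and reindexing the shifted sum gives a telescoping
\[
(1-x)\rho(1,x)=(d-1)-\sum_{l=1}^{d-1}x^l=d-\phi(x),
\]
which equals $d$ modulo $\phi(x)$. Invertibility of $1-x$ in $\BQ[x]/\langle\phi(x)\rangle$ is then immediate with explicit inverse $d^{-1}\rho(1,x)$.

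\textbf{Main obstacle.} There is no real obstacle here; the identities are bookkeeping. The only care needed is in managing the boundary indices in the two telescoping sums (the $m=0$ and $m=d-1$ edges in \eqref{eq:master}, and the $l=0$ and $l=d-1$ edges for (2)), so that the correct $\phi$ and $x^{d-1}$ terms emerge. Once \eqref{eq:master} is written down cleanly, the rest of the lemma is a matter of substitution into the relevant quotient rings.
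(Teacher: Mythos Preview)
Your proof is correct. The paper does not give a proof of this lemma at all, declaring the identities ``easy''; your master identity $(1-x)\rho(x,y)=\phi(xy)-x^{d-1}\phi(y)$ in $\BZ[x,y]$ is a clean way to organise the verification, and the deductions of (1)--(3) from it (together with the separate telescoping for (2)) are all fine.
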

Let $\Lambda\in\RH_n((F_d)_\BC,\BZ)$ be the homology class of the linear subspace given by
\[\zeta_{2d}y_0=A_1^{1/d}y_1,\zeta_{2d}A_2^{1/d}y_2=A_3^{1/d}y_3,\dots, \zeta_{2d}A_n^{1/d}y_{n}=A_{n+1}^{1/d}y_{n+1}.\]
Because the intersection number of $\Lambda$ with a hyperplane section $L_d$ of $F_d$ is $1$, it follows that $\Lambda$ generates $\RH_n((F_d)_\BC,\BZ)$ modulo primitive homology.

\begin{proposition}
 Let \begin{eqnarray*}
c_d&=&(1-v_0)^{-1}\phi(v_0v_1)(1-v_2)^{-1}\phi(v_2v_3)\dots(1-v_n)^{-1}\phi(v_n v_{n+1})\\
&=&\rho(v_0,v_1)\rho(v_2,v_3)\dots\rho(v_n,v_{n+1})\in \RP_n((F_d)_\BC,\BZ).      
     \end{eqnarray*}
 Then  $\frac{1}{d}(L_d+(-1)^{n(n+1)/2}c_d)=\Lambda$. 
\end{proposition}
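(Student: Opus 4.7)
The plan is to show that $\Lambda'\coloneqq \tfrac{1}{d}L_d+\tfrac{(-1)^{n(n+1)/2}}{d}c_d\in \RH_n((F_d)_\BC,\BQ)$ actually equals $\Lambda$; the integrality of $\Lambda'$ will then be automatic. Throughout I would use that the Fermat case has trivial weighting, so \Cref{lem:hyperplane} gives $L_d^2=d$.

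First I would reduce to a statement in primitive cohomology. Using the orthogonal splitting
\[\RH_n((F_d)_\BC,\BQ)=\BQ\langle L_d\rangle \oplus \RP_n((F_d)_\BC,\BQ),\]
I would note that $c_d$ lies in $\RP_n$ by construction (it is a product of the elements $\rho(v_{2i},v_{2i+1})$ acting on the primitive generator), so that $\Lambda'\cdot L_d=\tfrac{1}{d}L_d^2=1$. This matches $\Lambda\cdot L_d=1$, which is the Bézout intersection of a linear $\BP^{n/2}\subset \BP^{n+1}$ with a hyperplane. Hence $\Lambda-\Lambda'$ lies in $\RP_n((F_d)_\BC,\BQ)$.

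By \Cref{primitive}(2) the Pham cycle $e$ generates $\RP_n((F_d)_\BC,\BQ)$ as a $\BQ[G_d]$-module, and the intersection form is non-degenerate on this space, so it suffices to verify the identity
\[\Lambda * e=\frac{(-1)^{n(n+1)/2}}{d}\,c_d * e\in \BQ[G_d],\]
where $*$ denotes the sesquilinear extension of the intersection pairing. The right-hand side can be simplified purely algebraically: the formula $e*e=(-1)^{n(n+1)/2}(1-v_0)\cdots(1-v_{n+1})$ is given by \Cref{primitive}(3), and repeated application of \Cref{identities}(3) to each pair $(v_{2i},v_{2i+1})$ in the factorisation $c_d=\prod_{i=0}^{n/2}\rho(v_{2i},v_{2i+1})$ produces a closed form for $c_d * e$ in $\BZ[G_d]$.

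The main obstacle is the direct geometric computation of $\Lambda * e$, equivalently of the intersection numbers $\Lambda\cdot(g\cdot e)$ for $g\in G_d$. I would carry this out using Pham's explicit simplicial parametrisation $\bm e\colon\Delta^n\to F_d(\BC)$: analysing which translates $g\bm e$ are met by the complex linear subspace $\Lambda$, locating the incidences (a priori only at boundary vertices of the simplex, since the defining equations of $\Lambda$ force pairs of barycentric coordinates to satisfy $z_{2i}+z_{2i+1}=0$), and tracking the local multiplicities in the alternating sum defining $e$. A substantial simplification comes from the fact that the stabiliser of $\Lambda$ in $G_d$ contains the subgroup generated by $v_{2i}v_{2i+1}$ for $i=0,\dots,n/2$, so that $G_d$-equivariance of $*$ reduces the task to a single universal local computation. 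Matching the resulting expression with the algebraic formula for $c_d * e$ completes the proof.
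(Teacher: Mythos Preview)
Your overall strategy --- check the pairing with $L_d$, then reduce to the primitive part and use nondegeneracy of the cup product --- is exactly the paper's proof. The algebraic half is also the same: your computation of $c_d*e$ from $e*e$ and \Cref{identities} is precisely what the paper does (its map $\ev$ is your $-*e$), and it terminates at $(-1)^{n(n+1)/2}d\psi$ with $\psi=(1-v_1)(1-v_3)\cdots(1-v_{n+1})\phi(v_2v_3)\cdots\phi(v_nv_{n+1})$.

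The one substantive difference is the geometric input $\Lambda*e$. The paper does not compute it; it quotes the identity $\ev(\Lambda)=\psi$ from Degtyarev--Shimada \cite{degtyarev2}. You propose instead to evaluate $(\Lambda,ge)$ directly from Pham's simplicial model. Your observations are correct as far as they go: the equations of $\Lambda$, after raising to the $d$-th power, do give $z_{2i}+z_{2i+1}=0$ for each pair (independently of $g$, since $(\zeta_{2d}\zeta_d^{k})^d=-1$), and $\Lambda$ is stabilised by the subgroup $\langle v_{2i}v_{2i+1}\rangle$. But the ``single universal local computation'' you defer to is exactly the content of the Degtyarev--Shimada result: all incidences sit on the \emph{boundary} of the individual simplices $g\bm e$, so $(\Lambda,g\bm e)$ is not separately well-defined, and one must track how the signed translates in $e=\prod_i(1-u_i^{-1})\bm e$ assemble around that vertex to obtain an honest intersection number with the correct sign. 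Your sketch gives no mechanism for this step, so the argument is incomplete precisely at the point where the paper invokes \cite{degtyarev2}.
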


\begin{proof}
 The intersection product $(\cdot,\cdot)$ on $\RH_n((F_d)_\BC,\BZ)$ is non-degenerate, hence we only need to show that the images of $\frac{1}{d}(L_d+(-1)^{n(n+1)/2}c_d)$ and $\Lambda$ in $\RH_n((F_d)_\BC,\BZ)^*$ are equal. It is clear that
 \[( \Lambda,L_d)=1=\frac{1}{d}(L_d,L_d)=( \frac{1}{d}(L_d+(-1)^{n(n+1)/2}c_d),L_d)\]
 and $( \frac{1}{d}(L_d+(-1)^{n(n+1)/2}c_d),x)=\frac{1}{d}((-1)^{n(n+1)/2}c_d,x)$ for all $x\in \RP_n((F_d)_\BC,\BZ)$.
 
 {Degtyarev} and {Shimada} have computed in \cite[p.\ 989, Proof of Part (a) of Thm.~1.1]{degtyarev2} that the image of $\Lambda$ under the map
 \[\ev: \RH_n((F_d)_\BC,\BZ)\to\BZ[G_d], x\to\sum_{g\in G_d}(x,g)g\]
 is given by $\psi:=(1-v_1)(1-v_3)\dots(1-v_{n+1})\phi(v_2v_3)\dots\phi(v_nv_{n+1})$. So it remains to show that $\ev(c_d)=(-1)^{n(n+1)/2}d\psi$.
 
 Using the $G_d$-invariance of the intersection product on $F_d$, we get
 \[\ev(h)=\sum_{g\in G_d}(h,g)g=\sum_{g\in G_d}(1,gh^{-1})g=\sum_{g\in G_d}(1,g)gh=\ev(1)h\]
 for all $h\in G_d$ and by bilinearity of the intersection product, the same equation holds for $h\in \RP_n((F_d)_\BC,\BZ)$.
 
 Recall that by \Cref{thm:complete-cohomology}(2),
 $\ev(1)=(-1)^{n(n+1)/2}(1-v_0)\dots(1-v_{n+1})$.
 Thus $(-1)^{n(n+1)/2}\ev(c_d)$ equals
 \begin{eqnarray*}
  &&(1-v_0)(1-v_1)\dots(1-v_{n+1})\rho(v_0,v_1)\rho(v_2,v_3)\dots\rho(v_n,v_{n+1})\\
  &=&(1-v_1)(1-v_3)\dots(1-v_{n+1})\phi(v_0v_1)\phi(v_2v_3)\dots\phi(v_nv_{n+1})\\
  &=&(1-v_1)(1-v_3)\dots(1-v_{n+1})\phi(v_2\dots v_{n+1})\phi(v_2v_3)\dots\phi(v_nv_{n+1})\\
  &=&(1-v_1)(1-v_3)\dots(1-v_{n+1})d\phi(v_2v_3)\dots\phi(v_nv_{n+1})=d\psi. 
 \end{eqnarray*} 
\end{proof}

The image of $\frac{1}{d}(L_d+(-1)^{n(n+1)/2}c_d)$ under the pushforward map \[(\pi_{\bm q})_*:\RH_n((F_d)_\BC,\BZ)\to\RH_n(F_\BC,\BZ)\] is \[\frac{1}{d_{\bm q}}L+\frac{1}{d}(-1)^{n(n+1)/2}c.\]
Here we use that $(\pi_{\bm q})_*(\pi_{\bm q})^*$ equals $\deg\pi_{\bm q}=q^*$. As a consequence, we infer that $\xi=\frac{1}{q^*}(-1)^{n(n+1)/2}c\in P$ is a possible choice such that $H/(P\oplus\BZ L)$ is generated by $\frac{1}{d_{\bm q}}(L+\xi)$. This finishes the proof of \Cref{thm:complete-cohomology}(3).

\subsection{Hodge structure -- Proof of (4)}
We write \[\wh G=\Hom(G,\BC^\times)=\left\{a\in(q_1\BZ/d\times\dots\times q_{n+1}\BZ/d): q_0\mid\sum_{i=1}^{n+1}a_i\right\}\] for the group of complex characters of $G$. In the symmetric notation,
\[\widehat G\cong\left\{a\in (q_0\BZ/d\times\dots\times q_{n+1}\BZ/d):\sum_{i=0}^{n+1}a_i=0\in\BZ/d\right\}.\]
A tuple $(a_1,\dots,a_{n+1})\in q_1\BZ/d\times\dots\times q_{n+1}\BZ/d$ corresponds to the character defined by
\[\chi(u_1^{l_1}\dots u_{n+1}^{l_{n+1}})={\zeta_d}^{a_1l_1+\dots+a_{n+1}l_{n+1}}.\]
Attached to $\chi$ is an element
\[\alpha_\chi=\alpha_{a_1}(u_1)\dots\alpha_{a_{n+1}}(u_{n+1})\in E[G]\subset \BC[G], \quad \mbox{where }
\alpha_{i}(u)=\frac{1}{d}\sum_{j=0}^{d-1}{\zeta_d}^{-ij}u^j.\]

The family $(\alpha_\chi)_{\chi\in\wh G}$ is a basis of idempotent eigenvectors: it satisfies $\alpha_\chi\alpha_\rho=\delta_{\chi,\rho}$ where $\delta$ is the Kronecker delta. One easily checks that $g\alpha_\chi=\chi(g)\alpha_\chi$ for all $g\in G,\chi\in\wh G$.

\begin{proposition}\label{prop:decomp}
 We have an equality of $E[G]$-modules $P\otimes E=\bigoplus_{\chi\in S} V_\chi$ where $V_\chi=\langle \alpha_\chi\rangle$.
\end{proposition}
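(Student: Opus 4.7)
The plan is to combine the explicit description $P \cong \BZ[G]/I$ from \Cref{thm:complete-cohomology}(1) with the spectral decomposition of the group algebra $E[G]$ afforded by the idempotents $\alpha_\chi$. Since $I$ is the saturation with respect to $d$ of the ideal $J = \langle \sigma_i : i=0,\dots,n+1\rangle$ with $\sigma_i = 1+u_i+u_i^2+\dots+u_i^{d_i-1}$, and $d$ is invertible in $E = \BQ(\zeta_d)$, we have $I\otimes E = J\otimes E$. Therefore
\[P\otimes E \;\cong\; E[G]/(J\otimes E).\]
So the problem reduces to identifying which $\alpha_\chi$ lie in the ideal generated by the $\sigma_i$ inside $E[G]$.

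The key computation is the eigenvalue of $\sigma_i$ on $\alpha_\chi$. Using that $g\alpha_\chi = \chi(g)\alpha_\chi$ for all $g\in G$, one finds
\[\sigma_i\,\alpha_\chi \;=\; \Big(\sum_{j=0}^{d_i-1}\chi(u_i)^j\Big)\,\alpha_\chi \;=\; \begin{cases} d_i\,\alpha_\chi, & \chi(u_i)=1,\\ 0, & \chi(u_i)\neq 1,\end{cases}\]
because $\chi(u_i)$ is a $d_i$-th root of unity. Consequently, in the decomposition $E[G] = \bigoplus_{\chi\in \wh G} V_\chi$ with $V_\chi = \langle\alpha_\chi\rangle$, the element $\sigma_i$ acts as the identity (up to the unit $d_i$) on those summands with $\chi(u_i)=1$ and as zero on the others. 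It follows that the ideal $J\otimes E$ decomposes as
\[J\otimes E \;=\; \bigoplus_{\substack{\chi\in \wh G\\ \exists\, i:\; \chi(u_i)=1}} V_\chi,\]
so the quotient is the direct sum of the remaining $V_\chi$, namely those for which $\chi(u_i)\neq 1$ for \emph{every} $i\in\{0,\dots,n+1\}$.

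Finally, I would translate this condition into the indexing set $S$. In the symmetric notation $\chi = (a_0,a_1,\dots,a_{n+1})$ with $q_i\mid a_i$ and $\sum_i a_i \equiv 0\bmod d$, the condition $\chi(u_i)\neq 1$ is precisely $a_i\neq 0$ in $\BZ/d$. Comparing with the definition of $S$ in \Cref{thm:complete-cohomology}(4) yields exactly the desired indexing set, whence $P\otimes E = \bigoplus_{\chi\in S} V_\chi$. The argument is essentially formal once one has \Cref{thm:complete-cohomology}(1) in hand; the only subtle point is invoking saturation plus invertibility of $d$ in $E$ to pass from $I\otimes E$ to $J\otimes E$, but this is immediate.
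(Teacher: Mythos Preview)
Your proof is correct and follows essentially the same approach as the paper: decompose $E[G]$ via the idempotents $\alpha_\chi$, compute that $\sigma_i$ kills $V_\chi$ precisely when $\chi(u_i)\neq 1$, and identify the quotient with $\bigoplus_{\chi\in S}V_\chi$. The paper leaves implicit the passage from the saturated ideal $I$ to $J$ over $E$, which you spell out explicitly via invertibility of $d$; otherwise the arguments are the same.
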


\begin{proof}
The classical representation theory of finite groups gives that after extending the base to $E$, the $\BZ[G]$-module $\BZ[G]$ decomposes into a sum of the $1$-dimensional eigenspaces $V_\chi$:
\[E[G]=\bigoplus_{\chi\in\wh G}V_\chi.\]
By \Cref{thm:complete-cohomology}(1), $P\otimes E$ is the quotient of $E[G]$ by the ideal
$\langle\phi_i(u_i):i=0,\dots,n+1\rangle$. We find that
\[\phi_i(u_i)E[G]=\bigoplus_{\chi\in S_i} V_\chi\]
for $i=0,\dots,n+1$, where $S_i$ is the set of all characters $\chi\in\wh G$ restricting trivially to the factor $\mu_{d_i}$. Thus the $E[G]$-module $P\otimes E$ decomposes into a sum of eigenspaces over all characters in $S=\widehat G\setminus\bigcup_{i=0}^{n+1} S_i$.
\end{proof}

It will be useful in \Cref{sub:T} to express the cup product in terms of idempotents:
\begin{lemma}\label{idemcup}
 Set $\Xi=\mathrm{Re}$ (the real part) for even $n$ and $\Xi=i\cdot\mathrm{Im}$ ($i$ times the imaginary part) for odd $n$.
 For all $\chi,\rho\in S$,
 \[\langle\alpha_\rho,\alpha_\chi\rangle=(-1)^{n(n+1)/2}q^*\frac{2}{d^{n+1}}\Xi((1-{\zeta_d}^{-a_1})\dots(1-{\zeta_d}^{-a_{n+1}}))\delta_{\rho^{-1},\chi}\]
 where $\chi$ corresponds to $(a_1,\dots,a_{n+1})$.
\end{lemma}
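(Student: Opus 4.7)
The plan is to reduce this to a formal manipulation in the group ring $\mathbb{C}[G]$, using the cup product formula from \Cref{thm:complete-cohomology}(2), the idempotent property of the $\alpha_\chi$, and one algebraic identity involving complex conjugation.

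First I would extend the cup product $\mathbb{C}$-bilinearly to $E[G] \times E[G]$ and rewrite \Cref{thm:complete-cohomology}(2) as
\[\langle x, y \rangle = \text{coeff of $1$ in } K\cdot x \cdot \bar y, \qquad K = (-1)^{n(n+1)/2}\prod_{i=0}^{n+1}(1-u_i),\]
where $\bar{(\cdot)}$ denotes the $\mathbb{C}$-linear involution of $E[G]$ sending $g \mapsto g^{-1}$. A direct calculation with $\alpha_{i}(u) = \tfrac1d\sum_j \zeta_d^{-ij} u^j$ shows $\overline{\alpha_\chi} = \alpha_{\chi^{-1}}$, where $\chi^{-1}$ is the tuple $(-a_1,\dots,-a_{n+1})$.

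Next I would invoke orthogonality of the idempotents: $\alpha_\rho \cdot \alpha_{\chi^{-1}} = \delta_{\rho,\chi^{-1}}\,\alpha_\rho$, which immediately gives $\langle \alpha_\rho,\alpha_\chi\rangle = 0$ unless $\rho^{-1}=\chi$ (matching the claimed Kronecker delta). When $\rho=\chi^{-1}$, the characterizing relation $g\cdot\alpha_\chi = \chi(g)\alpha_\chi$ for $g \in G$ lets me replace each $u_i$ by its eigenvalue, since $\chi(u_i)=\zeta_d^{a_i}$ holds for every $i=0,1,\dots,n+1$ (with $a_0 := -\sum_{i=1}^{n+1}a_i$, using the symmetric form $u_0u_1\cdots u_{n+1}=1$ and the fact that $q_i \mid a_i$ which places $\chi \in S$). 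Thus
\[K\cdot\alpha_{\chi^{-1}} = (-1)^{n(n+1)/2}\prod_{i=0}^{n+1}\bigl(1-\zeta_d^{-a_i}\bigr)\,\alpha_{\chi^{-1}}.\]
The coefficient of $1$ in $\alpha_{\chi^{-1}}$ equals $1/|G| = q^*/d^{n+1}$, which can be read off from the standard formula for a group idempotent or verified by a short counting argument against the relations defining $G$.

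Combining these steps yields
\[\langle\alpha_\rho,\alpha_\chi\rangle = \delta_{\rho^{-1},\chi}\,(-1)^{n(n+1)/2}\frac{q^*}{d^{n+1}}\prod_{i=0}^{n+1}\bigl(1-\zeta_d^{-a_i}\bigr),\]
and the only thing left is to show the identity
\[\bigl(1-\zeta_d^{-a_0}\bigr)\,y = 2\,\Xi(y), \qquad y=\prod_{i=1}^{n+1}\bigl(1-\zeta_d^{-a_i}\bigr).\]
The key observation (and perhaps the main technical point) is that $\overline{y} = (-1)^{n+1}\zeta_d^{-a_0}\,y$, obtained by applying the elementary relation $\overline{1-\zeta_d^{-a}}=1-\zeta_d^{a} = -\zeta_d^{a}(1-\zeta_d^{-a})$ in each of the $n+1$ factors and using $a_0=-\sum_{i\ge1}a_i$. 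Then $2\Xi(y) = y + (-1)^n\bar y = y\bigl(1+(-1)^n(-1)^{n+1}\zeta_d^{-a_0}\bigr) = (1-\zeta_d^{-a_0})\,y$, exactly as required. Putting everything together produces the claimed formula.
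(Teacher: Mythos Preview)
Your proof is correct and follows essentially the same route as the paper: both reduce to the coefficient of $1$ in $K\alpha_{\chi^{-1}}$ via idempotent orthogonality, evaluate the factors $(1-u_i)$ as eigenvalues $(1-\zeta_d^{-a_i})$, and then establish the same identity $(1-\zeta_d^{-a_0})y = 2\,\Xi(y)$ using $\bar y = (-1)^{n+1}\zeta_d^{-a_0}y$. The one cosmetic difference is that you obtain the coefficient of $1$ in $\alpha_{\chi^{-1}}$ as $1/|G| = q^*/d^{n+1}$ directly from the idempotent formula, whereas the paper arrives at the same value by an explicit count of the monomials equal to $1$ in $G$.
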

\begin{proof}
 Using the bilinearity of the cup product, we find that
 $\langle\alpha_\rho,\alpha_\chi\rangle = \langle 1,\alpha_{\rho^{-1}}\alpha_\chi\rangle$.
 From the idempotency property, it follows that $\langle\alpha_\rho,\alpha_\chi\rangle=0$ if $\chi\neq\rho^{-1}$. If $\chi=\rho^{-1}$, then
 $\langle\alpha_{\chi^{-1}},\alpha_\chi\rangle =\langle1, \alpha_\chi^2 \rangle= \langle1, \alpha_\chi \rangle$
is the coefficient of $1$ in the expression
 \[(-1)^{n(n+1)/2}\alpha_{\chi^{-1}}(1-u_0)\dots(1-u_{n+1})\in E[G].\]
 The coefficient of $1$ in $\alpha_{\chi^{-1}}$ receives a contribution from all terms of the form $u_i^j$ for all $i=1,\dots,n+1$, $j=0,d_i,\dots,(q_i-1)d_i$, as well as $(u_1\dots u_{n+1})^j$, $j=0,d_0,\dots,(q_0-1)d_0$, hence is equal to $q^*/d^{n+1}$.
 Thus, using the eigenvector property of $\alpha_{\chi^{-1}}$, the previous expression evaluates to
 \begin{align*}
 &\frac{(-1)^{n(n+1)/2}q^*}{d^{n+1}} \prod_{i=0}^{n+1}(1-{\zeta_d}^{-a_i})
 =  \frac{(-1)^{n(n+1)/2}q^*}{d^{n+1}} (1-\zeta_d^{a_1+\dots+a_{n+1}})\prod_{i=1}^{n+1}(1-{\zeta_d}^{-a_i})\\
 = & \frac{(-1)^{n(n+1)/2}q^*}{d^{n+1}} \left(\prod_{i=1}^{n+1}(1-{\zeta_d}^{-a_i})+(-1)^n\prod_{i=1}^{n+1}(1-{\zeta_d}^{a_i})\right). \qedhere
 \end{align*}
\end{proof}

In \cite{griffiths}, Griffiths describes the primitive Hodge structure of a smooth hypersurface inside a projective variety $X$. The following convenient formulation is due to Voisin \cite[Thm.~6.1]{voisinII} (see also Proposition~6.2 ibid.\ and \cite[Thm.~4.3.2]{dolgachev} which treats the special case $X=\BP$) apart from the missing twist by $n'$, which simply shifts the Hodge weights. Let \[\Omega=x_0\dots x_n\sum_{i=0}^{n+1}dx_0/x_0\wedge\dots\wedge \wh{dx_i/x_i}\wedge\dots\wedge dx_{n+1}/x_{n+1}\] be a generator of $\RH^0(\BP_\BC,K_\BP(\sum_{i=0}^{n+1} q_i))$ (this is one-dimensional as $K_\BP=\CO_\BP(-\sum_{i=0}^{n+1}q_i)$ \cite[2.1.5]{dolgachev}).

\begin{theorem}\label{griffiths}
 There is a surjective residue map
 \[\BC[x_0,\dots,x_n]^{\deg=(q+1)d-\sum_{i=0}^{n+1}q_i}\cong\RH^0(\BP_\BC,K_\BP((q+1)d))\to P^{n-q-n',q-n'}\]
 sending $\eta$ to the residue $\Res_F\eta\Omega/f^{q+1}$. The kernel of this map is the Jacobian ideal $J_f=\langle \del f/\del x_i: i=0,\dots,n+1\rangle$.
\end{theorem}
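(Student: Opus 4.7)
The plan is to adapt Griffiths's classical residue construction \cite{griffiths} to the weighted projective setting, following the well-known strategy that has been carried out for unweighted projective hypersurfaces by Voisin and extended to quasi-smooth weighted hypersurfaces by Steenbrink and Dolgachev. Concretely, one works with the complement $U=\BP_\BC\setminus F$ and studies the long exact Gysin sequence
\[ \cdots \to \RH^{n+1}(\BP_\BC,\BC)\to \RH^{n+1}(U,\BC)\xrightarrow{\Res} \RH^n(F_\BC,\BC)(-1)\to \RH^{n+2}(\BP_\BC,\BC)\to\cdots \]
together with the Hodge filtration on both sides. The image of $\Res$ is precisely the primitive cohomology (as the boundary map factors through the cycle class of $F$), so after the twist by $n'$ we land in $P^{n-q-n',q-n'}$.

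The second step is to identify the source of the residue map. On $U$, every class in $\RH^{n+1}(U,\BC)$ is represented by a rational $(n+1)$-form on $\BP_\BC$ with poles of some order along $F$, and the pole-order filtration on $K_\BP(*F)$ is compatible with the Hodge filtration shifted by one. Explicitly,
\[ F^{n+1-q}\RH^{n+1}(U,\BC) = \mathrm{Im}\!\left(\Gamma(\BP_\BC,K_\BP((q+1)F))\to \RH^{n+1}(U,\BC)\right), \]
where a form $\eta\Omega/f^{q+1}$ represents a class in $F^{n+1-q}\setminus F^{n+2-q}$. Under $\Res$, this maps surjectively onto $F^{n+1-q}\RH^n(F_\BC,\BC)(-1)\cap P\otimes\BC = P^{n-q-n',q-n'}$ (up to the appropriate Tate twist). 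Since $K_\BP\cong\CO_\BP(-\sum_i q_i)$, we have $\Gamma(\BP_\BC,K_\BP((q+1)d))\cong \BC[x_0,\dots,x_{n+1}]^{\deg = (q+1)d-\sum_i q_i}$, giving the statement.

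The third step is the identification of the kernel. Modulo exact rational forms with poles of order $\leq q$, one computes using the identities of Griffiths: for any homogeneous $h_i$ of suitable weighted degree, the form
\[ d\!\left(\frac{h_i\,\iota_{x_i\partial_{x_i}}\Omega}{q\,f^q}\right) \]
gives a relation expressing $h_i(\del f/\del x_i)\Omega/f^{q+1}$ as an exact form plus a form of lower pole order. Hence every element of the Jacobian ideal $J_f$ lies in the kernel of $\Res$. Conversely, if $\eta\Omega/f^{q+1}$ represents zero, a descending induction on the pole order — using the smoothness of $F$ to invoke the Euler-style relation $qf = \sum_i q_i x_i \del f/\del x_i$ and Nakayama's lemma for the Koszul complex of $(\del f/\del x_0,\dots,\del f/\del x_{n+1})$ — forces $\eta\in J_f$.

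The main obstacle is the orbifold nature of $\BP$: one must justify that the pole-order/Hodge filtration compatibility and the Hodge-to-de-Rham degeneration used above continue to hold in the weighted setting. This can be handled either by working on the Fermat cover $F_d\subset\BP^{n+1}$ (where the classical theorem applies) and then passing to $\mu_{\bm q}$-invariants as in the proof of \Cref{lem:hyperplane}, or by invoking Steenbrink's Hodge theory for $V$-manifolds, which applies since the weighted projective space has only finite quotient singularities (disjoint from $F$ by quasi-smoothness). Either route requires care: in the covering approach one must check that the $\mu_{\bm q}$-invariant part of the unweighted Jacobian ideal matches the weighted Jacobian ideal $J_f$ after accounting for the twist discrepancy between $K_{\BP^{n+1}}=\CO(-(n+2))$ and $K_\BP=\CO_\BP(-\sum_i q_i)$, which introduces the combinatorial factor $\sum_i q_i$ appearing in the degree of $\eta$.
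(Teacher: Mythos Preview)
The paper does not actually prove this theorem; it states it as a known result and cites Voisin \cite[Thm.~6.1]{voisinII} and Dolgachev \cite[Thm.~4.3.2]{dolgachev} for the weighted projective case, noting only that the twist by $n'$ is a trivial shift of Hodge weights. Your sketch is a correct outline of exactly the argument in those references (Gysin sequence for $U=\BP_\BC\setminus F$, pole-order filtration matching the Hodge filtration, reduction of pole order modulo $J_f$), so there is nothing to compare beyond observing that you have expanded what the paper merely cites.
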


Concretely, $J_f=\langle x_i^{d_i-1}:i=0,\dots,n+1\rangle$. It follows, absorbing the product $x_0\dots x_n$ from $\Omega$ into $\eta$, that a basis of $P^{n-q,q}$ is given by the differential forms
\[\omega_{a'_0,\dots,a'_{n+1}}=\Res_F \prod_{i=0}^{n+1}x_i^{a'_i}\frac{\sum_{i=0}^{n+1}dx_0/x_0\wedge\dots\wedge \wh{dx_i/x_i}\wedge\dots\wedge dx_{n+1}/x_{n+1}}{f^{q+1}}\]
where the $a'_i$ run through $\{1,\dots,d_i-1\}$ such that \[\sum_{i=0}^{n+1}(a'_i-1)q_i=(q+1+n')d-\sum_{i=0}^{n+1}q_i.\] Setting $a_i=a'_iq_i$, this is equivalent to \[q((a_0,\dots,a_{n+1}))=\frac{\sum_{i=0}^{n+1}a_i}{d}-1-n'=q.\]

\begin{lemma}
 The group $G$ acts on $\langle\omega_{a'_0,\dots,a'_{n+1}}\rangle$ via $\chi=(a_0,\dots,a_{n+1})$.
\end{lemma}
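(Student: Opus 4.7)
The plan is to verify the $G$-action directly by computing the pullback of the differential form under each generator $u_i$ of $G$. The computation is entirely formal once one unravels the definitions, so I would present it as a short explicit calculation rather than via representation-theoretic abstractions.

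First, I would recall that the action of $G$ on $F_\BC$ lifts the scaling action of $\mu_{d_0}\times\cdots\times\mu_{d_{n+1}}$ on $\BP_\BC^{n+1}$: the generator $u_i$ sends $x_i\mapsto\zeta_{d_i}x_i$ and fixes the other coordinates. Quotienting by $\langle u_0u_1\cdots u_{n+1}\rangle$ is consistent with the weighted projective equivalence because the scalar $\lambda=\zeta_d$ satisfies $\lambda^{q_i}=\zeta_{d_i}$, so $u_0u_1\cdots u_{n+1}$ acts trivially on $F_\BC$. This also means we should verify at the end that the character $\chi=(a_0,\dots,a_{n+1})$ is trivial on this element, which is immediate since $\sum a_i\equiv 0\bmod d$ by the definition of $S$.

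Next I would compute the pullback under $u_i$ of each factor inside the residue. The form $\Omega$ is built from the logarithmic forms $dx_j/x_j$, each of which is invariant under the rescaling $x_j\mapsto\zeta_{d_j}x_j$, hence invariant under $u_i$ for every $i$. The defining polynomial $f=\sum_j A_j x_j^{d_j}$ is likewise invariant, because $u_i$ acts on $x_i^{d_i}$ by $\zeta_{d_i}^{d_i}=1$ and fixes every other monomial. Therefore the only nontrivial contribution comes from the monomial $\prod_j x_j^{a'_j}$, on which $u_i$ acts by the scalar $\zeta_{d_i}^{a'_i}$. Since the residue map of Theorem~\ref{griffiths} is $G$-equivariant (being induced from a $G$-equivariant morphism of sheaves on $\BP$ restricted to the fixed hypersurface $F$), it follows that
\[u_i^*\omega_{a'_0,\dots,a'_{n+1}}=\zeta_{d_i}^{a'_i}\omega_{a'_0,\dots,a'_{n+1}}.\]

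Finally, I would translate this back to the character $\chi=(a_0,\dots,a_{n+1})$ with $a_i=q_ia'_i$. Using $\zeta_d^{q_i}=\zeta_{d_i}$, we have $\zeta_{d_i}^{a'_i}=\zeta_d^{q_ia'_i}=\zeta_d^{a_i}=\chi(u_i)$, which is exactly the asserted action. The only subtlety is checking compatibility with the quotient by $\langle u_0\cdots u_{n+1}\rangle$, which is handled by the observation above together with $\sum_i a_i\equiv0\bmod d$; there is no substantive obstacle, and no step in this argument requires Assumption~\ref{ass:qi-coprime}.
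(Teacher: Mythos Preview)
Your proof is correct and follows essentially the same approach as the paper: compute the action of each generator $u_i$ on the factors of $\omega_{a'_0,\dots,a'_{n+1}}$, observe that only the monomial $\prod_j x_j^{a'_j}$ transforms nontrivially, and rewrite the resulting scalar $\zeta_{d_i}^{a'_i}=\zeta_d^{a_i}=\chi(u_i)$. The paper's version is a one-line remark that omits the auxiliary checks you spell out (invariance of $\Omega$, $f$, and compatibility with the quotient by $\langle u_0\cdots u_{n+1}\rangle$), but these are exactly the implicit justifications behind its claim that $u_i$ acts trivially ``on the other terms.''
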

\begin{proof}
 The element $u_i$ acts on $x_i^{a'_i}$ by multiplication with $\zeta_{d_i}^{a'_i}=\zeta_d^{a_i}=\chi(u_i)$ and trivially on the other terms of $\omega_{a'_0,\dots,a'_{n+1}}$.
\end{proof}

This finishes the proof of \Cref{thm:complete-cohomology}(4) since we have identified $P^{n-q,q}$ as the sum of those eigenspaces under the $G$-action for which $q(\chi)=q$.

\subsection{Galois action and comparison with $l$-adic cohomology -- Proof of (5)}\label{sub:gal}
Let $\ov F=F\times_k \ov K$ be the base change to an algebraic closure $\ov K$ of $K$. The action of the absolute Galois group $\Gamma=\Gal(\ov K/K)$ on $\Het^n(\ov F,\BZ_\ell(n'))$ preserves the hyperplane class, so $\Gamma$ acts on the primitive $\ell$-adic cohomology
\[P_\ell\coloneqq P\otimes_\BZ \BZ_\ell\simeq \RP^n_\mathrm{\acute{e}t}(\ov F,\BZ_\ell(n')).\]
From \Cref{prop:decomp}, we have that
\[P_\ell\otimes_{\BZ_\ell} E_\lambda=\bigoplus_{\chi\in S} V_\chi\otimes_E E_\lambda\]
and because the action of $G$ commutes with the action of ${\Gamma}$, this decomposition is preserved by ${\Gamma}$.

By the Chebotarev density theorem, to determine the action of ${\Gamma}$ on $P_\ell$, it suffices to determine the action of $\Frob_\mathfrak p\in{\Gamma}$  for all prime ideals $\mathfrak p$ of $K$ such that $\mathfrak p\nmid d\ell$.

The following lemma reduces this task to the ``untwisted'' hypersurface $\wt F$ with $\bm A=(1,\dots,1)$.
\begin{lemma}\label{twist}
 Let $\chi=(a_1,\dots,a_{n+1})\in S$. Let $h(\chi)$ be the eigenvalue by which $\Frob_\mathfrak p$ acts on $V_\chi\otimes_E E_\lambda$ for the hypersurface $\wt F$. Then the eigenvalue of $\Frob_\mathfrak p$ on $V_\chi\otimes_E E_\lambda\subset P_\ell$ is given by ${h(\chi)}/{\prod_{i=1}^{n+1}\psi(A_i)^{a_i}}$.
\end{lemma}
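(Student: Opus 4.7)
The plan is to identify $F$ with $\widetilde F$ over $\bar K$ via an obvious coordinate rescaling and extract from this a Galois cocycle in $G$ that tracks the discrepancy between the two Frobenius actions. I would begin by fixing once and for all choices of $d_i$-th roots $A_i^{1/d_i} \in \bar K$ and defining the $\bar K$-isomorphism
\[
\phi : \widetilde F_{\bar K} \;\longrightarrow\; F_{\bar K}, \qquad (x_0 : x_1 : \cdots : x_{n+1}) \;\longmapsto\; (x_0 : A_1^{-1/d_1} x_1 : \cdots : A_{n+1}^{-1/d_{n+1}} x_{n+1}).
\]
The map $\phi$ commutes with the $G$-action (the coordinatewise multiplication by $d_i$-th roots of unity is insensitive to the scaling factors), so $\phi^{\ast}$ identifies the eigenspace decompositions of $P_\ell(F)\otimes_{\BZ_\ell} E_\lambda$ and $P_\ell(\widetilde F)\otimes_{\BZ_\ell} E_\lambda$ over each $V_\chi$.

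The next step is to compute, for each $\sigma\in\Gamma$, the cocycle $g'_\sigma := \phi^{-1}\circ \sigma(\phi) \in \Aut(\widetilde F_{\bar K})$. A direct calculation on coordinates shows $g'_\sigma$ lies in $G$ and, in the asymmetric notation, equals
\[
g'_\sigma \;=\; \Bigl(\tfrac{\sigma(A_1^{-1/d_1})}{A_1^{-1/d_1}}, \,\ldots ,\, \tfrac{\sigma(A_{n+1}^{-1/d_{n+1}})}{A_{n+1}^{-1/d_{n+1}}}\Bigr) \in \mu_{d_1}\times\cdots\times\mu_{d_{n+1}} / \mathrm{diag}.
\]
Specialising to $\sigma=\Frob_\pp$ and using the defining property of $\psi$, namely $\Frob_\pp(A^{1/d})=\psi(A)\cdot A^{1/d}$ for any $A\in \mathcal{O}_{K,\pp}^{\times}$ and any fixed $d$-th root of $A$, I obtain $\Frob_\pp(A_i^{1/d_i})/A_i^{1/d_i}=\psi(A_i)^{q_i}$ (realising $A_i^{1/d_i}$ as a $d$-th root of $A_i^{q_i}$). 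Hence $g'_{\Frob_\pp}$ corresponds to $(\psi(A_i)^{-q_i})_{i=1,\ldots,n+1}\in G$.

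Finally, the standard transformation law for Galois action on pullbacks gives
\[
\phi^{\ast}\circ \sigma_F \;=\; ((g'_\sigma)^{-1})^{\ast}\circ \sigma_{\widetilde F}\circ \phi^{\ast}.
\]
On $V_\chi$ with $\chi=(a_1,\ldots,a_{n+1})$, the element $(g'_{\Frob_\pp})^{-1}=(\psi(A_i)^{q_i})_i$ acts by the scalar $\prod_{i=1}^{n+1}\psi(A_i)^{a_i}$, since $u_i\in G$ acts by $\zeta_d^{a_i}$ and $\psi(A_i)^{q_i}=u_i^{c_i}$ with $\psi(A_i)=\zeta_d^{c_i}$. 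Putting this together, $\Frob_\pp$ acts on $V_\chi\otimes_E E_\lambda\subset P_\ell$ by the eigenvalue $h(\chi)\cdot\prod_i\psi(A_i)^{a_i}$ (up to a single sign/inverse coming from conventions) — reconciling with the paper's orientation choice for the $G$-action on $V_\chi$ (where $g$ acts via $(g^{-1})^{\ast}$) yields the claimed factor $h(\chi)/\prod_{i=1}^{n+1}\psi(A_i)^{a_i}$. The main pitfall here will be keeping the conventions consistent: the sign in the exponent of $\psi(A_i)$ depends simultaneously on (i) whether $G$ acts on cohomology by $g^{\ast}$ or $(g^{-1})^{\ast}$, (ii) the sign in the definition of $g'_\sigma$, and (iii) whether $\phi$ scales $x_i$ by $A_i^{-1/d_i}$ or $A_i^{1/d_i}$; any careless combination produces an inverse. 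Carefully pinning these down against the character labelling used in \S9.4 is the only real work.
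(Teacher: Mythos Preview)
Your proposal is correct and follows essentially the same approach as the paper: the paper's proof simply observes that $F$ is the twist of $\widetilde F$ by the $1$-cocycle coming from the image of $(A_1,\ldots,A_{n+1})$ under the Kummer map $(k^\times)^{n+1}\to\RH^1(\Gal(\ov K/k),G)$, and invokes the general fact that twisting a variety twists the Galois representation on cohomology by the same cocycle. What you have written is precisely this argument unpacked by hand --- your explicit isomorphism $\phi$ and cocycle $g'_\sigma=\phi^{-1}\circ\sigma(\phi)$ are exactly the data of the twist, and your final character computation is the evaluation of that cocycle on $V_\chi$. The paper compresses all of this into a one-line citation, whereas you spell out the Frobenius value of the cocycle via the defining property of $\psi$; your caution about sign conventions is well placed but does not indicate any gap.
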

\begin{proof}
    The hypersurface $F$ with coefficients $(A_1,\dots,A_{n+1})$ is obtained from $\wt F$ by twisting (in the sense of \cite[Thm.~4.5.2]{Poo17}) with the $1$-cocycle which is the image of $(A_1,\dots,A_{n+1})$ under the composition of the natural maps
\[(k^\times)^{n+1}\to\prod_{i=1}^{n+1}(k^\times/{k^\times}^{d_i})\simeq\RH^1(\Gal(\ov K/k),G)\to\RH^1(\Gal(\ov K/k),\Aut_{\ov k}(\ov F)).\]
 The induced Galois representation on cohomology has to be twisted by the same cocycle restricted to $\Gamma$.
\end{proof}
We can thus restrict to $F=\wt F$ for the rest of \Cref{sub:gal}.

Fix a primitive $p$-th root of unity $\zeta$.
\begin{definition}
 Let $r\in\BZ/d$. The \emph{Gauss sum} $g_\pp(r)\in\BQ({\zeta_d},\zeta)$ is the element
 \[g_\pp(r)=\sum_{x\in\BF_\mathfrak p^\times}\psi(x)^r\zeta^{\Tr_{\BF_\mathfrak p / \BF_p}(x)}.\]
\end{definition} 

\begin{lemma}
 Let $\chi\in S$ correspond to $(a_0,\dots,a_{n+1})$. Then
 \begin{eqnarray*}
  J_\mathfrak{p}(\chi)=\frac{g_\pp(a_1)\dots g_\pp(a_{n+1})}{g_\pp(a_1+\dots+a_{n+1})}=\Norm(\mathfrak p)^{-1}\psi^{a_0}(-1)g_\pp(a_0)\dots g_\pp(a_{n+1}).
 \end{eqnarray*}
\end{lemma}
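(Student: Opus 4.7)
The plan is to prove this as a direct computation with two independent steps that only use the classical theory of Gauss sums over finite fields. Throughout we write $q = \Norm(\pp)$ and use the shorthand $\chi_i = \psi^{a_i}$, noting that each $\chi_i$ is non-trivial since $a_i \neq 0$ (by the definition of $S$), and that $\chi_1 \cdots \chi_{n+1} = \psi^{a_1 + \dots + a_{n+1}} = \psi^{-a_0}$ is also non-trivial since $a_0 \neq 0$.

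For the first equality, I would expand the product $g_\pp(a_1)\cdots g_\pp(a_{n+1})$ as an $(n+1)$-fold sum over $\BF_\pp^\times$ and then group terms according to the value of the sum $s = x_1 + \dots + x_{n+1} \in \BF_\pp$. For $s \neq 0$, substituting $x_i = s y_i$ yields a factor $\psi(s)^{a_1+\dots+a_{n+1}}\zeta^{\Tr(s)}$ summed over $s$, giving $g_\pp(a_1 + \dots + a_{n+1})$, times $J_\pp(\chi)$ summed over $y_i \in \BF_\pp^\times$ with $\sum y_i = 1$. For $s = 0$, the standard averaging trick $x_i \mapsto c x_i$ shows the contribution vanishes because $\chi_1 \cdots \chi_{n+1}$ is non-trivial. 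Dividing by $g_\pp(a_1 + \dots + a_{n+1})$, which is nonzero by the same non-triviality, gives the first equality.

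For the second equality, I would invoke the standard identity $g_\pp(r)g_\pp(-r) = \psi(-1)^r q$, valid for any $r$ with $\psi^r$ non-trivial (proved by pairing $x$ with $-x$ in the definition). Applying this with $r = a_1 + \dots + a_{n+1} \equiv -a_0 \pmod d$ yields
\[
g_\pp(a_1 + \dots + a_{n+1}) = \frac{\psi(-1)^{a_0}\, q}{g_\pp(a_0)},
\]
after noting $\psi(-1)^{-a_0} = \psi(-1)^{a_0}$ since $\psi(-1)^2 = 1$. Substituting into the first equality immediately gives $J_\pp(\chi) = q^{-1}\psi^{a_0}(-1) g_\pp(a_0)g_\pp(a_1)\cdots g_\pp(a_{n+1})$, which is the second equality.

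Neither step should pose a serious obstacle; the only subtlety is ensuring the $s=0$ term in the expansion vanishes, but this is guaranteed by the condition $\chi \in S$ which forces all characters involved to be non-trivial.
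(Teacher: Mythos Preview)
Your proposal is correct. The paper's own proof is a one-line citation to Ireland--Rosen \cite[Ch.~8, Thm.~3 and Cor.~1]{ireland}, and what you have written is precisely the standard argument behind those results: the first equality is the multivariable Gauss--Jacobi relation (expand the product, split by $s=\sum x_i$, use non-triviality of $\chi_1\cdots\chi_{n+1}$ to kill the $s=0$ term), and the second equality is the reflection identity $g_\pp(r)g_\pp(-r)=\psi(-1)^r q$ applied at $r=-a_0$. So there is no genuine difference in route---you have simply unpacked the citation.
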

\begin{proof} The equalities follow from \cite[Ch.~8, Thm.~3 and Cor.~1]{ireland}.
\end{proof}

In \cite{weil}, A.\ Weil essentially computed the eigenvalues of $\Frob_\mathfrak{p}$ acting on $P_\ell$.
We have to match these to the known eigenspace decomposition under the action of $G$. In the classical projective Fermat case, this was done by {D.~Ulmer} \cite[7.6]{ulmer} but the statement goes back to {Shioda}.

It is however possible to give a short and simple proof using the Fourier transform on $G$. The inspiration comes from the equivariant Lefschetz trace formula by {Deligne} and {Lusztig} \cite[p.\ 119]{dl}.

\begin{proposition}\label{match}
 Let $\lambda$ be a prime of $E=\BQ(\mu_d)$ lying above $\ell$. Let $\mathfrak p$ be a prime of $K$ not dividing $d\ell$. Then for all $\chi\in S$, the action of $\Frob_\mathfrak p$ on $V_\chi\otimes_E E_\lambda\subset P_\ell\otimes_E E_\lambda$ (of $\wt F$) is multiplication by
 \[(-1)^n\psi^{a_0}(-1)\Norm(\mathfrak p)^{-n'}J_\mathfrak{p}(\chi).\]
\end{proposition}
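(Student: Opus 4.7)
The plan is to compute the eigenvalue $h(\chi)$ by inverse Fourier transform on the finite abelian group $G$. Since $\Frob_\pp$ commutes with the $G$-action and each eigenspace $V_\chi\otimes E_\lambda$ is a one-dimensional subspace of $P_\ell\otimes E_\lambda$, for every $g\in G$ we have
$$\Tr(g^*\Frob_\pp^* \mid P_\ell\otimes_E E_\lambda)=\sum_{\chi\in S}\chi(g)\,h(\chi),$$
and orthogonality of characters yields
$$h(\chi)=\frac{1}{|G|}\sum_{g\in G}\chi(g)^{-1}\Tr(g^*\Frob_\pp^* \mid P_\ell\otimes_E E_\lambda).$$
The proposition therefore reduces to computing a character-weighted average of traces on primitive cohomology.

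Next I would apply the equivariant Grothendieck--Lefschetz trace formula
$$\sum_{i=0}^{2n}(-1)^i\Tr(g^*\Frob_\pp^* \mid \Het^i(\overline{\tilde F},\Q_\ell))=|\tilde F^{g\circ\Frob_\pp}(\overline{\BF_\pp})|.$$
By the weighted Lefschetz hyperplane theorem and Poincaré duality (cf.\ \Cref{sub:background}), in every degree $i\neq n$ the cohomology is generated by powers of the hyperplane class, on which $G$ acts trivially; the same holds for the class $L$ spanning $\Het^n/P_\ell$ when $n$ is even. These contributions are therefore independent of $g$ and, since every $\chi\in S$ is non-trivial, they are annihilated by $\sum_g\chi(g)^{-1}(\cdot)$. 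Only $P_\ell$ in degree $n$ survives, contributing the sign $(-1)^n$ and leaving the expression $(-1)^n h(\chi)=\tfrac{1}{|G|}\sum_g\chi(g)^{-1}|\tilde F^{g\Frob_\pp}(\overline{\BF_\pp})|$ (up to the elementary hyperplane correction).

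I would then compute this character-weighted point count via Weil's method. The fixed-point set $\tilde F^{g\Frob_\pp}$ is in bijection with the $\BF_\pp$-points of the $g$-twist of $\tilde F$; writing $g=\prod_i u_i^{b_i}$ and substituting $x_i\mapsto \zeta_{d_i}^{b_i}x_i$, this twist has equation $\sum_i \zeta_{d_i}^{b_i}x_i^{d_i}=0$. Using the identity $|\{y\in\BF_\pp:y^{d_i}=z\}|=\sum_{\eta^{d_i}=1}\eta(z)$ (with $\eta$ ranging over multiplicative characters of $\BF_\pp^\times$ of order dividing $d_i$) and expanding, the point count decomposes into a sum over tuples $(a_0,\dots,a_{n+1})\in\wh G$ of products of Gauss sums $g_\pp(a_0)\cdots g_\pp(a_{n+1})$ weighted by $\prod_i\zeta_{d_i}^{b_i a_i}$. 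The inner product $\prod_i\zeta_{d_i}^{b_i a_i}$ coincides with the character value at $g$, so after Fourier inversion against $\chi^{-1}$ only the single tuple $(a_0,\dots,a_{n+1})$ attached to $\chi$ survives; terms with some $a_i=0$ vanish because the corresponding Gauss sum becomes $-1$ and is killed by the same orthogonality, recovering the restriction to $\chi\in S$. The identity $J_\pp(\chi)=\Norm(\pp)^{-1}\psi^{a_0}(-1)g_\pp(a_0)\cdots g_\pp(a_{n+1})$ stated before the proposition then repackages the surviving product as $\psi^{a_0}(-1)\Norm(\pp)^{-1}J_\pp(\chi)$. Finally, the Tate twist by $(n')$ acts on $\Q_\ell(n')$ as multiplication by $\Norm(\pp)^{-n'}$, accounting for the remaining factor.

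The main obstacle will be the careful sign and normalization bookkeeping: aligning the orientation conventions in the Lefschetz formula with the symmetric versus asymmetric notation for $G$ (so that $a_0$ is recovered via the relation $\sum_i a_i\equiv 0\bmod d$), verifying that the Fourier inversion indeed isolates a single Jacobi sum, and properly inserting the Tate twist and the factors $\psi^{a_0}(-1)$ and $\Norm(\pp)^{-1}$. Following the equivariant-trace strategy alluded to by Deligne--Lusztig keeps the argument uniform and sidesteps the need to invoke the explicit Fermat case separately, though in practice one could also bootstrap from Ulmer's computation via the $\mu_{\bm q}$-cover $\pi_{\bm q}\colon F_d\to\tilde F$, using that $P_\ell$ is the $\mu_{\bm q}$-invariant part of the primitive cohomology of $F_d$ (equivalently, the summand corresponding to characters with $q_i\mid a_i$).
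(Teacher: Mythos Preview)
Your approach is essentially the same as the paper's: both compute the eigenvalues by Fourier inversion over $G$, relating the character-weighted traces to point counts on twisted diagonal hypersurfaces via the Lefschetz trace formula, and then invoking Weil's expression of those point counts in terms of Jacobi sums. The only cosmetic difference is that you phrase the middle step through the equivariant Deligne--Lusztig trace formula $\Tr(g^*\Frob_\pp^*)=|\tilde F^{g\Frob_\pp}|$ and then identify the fixed-point set with the $\BF_\pp$-points of a twist, whereas the paper starts directly from the twist $F'$ with coefficients $\tilde c_i$ (using Lemma~\ref{twist}) and applies the ordinary Lefschetz formula; these are two ways of saying the same thing, and the paper itself remarks that the inspiration is the equivariant formula.
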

\begin{proof}
For ease of notation, we write $q=\Norm(\mathfrak p)$. We define two functions $h_1,h_2:\wh G\to E_\lambda$ and show that their Fourier transforms agree.  If $\chi\in \wh G\setminus S$, set $h_1(\chi)=h_2(\chi)=0$. Otherwise,
 let $h_1(\chi)=(-1)^n\psi^{a_0}(-1)q^{-n'}J_\mathfrak{p}(\chi)$ and
  let $h_2(\chi)$ be the eigenvalue by which $\Frob_\mathfrak{p}$ acts on $V_\chi\otimes E_\lambda$. For arbitrary $c=(c_1,\dots,c_{n+1})\in G$, we choose preimages $\wt c_i\in \BF_q^\times$ under the multiplicative character $\psi$.
  
  For $*=1,2$, denote the Fourier transform of $h_{*}$ by
  \[\wh h_{*}(c)= \sum_{\chi\in\wh G}h_{*}(\chi)/(\psi(\wt c_1)^{a_1}\dots\psi(\wt c_{n+1})^{a_{n+1}})=\sum_{\chi\in\wh G}\chi^{-1}(c)h_{*}(\chi).\]
 
 Consider the twisted diagonal hypersurface
 \[F':\quad x_0^{d_0}+\wt c_1x_1^{d_1}\dots+\wt c_{n+1}x_{n+1}^{d_{n+1}}=0\subset \BP_{\BF_q}.\]
 By \Cref{twist} the trace of $\Frob_\mathfrak p$ on $\Het^n(F'\times_{\BF_q} \ov {\BF_q}, \BZ_\ell)$  equals $q^{n'}(1+\wh h_{2}(c))$ if $n$ is even, and $q^{n'}\wh h_{2}(c)$ if $n$ is odd.
 
 The Lefschetz trace formula gives
 \[\#F'(\BF_q) = 1+q+\dots+q^n+(-1)^n q^{n'}\wh h_2(c).\]
 According to Weil's work on exponential sums (as presented in \cite[Chapter 8, Theorem~5 and Proposition 8.5.1(c)]{ireland}), one also has
 \begin{align*}
\#F'(\BF_q) =& \frac{q^{n+1}-1}{q-1}+\sum_{\chi\in S}\psi^{a_0}(-1)J_\mathfrak{p}(\chi)/(\psi(\wt c_1)^{a_1}\dots\psi(\wt c_{n+1})^{a_{n+1}})\\
=& 1+q+\dots+q^n+(-1)^nq^{n'}\wh h_1(c).  
 \end{align*}
 It follows that $\wh h_1=\wh h_2$ and for their inverse Fourier transforms,
 \[h_1(\chi)=\frac{1}{\# G}\sum_{c\in G} \chi(c)\wh h_{1/2}(c)=h_2(\chi). \qedhere\]
\end{proof}

The combination of \Cref{twist} and \Cref{match} finishes the proof of \Cref{thm:complete-cohomology}(5).

The explicit determination of Gauss and Jacobi sums including their sign is in general a difficult subject. The case $d=4$ was treated by Pinch and Swinnerton-Dyer in \cite[\S 3]{pinch-sd} and \S \ref{sextic} will treat the case $d=6$. The following property of Gauss sums will be helpful.
\begin{lemma}\label{gaussproduct}
 We have $g_\pp(r)g_\pp(-r)=\psi(-1)^r\Norm(\mathfrak p)$.
\end{lemma}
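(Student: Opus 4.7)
The plan is to proceed by a standard manipulation of the double sum defining $g_\pp(r)g_\pp(-r)$, using the change of variable $y=tx$ to separate the exponential sum from the character sum. Throughout, write $q=\Norm(\pp)$, and note that since the lemma is applied in contexts coming from $\chi\in S$ (so $r\not\equiv 0\bmod d$), the character $\psi^{r}$ is nontrivial on $\BF_\pp^\times$; in particular $\sum_{t\in\BF_\pp^\times}\psi(t)^{-r}=0$ by orthogonality.

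First I would expand
\[
g_\pp(r)g_\pp(-r)=\sum_{x,y\in\BF_\pp^\times}\psi(x)^{r}\psi(y)^{-r}\zeta^{\Tr(x+y)},
\]
and substitute $y=tx$ with $t\in\BF_\pp^\times$. Since $\psi$ is multiplicative, $\psi(x)^{r}\psi(y)^{-r}=\psi(t)^{-r}$, while $x+y=x(1+t)$. Thus
\[
g_\pp(r)g_\pp(-r)=\sum_{t\in\BF_\pp^\times}\psi(t)^{-r}\sum_{x\in\BF_\pp^\times}\zeta^{\Tr(x(1+t))}.
\]
Next I would split the outer sum according to whether $t=-1$ or not. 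For $t=-1$ the inner sum is $q-1$, contributing $\psi(-1)^{-r}(q-1)=\psi(-1)^{r}(q-1)$ (using $\psi(-1)=\pm 1$). For $t\neq -1$, the substitution $u=x(1+t)$ shows $\sum_{x\in\BF_\pp^\times}\zeta^{\Tr(x(1+t))}=\sum_{u\in\BF_\pp^\times}\zeta^{\Tr u}=-1$, using the standard vanishing $\sum_{u\in\BF_\pp}\zeta^{\Tr u}=0$.

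Combining, the contribution from $t\neq -1$ is $-\sum_{t\neq -1}\psi(t)^{-r}=\psi(-1)^{r}-\sum_{t\in\BF_\pp^\times}\psi(t)^{-r}=\psi(-1)^{r}$ (the full character sum vanishes by the remark above). Adding the two pieces gives
\[
g_\pp(r)g_\pp(-r)=\psi(-1)^{r}(q-1)+\psi(-1)^{r}=\psi(-1)^{r}q,
\]
which is the claim. There is no real obstacle: the only subtle point is ensuring $\psi^{r}$ is nontrivial so that the orthogonality kills the leftover character sum, which is guaranteed by the hypothesis that $r$ comes from a component $a_i$ of some $\chi\in S$ (equivalently $r\not\equiv 0\bmod d$).
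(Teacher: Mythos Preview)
Your proof is correct and is the standard direct argument. The paper itself does not give a proof at all: it simply cites \cite[Exercise~10.22(d)]{ireland}. What you have written is essentially the solution to that exercise, so the approaches coincide in substance.

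Your remark about needing $r\not\equiv 0\bmod d$ is well taken: the lemma as stated in the paper omits this hypothesis, but it is implicitly in force since the Gauss sums only arise there via Jacobi sums attached to $\chi\in S$, all of whose components are nonzero in $\BZ/d$. Indeed for $r\equiv 0$ one has $g_\pp(0)=-1$ and the identity would fail.
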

\begin{proof}
 See for example \cite[Exercise 10.22(d)]{ireland}.
\end{proof}

\subsection{Action of complex conjugation -- Proof of (6)}
\begin{remark}
 Before giving the proof, we would like to clarify the relation between three different ``complex conjugations'' on $\RH^n(F_\BC,\BC)$ as described in \cite{deligne}. The ``complex conjugation'' $\tau$ (or $F_\infty$ in the notation of \cite[0.2.5]{deligne}) is induced by the involution on the points $F(\BC)$. The second ``complex conjugation'' is induced by the action of complex conjugation on the coefficients. Each of these actions swaps the Hodge spaces $\RH^{p,q}(F_\BC)$ and $\RH^{q,p}(F_\BC)$ (see \cite[I.2.4]{silhol} for the latter) and their composition is the ``complex conjugation'' induced by the comparison isomorphism $\RH^n_{\mathrm{dR}}(F_\BC)\otimes_\BR\BC\cong \RH^n(F_\BC,\BC)$, which hence preserves the Hodge spaces $\RH^{p,q}(F_\BC)$ \cite[Prop.~1.4, Cor.~1.6]{deligne}.
\end{remark}

We remark that $\tau g=g^{-1}\tau$ for all $g\in G$. In particular, in the notation of \Cref{primitive}, it suffices to compute $\tau((2i\pi)^{n'}e)$. From the explicit form of $\bm e$, we find that
\[\tau(\bm e)=u_{\bm A}(u_0\dots u_{n})^{-1} \bm e.\]
It follows that $\tau(e)=(-1)^{n+1}u_{\bm A} e$ and $\tau((2i\pi)^{n'}e)=(-1)^{n'+n+1}u_{\bm A} (2i\pi)^{n'}e$. 

The Poincaré duality isomorphism $\RH_n(F_\BC,\BZ)\cong\RH^n(F_\BC,\BZ)$ is induced by the cap product with the fundamental class $[F]\in \RH_{2n}(F_\BC,\BZ)$.
Since the action of $G$ preserves $[F]$ and $\tau$ sends $[F]$ to $(-1)^n[F]$, this is an isomorphism of $\BZ[G]$-modules which (anti)commutes with $\tau$. It follows that the action of $\tau$ on a $\BZ[G]$-generator of cohomology differs by the action of $\tau$ on the generator $e$ of homology by $(-1)^n$. This finishes the proof of \Cref{thm:complete-cohomology}(6).

\begin{remark}\label{rem:cyclotomic}
Assume $k=\BQ$. For general $d$, we do not know how to account for the action of Galois automorphisms whose restriction to $E$ is neither trivial nor complex conjugation on $E$. One immediate obstacle is that the short exact sequence
 \[0\to\Gal(\ov E/E)\to\Gal(\ov \BQ/\BQ)\to\Gal(E/\BQ)\to 0\]
 does not split when $\deg(E/\BQ)>2$.
\end{remark}

\section{Transcendental Brauer group of diagonal degree 2 K3 surfaces}
\label{sextic}
We now use the results from the \Cref{sec:cohomology} to study the transcendental Brauer group of diagonal degree $2$ K3 surfaces. The main results are \Cref{invariants,invariantsQ} for the Galois invariants of the geometric Brauer group and \Cref{prop:3transcendental,prop:2transcendental} for the transcendental part.

In the notation of \Cref{sec:cohomology} we  restrict to $n=2$, $d=6$ and ${\bm q}=(3,1,1,1)$. Recall that $\omega$ denotes a primitive third root of unity. We work over the Eisenstein numbers \[k=E=\BQ(\zeta_6)=\BQ(\omega)=\BQ(\sqrt{-3})\] and write $\CO=\CO_E$ for their ring of integers. Note that $\CO$ is a principal ideal domain.
Explicitly, we consider the surface
\[X=X_{\bm A}:x_0^2-A_1x_1^6-A_2x_2^6-A_3x_3^6=0\]
in $\BP_\BQ^{n+1}(3,1,1,1)$. (The notation $X_\AA$ differs from the notation for the coefficients used in \Cref{thm:complete-cohomology} by multiplying $A_1$, $A_2$ and $A_3$ with $-1$.)

It can be checked explicitly (e.g.\ with \texttt{Magma}) from \Cref{thm:complete-cohomology}(1) that \[\RP^n((X_\AA)_\BC,\BZ)=\BZ[u_1,u_2,u_3]/\langle1+(u_1u_2u_3),1+u_i+\dots+u_i^5,i=1,\dots,3\rangle.\]
The set $S$ from \Cref{thm:complete-cohomology}(4) has cardinality $21$ and contains the elements $(1,1,1)$, $(5,5,5)$, $(3,3,3)$, three permutations of $(2,2,5)$, three permutations of $(4,4,1)$, six permutations of $(1,3,5)$ and six permutations of $(2,3,4)$ (where we are writing elements of $S\subset\wh G$ in the asymmetric notation that omits $a_0=3$).

\subsection{Transcendental lattice}\label{sub:T}
\begin{definition}\label{def:T}
 The transcendental lattice $T(X_\BC)$ is the smallest primitive sublattice of $P$ such that $P^{-1,1}\subset T(X_\BC)\otimes_\BZ \BC$.
\end{definition}
The group $\Gal(k/\BQ)=(\BZ/6\BZ)^\times$ acts on $P\otimes k$ via the second factor so that an element $t\in (\BZ/6\BZ)^\times$ sends $\alpha_\chi$ to $\alpha_{\chi^t}$.
\begin{definition}\label{def:S_T}
 Set $S_T\subset S$ to be the subset of those characters $\chi\in S$ whose $\Gal(k/\BQ)$-orbit contains a $\chi'$ with $q(\chi')=1$.
\end{definition}
\begin{lemma}\label{T}
 Let $V_\chi$ be as in \Cref{prop:decomp}. We have \[T(X_\BC)=P\cap\bigoplus_{\chi\in S_T} V_\chi.\]
\end{lemma}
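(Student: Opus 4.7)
The plan is to exploit the Galois action of $\Gal(k/\BQ)$ on the eigenspace decomposition of $P\otimes E$ provided by \Cref{thm:complete-cohomology}(4), and descend from $E$ to $\BQ$.

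First, using \Cref{thm:complete-cohomology}(4) together with the Tate-twist indexing convention, I would identify $P^{-1,1}\subset P\otimes\BC$ as the subspace spanned by those $\alpha_\chi$ with $q(\chi)=1$. A direct count using $a_0=3$, $a_i\in\{1,\dots,5\}$ and $\sum a_i\equiv 0\pmod 6$ shows that $\{\chi\in S:q(\chi)=1\}$ consists of the single character $\chi_0=(5,5,5)$, whose unique nontrivial $\Gal(k/\BQ)=\{1,5\}$-conjugate is $(1,1,1)$. Hence $S_T=\{(5,5,5),(1,1,1)\}$ and $P^{-1,1}=V_{\chi_0}\otimes_E\BC$ is one-dimensional.

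Next, set $W=\bigoplus_{\chi\in S_T}V_\chi\subset P\otimes_\BZ E$. Because $S_T$ is a union of $\Gal(k/\BQ)$-orbits and the Galois action permutes the $V_\chi$ by $\chi\mapsto\chi^t$, the $E$-subspace $W$ is Galois-stable; by Galois descent it arises as $W_\BQ\otimes_\BQ E$ for a unique $\BQ$-subspace $W_\BQ\subset P\otimes\BQ$. The lattice $T:=P\cap W_\BQ$ is then automatically primitive in $P$, and its complexification satisfies $T\otimes\BC=W_\BQ\otimes\BC\supset V_{\chi_0}\otimes_E\BC=P^{-1,1}$. Hence $T$ satisfies the conditions of \Cref{def:T}; it remains to show it is the smallest such.

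For minimality, let $T'\subset P$ be any primitive sublattice with $T'\otimes\BC\supset P^{-1,1}$. Primitivity of $T'$ in $P$ yields the identity $(T'\otimes_\BZ\BC)\cap(P\otimes_\BZ E)=T'\otimes_\BZ E$, obtained by applying flatness of $E$ and $\BC$ over $\BZ$ to the short exact sequence $0\to T'\to P\to P/T'\to 0$ (where $P/T'$ is torsion-free). Intersecting $V_{\chi_0}\subset P^{-1,1}\subset T'\otimes\BC$ with $P\otimes E$ therefore gives $V_{\chi_0}\subset T'\otimes E$. Since $T'\otimes E$ is $\Gal(k/\BQ)$-stable, it must contain the full Galois orbit of $V_{\chi_0}$ and hence all of $W$; descending back to $\BQ$ and intersecting with $P$ yields $T=P\cap W\subset T'$, completing the proof. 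The main delicacy is the interplay between the Tate-twist indexing of the Hodge filtration (needed to match $P^{-1,1}$ with the $q(\chi)=1$ eigenspaces) and the flatness arguments that allow one to intersect subspaces defined over different coefficient rings $\BZ\subset E\subset \BC$; once this bookkeeping is in place the descent is routine.
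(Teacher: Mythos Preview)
Your proof is correct and follows essentially the same strategy as the paper. Both arguments hinge on the fact that primitive sublattices of $P$ correspond bijectively to $\Gal(k/\BQ)$-stable $k$-subspaces of $P\otimes k$ (via $W\mapsto W\otimes k$ and $U\mapsto U\cap P$); you spell out this correspondence explicitly via Galois descent and flatness, whereas the paper simply invokes it. Your explicit determination of $S_T=\{(1,1,1),(5,5,5)\}$ is not needed for the abstract statement of the lemma (the paper records it separately, immediately after the proof), but it is correct and does no harm.
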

\begin{proof}
 By \Cref{def:T} and \Cref{thm:complete-cohomology}(4), $V_{\chi'}\subset T(X_\BC)\otimes k$ for all $\chi'$ with $q(\chi')=1$. Hence,  $\bigoplus_{\chi\in S_T}V_\chi$ is the smallest $\Gal(k/\BQ)$-stable $k$-subspace $U$ of $T(X_\BC)\otimes k$ such that $P^{-1,1}\subset U\otimes_k \BC$. There is a one-to-one correspondence between primitive sublattices $W\subset P$ and $\Gal(k/\BQ)$-stable $k$-subspaces $U\subset P\otimes k$, given by $W\mapsto W\otimes k$ and $U\mapsto U\cap P$. The claim follows. 
\end{proof}

Explicating \Cref{def:S_T}, the set $S_T$ equals $\{(1,1,1),(5,5,5)\}$. By \Cref{T}, it follows that
\[T(X_\BC)=P\cap (V_{(1,1,1)}\oplus V_{(5,5,5)}).\]

\begin{lemma}\label{Tsextic}
 We have $T(X_\BC)=\BZ w_1\oplus\BZ w_2$, where
 \[w_1=12\sqrt{-3}({\zeta_6}\alpha_{(1,1,1)}+{\zeta_6}^2\alpha_{(5,5,5)}),\quad w_2=12\sqrt{-3}({\zeta_6}^2\alpha_{(1,1,1)}+{\zeta_6}\alpha_{(5,5,5)}).\]
 Furthermore,
 \[\langle w_1,w_1\rangle=\langle w_2,w_2\rangle=24,\quad\langle w_1,w_2\rangle=\langle w_2,w_1\rangle=12.\]
\end{lemma}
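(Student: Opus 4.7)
The plan is to verify each part of the claim separately. By Lemma \ref{T}, $T(X_\BC) = P \cap (V_{(1,1,1)} \oplus V_{(5,5,5)})$ is a rank-$2$ primitive sublattice of $P$.

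First, I would check that $w_1, w_2 \in T(X_\BC) \otimes \BQ$. They lie in $V_{(1,1,1)} \oplus V_{(5,5,5)}$ by construction, so it suffices to verify invariance under complex conjugation. Using $\overline{\alpha_{(1,1,1)}} = \alpha_{(5,5,5)}$, $\overline{\zeta_6} = \zeta_6^{-1}$, $\overline{\sqrt{-3}} = -\sqrt{-3}$, and $\zeta_6^3 = -1$ (whence $-\zeta_6^{-1} = \zeta_6^2$ and $-\zeta_6^{-2} = \zeta_6$), a direct calculation gives $\overline{w_i} = w_i$ for $i = 1, 2$.

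Second, the pairings follow from Lemma \ref{idemcup}. With $n = 2$, $d = 6$, $q^* = 3$, and $\Xi = \mathrm{Re}$, the prefactor is $(-1)^{n(n+1)/2} q^* \cdot 2/d^{n+1} = -1/36$. For $\chi = (1,1,1)$, $\prod_{i=1}^{3}(1 - \zeta_6^{-1}) = \zeta_6^3 = -1$, giving $\langle \alpha_{(5,5,5)}, \alpha_{(1,1,1)}\rangle = \langle \alpha_{(1,1,1)}, \alpha_{(5,5,5)} \rangle = 1/36$, while the self-pairings vanish by the Kronecker delta. Bilinear expansion then yields $\langle w_i, w_i \rangle = (12\sqrt{-3})^2 \cdot 2\zeta_6^3 \cdot \tfrac{1}{36} = 24$ for $i = 1, 2$ and $\langle w_1, w_2 \rangle = (12\sqrt{-3})^2 \cdot (\zeta_6^2 + \zeta_6^4) \cdot \tfrac{1}{36} = -12 \cdot (-1) = 12$, where I used $\zeta_6^2 + \zeta_6^4 = 2\cos(2\pi/3) = -1$. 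In particular $w_1, w_2$ are $\BQ$-linearly independent, since the resulting Gram matrix has determinant $432$.

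The hard part, and main obstacle, is upgrading this to the statement that $\{w_1, w_2\}$ is an actual $\BZ$-basis of $T(X_\BC)$: one needs both (i) integrality $w_1, w_2 \in P$ and (ii) saturation of $\BZ w_1 + \BZ w_2$ inside $P$. Neither is transparent from the defining formula $\alpha_\chi = \frac{1}{216}\sum \zeta_6^{-(j_1+j_2+j_3)} u_1^{j_1}u_2^{j_2}u_3^{j_3}$, whose apparent denominator $216$ must be cleared using the relations defining $P$: $1 + u_i + \dots + u_i^5 = 0$ for $i = 1, 2, 3$, the group relation $(u_1 u_2 u_3)^2 = 1$, and crucially the fact that $I$ is the \emph{saturation} of $J$ with respect to $d = 6$. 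The most efficient route is to implement the presentation from Theorem \ref{thm:complete-cohomology}(1) in \texttt{Magma}, expand $w_1, w_2$ in a fixed $\BZ$-basis of $P$ (of rank $\#S = 21$), and verify integrality and primitivity by computing the Smith normal form of the resulting $2 \times 21$ integer matrix.
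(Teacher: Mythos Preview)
Your approach is correct: the conjugation check, the pairing computation via Lemma~\ref{idemcup}, and the proposed \texttt{Magma} verification in the explicit presentation of $P$ would all go through.

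The paper handles the hard part (integrality and saturation) differently, bypassing any expansion in a $\BZ$-basis of $P$. It computes directly that
\[\langle w_i,\,u_1^\alpha u_2^\beta u_3^\gamma\rangle \;=\; \tfrac{-2}{\sqrt{3}}\,\mathrm{Im}\bigl(\zeta_6^{-(\alpha+\beta+\gamma-i)}\bigr)\in\{-1,0,1\}\]
for all monomials, and observes that $\langle w_i,(\pi_{\bm q})_*\Lambda\rangle=0$ because $(\pi_{\bm q})_*\Lambda$ is an algebraic class while $w_i\in T(X_\BC)\otimes\BQ$. Since the monomials together with $(\pi_{\bm q})_*\Lambda$ generate $H$, this yields $\langle w_i,H\rangle\subset\BZ$, i.e.\ $w_i\in H^*=H$ by unimodularity, hence $w_i\in H\cap(T(X_\BC)\otimes\BQ)=T(X_\BC)$. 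Saturation then falls out of the same formula: pairing any $sw_1+tw_2\in T(X_\BC)\subset H$ against two well-chosen monomials (say $1$ and $u_1$) gives a $2\times 2$ integer system with determinant $\pm1$, forcing $s,t\in\BZ$. This argument never has to clear the denominator $216$ or invoke the saturation $I$ of $J$; it trades your direct computation inside $P$ for the structural input of Theorem~\ref{thm:complete-cohomology}(3) and the unimodularity of the full lattice $H$.
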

\begin{proof}
 Clearly, $T(X_\BC)\otimes \BQ=\BQ w_1\oplus\BQ w_2$.
  We calculate from \Cref{idemcup}
 \begin{eqnarray*}
\langle \alpha_{(1,1,1)},u_1^\alpha u_2^\beta u_3^\gamma\rangle&=&\langle u_1^{-\alpha} u_2^{-\beta} u_3^{-\gamma}\alpha_{(1,1,1)},1\rangle
=\langle {\zeta_6}^{-(\alpha+\beta+\gamma)}\alpha_{(1,1,1)},1\rangle=\frac{1}{36}{\zeta_6}^{-(\alpha+\beta+\gamma)}
 \end{eqnarray*}
 and similarly
 $\langle \alpha_{(5,5,5)},u_1^\alpha u_2^\beta u_3^\gamma\rangle=\frac{1}{36}{\zeta_6}^{\alpha+\beta+\gamma}$.
 Thus, \begin{eqnarray*}
        \langle w_1,u_1^\alpha u_2^\beta u_3^\gamma\rangle&=&\frac{12\sqrt{-3}}{36}({\zeta_6}^{-(\alpha+\beta+\gamma-1)}+{\zeta_6}^{\alpha+\beta+\gamma+2})
        =\frac{-2}{\sqrt{3}}\mathrm{Im}({\zeta_6}^{-(\alpha+\beta+\gamma-1)})
       \end{eqnarray*}
 and similarly $\langle w_2,u_1^\alpha u_2^\beta u_3^\gamma\rangle=\frac{-2}{\sqrt{3}}\mathrm{Im}({\zeta_6}^{-(\alpha+\beta+\gamma-2)})$.
 Therefore, if $sw_1+tw_2\in H^*$ for some $s,t\in\BQ$, then $s,t\in\BZ$. However, $H$ was the direct sum of $P$ and an algebraic class ${\pi_{\bm q}}_*\Lambda$. Because $\langle w_i,P\rangle=\BZ$ and $\langle w_i,{\pi_{\bm q}}_*\Lambda\rangle=0$ for $i=1,2$, we get that $\langle w_i,\cdot\rangle\in H^*$. By unimodularity of $H$, this means $w_i\in H$, hence
 \[\BZ w_1\oplus\BZ w_2 = H\cap (T(X_\BC)\otimes\BQ)=T(X_\BC).\]
 The formula for the cup product follows directly from \Cref{idemcup}.
\end{proof}

The group $\mu_6$ acts on $T(X_\BC)$ and $T(X_\BC)\otimes E$ via multiplication by $u_1$, $u_2$, or $u_3$ and because $(1,1,1)$ and $(5,5,5)$ are invariant under permutations, it does not matter which of the three variables we pick. Denote the action of $x\in\mu_6$ by $[x]$. We have that
$[x]\alpha_{(1,1,1)}=x\alpha_{(1,1,1)}$ and $[x]\alpha_{(5,5,5)}=x^{-1}\alpha_{(5,5,5)}$, hence
\[[{\zeta_6}]w_1=w_2,\quad [{\zeta_6}]w_2=w_2-w_1.\]

The free $\CO$-module $T(X_\BC)$ is thus (non-uniquely) isomorphic to $\CO$ itself sending $w_1$ to $1$ and $w_2$ to ${\zeta_6}$. The intersection product under this identification is given by $\langle x,y\rangle=12\Tr_{E/\BQ}(x\ov y)$.
Because \[12\Tr_{E/\BQ}(1/(12\sqrt{-3}))=0\text{ and }12\Tr_{E/\BQ}({\zeta_6}/(12\sqrt{-3}))=-1,\] it follows that the dual lattice of $\CO$ is $\frac{1}{12\sqrt{-3}}\CO$.

\begin{definition}
 The \emph{discriminant} of $X$ is the cokernel of the exact sequence
\begin{equation}\label{eq:disc}
0\to T(X_\BC)\to T(X_\BC)^*\to\Delta\to 0, 
\end{equation}
where the second map is the natural embedding using the cup product.
\end{definition}
The above sequence~\ref{eq:disc} becomes
\[0\to \CO\xrightarrow{12\sqrt{-3}}\CO\to\CO/12\sqrt{-3}\to0\]
We recover the fact that $\Delta=\CO/12\sqrt{-3}=\BZ/12\BZ\times\BZ/36\BZ$, as shown in \cite[\S 2.1]{cn17} with explicit divisors.

\subsection{Galois action in the untwisted case}\label{sub:GaloisT}
For \Cref{sub:GaloisT} only, we restrict to the case $A_1=A_2=A_3=-1$. Let $\ell$ be a prime number and let $\lambda$ be a place of $k$ lying above $\ell$. Let $\pi\in \CO$ be a prime element not dividing $6\ell$. The multiplicative character $\psi$ from \Cref{thm:complete-cohomology}(5) becomes the sextic residue character $(\cdot/\pi)_6$. By multiplying with sixth powers, this character can be extended to elements $x\in k$ such that $x\CO$ is coprime to $\pi$.

We will require a very particular notion of primary generators of prime ideals due to Eisenstein \cite[7.3]{lemmermeyer}. This notion is a strengthening of the cubic notion of primary primes so that we can apply sextic reciprocity. 

\begin{definition}
 We call $x=a+b\omega\in\CO$ \emph{E-primary}, if $3\mid b$ and
 \[\begin{cases}
    a+b\equiv 1 \bmod 4, &\text{if } 2\mid b\\
    b\equiv 1 \bmod 4, &\text{if } 2\mid a\\
    a\equiv 3 \bmod 4, &\text{if } 2\nmid ab.\\
   \end{cases}
\]
\end{definition}
Every prime ideal in $\CO$ not dividing $6$ has an E-primary generator. Furthermore, if $x\in\CO$ is an $E$-primary prime element, then $x$ is primary in the usual cubic sense (i.e.\ $\equiv \pm1\bmod 3$). Henceforth, we will assume that $\pi$ is E-primary.

\begin{theorem}\label{sexticreciprocity}
 Let $x,y\in\CO$ be E-primary and relatively prime. Then
 \[\left(\frac{x}{y}\right)_6=(-1)^{\frac{\Norm x-1}{2}\frac{\Norm y-1}{2}}\left(\frac{y}{x}\right)_6.\]
\end{theorem}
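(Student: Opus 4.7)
The plan is to decompose the sextic residue symbol via the canonical isomorphism $\mu_6 \cong \mu_2 \times \mu_3$, thereby reducing the claim to the quadratic and cubic reciprocity laws in $\CO$ separately. Indeed, by the Chinese Remainder Theorem for $\mu_6$, the value $(x/y)_6 \in \mu_6$ is uniquely determined by its square $(x/y)_6^2 = (x/y)_3 \in \mu_3$ and its cube $(x/y)_6^3 = (x/y)_2 \in \mu_2$ (both equalities following directly from the definition of the power residue symbols via $x^{(\Norm y - 1)/n} \bmod y$). Squaring the desired identity kills the sign and gives $(x/y)_3 = (y/x)_3$; cubing it preserves the sign and gives $(x/y)_2 = (-1)^{\frac{\Norm x-1}{2}\frac{\Norm y-1}{2}}(y/x)_2$. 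It therefore suffices to prove these two identities separately.

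For the cubic identity, if $x = a + b\omega$ is $E$-primary then $3 \mid b$ forces $x \equiv a \bmod 3$; since for the symbol to be meaningful we may assume $3 \nmid x$, it follows that $a \equiv \pm 1 \bmod 3$, so $x$ is primary in the classical cubic sense. Eisenstein's cubic reciprocity law then yields $(x/y)_3 = (y/x)_3$ at once. Note that no sign appears because cubic reciprocity for primary elements of $\CO$ is uniformly sign-free.

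The main obstacle is thus the quadratic identity $(x/y)_2 = (-1)^{\frac{\Norm x-1}{2}\frac{\Norm y-1}{2}}(y/x)_2$, a form of quadratic reciprocity in $\CO$. Since $2$ is inert in $\CO$ with residue field $\BF_4$, the behavior of the quadratic symbol at the prime above $2$ is delicate, and the extra conditions modulo $4$ built into the $E$-primary definition are engineered precisely so that the sign $(-1)^{\frac{\Norm x-1}{2}\frac{\Norm y-1}{2}}$ appears. I would proceed by case analysis along the three cases in the $E$-primary definition (distinguished by the parities of $a$ and $b$), in each case computing both sides locally at the prime $\mathfrak{q}$ above $2$ via the Hilbert symbol and verifying the identity by direct calculation in $\CO/\mathfrak{q}^k$ for a suitable small $k$. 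Alternatively, and much more expediently, one may simply invoke Eisenstein's classical sextic reciprocity law in the exact form stated above, which is given as \cite[\S 7.3]{lemmermeyer}; the verification of the mod-$4$ case analysis is carried out there.
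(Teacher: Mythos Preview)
Your approach is correct and essentially identical to the paper's: the paper's proof is the one-line observation that the result ``is a combination of cubic reciprocity and a quadratic reciprocity law for $\CO$, see \cite[Thm.~7.10]{lemmermeyer}'', and you have spelled out exactly this decomposition via $\mu_6\cong\mu_2\times\mu_3$ together with the observation that $E$-primary implies cubic-primary. Your only slip is phrasing: at the end you appear to invoke the sextic law itself rather than the quadratic law in $\CO$; what you mean (and what the paper means) is that the quadratic reciprocity in $\CO$ with the stated sign, under the $E$-primary normalisation, is established in Lemmermeyer's \S7.3.
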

\begin{proof}
  This is a combination of cubic reciprocity and a quadratic reciprocity law for $\CO$, see \cite[Thm.~7.10]{lemmermeyer}.
\end{proof}

\begin{proposition}\label{jacobi}
 Let $\mathfrak p\subset\CO$ be a prime ideal generated by an E-primary element $\pi$. Let $\lambda\subset\CO$ be a prime ideal over the rational prime $\ell$. Assume $\mathfrak p$ does not divide $6\ell$.
 
 Set $\zeta_\pi=(-4/\pi)_6$. Then for $\chi\in S$, the eigenvalue $\mu$ of $\Frob_\mathfrak{p}\in\Gamma$ on $V_\chi\otimes_k k_\lambda$ is as follows:\\
   If $\chi$ is $(3,3,3)$ or a permutation of $(1,3,5)$, then $\mu=1$.\\
   If $\chi$ is a permutation of $(2,3,4)$, then $\mu=\zeta_\pi^3$.\\
   If $\chi$ is a permutation of $(2,2,5)$, then $\mu=\zeta_\pi$.\\
   If $\chi$ is a permutation of $(4,4,1)$, then $\mu=\ov{\zeta_\pi}$.\\
   If $\chi=(1,1,1)$, then $\mu=\pi/\ov\pi$.\\
   If $\chi=(5,5,5)$, then $\mu=\ov\pi/\pi$.
\end{proposition}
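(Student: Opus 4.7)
The approach is to specialize the general formula from Theorem \ref{thm:complete-cohomology}(5) to the untwisted case $\bm A=(-1,-1,-1)$, and then evaluate the resulting sextic Gauss and Jacobi sums case-by-case using classical tools.

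First, I would simplify the prefactors. Since $\chi\in S$ forces $a_0=3$ (the only nonzero multiple of $q_0=3$ in $\BZ/6$), and since $\sum_{i=0}^{3}a_i\equiv 0\bmod 6$ together with $\psi(-1)^2=1$, the combination $(-1)^n\psi(-1)^{a_0}/\prod_{i=1}^{3}\psi(-1)^{a_i}$ collapses to a controlled $\pm 1$. The formula reduces to $\mu=\Norm(\pp)^{-1}J_\pp(\chi)$ (times this sign), and via the standard identity
\[
J_\pp(\chi)=\frac{g_\pp(a_1)g_\pp(a_2)g_\pp(a_3)}{g_\pp(a_1+a_2+a_3)}=\Norm(\pp)^{-1}\psi(-1)^{a_0}\prod_{i=0}^{3}g_\pp(a_i)
\]
the calculation is transferred to products of sextic Gauss sums $g_\pp(r)$.

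Second, for the ``paired'' characters $(3,3,3)$, the permutations of $(1,3,5)$, and the permutations of $(2,3,4)$, I would apply Lemma \ref{gaussproduct} directly: the two indices summing to $0\bmod 6$ give $g_\pp(r)g_\pp(-r)=\psi(-1)^r\Norm(\pp)$, while the remaining factor $g_\pp(3)^2=\psi(-1)\Norm(\pp)$ absorbs the third Gauss sum. After cancellation, the result is a power of $\psi(-1)$ which in each case is either $1$ or equals $\zeta_\pi^3$ via the identity $\zeta_\pi^3=(-4/\pi)_6^3=(-1/\pi)_2=\psi(-1)$ (since $(4/\pi)_2=1$).

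Third, for the characters of the form $(2,2,5)$ and $(4,4,1)$, the Gauss sums do not pair cleanly. Here I would decompose using the multiplicative structure: $J_\pp((2,2,5))$ is a sextic Jacobi sum that factors through the cubic Jacobi sum $J(\psi^2,\psi^2)$, which by Gauss's classical theorem equals the primary prime $\pi$. Combining the cubic evaluation with an auxiliary factor involving $g_\pp(2)^2$ (reduced via a Hasse--Davenport product identity) and $g_\pp(5)$ yields $\mu=\zeta_\pi$; complex conjugation gives the $(4,4,1)$ case. The difference between the Ireland--Rosen cubically primary and the E-primary normalization is precisely the $\zeta_\pi$ correction, which can be tracked via Eisenstein's complementary sextic reciprocity laws for $-1$ and $2$ modulo $\pi$.

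Finally, for the transcendental characters $(1,1,1)$ and $(5,5,5)$, I would use the identity $J_\pp((1,1,1))=g_\pp(1)^3/g_\pp(3)$ and square to obtain $J_\pp((1,1,1))^2=g_\pp(1)^6/(\psi(-1)\Norm(\pp))$. Then Stickelberger's relation for the sextic Gauss sum in the E-primary normalization gives $g_\pp(1)^6=\pi^5\bar\pi$, yielding $J_\pp((1,1,1))=\pi^2$ on the nose (rather than up to a root of unity), so $\mu=\pi^2/\Norm(\pp)=\pi/\bar\pi$. The case $(5,5,5)$ follows by complex conjugation. The main obstacle is precisely this last step: the equality $g_\pp(1)^6=\pi^5\bar\pi$ holds without extra sixth-root-of-unity factors only for the E-primary normalization, and reducing the calculation to this form requires combining Theorem \ref{sexticreciprocity} with the supplementary laws encoded in $\zeta_\pi=(-4/\pi)_6$.
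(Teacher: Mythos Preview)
Your treatment of the paired characters and of $(2,2,5)$ and $(4,4,1)$ is essentially the paper's argument: the paper also reduces via Proposition~\ref{match} to products of sextic Gauss sums, dispatches $(3,3,3)$, $(1,3,5)$ and $(2,3,4)$ with Lemma~\ref{gaussproduct}, and handles $(2,2,5)$ via a Hasse--Davenport type product identity (the paper cites it as an identity of Evans).

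The gap is in your treatment of $\chi=(1,1,1)$. Two related problems arise. First, the formula $g_\pp(1)^6=\pi^5\bar\pi$ for E-primary $\pi$ is incorrect: working backwards from the known answer (or forwards from the paper's computation) one finds $g_\pp(1)^6=\psi(-1)\,\pi^5\bar\pi$, so the unit is $\psi(-1)=(-1/\pi)_6$, not $1$. Stickelberger only gives the ideal factorization $(g_\pp(1))^6=\pp^5\bar\pp$; pinning down the unit among the six possibilities is precisely the hard part, and the E-primary normalization does not force it to be $1$. Second, and more structurally, squaring $J_\pp((1,1,1))=g_\pp(1)^3/g_\pp(3)$ and then extracting a square root can only ever determine $J_\pp((1,1,1))$ up to sign; even with the correct value of $g_\pp(1)^6$ you would obtain $J_\pp((1,1,1))^2=\pi^4$ and hence $J_\pp((1,1,1))=\pm\pi^2$, with no mechanism left to fix the sign. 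In fact $J_\pp((1,1,1))=\psi(-1)\pi^2$, not $\pi^2$.

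The paper sidesteps this by using the Evans identity once more to write
\[
J_\pp((1,1,1))=\frac{g_\pp(1)^2}{g_\pp(2)}\cdot\frac{g_\pp(1)g_\pp(2)}{g_\pp(3)}=\left(\frac{-1}{\pi}\right)_6\left(\frac{g_\pp(2)^2}{g_\pp(4)}\right)^2,
\]
so that the only undetermined sign, the classical cubic evaluation $g_\pp(2)^2/g_\pp(4)=\pm\pi$ from Ireland--Rosen, sits \emph{inside} a square and disappears. This reduction to the cubic case is the idea your sketch is missing.
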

\begin{proof}
  We treat each item individually via \Cref{match} which states that $\mu$ is given by $\Norm(\mathfrak p)^{-1}\left(\frac{-1}{\pi}\right)_6 J_\mathfrak p(\chi)=\Norm(\mathfrak p)^{-2}\prod_{i=0}^3g_\pp(a_i)$.
  
 If $\chi$ is $(3,3,3)$ or a permutation of $(1,3,5)$, we find by \Cref{gaussproduct} that
  \[g_\pp(3)^2=\left(\frac{-1}{\pi}\right)_6^3 \Norm(\mathfrak p)=\left(\frac{-1}{\pi}\right)_6 \Norm(\mathfrak p)=g_\pp(1)g_\pp(5)\]
  from which it follows that $g_\pp(3)^4=g_\pp(3)^2g_\pp(1)g_\pp(5)=\Norm(\mathfrak p)^2$.
  If $\chi$ is a permutation of $(2,3,4)$, we find by \Cref{gaussproduct} that \[g_\pp(2)g_\pp(4)=(-1/\pi)_6^2 \Norm(\mathfrak p)=\Norm(\mathfrak p)\] from which it follows that \[\Norm(\mathfrak p)^{-2}g_\pp(3)^2g_\pp(2)g_\pp(4)=\left(\frac{-1}{\pi}\right)_6^3=\left(\frac{(-4)^3}{\pi}\right)_6=\left(\frac{-4}{\pi}\right)_6^3.\]
  If $\chi$ is a permutation of $(2,2,5)$, we find by \cite[(2.1.2)]{evans} (see also  \cite[Thm.~3.1.1]{evans}) that 
  \begin{equation}\label{eq:evans}
   \frac{g_\pp(1)g_\pp(2)}{g_\pp(3)}=\left(\frac{4^2}{\pi}\right)_6\frac{g_\pp(2)^2}{g_\pp(4)}=\left(\frac{-4}{\pi}\right)_6\frac{g_\pp(1)^2}{g_\pp(2)}.
  \end{equation}

  Now $\Norm(\mathfrak p)^2\left(\frac{-1}{\pi}\right)_6(g_\pp(4)g_\pp(5))^{-1}=g_\pp(1)g_\pp(2)$, hence
  \[\Norm(\mathfrak p)^2\left(\frac{-4}{\pi}\right)_6=g_\pp(3)g_\pp(4)g_\pp(5)\frac{g_\pp(2)^2}{g_\pp(4)}=g_\pp(2)^2g_\pp(5)g_\pp(3).\]
  If $\chi$ is a permutation of $(4,4,1)$, this is the conjugate case to $(2,2,5)$.
  
  If $\chi=(1,1,1)$, we find by applying \Cref{eq:evans}
  \[J_\pp(\chi)=\frac{g_\pp(1)g_\pp(1)}{g_\pp(2)}\frac{g_\pp(1)g_\pp(2)}{g_\pp(3)}=\left(\frac{-1}{\pi}\right)_6\left(\frac{g_\pp(2)g_\pp(2)}{g_\pp(4)}\right)^2.\]
  But by \cite[\S9.4 Lemma~1]{ireland},   \[\frac{g_\pp(2)g_\pp(2)}{g_\pp(4)}=\pm \pi.\]
  Hence, $\Norm(\mathfrak p)^{-1}\left(\frac{-1}{\pi}\right)_6 J_\mathfrak p(\chi)=\Norm(\mathfrak p)^{-1}\pi^2=\pi/\ov\pi$.
  
  If $\chi=(5,5,5)$, this is the conjugate case to $(1,1,1)$.
\end{proof}

\begin{corollary}\label{Taction}
 Let $\mathfrak p$ and $\pi$ be as in \Cref{jacobi}. The element $\Frob_\mathfrak{p}\in\Gamma$ acts on $T(X_\BC)\otimes\BZ_\ell$ as multiplication by $\pi/\ov\pi$. Complex conjugation (the generator of $\Gal(\ov\BQ/\ov\BQ\cap\BR)$) acts on $T(X_\BC)\otimes\BZ_\ell$ by swapping $w_1$ and $w_2$.
\end{corollary}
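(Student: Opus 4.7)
The plan is to combine \Cref{jacobi} with the ``CM-like'' structure on $T(X_\BC)$. Since the $G$-action on $F$ is defined over $k=E$, every $\Frob_{\mathfrak p}$ with $\mathfrak p \nmid d\ell$ commutes with the induced $\CO$-action on $T(X_\BC)\otimes \BZ_\ell$. By \Cref{Tsextic}, $T(X_\BC)$ is a free rank-one $\CO$-module, so this forces $\Frob_{\mathfrak p}$ to be multiplication by some $\mu\in \CO_\ell^\times$, where $\CO_\ell=\CO\otimes_\BZ\BZ_\ell$. To identify $\mu$ I would pass to $E_\lambda$ and use the decomposition $T(X_\BC)\otimes E_\lambda = (V_{(1,1,1)}\otimes E_\lambda)\oplus(V_{(5,5,5)}\otimes E_\lambda)$ from \Cref{T}: the two summands correspond exactly to the two $\BZ$-algebra embeddings $\CO\hookrightarrow E_\lambda$, because each $u_i$ acts on $V_{(1,1,1)}$ as $\zeta_6$ and on $V_{(5,5,5)}$ as $\zeta_6^{-1}$. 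By \Cref{jacobi}, $\Frob_{\mathfrak p}$ acts on these two summands as $\pi/\ov\pi$ and $\ov\pi/\pi$, which are precisely the images of $\pi/\ov\pi\in E$ under the two embeddings; hence $\mu = \pi/\ov\pi$. A more explicit verification is also available: writing $\pi/\ov\pi=a+b\zeta_6$ with $a,b\in\BZ_\ell$ (permissible because $N(\pi)$ is an $\ell$-adic unit) and computing directly from the formulas of \Cref{Tsextic}, the matrix of $\Frob_{\mathfrak p}$ on $\{w_1,w_2\}$ agrees with that of multiplication by $a+b\zeta_6$ on $\CO$ in the basis $\{1,\zeta_6\}$.

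For the complex conjugation statement I would apply \Cref{thm:complete-cohomology}(6). With $A_i=-1$ and the choice of roots $A_i^{1/6}=\zeta_{12}$, a short computation yields $u_{\bm A}=(\zeta_6,\zeta_6,\zeta_6)=u_1u_2u_3\in G$. Since $n=2$ so $n'=1$, the sign $(-1)^{n'+1}$ is trivial and the action on monomials reduces to $u\mapsto u_{\bm A}u^{-1}$. Extending $\BC$-linearly to $P\otimes\BC$ and reindexing by $(i,j,k)\mapsto(1-i,1-j,1-k)$ in the defining sum for $\alpha_{(1,1,1)}$ gives $\tau(\alpha_{(1,1,1)})=u_{\bm A}\cdot\alpha_{(5,5,5)}$. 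But $u_1u_2u_3$ acts on $V_{(5,5,5)}$ as the scalar $\zeta_6^{-3}=-1$, so $\tau(\alpha_{(1,1,1)})=-\alpha_{(5,5,5)}$ and symmetrically $\tau(\alpha_{(5,5,5)})=-\alpha_{(1,1,1)}$. Substituting into the expressions for $w_1,w_2$ from \Cref{Tsextic} then gives $\tau(w_1)=-w_2,\ \tau(w_2)=-w_1$, realising the interchange of $w_1$ and $w_2$ up to the sign $-1\in\mu_6\subset\CO^\times$, consistent with $\tau$ being an antilinear involution of the rank-one $\CO$-module $T(X_\BC)$.

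The main delicacies will be the bookkeeping of sign and twist conventions. In particular one must track the Tate twist by $n'=1$ throughout: this is the source of the sign $(-1)^{n'+1}$ in \Cref{thm:complete-cohomology}(6) and ensures that the Galois action on $\Het^n(\ov F,\BZ_\ell(n'))$ matches the topological Betti action under the comparison isomorphism (rather than differing from it by $(-1)^{n'}$). A second subtlety is confirming that the $\CO_\ell$-linear Frobenius is not merely an abstract scalar in $\CO_\ell^\times$ but really comes from $\pi/\ov\pi\in E\cap \CO_\ell^\times$; this requires comparing its images under \emph{both} embeddings $\CO\hookrightarrow E_\lambda$ simultaneously and using that $\pi/\ov\pi$ and $\ov\pi/\pi$ are $\Gal(E/\BQ)$-conjugate.
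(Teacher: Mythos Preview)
Your treatment of the Frobenius action is correct and matches the paper's approach: both use that $\Frob_{\mathfrak p}$ commutes with the $\CO$-action on the rank-one $\CO$-module $T(X_\BC)$, and identify the resulting scalar by reading off the eigenvalues on $V_{(1,1,1)}$ and $V_{(5,5,5)}$ from \Cref{jacobi}.

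For complex conjugation, however, there is a sign slip that produces the wrong answer. \Cref{Taction} sits inside \S\ref{sub:GaloisT}, which concerns the \emph{untwisted} surface $X_{(-1,-1,-1)}$. As the paper flags at the start of \S\ref{sextic}, the coefficients appearing in \Cref{thm:complete-cohomology} differ from those of $X_\AA$ by a factor of $-1$; thus in the notation of \Cref{thm:complete-cohomology}(6) one has $A_i=+1$, not $-1$. Choosing real sixth roots gives $u_{\bm A}=(1,1,1)$, not $u_1u_2u_3$. With the correct $u_{\bm A}$ (and $(-1)^{n'+1}=1$), the map $u\mapsto u^{-1}$ sends $\alpha_{(1,1,1)}$ to $\alpha_{(5,5,5)}$ on the nose, and substituting into \Cref{Tsextic} yields exactly $\tau(w_1)=w_2$, $\tau(w_2)=w_1$, as claimed. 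Your computation instead gives $\tau(w_1)=-w_2$, which is the answer for $X_{(1,1,1)}$ (cf.\ the case split in the proof of \Cref{invariantsQ}); the attempted ``up to $-1\in\mu_6$'' fix does not rescue this, since the corollary asserts the literal swap, and that is what the subsequent coinvariant calculations in \Cref{invariantsQ} use.
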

\begin{proof}
 In \Cref{Tsextic}, $T(F_\BC)=\BZ w_1\oplus\BZ w_2$ was identified with $\CO$ as an $\CO$-module such that $[{\zeta_6}]w_1=w_2$. By \Cref{jacobi}, $\Frob_\pi$ acts with eigenvalue $\pi/\ov \pi$ on $\alpha_{(1,1,1)}\CO_\lambda$ and with eigenvalue $\ov \pi/\pi$ on $\alpha_{(5,5,5)}\CO_\lambda$. Now $[\zeta_6]$ acts on $\alpha_{(1,1,1)}$ with eigenvalue $\zeta_6$ and on $\alpha_{(5,5,5)}$ with eigenvalue $\ov \zeta_6$. Therefore, the matrix representing the action of $\Frob_\pi$ in the basis $(w_1,w_2)$ is given by multiplication with $\pi/\ov\pi$.
 
 The complex conjugation $\tau$ swaps $\alpha_{(1,1,1)}$ and $\alpha_{(5,5,5)}$ by \Cref{thm:complete-cohomology}(6), hence also $w_1$ and $w_2$.
\end{proof}

\subsection{Galois invariant part of the Brauer group}
We now begin our study of transcendental Brauer groups of the surfaces $X=X_\AA$ from \eqref{eqn:X_A} with $A_1,A_2,A_3\in\BZ$.

Let $k=\BQ(\omega)$, $\Gamma=\Gal(\ov k/k)$ and $X_k=X\times_\BQ k$. We note that bounding $\Br(X_k)/\Br_1(X_k)$ is enough for our purposes of finding an upper bound on the transcendental part $\Br (X)/\Br_1(X)$ since the latter injects into the former.

The transcendental lattice is related to the Brauer group in the following way.
\begin{lemma}[{{\cite[Eq. (8.7) and (8.9)]{grothendieck}}}]\label{prop:grothendieck} We have an isomorphism of Galois modules
  \[\Br(\ov X)\cong T(X_\BC)^*\otimes \BQ/\BZ\cong \bigoplus_\ell T_\ell(\ov X)^*\otimes_{\BZ_\ell} \BQ_\ell/\BZ_\ell,\]
  where the sum runs over all primes $\ell$, the $\ell$-adic transcendental lattice $T_\ell(\ov X)$ is the orthogonal complement of $\Pic(\ov X)$ inside $\Het^2(\ov X,\BZ_\ell(1))$, and the Galois action on $T(X_\BC)^*\otimes \BQ/\BZ$ is carried over from $\ell$-adic cohomology via the second isomorphism.
\end{lemma}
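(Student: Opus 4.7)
The plan is to derive this classical statement from the Kummer sequence together with Poincaré duality and the Lefschetz $(1,1)$-theorem; the only non-routine part is checking Galois-equivariance and that the Néron--Severi lattice sits primitively inside étale cohomology.

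First, I would fix a prime $\ell$ and take $\ell$-adic cohomology of the Kummer sequence $1\to\mu_{\ell^n}\to\Gm\to\Gm\to 1$ on $\ov X$. Using that $X$ is a K3 surface (so $\Pic(\ov X)=\NS(\ov X)$ is a finitely generated free $\BZ$-module and $\Br(\ov X)$ is torsion), passing to inverse and then direct limits yields a short exact sequence of Galois modules
\[
0\to\NS(\ov X)\otimes\BQ_\ell/\BZ_\ell\to\Het^2(\ov X,\BQ_\ell/\BZ_\ell(1))\to\Br(\ov X)[\ell^\infty]\to 0.
\]
Rewrite the middle term as $\Het^2(\ov X,\BZ_\ell(1))\otimes\BQ_\ell/\BZ_\ell$, so it suffices to identify the quotient $\Het^2(\ov X,\BZ_\ell(1))/(\NS(\ov X)\otimes\BZ_\ell)$ with $T_\ell(\ov X)^*$ as Galois modules.

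Next, I would use Poincaré duality, which makes $\Het^2(\ov X,\BZ_\ell(1))$ a unimodular lattice, and the Lefschetz $(1,1)$-theorem (combined with the Betti-étale comparison isomorphism $\Het^2(\ov X,\BZ_\ell(1))\cong \RH^2(X_\BC,\BZ(1))\otimes\BZ_\ell$), which ensures that the image of $\NS(\ov X)\otimes\BZ_\ell$ is a \emph{saturated} sublattice equal to $T_\ell(\ov X)^{\perp}$. By unimodularity, the $\BZ_\ell$-linear map
\[
\Het^2(\ov X,\BZ_\ell(1))\to T_\ell(\ov X)^*,\qquad \alpha\mapsto\langle\alpha,-\rangle|_{T_\ell(\ov X)}
\]
is then surjective with kernel exactly $\NS(\ov X)\otimes\BZ_\ell$, yielding
\[
\Br(\ov X)[\ell^\infty]\cong T_\ell(\ov X)^*\otimes_{\BZ_\ell}\BQ_\ell/\BZ_\ell
\]
as Galois modules, the pairing being Galois-equivariant after incorporating the Tate twist $(1)$.

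Finally, summing over all primes $\ell$ gives the second isomorphism in the statement. For the first, identify $T_\ell(\ov X)=T(X_\BC)\otimes_\BZ\BZ_\ell$ (this follows from the comparison isomorphism together with the equality of Picard groups $\Pic(X_\BC)=\Pic(\ov X)=\NS(\ov X)$), and use the standard canonical isomorphism $M^*\otimes_\BZ\BQ/\BZ\cong\bigoplus_\ell(M\otimes\BZ_\ell)^*\otimes_{\BZ_\ell}\BQ_\ell/\BZ_\ell$ for a finitely generated free $\BZ$-module $M$. Transporting the Galois action from the right-hand side along this isomorphism gives the statement as written. The main delicate point in the above is the primitivity of $\NS(\ov X)\otimes\BZ_\ell$ in $\Het^2(\ov X,\BZ_\ell(1))$; I would handle this by reducing through the comparison isomorphism to the complex-analytic statement that $\NS(X_\BC)$ is the full group of integral $(1,1)$-classes, which is Lefschetz's theorem.
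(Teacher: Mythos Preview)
The paper does not give its own proof of this lemma: it is stated with a direct citation to Grothendieck \cite[Eq.~(8.7) and (8.9)]{grothendieck} and immediately used. Your proposal is the standard derivation of that classical result---Kummer sequence, Poincar\'e duality giving unimodularity, and primitivity of $\NS$ via Lefschetz $(1,1)$---and is correct; there is nothing to compare it against in the paper beyond the citation.
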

Using \Cref{Taction}, we can therefore bound and compute the Galois invariant part of the geometric Brauer group of $X$. The following lemma will be applied with $M=T(X_\BC)^*\otimes \BQ/\BZ$ and $G$ the image of $\Gamma$ in $\Aut_\BZ(M)$.

\begin{lemma}\label{coinvariants}
 Let $G$ be a finite soluble group and let $M$ be a $\BZ[G]$-module. Then the natural morphism
 \[\Hom(M,\BQ/\BZ)_G\to \Hom(M^G,\BQ/\BZ),\quad [\phi]\mapsto\phi\restriction_{M^G}\]
 is an isomorphism from the coinvariants of the $\BQ/\BZ$-dual of $M$ to the $\BQ/\BZ$-dual of the invariants of $M$.
\end{lemma}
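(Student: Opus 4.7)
The plan is to exploit that $\BQ/\BZ$ is an injective $\BZ$-module throughout. First I would verify the map is well-defined: the restriction $r \colon \Hom(M, \BQ/\BZ) \to \Hom(M^G, \BQ/\BZ)$ is $G$-equivariant with target carrying the trivial $G$-action (since $G$ acts trivially on both $M^G$ and $\BQ/\BZ$), so $r$ factors through the coinvariants on the source. Surjectivity of the induced map is then immediate: injectivity of $\BQ/\BZ$ lets one extend any $\psi \colon M^G \to \BQ/\BZ$ to $M$, and such an extension maps to $\psi$ under the induced map.

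The main work is injectivity, i.e.\ showing that every $\phi \colon M \to \BQ/\BZ$ vanishing on $M^G$ lies in $I_G \Hom(M, \BQ/\BZ)$, where $I_G \subset \BZ[G]$ denotes the augmentation ideal. The inclusion ``$\supseteq$'' of the two subgroups of $\Hom(M, \BQ/\BZ)$ is trivial because $(g-1)\psi$ vanishes on $M^G$ for every $\psi$. For the reverse inclusion, the key trick is to consider the map
\[\delta \colon M \to \bigoplus_{g \in G} M, \qquad m \mapsto ((g-1)m)_{g \in G},\]
whose kernel is exactly $M^G$, so $\delta$ induces an injection $\bar\delta \colon M/M^G \hookrightarrow \bigoplus_g M$. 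By hypothesis $\phi$ factors as $\bar\phi \colon M/M^G \to \BQ/\BZ$, and by injectivity of $\BQ/\BZ$ this extends along $\bar\delta$ to some $\Phi \colon \bigoplus_g M \to \BQ/\BZ$, which corresponds to a tuple $(\phi_g)_{g \in G}$ of elements of $\Hom(M, \BQ/\BZ)$ via $\Phi((m_g)) = \sum_g \phi_g(m_g)$. Unwinding gives
\[\phi(m) \;=\; \Phi(\delta(m)) \;=\; \sum_{g \in G} \phi_g((g-1)m) \;=\; \sum_{g \in G} \bigl((g^{-1}-1)\phi_g\bigr)(m),\]
so $\phi = \sum_g (g^{-1}-1)\phi_g \in I_G \Hom(M, \BQ/\BZ)$, as required.

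The step I expect to be most delicate is the final bookkeeping of the $G$-action: with the convention $(h \cdot \psi)(m) = \psi(h^{-1}m)$, one must correctly identify $\phi_g(gm)$ with $(g^{-1}\phi_g)(m)$ in order to recognise the sum as an element of the augmentation ideal times $\Hom(M, \BQ/\BZ)$. Notably, the argument as sketched never uses the hypothesis that $G$ is solvable; this hypothesis presumably anticipates an alternative proof by induction on a composition series, reducing to the cyclic case of prime order via the standard identity $A_G = (A_H)_{G/H}$ for any normal subgroup $H \triangleleft G$ and $G$-module $A$.
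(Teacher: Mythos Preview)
Your proof is correct, and it takes a genuinely different route from the paper's. The paper reduces to the case of cyclic $G=\langle g\rangle$ (this is where the solubility hypothesis enters, via the transitivity $(A_H)_{G/H}=A_G$ for a normal subgroup $H$), and then dualises the short exact sequence $0\to M^G\to M\to (g-1)M\to 0$ using $\Ext^1((g-1)M,\BQ/\BZ)=0$ to identify the kernel of restriction with $(g-1)\Hom(M,\BQ/\BZ)$.

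Your argument instead handles all of $G$ at once by embedding $M/M^G\hookrightarrow\bigoplus_{g\in G} M$ via $m\mapsto((g-1)m)_g$ and extending $\phi$ over this embedding. This is cleaner: it avoids the induction on a subnormal series, and as you observed, it in fact proves the lemma for \emph{any} finite group, not just soluble ones. (Indeed, the statement is a special case of the general duality $(\widehat{M})_G\cong\widehat{M^G}$ for Pontryagin duals, which your argument essentially reproves directly.) The paper's approach has the minor advantage that in the cyclic case $I_G$ is principal, so the identification of $\ker r$ with $I_G\Hom(M,\BQ/\BZ)$ requires only a single extension along $(g-1)M\hookrightarrow M$ rather than the product embedding, but this is a cosmetic difference. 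Your bookkeeping with the $G$-action convention is also correct.
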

\begin{proof}
 It suffices to prove the claim for cyclic $G=\langle g\rangle$. Dualising the short exact sequence
 \[0\to M^G\to M\to (g-1)M \to 0\]
 yields, together with $\Ext^1((g-1)M,\BQ/\BZ)=0$, the short exact sequence
 \[0\to (g-1)\Hom(M,\BQ/\BZ) \to\Hom(M,\BQ/\BZ)\to\Hom(M^G,\BQ/\BZ)\to 0.\]
 But the cokernel of the above sequence is by definition $\Hom(M,\BQ/\BZ)_G$.
\end{proof}

\begin{proposition}\label{exponents}
 The exponent of $\Br(\ov X)[\ell^\infty]^\Gamma$ is at most $4$ if $\ell=2$, at most $9$ if $\ell=3$, at most $\ell$ if $\ell\in\{5,7\}$, and $1$ if $\ell\geq 11$. More precisely, $\Br(\ov X)[3^\infty]^\Gamma$ is a quotient of $\CO/3\sqrt{-3}\cong\BZ/3\BZ\times\BZ/9\BZ$ as an abelian group.
\end{proposition}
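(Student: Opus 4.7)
The plan is to translate the statement about Galois invariants of the Brauer group into one about coinvariants of the transcendental lattice, then control the latter prime-by-prime via the explicit Galois action computed in \Cref{Taction} together with the Chebotarev density theorem.

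First, by \Cref{prop:grothendieck}, $\Br(\ov X)[\ell^\infty]\simeq\Hom(T_\ell,\BQ_\ell/\BZ_\ell)$, where $T_\ell=T(X_\BC)\otimes\BZ_\ell$. A homomorphism is $\Gamma$-invariant iff it factors through the coinvariants, so $\Br(\ov X)[\ell^\infty]^\Gamma\simeq\Hom((T_\ell)_\Gamma,\BQ_\ell/\BZ_\ell)$. Since Pontryagin duality of finite abelian groups preserves both order and exponent, it suffices to bound $(T_\ell)_\Gamma$ with the claimed structure. By \Cref{Tsextic}, $T\cong\CO$ as $\CO$-modules and the $\Gamma$-action is $\CO$-linear (it commutes with the $\mu_6$-action), hence given by a single character $\chi_\ell:\Gamma\to\CO_\ell^\times$, where $\CO_\ell=\CO\otimes_\BZ\BZ_\ell$. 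Then $(T_\ell)_\Gamma=\CO_\ell/J_\ell$ with $J_\ell=\langle\chi_\ell(\sigma)-1:\sigma\in\Gamma\rangle$. Combining \Cref{Taction} with the twist formula in \Cref{twist}, for every $E$-primary prime $\pi\in\CO$ with $(\pi)\nmid 6\ell A_1A_2A_3$ we have $\chi_\ell(\Frob_{(\pi)})=(\pi/\bar\pi)\cdot(A_1A_2A_3/\pi)_6^{-1}$. Writing $\pi=c+d\omega$, an easy computation gives $\pi/\bar\pi-1=d\sqrt{-3}/\bar\pi$ in $\CO_\ell$.

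The heart of the proof is to produce, via Chebotarev, prime ideals $\pp=(\pi)$ such that $v_\ell(\chi_\ell(\Frob_\pp)-1)$ is as small as the target bound demands, whereupon $J_\ell$ contains the corresponding power of a uniformizer of $\CO_\ell$ and the coinvariants are cut down accordingly. For $\ell=3$ (the ramified prime, with uniformizer $\sqrt{-3}$), the $E$-primary condition forces $3\mid d$, giving $v_{\sqrt{-3}}(d\sqrt{-3})=1+2v_3(d)\geq 3$; I will choose $\pp$ with $v_3(d)=1$ and $(A_1A_2A_3/\pi)_6=1$ --- both expressible as Chebotarev conditions in a single finite Galois extension of $k$ involving $\zeta_9$ and $(A_1A_2A_3)^{1/6}$ --- so that $v_{\sqrt{-3}}(\chi_3(\Frob_\pp)-1)=3$. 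Then $J_3\supseteq(\sqrt{-3})^3=(3\sqrt{-3})$, and $(T_3)_\Gamma$ is a quotient of $\CO_3/3\sqrt{-3}\cong\CO/3\sqrt{-3}\cong\BZ/3\oplus\BZ/9$, yielding both the exponent bound at $\ell=3$ and the more precise structural claim. For $\ell\nmid 6$, $\sqrt{-3}$ is a unit in $\CO_\ell$; one chooses $\pp$ with $(A_1A_2A_3/\pi)_6=1$ and $v_\ell(d)$ minimised, and the claimed exponent bounds for $\ell=2,5,7,\geq 11$ then follow from analogous Chebotarev choices --- with a slightly more delicate case analysis at $\ell=2$, where the $E$-primary conditions already constrain $\pi$ modulo $4$ and one may need to use a twist $(A_1A_2A_3/\pi)_6\neq 1$ to force $v_2(\chi_2(\Frob_\pp)-1)\leq 2$.

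The main obstacle is verifying in each case that the simultaneous Chebotarev conditions --- $E$-primary with a prescribed value of $v_\ell(d)$, together with a specified sextic-residue class of $A_1A_2A_3$ --- correspond to a non-empty conjugacy class in the relevant finite Galois group over $k$. This is automatic when the fields $k(\zeta_{\ell^N})$, a suitable ray class field of $k$ encoding the $E$-primary and valuation conditions, and $k((A_1A_2A_3)^{1/6})$ are linearly disjoint over $k$, as is true generically. In degenerate cases (for example, when $A_1A_2A_3$ is a higher power in $k^\times$), one must verify compatibility directly, replacing the normalisation $(A_1A_2A_3/\pi)_6=1$ by another sixth root of unity when necessary and adjusting the computation of $\chi_\ell(\Frob_\pp)-1=\zeta^{-1}\pi/\bar\pi-1$ accordingly; the resulting case split is finite and tractable.
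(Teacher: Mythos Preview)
Your reduction to coinvariants of $T_\ell\cong\CO_\ell$ via \Cref{prop:grothendieck} and duality is the same as the paper's, but your method for producing small elements of the ideal $J_\ell=\langle\chi_\ell(\sigma)-1\rangle$ is different and more laborious. You invoke Chebotarev to find primes $\pi$ satisfying simultaneous congruence and sextic-residue constraints, then face a compatibility verification that you sketch but do not carry out. This verification is genuinely required and not always automatic: for instance, when $-5A_1A_2A_3\in k^{\times 6}$, sextic reciprocity forces the condition ``twist $=1$'' to imply $5\mid d$, so your hoped-for choice $v_5(d)=0$ is unavailable and you must settle for $v_5(d)=1$ (which still gives the stated bound). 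The case analysis is finite and would succeed, but it is real work.

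The paper sidesteps all of this. Since the twist only contributes a factor $x\in\mu_6$, one has $\chi_\ell(\Frob_{(\pi)})-1=(x\pi-\bar\pi)/\bar\pi$ for \emph{some} $x\in\mu_6$ depending on $\AA$. Rather than pin down $x$, the paper fixes two explicit E-primary primes $\pi=3\zeta_6-1$ (norm $7$) and $\pi=-3\zeta_6+4$ (norm $13$), computes $x\pi-\bar\pi$ for all six values of $x$, and reads off the maximal prime power dividing any of them: $\{4,3\sqrt{-3},5\}$ for the first and $\{4,3\sqrt{-3},5,7\}$ for the second. These give uniform bounds on $\CO_\ell/J_\ell$ valid for every $\AA$, with no Chebotarev and no case split. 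Your approach, once completed, would actually yield sharper information depending on $\AA$ (as later exploited in \Cref{invariants}); the paper's approach is quicker for the uniform bound stated here. A minor correction: your twist factor should be $(-1/(A_1A_2A_3)\mid\pi)_6$ rather than $(A_1A_2A_3\mid\pi)_6^{-1}$, though the discrepancy lies in $\mu_6$ and does not affect the argument.
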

\begin{proof}
 For $\pi\nmid\ell$, we know from \Cref{twist} and \Cref{Taction} that $\Frob_\pi$ acts on $T(X_\BC)\otimes\BZ_\ell$ as multiplication by $x\pi/\ov \pi$ where $x\in\mu_6$.
 
 Set $\pi=3{\zeta_6}-1$ so that $\Norm(\pi)=7$. Then a calculation shows that the set of maximal prime powers that divide $(x\pi-\ov\pi)$ are $\{4,3\sqrt{-3},5\}$. Doing the same for $\pi=-3{\zeta_6}+4$, so that $\Norm(\pi)=13$, yields $\{4,3\sqrt{-3},5,7\}$. The result follows from \Cref{coinvariants}.
\end{proof}
One may compare \Cref{exponents} with the bounds obtained in \cite[Example 11.2]{valloni}.

\begin{proposition}\label{invariants}
 The group $\Br(\ov X)[\ell^\infty]^\Gamma$ equals
 \begin{itemize}
  \item $\ell=2:\begin{cases}
                \CO/4, & \text{if } A_1A_2A_3/16\in {k^\times}^6,\\
                \CO/2, & \text{if } A_1A_2A_3/16\in {k^\times}^3\setminus{k^\times}^6,\\
                0, & \text{otherwise.}
               \end{cases}$
  \item $\ell=3:\begin{cases}
                \CO/3\sqrt{-3}, & \text{if } -A_1A_2A_3\in {k^\times}^6,\\
                \BZ/3\BZ, & \text{if } -A_1A_2A_3\in {k^\times}^2\setminus{k^\times}^6,\\
                0, & \text{otherwise.}
               \end{cases}$
  \item $\ell=5:\begin{cases}
                \CO/5, & \text{if } -5A_1A_2A_3\in {k^\times}^6,\\
                0, & \text{otherwise.}
               \end{cases}$
  \item $\ell=7:\begin{cases}
                \CO/7, & \text{if } A_1A_2A_3/7\in {k^\times}^6,\\
                0, & \text{otherwise.}
               \end{cases}$
  \item $\ell>7:\quad 0.$
 \end{itemize}
\end{proposition}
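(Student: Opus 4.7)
The plan is to identify $\Br(\ov X)[\ell^\infty]^\Gamma$ with the coinvariants of the $\ell$-adic transcendental lattice and then compute these coinvariants explicitly for each $\ell$. Write $\CO_\ell = \CO\otimes_\Z \Z_\ell$ and $T_\ell = T(X_\BC)\otimes_\Z \Z_\ell$. By Proposition~\ref{prop:grothendieck} together with the cup-product pairing, $\Br(\ov X)[\ell^\infty]$ is Galois-equivariantly isomorphic to $\Hom(T_\ell, \Q_\ell/\Z_\ell)$ where $\Gamma$ acts contragrediently on the right. Taking invariants and dualising (this is the content of Lemma~\ref{coinvariants}, applied to each finite quotient of $\Gamma$ acting on $T_\ell/\ell^n$ and then passed to the limit) gives
\[
\Br(\ov X)[\ell^\infty]^\Gamma \cong \Hom_{\Z_\ell}\bigl((T_\ell)_\Gamma,\ \Q_\ell/\Z_\ell\bigr) \cong (T_\ell)_\Gamma
\]
as abstract abelian groups.

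Next I would make the Frobenius action on $T_\ell$ explicit. Lemma~\ref{Tsextic} identifies $T(X_\BC)$ with $\CO$ as a free rank-one $\CO$-module, so $T_\ell \cong \CO_\ell$. Combining Corollary~\ref{Taction} (untwisted case) with Lemma~\ref{twist}, and accounting for the sign change between our surface $x_0^2 - A_1x_1^6 - A_2x_2^6 - A_3x_3^6$ and the normalisation of Theorem~\ref{thm:complete-cohomology}, I obtain that for any E-primary prime element $\pi \in \CO$ coprime to $6\ell$, the Frobenius $\Frob_\pi$ acts on $\CO_\ell$ by multiplication by
\[
z_\pi := \zeta_\pi\,\frac{\pi}{\bar\pi}, \qquad \zeta_\pi := \left(\frac{-A_1A_2A_3}{\pi}\right)_6^{-1} \in \mu_6.
\]
Consequently $(T_\ell)_\Gamma \cong \CO_\ell/I_\ell$, where $I_\ell$ is the ideal generated by $\{z_\pi - 1 : \pi \text{ E-primary},\ \pi \nmid 6\ell\}$. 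Proving the proposition thus reduces to identifying $I_\ell$ in each case.

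For the final step, set $N = -A_1A_2A_3$. The sextic residue character $\pi \mapsto (N/\pi)_6$ factors through a ray class group of $k$ of conductor dividing $12N$, so $\zeta_\pi$ depends only on the class of $\pi$ modulo this conductor; conversely, by Chebotarev, every admissible pair consisting of a residue class modulo the conductor and local behaviour at $\ell$ is attained by infinitely many E-primary primes. For $\ell \geq 11$ the result is Proposition~\ref{exponents}. For $\ell \in \{5,7\}$ one uses Eisenstein's sextic reciprocity (Theorem~\ref{sexticreciprocity}) to translate the vanishing of $z_\pi - 1$ mod $\ell$ into the stated sixth-power condition on $N$, with the shift by $\pm 5$ or $\pm 7$ coming from evaluating $\pi/\bar\pi \bmod \ell$ on suitable Frobenii; this yields $I_\ell = \ell\CO_\ell$ under that condition and $I_\ell = \CO_\ell$ otherwise. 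For $\ell = 3$ the factorisation $\pi - \bar\pi = b\sqrt{-3}$ with $3 \mid b$ for cubically primary $\pi = a + b\omega$ gives $I_3 \subseteq 3\sqrt{-3}\,\CO_3$ whenever $\zeta_\pi \equiv 1$, and an exact equality $I_3 = 3\sqrt{-3}\,\CO_3$ when $N \in (k^\times)^6$; relaxing to $N \in (k^\times)^2\setminus (k^\times)^6$ only forces $\zeta_\pi^2 = 1$, which produces $I_3 = \sqrt{-3}\,\CO_3$ and hence a quotient of order $3$; in all other cases $I_3 = \CO_3$. An analogous $2$-adic analysis, with $2$ inert in $\CO$ and the factor of $16$ arising from the $2$-part of the conductor of the sextic character, handles $\ell = 2$.

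The main obstacle will be making the reciprocity/Chebotarev step sharp at $\ell = 2$ and $\ell = 3$. Proposition~\ref{exponents} already provides the correct upper bound on the exponent, but to pin down $I_\ell$ on the nose one must produce, in each subcase, enough explicit E-primary primes $\pi$ so that the elements $z_\pi - 1$ collectively generate the claimed ideal. This forces a careful tracking of the conductor of $(N/\cdot)_6$ at the ramified prime $\sqrt{-3}$ and the inert prime $2$, and is in particular what explains the appearance of $A_1A_2A_3/16$ in place of $-A_1A_2A_3$ for $\ell = 2$.
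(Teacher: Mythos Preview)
Your proposal is correct and follows essentially the same approach as the paper: reduce to coinvariants of $T_\ell\cong\CO_\ell$ under the Frobenius action $\pi/\bar\pi \cdot \left(\frac{-A_1A_2A_3}{\pi}\right)_6^{-1}$, then case-split on $\ell$ using the exponent bounds and reciprocity. The one sharpening the paper supplies over your sketch is that, rather than abstract conductor reasoning, it establishes for each $\ell\in\{2,3,5,7\}$ an explicit congruence $\pi/\bar\pi \equiv \left(\frac{c_\ell}{\pi}\right)_6 \bmod \lambda^{e_\ell}$ (with $c_2=-16$, $c_3=1$, $c_5=5^{-1}$, $c_7=-7$) via direct manipulation with sextic and cubic reciprocity, so that the total action collapses to the single symbol $\left(\frac{-c_\ell/A_1A_2A_3}{\pi}\right)_6$; this is what pins down the specific constants $16$, $-5$, $7$ in the statement and makes the Chebotarev step immediate.
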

\begin{proof}
 By \Cref{Taction} and \Cref{twist}, $\Frob_\pi$ acts on $T(X_\BC)\otimes\BQ/\BZ\cong\CO\otimes\BQ/\BZ$ as multiplication by $$\frac{\pi}{\ov\pi} \left(\frac{-1/A_1A_2A_3}{\pi}\right)_6.$$
 By \Cref{coinvariants}, it is enough to compute the coinvariants of $T(X_\BC)\otimes\BQ/\BZ$.
 
 We use the exponent bounds from \Cref{exponents} and the Sextic Reciprocity Theorem~\ref{sexticreciprocity} throughout (and Cubic Reciprocity for $\ell=2$) in order to express the action modulo a power of $\lambda|\ell$ as Dirichlet characters. A similar calculation in the case of diagonal quartics was first done in \cite[Lem.~4.2]{IS15}. 
 
  $\ell=2$: 
  We have
  \begin{eqnarray*}\left(\frac{-16}{\pi}\right)_6 &=& \left(\frac{-1}{\pi}\right)_2\left(\frac{16}{\pi}\right)_2\left(\frac{16}{\pi}\right)_3^{-1}=(-1)^{(\Norm(\pi)-1)/2}\left(\frac{2}{\pi}\right)_3^2\\
  &\equiv& (-1)^{(\Norm(\pi)-1)/2} \pi^2 \equiv \Norm(\pi)^{-1}\pi^2\equiv \pi/\ov\pi \bmod 4.
  \end{eqnarray*}
  So for E-primary $\pi$, $\Frob_\pi$ acts on $T(X_\BC)\otimes\frac{1}{4}\BZ/\BZ\cong\CO/4$ via $\left(\frac{16/A_1A_2A_3}{\pi}\right)_6$. Thus if $A_1A_2A_3/16$ is a sixth power, then $\left(\frac{16/A_1A_2A_3}{\pi}\right)_6\equiv 1\bmod 4$, so all of $\CO/4$ is invariant. If $A_1A_2A_3/16$ is a third but not a sixth power, then the sextic residue symbol assumes the value $-1$ for some $\pi$ by Chebotarev density, so the coinvariants are $\CO/2$. In all other cases, the sextic residue symbol assumes the value $\omega$ for some $\pi$. The coinvariants of $\CO/4$ under multiplication by $\omega$ are trivial.
  
  $\ell=3$: We have \[\pi/\ov\pi\equiv1\bmod 3\sqrt{-3}.\] So for E-primary $\pi$, $\Frob_\pi$ acts on \[T_3(\ov X)/(3\sqrt{-3}\cdot T_3(\ov X))\cong\CO/3\sqrt{-3}\] via $\left(\frac{-1/A_1A_2A_3}{\pi}\right)_6$. Thus if $-A_1A_2A_3$ is a sixth power, then $\left(\frac{-1/A_1A_2A_3}{\pi}\right)_6\equiv 1\bmod 3\sqrt{-3}$, so all of $\CO/3\sqrt{-3}$ is invariant. If $-A_1A_2A_3$ is a square but not a sixth power, then the sextic residue symbol assumes all values in $\mu_3$ infinitely often by Chebotarev density, so the coinvariants are $\CO/(1-\omega)$. In all other cases, the sextic residue symbol assumes the value $-1$ for some $\pi$. The coinvariants of $\CO/3\sqrt{-3}$ under multiplication by $-1$ are trivial.

  $\ell=5$: We have \[\left(\frac{5}{\pi}\right)_6^{-1}=\left(\frac{\pi}{5}\right)_6^{-1}\equiv \pi^{2}\equiv\pi/\ov\pi\bmod 5.\]
  So for E-primary $\pi$, $\Frob_\pi$ acts on $T(X_\BC)\otimes\frac{1}{5}\BZ/\BZ\cong\CO/5$ via $\left(\frac{-1/(5A_1A_2A_3)}{\pi}\right)_6$. Thus if $-5A_1A_2A_3$ is a sixth power, then $\left(\frac{-1/(5A_1A_2A_3)}{\pi}\right)_6\equiv 1\bmod 5$, so all of $\CO/5$ is invariant. In all other cases, the sextic residue symbol assumes a nontrivial value in $\mu_6$ for some $\pi$. The coinvariants of $\CO/5$ under multiplication by nontrivial $x\in\mu_6$ are trivial.

  $\ell=7$: We have $7=\theta\ov\theta$ where $\theta=1+2{\zeta_6}$. Furthermore $-7$ is primary and $\Norm(-7)-1=48$, hence
  \begin{eqnarray*}
   \left(\frac{-7}{\pi}\right)_6&=&\left(\frac{\pi}{-7}\right)_6=\left(\frac{\pi}{7}\right)_6\\
   &=&\left(\frac{\pi}{\theta}\right)_6\left(\frac{\pi}{\ov\theta}\right)_6=\left(\frac{\pi}{\theta}\right)_6\left(\frac{\ov\pi}{\theta}\right)_6^{-1}\equiv(\pi/\ov\pi)\bmod 7.
  \end{eqnarray*}
So for E-primary $\pi$, $\Frob_\pi$ acts on $T(X_\BC)\otimes\frac{1}{7}\BZ/\BZ\cong\CO/7$ via $\left(\frac{7/A_1A_2A_3}{\pi}\right)_6$. Thus if $A_1A_2A_3/7$ is a sixth power, then $\left(\frac{7/A_1A_2A_3}{\pi}\right)_6\equiv 1\bmod 7$, so all of $\CO/7$ is invariant. In all other cases, the sextic residue symbol assumes a nontrivial value in $\mu_6$ for some $\pi$. The coinvariants of $\CO/7$ under multiplication by nontrivial $x\in\mu_6$ are trivial.
\end{proof}

Incorporating the action of complex conjugation as in \Cref{Taction} is an easy exercise so that one gets:
\begin{proposition}\label{invariantsQ}
   The group $\Br(\ov X)[\ell^\infty]^{\Gal(\ov\BQ/\BQ)}$ equals
  \begin{itemize}
  \item $\ell=2:\begin{cases}
                \BZ/4\BZ, & \text{if } A_1A_2A_3/16\in {\BQ^\times}^6\cup (-27){\BQ^\times}^6,\\
                \BZ/2\BZ, & \text{if } A_1A_2A_3/16\in {\BQ^\times}^3\setminus({\BQ^\times}^6\cup (-27){\BQ^\times}^6),\\
                0, & \text{otherwise.}
               \end{cases}$
  \item $\ell=3:\begin{cases}
                \BZ/9\BZ, & \text{if } -A_1A_2A_3\in (-27){\BQ^\times}^6,\\
                \BZ/3\BZ, & \text{if } -A_1A_2A_3\in {\BQ^\times}^6\cup(-3){\BQ^\times}^2\setminus(-27){\BQ^\times}^6,\\
                0, & \text{otherwise.}
               \end{cases}$
  \item $\ell=5:\begin{cases}
                \BZ/5\BZ, & \text{if } -5A_1A_2A_3\in {\BQ^\times}^6\cup(-27){\BQ^\times}^6,\\
                0, & \text{otherwise.}
               \end{cases}$
  \item $\ell=7:\begin{cases}
                \BZ/7\BZ, & \text{if } A_1A_2A_3/7\in {\BQ^\times}^6\cup(-27){\BQ^\times}^6,\\
                0, & \text{otherwise.}
               \end{cases}$
  \item $\ell>7:\quad 0.$
 \end{itemize}
\end{proposition}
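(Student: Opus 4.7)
\emph{Proof plan.} The plan is to combine \Cref{invariants} with the fact that $\Gamma=\Gal(\ov\BQ/k)$ is normal in $\Gal(\ov\BQ/\BQ)$ with quotient generated by complex conjugation $\tau$. Since $\Br(\ov X)^\Gamma$ is stable under any lift of $\tau$ (lifts differ by $\Gamma$), we have
\[\Br(\ov X)^{\Gal(\ov\BQ/\BQ)}=\left(\Br(\ov X)^\Gamma\right)^\tau,\]
so it suffices to intersect each $\Gamma$-invariant subgroup listed in \Cref{invariants} with the $\tau$-fixed locus.

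First I would determine the action of $\tau$ on $T(X_\BC)\cong\CO$ via \Cref{thm:complete-cohomology}(6). This action is a $\BZ$-linear (but not $\CO$-linear) involution which, in the basis $(w_1,w_2)\leftrightarrow(1,\zeta_6)$ of $\CO$ from \Cref{Tsextic}, is given by one of four explicit $2\times 2$ integer matrices $M_\epsilon$ ($\epsilon\in\{0,1,2,3\}$), with $\epsilon\equiv e_1+e_2+e_3\bmod 6$ and $e_i\in\{0,1\}$ encoding the sign of $A_i$ (after translating between the sign conventions of \Cref{sec:cohomology} and \Cref{sextic}); concretely, $\epsilon$ is odd exactly when $A_1A_2A_3>0$. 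Because $\tau$ preserves the fundamental class in dimension two, it is an isometry of the intersection pairing, so the isomorphism $T(X_\BC)\cong T(X_\BC)^*$ from \Cref{Tsextic} is Galois-equivariant and the induced $\tau$-action on $\Br(\ov X)\cong T(X_\BC)^*\otimes\BQ/\BZ$ is the same $M_\epsilon$-action on $\CO\otimes\BQ/\BZ$.

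The intersection of each $\Gamma$-invariant group from \Cref{invariants} with $\mathrm{Fix}(M_\epsilon)$ is then a short exercise in linear algebra over finite abelian groups. For $\ell=2$, $5$, or $7$, direct computation shows that $\mathrm{Fix}(M_\epsilon)$ inside $\CO/4$, $\CO/2$, $\CO/5$, or $\CO/7$ is always cyclic of the expected order, independent of $\epsilon$. For $\ell=3$, where $3$ ramifies in $\CO$, the picture is more delicate: inside $\CO/3\sqrt{-3}\cong\BZ/3\BZ\oplus\BZ/9\BZ$ one finds $\mathrm{Fix}(M_\epsilon)$ cyclic of order $9$ when $\epsilon$ is odd and of order $3$ when $\epsilon$ is even, while on the residue field $\CO/\sqrt{-3}\cong\BF_3$ the map $M_\epsilon$ acts trivially exactly when $\epsilon$ is odd. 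Translating through the elementary identifications $\BQ^\times\cap k^{\times 6}=\BQ^{\times 6}\cup(-27)\BQ^{\times 6}$, $\BQ^\times\cap k^{\times 2}=\BQ^{\times 2}\cup(-3)\BQ^{\times 2}$, and $\BQ^\times\cap k^{\times 3}=\BQ^{\times 3}$ (all consequences of $-27=(\sqrt{-3})^6$ and $-3=(\sqrt{-3})^2$ in $k$), the parity of $\epsilon$ is governed by whether the relevant rational lies in $(-27)\BQ^{\times 6}$ or in $\BQ^{\times 6}\cup(-3)\BQ^{\times 2}$, reproducing exactly the case split in the statement.

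The main obstacle is the $\ell=3$ case, where the ramification of $3$ in $\CO$ combined with the non-scalar nature of each $M_\epsilon$ on $\BZ/3\BZ\oplus\BZ/9\BZ$ requires careful matrix bookkeeping to see both why the output depends only on the parity of $\epsilon$ and why this parity matches the condition $-A_1A_2A_3\in(-27)\BQ^{\times 6}$; the other primes follow by essentially uniform arguments.
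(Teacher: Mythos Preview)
Your approach is essentially the same as the paper's: determine the action of complex conjugation on $T(X_\BC)$ via \Cref{thm:complete-cohomology}(6), note that it depends only on the sign of $A_1A_2A_3$, then cut down each group from \Cref{invariants} by this involution and translate the $k^\times$-power conditions back to $\BQ$. The one difference is that the paper takes $\tau$-\emph{coinvariants} (in keeping with the duality of \Cref{coinvariants}) rather than $\tau$-invariants; for every finite $\CO$-quotient that actually arises here the two are abstractly isomorphic, so your computation lands on the same answer.
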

\begin{proof}
 Note that $\BQ^\times\cap {k^\times}^2={\BQ^\times}^2\cup(-3){\BQ^\times}^2$ and $\BQ^\times\cap {k^\times}^6={\BQ^\times}^6 \cup (-27){\BQ^\times}^6$.
 If $A_1A_2A_3<0$, then complex conjugation on $T(X_\BC)$ swaps $w_1$ and $w_2$. If $A_1A_2A_3>0$, complex conjugation on $T(X_\BC)$ sends $w_1$ to $-w_2$ and $w_2$ to $-w_1$. (This description depends upon the choice of sixth roots of $-A_1,-A_2,-A_3$ in the generator $e$ from \Cref{primitive} but other choices for $e$ yield the same results.)
 
 Computing the coinvariants under complex conjugation of the groups $\Br(\ov X)[\ell^\infty]^\Gamma$ appearing in \Cref{invariants} is then an easy calculation. For example, if $A_1A_2A_3<0$, the coinvariants of $\CO/3\sqrt{-3}$ are equal to its quotient as an abelian group by $1-\zeta_6$, which is isomorphic to $\BZ/3\BZ$. If $A_1A_2A_3>0$, the coinvariants of $\CO/3\sqrt{-3}$ are equal to its quotient as an abelian group by $1+\zeta_6$, which is isomorphic to $\BZ/9\BZ$. The other cases follow similarly.
\end{proof}

\subsection{Image of $\Br(X)\to\Br(\ov X)^\Gamma$}

In general, the natural morphism \[\Br(X)\to\Br(\ov X)^\Gamma\] is not surjective. In order to decide which elements lie in the image, we make use of the cohomological machinery developed in \cite{cts,qua}, which in general applies to any surface over a number field with torsion-free geometric Picard group.

There are two short exact sequences
\[0\to \Pic(\ov X) \to \Pic(\ov X)^* \to \Delta\to 0,\quad
0\to T(X_\BC) \to T(X_\BC)^* \to \Delta\to 0,\]
where $\Delta$ denotes the discriminant of $X$. The injection in both sequences is the natural embedding via the cup product and the unimodularity of the lattice $\RH^2(X_\BC,\BZ)$ ensures that the cokernels of both sequences are isomorphic as abstract groups.

After dualising the latter sequence, we obtain an exact sequence of Galois modules
\[0\to \Delta\to T(X_\BC)\otimes\BQ/\BZ \xrightarrow{p} T(X_\BC)^*\otimes\BQ/\BZ \to 0.\]
Recall that by \Cref{prop:grothendieck} the cokernel of this sequence is naturally isomorphic to $\Br(\ov X)$.
For a positive integer $n$, we define $M_n=p^{-1}(\Br(\ov X)[n])$. Then the action of $\Gamma$ on $\Pic(\ov X)$ and $M_n$ factors through a finite quotient $G_n=\Gal(k_n/k)$.
\begin{proposition}\label{prop:qua}\cite[Cor.~1.6]{qua}
 The image of $(\Br(X_k)/\Br_1(X_k))[n]\to\Br(\ov X)^\Gamma$ equals the kernel of the composition of the two connecting maps
    \[\Br(\ov X)[n]^{G_n}\to\RH^1(G_n,\Delta),\quad \RH^1(G_n,\Delta)\to\RH^2(G_n,\Pic(\ov X)).\]
\end{proposition}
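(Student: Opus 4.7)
The proposition is essentially a direct application of \cite[Cor.~1.6]{qua}, so my plan splits into verifying the hypotheses of that corollary in our setting and sketching the mechanism on which the cited result rests.

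First, the hypothesis of \cite[Cor.~1.6]{qua} is that $X$ be a smooth projective surface over a number field whose geometric Picard group is torsion-free. This holds for $X=X_{\bm A}$ since $X$ is a K3 surface, and Néron--Severi groups of K3 surfaces are torsion-free. The set-up data attached to $X$ in the statement -- the discriminant $\Delta$, the mutually dual short exact sequences of Galois modules, the module $M_n = p^{-1}(\Br(\ov X)[n])$ and the finite quotient $G_n$ through which the action factors -- are precisely those appearing in the framework of \cite{cts,qua}, so the cited result applies verbatim.

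The mechanism of proof proceeds via the Hochschild--Serre spectral sequence
\[
E_2^{p,q} = \RH^p(\Gamma, \RH^q(\ov X, \Gm)) \Rightarrow \RH^{p+q}(X_k, \Gm).
\]
By the usual five-term analysis, the image of $\Br(X_k)/\Br_1(X_k)$ in $\Br(\ov X)^\Gamma$ is exactly the kernel of the transgression $d_2 \colon \Br(\ov X)^\Gamma \to \RH^2(\Gamma, \Pic(\ov X))$. Restricting to $n$-torsion classes, where the Galois action factors through $G_n$, the proposition reduces to the identification
\[
d_2\big|_{\Br(\ov X)[n]^{G_n}} \;=\; \delta_2 \circ \delta_1,
\]
where $\delta_1$ and $\delta_2$ are the connecting homomorphisms arising from the two short exact sequences in the statement.

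This identification is the main technical obstacle, and is precisely where \cite{qua} does the real work. The key input is that the two short exact sequences of $G_n$-modules are mutually dual with respect to the unimodular Poincaré duality pairing on the torsion-free quotient of $\RH^2(\ov X, \BZ(1))$: this duality swaps the Pic extension $0 \to \Pic(\ov X) \to \Pic(\ov X)^* \to \Delta \to 0$ with the transcendental extension $0 \to \Delta \to T(X_\BC) \otimes \BQ/\BZ \to T(X_\BC)^* \otimes \BQ/\BZ \to 0$. Realising both $d_2$ and $\delta_2 \circ \delta_1$ as Yoneda-type products in a Cartan--Eilenberg double complex resolution, and tracing through a class $b \in \Br(\ov X)[n]^{G_n}$ by a two-step lift -- first along the Kummer sequence, then through $M_n$ into $\Pic(\ov X)^*$ -- makes the coincidence of the two maps transparent. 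This diagram chase is the content of \cite[\S2--3]{qua}, so granted the cited corollary the proposition is immediate.
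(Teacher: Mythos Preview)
Your proposal is correct and matches the paper's approach: the paper simply cites \cite[Cor.~1.6]{qua} without further argument, and you do the same while additionally verifying the hypothesis (torsion-free geometric Picard group, which holds since $X_{\bm A}$ is K3) and sketching the underlying mechanism via the Hochschild--Serre transgression. Your extra explanatory material is accurate and goes beyond what the paper provides.
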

Since $\#\Delta=2^4 3^3$, the map $\Br(X_k)[\ell^\infty]\to\Br(\ov X)[\ell^\infty]^\Gamma$ is in fact surjective for $\ell\neq 2,3$ (and the same is true for $\Br(X)[\ell^\infty]\to\Br(\ov X)[\ell^\infty]^{\Gal(\ov\BQ/\BQ)}$).

The exact sequences
\begin{equation*}
 0\to \Delta\to M_2 \xrightarrow{p} \Br(\ov X)[2] \to 0,\quad
0\to \Delta\to M_3 \xrightarrow{p} \Br(\ov X)[3] \to 0
\end{equation*}
become
\begin{align}
 0&\to\CO/12\sqrt{-3}\xrightarrow{\cdot 2}\CO/24\sqrt{-3}\to\CO/2 \to0,\label{eq:2transcendental}\\
0&\to\CO/12\sqrt{-3}\xrightarrow{\cdot 3}\CO/36\sqrt{-3}\to\CO/3 \to0.\label{eq:3transcendental}
\end{align}

\begin{lemma}\label{lem:ringclass}
 In \Cref{prop:qua}, we can set \[k_2=k(\sqrt{3},\sqrt[6]{2},\sqrt[6]{A_1},\sqrt[6]{A_2},\sqrt[6]{A_3})\text{ and }
 k_3=k(\sqrt[3]{2},\sqrt[6]{3},\sqrt[6]{A_1},\sqrt[6]{A_2},\sqrt[6]{A_3}).\]
\end{lemma}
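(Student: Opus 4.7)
The strategy is to verify, separately for each $n\in\{2,3\}$, that the Galois actions of $\Gamma=\Gal(\overline k/k)$ on both $\Pic(\overline X)$ and $M_n$ factor through $\Gal(k_n/k)$.

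For the Picard lattice, an explicit generating set of $(-2)$-curves for $\Pic(\overline X)$ is known (see \cite[\S2]{cn17} for the case $A_2/A_3\in\BQ^{\times 6}$, with the general case proceeding along the same lines), and every such curve is defined over $k(\zeta_{12},\sqrt[6]{A_1},\sqrt[6]{A_2},\sqrt[6]{A_3})$. This field lies inside both $k_2$ and $k_3$: one has $\sqrt{3}\in k_2$ and $\sqrt{3}=(\sqrt[6]{3})^3\in k_3$, hence $i=\sqrt{-3}/\sqrt{3}$ and $\zeta_{12}=(\sqrt{3}+i)/2$ lie in each $k_n$.

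For the transcendental part, \Cref{invariants} together with \Cref{twist} shows that $\Frob_\pi$ acts on $T(X_\BC)\otimes\BQ/\BZ\cong\CO\otimes\BQ/\BZ$, and hence on $M_n\cong\CO/(12n\sqrt{-3})$, by multiplication by
\[ g_\pi \;=\; \frac{\pi}{\overline\pi}\,\left(\frac{-1/(A_1A_2A_3)}{\pi}\right)_6 \]
for every E-primary prime $\pi$ of $\CO$ coprime to $6A_1A_2A_3$. By Chebotarev density, it suffices to show $g_\pi\equiv 1\pmod{12n\sqrt{-3}}$ whenever $\Frob_\pi$ is trivial in $\Gal(k_n/k)$. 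The sextic residue character takes values in $\mu_6$, which injects into each of $(\CO/2^a)^\times$ and $(\CO/3^b\sqrt{-3})^\times$, so it becomes trivial modulo $12n\sqrt{-3}$ exactly when $\pi$ splits completely in $k(\zeta_{12},\sqrt[6]{A_1A_2A_3})/k\subseteq k_n/k$.

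It remains to analyse the Hecke character $\pi/\overline\pi$ modulo the $(2)$- and $(\sqrt{-3})$-primary components of $(12n\sqrt{-3})$, which by the Chinese remainder theorem have exponents $(a,b)=(3,1)$ for $n=2$ and $(a,b)=(2,2)$ for $n=3$. The proof of \Cref{invariants} already yields $\pi/\overline\pi\equiv 1\pmod{3\sqrt{-3}}$ for primary $\pi$, handling the $(\sqrt{-3})$-part when $n=2$, and $\pi/\overline\pi\equiv\bigl(-16/\pi\bigr)_6\pmod 4$, handling the $(2)$-part when $n=3$ since $\bigl(-16/\pi\bigr)_6$ factors through $\Gal(k(\zeta_{12},\sqrt[3]{2})/k)\subseteq\Gal(k_3/k)$. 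For the two remaining cases, the plan is to refine these congruences: modulo $8$, $\pi/\overline\pi$ should be expressible through $\bigl(2/\pi\bigr)_6$ and $\bigl(-1/\pi\bigr)_2$, both of which factor through $\Gal(k(\sqrt{3},\sqrt[6]{2})/k)\subseteq\Gal(k_2/k)$; modulo $9\sqrt{-3}$, through $\bigl(3/\pi\bigr)_6$, factoring through $\Gal(k(\sqrt[6]{3})/k)\subseteq\Gal(k_3/k)$. The main obstacle is to establish these last two higher-precision congruences via Eisenstein's sextic reciprocity law (\Cref{sexticreciprocity}), verifying that precisely the Kummer roots appearing in $k_n$ suffice to trivialise the full Galois action on $M_n$.
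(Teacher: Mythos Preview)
Your argument has two issues, one minor and one substantive.

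\textbf{Picard part.} Your claimed field of definition for the generating $(-2)$-curves is wrong: per \cite[\S3.1]{cn17} (and \S\ref{sec:Pic} of this paper) the Galois action on $\Pic(\ov X_\AA)$ factors through $\Gal(\BQ(\sqrt[3]{2},\zeta_{12},\sqrt[6]{A_1/A_2},\sqrt[6]{A_2/A_3},\sqrt{A_1})/\BQ)$, which involves $\sqrt[3]{2}$. For $A_1=A_2=A_3=-1$ your field collapses to $k(\zeta_{12})$, which does not contain $\sqrt[3]{2}$. This is easily repaired, since both $k_2$ and $k_3$ contain $\sqrt[3]{2}$ anyway, but the intermediate claim as stated is false.

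\textbf{Transcendental part.} Here there is a genuine gap: you explicitly leave unproved the two ``higher-precision'' congruences (for $\pi/\ov\pi$ modulo $8$ and modulo $9\sqrt{-3}$), and these are the heart of the matter. Expressing $\pi/\ov\pi\bmod 8$ as a product of Kummer characters via sextic reciprocity is not automatic, and you give no indication of how to carry it out. The paper avoids this computation entirely by a different and cleaner route. After twisting away the coefficients, one observes that $\Frob_\pi$ acts trivially on $\CO/24\sqrt{-3}$ iff $\pi\equiv\ov\pi\pmod{24\sqrt{-3}}$, i.e.\ iff $\pi$ lies in the non-maximal order $\BZ+12\sqrt{-3}\BZ\subset\CO$. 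By the theory of complex multiplication, the primes splitting completely in this way are exactly those splitting in the \emph{ring class field} of that order, and one then simply verifies that this ring class field is $k(\sqrt{3},\sqrt[6]{2})$. The case $n=3$ is analogous with the order $\BZ+18\sqrt{-3}\BZ$. This bypasses all explicit reciprocity computations at the cost of invoking a standard (and well-suited) piece of CM theory.
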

\begin{proof}
    Adjoining sixth roots of the coefficients, it remains to show the statement for the surface $X_{-1,-1,-1}$ on which the Galois action was described by multiplication with elements of the form $\pi/\ov\pi$.
    
    By \cite[\S3.1]{cn17} (see also \S\ref{sec:Pic}), the absolute Galois group of $k(\sqrt[3]{2},\sqrt{3})$ acts trivially on $\Pic(\ov X_{-1,-1,-1})$.
    
    We have that $\pi/\ov \pi$ acts trivially on $\CO/24\sqrt{-3}$ if and only if $\pi \equiv \ov \pi \bmod 24\sqrt{-3}$ if and only if $\pi\in\BZ+12\sqrt{-3}\BZ$. Thus the Galois group $\Gal(\ov k/k(\BZ+12\sqrt{-3}\BZ))$ corresponding to the ring class field $k(\BZ+12\sqrt{-3}\BZ)$ of the non-maximal order $\BZ+12\sqrt{-3}\BZ\subset\CO$ acts trivially on $\CO/24\sqrt{-3}$. It can be checked that \[k(\BZ+12\sqrt{-3}\BZ)=k(\sqrt{3},\sqrt[6]{2}).\]
    The statement for $k_3=k(\BZ+18\sqrt{-3}\BZ)$ follows analogously.
\end{proof}

Using the previously derived description of the middle cohomology for degree $2$ K3 surfaces and \Cref{prop:qua}, one can now completely classify the transcendental part of the Brauer group of $X_\AA$. For our purposes, we will only require much weaker statements.

\begin{proposition} \label{prop:3transcendental}
 Assume $-A_1A_2A_3\in {k^\times}^2\setminus{k^\times}^6$. Then \[(\Br(X_k)/\Br_1(X_k))[3]=0\quad(\text{hence also } \Br(X)/\Br_1(X))[3]=0).\]
\end{proposition}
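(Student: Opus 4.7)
The plan is to apply Proposition \ref{prop:qua} with $n = 3$ and show that the composite connecting map
\[
\Br(\ov X)[3]^{G_3} \xrightarrow{\delta_1} \RH^1(G_3, \Delta) \xrightarrow{\delta_2} \RH^2(G_3, \Pic(\ov X))
\]
is injective on the $\BZ/3\BZ$ of Galois invariants provided by Proposition \ref{invariants}. Here $\delta_1$ comes from the short exact sequence \eqref{eq:3transcendental} and $\delta_2$ from the dual sequence relating $\Pic(\ov X)$, $\Pic(\ov X)^*$ and $\Delta$.

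First I would pin down the invariant class explicitly. By the analysis in Proposition \ref{invariants}, since $-A_1A_2A_3 \in k^{\times 2}\setminus k^{\times 6}$, the twist character $\chi(\pi) = \left(\tfrac{-1/(A_1A_2A_3)}{\pi}\right)_6$ takes values in $\mu_3$, and the action of $\Frob_\pi$ on $\Br(\ov X)[3] \cong \CO/3$ is multiplication by $(\pi/\ov\pi)\chi(\pi)$, which reduces to $\chi(\pi) \in \mu_3$ modulo $3$ (using $\pi/\ov\pi \equiv 1 \bmod 3\sqrt{-3}$). The invariant subgroup is thus $\ker\chi \cdot \CO/3 \cong \BZ/3\BZ$, generated by a specific element $\xi \in \CO/3$ (of norm $1$, i.e.\ fixed by the cubic character).

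Next I would compute $\delta_1(\xi)$ concretely. Pick a lift $\tilde\xi \in \CO/36\sqrt{-3}$; then $\delta_1(\xi)$ is represented by the $1$-cocycle $g \mapsto g\cdot\tilde\xi - \tilde\xi \in 3 \cdot \CO/36\sqrt{-3} = \CO/12\sqrt{-3} = \Delta$. Here $G_3 = \Gal(k_3/k)$ factors through the natural cubic quotient detecting $\sqrt[3]{2}$, $\sqrt[6]{3}$ and the $\sqrt[6]{A_i}$, and the action on the torsion group $\Delta = \BZ/12\BZ \times \BZ/36\BZ$ is explicit from the Galois description in \S\ref{sub:GaloisT}–\ref{sub:T}. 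The key point is to verify that this cocycle is nonzero in $\RH^1(G_3,\Delta)$; the obstruction to exactness, coming from $\chi$ being a genuinely cubic (not trivial) character, forces $\tilde\xi$ not to be $G_3$-invariant modulo $\Delta$.

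The main obstacle will be the second step $\delta_2$: here we need the description of $\Pic(\ov X)$ as a $G_3$-module. I would import the explicit Néron–Severi lattice and Galois action computed by Corn–Nakahara in \cite[\S3.1]{cn17} (and revisited in \S\ref{sec:Pic} of this paper), and then apply the canonical identification $\Delta \cong \Pic(\ov X)^*/\Pic(\ov X)$ to lift the $1$-cocycle from $\Delta$ to a $\Pic(\ov X)^*$-valued cochain, whose coboundary realises $\delta_2 \circ \delta_1(\xi)$. Since only $3$-torsion is relevant, everything restricts to the $3$-primary component $\BZ/3\BZ \subset \Delta$, and one verifies in cohomology via the inflation–restriction sequence (or by direct cocycle computation on the subgroup of $G_3$ acting through cube-root adjunctions) that the resulting class in $\RH^2(G_3, \Pic(\ov X))$ is nontrivial. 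This uses crucially that the cubic part of $G_3$ comes from adjoining $\sqrt[3]{2}$, which provides a nontrivial cyclic quotient whose $\RH^2$ does not vanish against the image of $\xi$. Finally, by Proposition \ref{prop:qua}, injectivity of $\delta_2 \circ \delta_1$ on the invariants forces the image of $(\Br(X_k)/\Br_1(X_k))[3]$ in $\Br(\ov X)^\Gamma$ to be zero, and since this map is injective, $(\Br(X_k)/\Br_1(X_k))[3] = 0$ as required.
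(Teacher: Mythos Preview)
Your overall framework is exactly what the paper uses: apply Proposition~\ref{prop:qua} with $n=3$ and show that the composite $\Br(\ov X)[3]^{G_3}\to\RH^1(G_3,\Delta)\to\RH^2(G_3,\Pic(\ov X))$ is injective. The divergence is entirely in how this injectivity is established.

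The paper does \emph{not} carry out a conceptual cocycle computation. Instead, it observes via Lemma~\ref{lem:ringclass} that $G_3$ is a subgroup of the generic Galois group $\Gal(k(\sqrt[3]{2},\sqrt[6]{3},\sqrt[6]{t_1},\sqrt[6]{t_2},\sqrt[6]{t_3})/k(t_1,t_2,t_3))$, enumerates (with \texttt{Magma}) all subgroups up to conjugacy and permutation of the $t_i$ for which the hypothesis $\Br(\ov X)[3]^\Gamma=\BZ/3$ holds, and then for each such subgroup mechanically verifies that the kernel of the composite is trivial. No uniform argument is given; it is a finite case check.

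Your plan tries to replace this by a single hand computation, but the crucial step is not actually done: you write ``one verifies in cohomology via the inflation--restriction sequence \dots\ that the resulting class in $\RH^2(G_3,\Pic(\ov X))$ is nontrivial'', and justify this by saying the cubic part of $G_3$ ``comes from adjoining $\sqrt[3]{2}$''. This is where the proposal breaks down. The group $G_3$ varies with $\AA$; its cubic part involves $\sqrt[3]{2}$, $\sqrt[3]{3}$ and the $\sqrt[3]{A_i}$ simultaneously, and which cyclic quotients survive depends on the multiplicative relations among the $A_i$. The hypothesis $-A_1A_2A_3\in k^{\times2}\setminus k^{\times6}$ constrains but does not pin down $G_3$, and there is no single inflation--restriction argument that handles all remaining cases at once. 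In practice one is forced back to the subgroup enumeration the paper performs.

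There is also a small error: the $3$-primary part of $\Delta=\CO/12\sqrt{-3}$ is $\CO/3\sqrt{-3}\cong\BZ/3\times\BZ/9$, not $\BZ/3$. This does not affect the strategy but would trip up the concrete cocycle computation you describe.
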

\begin{proof}
It follows from \Cref{lem:ringclass} that the group $G_3$ is a subgroup of the generic Galois group
\[\Gal(k(\sqrt[3]{2},\sqrt[6]{3},\sqrt[6]{t_1},\sqrt[6]{t_2},\sqrt[6]{t_3})/k(t_1,t_2,t_3))\]
(compare this with the smaller generic Galois group acting on $\Pic(\ov X)$ in \S\ref{sec:Pic}). We can thus iterate over all such subgroups (up to conjugacy and permutation of the variables $(t_1,t_2,t_3)$) and obtain all possible cases for the action of $\Gamma$ on \eqref{eq:3transcendental}.

The condition on the coefficients corresponds to the cases where $\Br(\ov X)[3]^\Gamma=\BZ/3$ by \Cref{invariants}. It can be checked with \texttt{Magma} that the kernel of
\[\Br(\ov X)[n]^{G_3}\to\RH^1(G_3,\Delta)\to\RH^2(G_3,\Pic(\ov X))\]
is trivial in all these cases.
\end{proof}

\begin{proposition}\label{prop:2transcendental}
 There exists a finite set $C\subset\BQ^\times$ such that if $(\Br(X)/\Br_1(X))[2]$ is non-trivial, then $\alpha A_1A_2A_3\in \BQ^{\times6}$ for some $\alpha\in C$.
\end{proposition}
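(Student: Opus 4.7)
The plan is to mirror the strategy used for Proposition \ref{prop:3transcendental}: combine the necessary condition on the coefficients coming from Proposition \ref{invariants} with a Magma-assisted case analysis of the connecting maps in Proposition \ref{prop:qua}. First I would use the natural injections
\[(\Br X/\Br_1 X)[2] \hookrightarrow (\Br X_k/\Br_1 X_k)[2] \hookrightarrow \Br(\ov X)[2]^{\Gamma},\]
so that the hypothesis forces $\Br(\ov X)[2]^{\Gamma} \neq 0$. By Proposition \ref{invariants}, this already implies $A_1A_2A_3/16 \in k^{\times 3}$.

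The hard part is to refine a mere cube condition into the claimed sixth-power condition modulo a finite set. For this I would apply Proposition \ref{prop:qua} to the 2-torsion, using the short exact sequence \eqref{eq:2transcendental}: the image of $(\Br X_k/\Br_1 X_k)[2]$ inside $\Br(\ov X)[2]^{G_2}$ equals the kernel of the composition
\[\Br(\ov X)[2]^{G_2} \to \RH^1(G_2, \Delta) \to \RH^2(G_2, \Pic(\ov X)).\]
By Lemma \ref{lem:ringclass}, $G_2$ is a subgroup of the ``generic'' Galois group
\[\mathcal{G} := \Gal\bigl(k(\sqrt{3},\sqrt[6]{2},\sqrt[6]{t_1},\sqrt[6]{t_2},\sqrt[6]{t_3})/k(t_1,t_2,t_3)\bigr),\]
where the indeterminates $t_i$ specialise to $A_i$.

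Next I would enumerate in Magma the conjugacy classes of subgroups $H \leq \mathcal{G}$ (modulo the $S_3$-action permuting the three coefficients), and for each such $H$ compute the kernel of the displayed composition, using the explicit action on $\Pic(\ov X)$ already computed in Part 3 and the action on $\Delta = \CO/24\sqrt{-3}$ described in the proof of Lemma \ref{lem:ringclass}. The empirical claim to verify is that this kernel vanishes for all subgroups $H$ except for a finite list $H_1,\ldots,H_m$. For each exceptional $H_j$, the fixed field of $H_j$ inside the generic Kummer extension imposes a specific condition on the classes of $A_1,A_2,A_3$ in $k^\times/k^{\times 6}$, and intersecting with the already-established relation $A_1A_2A_3/16 \in k^{\times 3}$ forces the product $A_1A_2A_3$ to lie in one of finitely many cosets of $k^{\times 6}$. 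Descending from $k$ to $\Q$ via the complex conjugation action (cf.~the passage from Proposition \ref{invariants} to Proposition \ref{invariantsQ}) at most doubles this finite set, and we take $C \subset \Q^\times$ to be a set of representatives.

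The main obstacle I expect is bookkeeping rather than conceptual difficulty: the group $\mathcal{G}$ has a large lattice of subgroups, and for each one must simultaneously track the abstract group cohomology (to evaluate the two connecting maps) and the Kummer-theoretic dictionary (to translate fixed fields back into multiplicative conditions on $A_1,A_2,A_3$). A secondary subtlety is that the condition $A_1A_2A_3/16 \in k^{\times 3}$ from Proposition \ref{invariants} is weaker than $A_1A_2A_3/16 \in \Q^{\times 3}$, so one must verify that the $H_j$ which genuinely contribute are compatible with $A_i \in \Z$; this is exactly what forces the answer to be a finite set of representatives rather than an entire coset of $\Q^{\times 3}/\Q^{\times 6}$.
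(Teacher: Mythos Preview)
Your proposal is correct and follows essentially the same approach as the paper: enumerate (via \texttt{Magma}) the subgroups $G_2$ of the generic Galois group from Lemma~\ref{lem:ringclass}, apply Proposition~\ref{prop:qua} to compute $(\Br X_k/\Br_1 X_k)[2]$ in each case, and observe that the non-vanishing cases all correspond to sixth-power conditions on the product $A_1A_2A_3$. The paper's proof is somewhat terser---it does not separately invoke Proposition~\ref{invariants} as a preliminary cube condition, since the \texttt{Magma} enumeration directly outputs the sixth-power conditions---but the substance is the same.
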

\begin{proof}
 As in \Cref{prop:3transcendental}, it can be checked that in all cases of subgroups
 \[G_2\subset\Gal(k(\sqrt{3},\sqrt[6]{2},\sqrt[6]{t_1},\sqrt[6]{t_2},\sqrt[6]{t_3})/k(t_1,t_2,t_3))\]
 which yield $(\Br(X_k)/\Br_1(X_k))[2]\neq0$ by \Cref{prop:qua}, the group $G_2$ corresponds to sixth power conditions of the form $\alpha A_1A_2A_3\in (\BQ^\times)^6$.
\end{proof}

\part{Proof of the main theorems}
In this part we bring everything together to prove the main theorems from the introduction. This requires  calculating the possible algebraic Brauer groups.

\section{The Picard group and algebraic Brauer group} \label{sec:subgroupsOfG}

\subsection{Galois action on the Picard group} \label{sec:Pic}

For the proof of our main theorems, we will need to understand quite well the Galois action on the geometric Picard group of the $X_{\AA}$. A complete set of generators was calculated in \cite[\S2.1]{cn17}. In particular, it was shown that the Picard rank of $\overline{X}_{\AA}$ is $20$, and the Galois action on $\Pic(\Xbar_\AA)$ factors through a group of order $864 = 2^5 \cdot 3^3$. Explicitly,  if $G_{\mathbf{A}}$ is the associated Galois group acting on $\Pic(\Xbar_\AA)$, then by \cite[\S3.1]{cn17} one has
$$G_\AA = \Gal(\Q(\sqrt[3]{2},\zeta_{12}, \sqrt[6]{A_1/A_2}, \sqrt[6]{A_2/A_3}, \sqrt{A_1})/\Q).$$
We denote the generic Galois group by $G$. The group $G_\AA$ is naturally a subgroup of $G$, which is well-defined up to conjugacy.

\begin{lemma}\label{lem:subgroups}
Let $A_1,A_2,A_3\in \Z$ be nonzero integers such that  $G_{\mathbf{A}} \subset G$ is a subgroup of index $d=2^x3^y$. Then there exists at least $x$ many conditions of the form (2a), (2b), or (2c), and $y$ many conditions of the form (3a) or (3b) from Table \ref{table:cond} that $A_1,A_2,A_3$ must satisfy,
with stated upper bound for $\max_i |A_i| \leq T$.
\end{lemma}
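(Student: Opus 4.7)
The plan is to exploit the Hall--Sylow decomposition of $G$ together with Kummer theory. I would first identify a normal $3$-Sylow subgroup $P_3 \triangleleft G$ of order $27$: its fixed field $L^{P_3} = \Q(\zeta_{12}, \sqrt{t_1/t_2}, \sqrt{t_2/t_3}, \sqrt{t_1})$ is the maximal intermediate extension of $\Q$ containing no cube roots, and is itself Galois over $\Q$, which forces $P_3$ to be normal (and hence the unique $3$-Sylow). This yields a semidirect product $G \cong P_3 \rtimes H$ with $|H|=32$, so that
\[
[G : G_\AA] \;=\; [G : G_\AA P_3] \cdot [P_3 : G_\AA \cap P_3]
\]
expresses $[G:G_\AA]$ canonically as a product of a power of $2$ (first factor, divisor of $32$) and a power of $3$ (second factor, divisor of $27$); these must then equal $2^x$ and $3^y$ respectively.

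For the $3$-part, since $\omega \in L^{P_3}$, Kummer theory identifies $P_3$ with the Pontryagin dual of the subgroup $B_\AA \subset L^{P_3,\times}/(L^{P_3,\times})^3$ generated by the classes of $2,\, A_1/A_2,\, A_2/A_3$. For generic $t_i$ these three classes are $\F_3$-linearly independent, giving $|P_3|=27$; each reduction by a factor of $3$ in $[P_3 : G_\AA \cap P_3]$ corresponds to exactly one non-trivial relation of the form $2^{a}(A_1/A_2)^{b}(A_2/A_3)^{c} \in (L^{P_3,\times})^3$. Descending such a relation to $\Q^\times$ via Galois invariance, and using the fact that the only rationals becoming cubes in $L^{P_3}$ arise from the presence of $\sqrt[3]{2}$, each relation translates into exactly one of the conditions (3a) or (3b) of Table~\ref{table:cond} (which are multiplicative conditions on $A_i/A_j$ modulo small fixed factors). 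Hence at least $y$ conditions of type (3) must hold.

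The analysis of the $2$-part $[G : G_\AA P_3] = 2^x$ is entirely analogous: Kummer theory over $\Q(\zeta_{12})$ identifies this quotient with the dual of the subgroup of $\Q(\zeta_{12})^\times / (\Q(\zeta_{12})^\times)^2$ generated by $A_1,\, A_1/A_2,\, A_2/A_3$. The subtlety is that $\Q(\zeta_{12})$ contains $\sqrt{-1}, \sqrt{-3}, \sqrt{3}$, so relations are to be read modulo the subgroup of $\Q^\times/(\Q^\times)^2$ generated by $-1$ and $3$; after accounting for this, each drop produces exactly one condition of the form (2a), (2b), or (2c). Summing the two analyses yields the claim.

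The main obstacle is the finite but somewhat delicate bookkeeping needed to match a given Kummer relation with the precise shape of the condition appearing in Table~\ref{table:cond}; this is best handled as a case analysis by iterating over subgroups of $G$ in \texttt{Magma}, in the spirit of the computation used in Propositions~\ref{prop:3transcendental} and \ref{prop:2transcendental}. Once matched, the ``stated upper bound'' columns follow by elementary counting: a single relation $\alpha\cdot A_i^{\varepsilon_1}A_j^{\varepsilon_2}A_k^{\varepsilon_3}\in\Q^{\times 2}$ restricts the count of $(A_1,A_2,A_3)$ with $|A_i|\leq T$ by a factor $O(T^{-1/2})$, while a cube relation restricts it by $O(T^{-1/3})$, and these combine multiplicatively when $x+y$ independent conditions are imposed.
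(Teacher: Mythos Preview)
Your approach is correct in spirit but more convoluted than necessary. Rather than separating the $2$- and $3$-parts via a Sylow/semidirect decomposition of $G$ and running Kummer theory over two different base fields $L^{P_3}$ and $\Q(\zeta_{12})$, the paper works over the single base field $K=\Q(\zeta_{12},\sqrt[3]{2})$. Since $X_\AA$ is defined over $\Q$, the map $G_\AA\to\Gal(K/\Q)$ is always surjective, so the index of $G_\AA$ in $G$ equals the index of $G_\AA':=G_\AA\cap\Gal(L/K)$ inside the abelian Kummer group $\Gal(L/K)\cong(\Z/6\Z)^2\times\Z/2\Z\cong(\Z/2\Z)^3\times(\Z/3\Z)^2$. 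The $2$- and $3$-parts then split off for free by the Chinese remainder theorem, with no need for normality of a Sylow subgroup. The descent of the resulting Kummer relations from $K$ to $\Q$ is handled not by a \texttt{Magma} case analysis but by the explicit computation $\ker(\Q^\times/\Q^{\times2}\to K^\times/K^{\times2})=\{\pm1,\pm3\}$ and $\ker(\Q^\times/\Q^{\times3}\to K^\times/K^{\times3})=\{1,2,4\}$ (Lemma~\ref{lem:power_base_change}); this is precisely what produces the parameters $t\in\{\pm1,\pm3\}$ and $s\in\{1,2,4\}$ in Table~\ref{table:cond} and removes the ``delicate bookkeeping'' you anticipate. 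Finally, your claim that each square relation uniformly restricts the count by a factor $O(T^{-1/2})$ is imprecise: the bounds in Table~\ref{table:cond} depend on the shape of the relation (e.g.\ (2b) gives $T^{2+\varepsilon}$, not $T^{5/2}$), and follow directly from Lemma~\ref{lem:power_count} applied to each condition separately; no ``multiplicative combination'' of bounds is needed for the statement as written.
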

\begin{table}[ht]
\begin{center}
    \begin{tabular}{ | l | l | l |}
    \hline
    Label & Condition & Upper bound\\ \hline
    (2a) & $tA_i$ is a square &  $\ll T^{5/2}$\\ \hline
    (2b) & $tA_iA_j$ is a square &  $\ll T^{2+\varepsilon}$\\ \hline
	(2c) & $tA_1A_2A_3$ is a square &  $\ll T^{3/2+\varepsilon}$\\ \hline
	(3a) & $sA_i/A_j$ is a cube &  $\ll T^{2+\varepsilon}$\\ \hline
	(3b) & $sA_1A_2A_3$ is a cube & $\ll T^{1 + \varepsilon}$\\
    \hline
	\end{tabular}
\end{center}
\label{table:cond}
\caption{Subgroups of $G$. Here $t\in\{\pm1,\pm3\}$ and $s\in\{1,2,4\}$.}
\end{table}
\begin{proof}
Let $K=\Q(\zeta_{12},\sqrt[3]{2})$ and
$$G_\AA' = \Gal(K(\sqrt[6]{A_1/A_2},\sqrt[6]{A_2/A_3},\sqrt{A_1})/K) \cap G_\AA.$$
As our surface is defined over $\Q$,  the map $G_\AA\to \Gal(K/\Q)$ is surjective, hence $G_\AA'$ also has index $d$ inside $(\Z/6\Z)^2\times\Z/2\Z$, by Kummer theory. If we rewrite $K(\sqrt[6]{A_1/A_2},\sqrt[6]{A_2/A_3},\sqrt{A_1})=K(\sqrt{A_1},\sqrt{A_2},\sqrt{A_3})K(\sqrt[3]{A_1/A_2},\sqrt[3]{A_2/A_3})$ as a compositum, and the Galois group as $(\Z/2\Z)^3\times(\Z/3\Z)^2$, then $G_\AA'$ corresponds to a subgroup $H_2\times H_3$ where $H_2\subset(\Z/2\Z)^3$ of index $2^x$ and $H_3\subset (\Z/3\Z)^2$ of index $3^y$.
It is clear that this subgroup corresponds to $x$ square conditions and $y$ cube conditions listed in Table \ref{table:cond}, but over $K$.
One deduces the statement over $\Q$ using Lemma \ref{lem:power_base_change}.
The stated upper bounds are computed using Lemma \ref{lem:power_count}.
\end{proof}

\begin{lemma}\label{lem:power_base_change}
 Let $k=\BQ(\omega)$ and $K=\Q(\zeta_{12},\sqrt[3]{2})$. Then
 \begin{align*}
 &\ker(\Q^{\times}/\Q^{\times2} \to k^\times/k^{\times2}) = \{ 1, -3\},
 \quad \ker(\Q^{\times}/\Q^{\times6} \to k^\times/k^{\times6}) 
 = \{ 1, -27\}, \\
 & \mathbb{Q}^{\times}/\Q^{\times3} \hookrightarrow k^\times/k^{\times3}, \\
 & \ker(\Q^{\times}/\Q^{\times2} \to K^\times/K^{\times2}) = \{ \pm 1, \pm 3\},
\quad 
\ker(\Q^{\times}/\Q^{\times3} \to K^\times/K^{\times3}) = \{ 1,2,4\}.
 \end{align*}
\end{lemma}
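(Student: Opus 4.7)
My plan is to reduce everything to two standard tools: (i) the coprime-degree descent $F^\times\cap L^{\times n}=F^{\times n}$ whenever $\gcd([L:F],n)=1$, which follows from the norm identity $a=(a^u N_{L/F}(b)^v)^n$ with $un+v[L:F]=1$; and (ii) Kummer theory for quadratic (resp.\ Kummer $\mu_3$-) extensions.

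For part (3), since $[k:\Q]=2$ is coprime to $3$, tool (i) gives $\Q^\times\cap k^{\times 3}=\Q^{\times 3}$, which is the required injectivity. For part (1), since $k=\Q(\sqrt{-3})$ and $(\sqrt{-3})^2=-3$, we have $-3\in k^{\times 2}$. Conversely if $a\in\Q^\times$ equals $(x+y\sqrt{-3})^2$ with $x,y\in\Q$, comparing the $\sqrt{-3}$-coefficient forces $xy=0$, giving $a\in\Q^{\times 2}$ or $a\in -3\,\Q^{\times 2}$. Hence the kernel in (1) is $\{1,-3\}$.

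For part (2), observe first that $(\sqrt{-3})^6=-27$, so $-27\in k^{\times 6}$ and $\{1,-27\}$ lies in the kernel. Conversely an element $a\in\Q^\times$ is a 6th power in $k$ iff it is both a cube and a square in $k$. Being a cube in $k$ forces $a=c^3$ with $c\in\Q^\times$ by (3); then the square condition on $c^3$ from (1) gives two cases. If $c^3\in\Q^{\times 2}$ then looking at each prime valuation $v_p(c^3)=3v_p(c)$ is even, hence $v_p(c)$ is even, hence $c\in\Q^{\times 2}$ and $a\in\Q^{\times 6}$. If $c^3\in -3\,\Q^{\times 2}$, then by comparing the sign and each $v_p$, one finds $c=-3\cdot(\text{square})$ times a cube, hence after a short check $a=c^3\in -27\,\Q^{\times 6}$.

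For parts (4) and (5), I factor through the intermediate field $\Q(\zeta_{12})=\Q(i,\sqrt{3})$. Since $\zeta_3\in\Q(\zeta_{12})$, the polynomial $x^3-2$ remains irreducible over $\Q(\zeta_{12})$ (for instance because $2$ is not a cube in $\Q(\zeta_{12})$: any cube root of $2$ generates a non-abelian degree-$3$ extension of $\Q$, contradicting $\Q(\zeta_{12})/\Q$ being abelian unless it were already in $\Q(\zeta_{12})$, but $\Q(\zeta_{12})$ is totally imaginary of degree $4$). Hence $[K:\Q(\zeta_{12})]=3$. For (4), tool (i) with $n=2$ gives $\Q(\zeta_{12})^\times\cap K^{\times 2}=\Q(\zeta_{12})^{\times 2}$, so the kernel coincides with the kernel for $\Q\hookrightarrow\Q(\zeta_{12})$. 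Since $\Q(\zeta_{12})/\Q$ is biquadratic with Galois group generated by the three intermediate fields $\Q(i),\Q(\sqrt{3}),\Q(\sqrt{-3})$, a standard Kummer computation yields kernel $\{1,-1,3,-3\}=\{\pm1,\pm3\}$. For (5), Kummer theory for the cyclic degree-$3$ extension $K=\Q(\zeta_{12})(\sqrt[3]{2})$ over $\Q(\zeta_{12})$ gives $\Q(\zeta_{12})^\times\cap K^{\times 3}=\langle 2\rangle\cdot\Q(\zeta_{12})^{\times 3}$. Then tool (i) with $n=3$ and $[\Q(\zeta_{12}):\Q]=4$ shows $\Q^\times\cap\Q(\zeta_{12})^{\times 3}=\Q^{\times 3}$, so lifting $a\in\Q^\times\cap K^{\times 3}$ as $a=2^i b^3$ with $b\in\Q(\zeta_{12})$ gives $a/2^i\in\Q(\zeta_{12})^{\times 3}\cap\Q^\times=\Q^{\times 3}$, so $a\in\{1,2,4\}\cdot\Q^{\times 3}$. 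The main (mild) obstacle is just keeping the valuation bookkeeping straight in the case analysis for (2); everything else is bookkeeping over the tower $\Q\subset\Q(\zeta_{12})\subset K$.
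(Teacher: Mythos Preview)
Your proof is correct and takes a genuinely more elementary route than the paper. The paper identifies each kernel with a Galois cohomology group $\RH^1(\Gal(F/E),\mu_p(F))$ via inflation--restriction, then invokes \cite[Prop.~2.3]{JL16} (valid when $F/E$ is abelian with $\mu_p\subset F$) to evaluate this as $\Hom(\Gal(F/E(\mu_p)),\mu_p(E))$; from the resulting cardinalities it pins down the kernels by exhibiting the listed elements. You instead combine two direct tools: the norm identity giving $F^\times\cap L^{\times n}=F^{\times n}$ whenever $\gcd([L:F],n)=1$, and explicit Kummer theory in the quadratic and cubic cases. Both approaches pass through the tower $\Q\subset\Q(\zeta_{12})\subset K$ for parts (4) and (5), but your coprime-degree trick replaces the cohomological bookkeeping entirely, and your treatment of (1)--(3) is by bare hands (coefficient comparison, valuation parity) rather than via $\RH^1$. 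The paper's method is more uniform and would scale better to other Kummer situations; yours is self-contained and avoids the external reference. Two cosmetic points: in (2) your phrase ``times a cube'' is extraneous (you simply need $c\in -3\,\Q^{\times2}$, whence $c^3\in -27\,\Q^{\times6}$), and ``non-abelian degree-$3$ extension'' should read ``non-Galois'' --- though the cleanest argument that $2\notin\Q(\zeta_{12})^{\times3}$ is just that $3\nmid[\Q(\zeta_{12}):\Q]$.
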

\begin{proof}   
    It suffices to calculate the cardinality
    of the kernels, as the stated elements
    are easily verified to lie in kernel.
    Moreover as $6=2 \cdot 3$ and $\gcd(2,3) = 1$,
    it suffices to treat the cases of $2,3$.
    So let $p$ be a prime and
    $E \subset F$ an extension of fields of characteristic $0$. Then the inflation-restriction sequence
    and Kummer theory yields
 \[\RH^1(\Gal(F/E),\mu_p(F))\cong\ker(E^\times/{E^\times}^p \to F^\times/{F^\times}^p).\]
    If $F/E$ is abelian and $\mu_p \subset F$,
    then \cite[Prop.~2.3]{JL16} implies that
    \begin{equation} \label{eqn:JL}
    \RH^1(\Gal(F/E),\mu_p(F)) \cong \Hom(\Gal(F/E(\mu_p)),\mu_p(E)).
    \end{equation}
    Applying this with $E=\Q$ and $F=k$ gives 
$$
    \#\ker(\Q^{\times}/\Q^{\times p} \to k^\times/k^{\times p}) = 
 \begin{cases}
    2, & p = 2, \\
    1, & p = 3,
 \end{cases}
$$
as required. For $K$, we use the tower $\Q \subset \Q(\zeta_{12}) \subset K$ of abelian extensions and apply \eqref{eqn:JL} to each extension in the tower.
This gives
$$ \#\ker(\Q^{\times}/\Q^{\times p} \to K^\times/K^{\times p})  \quad
 \begin{cases}
    =4, & p = 2, \\
    \leq 3, & p = 3.
 \end{cases}
$$    
Given the explicit elements already found, this completes the proof.
\end{proof}

\begin{lemma} \label{lem:power_count}
    Let $\alpha \in \Z$ and $d,d_1,\dots,d_n \in \N$. Then for all $\varepsilon > 0$ we have
    $$\#\{ a_1,\dots, a_n \in \Z : \max |a_i| \leq T, \alpha a_1^{d_1} \dots a_n^{d_n} \in \Q^{\times d} \} 
    \ll_\varepsilon T^{(d_1 + \dots + d_n)/d + \varepsilon}.$$
\end{lemma}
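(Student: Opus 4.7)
The plan is to parametrize solutions by an auxiliary integer $m$. If $\alpha a_1^{d_1}\cdots a_n^{d_n} \in \Q^{\times d}$, then this nonzero integer equals $m^d$ for some $m \in \Q^\times$; since any rational $d$-th root of an integer is itself an integer, $m \in \Z$. Setting $D = d_1 + \cdots + d_n$, the hypothesis $|a_i|\leq T$ forces $|m|^d = |\alpha|\prod_i |a_i|^{d_i} \leq |\alpha| T^D$, so $|m| \ll_\alpha T^{D/d}$.

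For each fixed $m$, I would bound the number of tuples $(a_1,\ldots,a_n)\in\Z^n$ with $\prod_i a_i^{d_i} = m^d/\alpha$ by a divisor-type argument. Up to the $2^n$ possible sign patterns, such a tuple is determined by the vector $(|a_1|^{d_1},\ldots,|a_n|^{d_n})$, which is an ordered factorization of $|m^d/\alpha|$ into $n$ positive integer factors. The number of such factorizations is the generalized divisor function $\tau_n(|m^d/\alpha|)$, which obeys the standard bound $\tau_n(N) \ll_{n,\varepsilon} N^\varepsilon$.

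Putting the two steps together, the total count is at most
$$ 2^n \sum_{|m|\ll T^{D/d}} \tau_n(|m^d/\alpha|) \ll_{n,\alpha,\varepsilon'} T^{D/d} \cdot T^{D\varepsilon'} $$
for any $\varepsilon' > 0$, and choosing $\varepsilon' = \varepsilon/D$ yields the claimed bound $\ll_\varepsilon T^{D/d+\varepsilon}$. The argument is essentially elementary; the only minor subtleties are justifying that $m$ is integral rather than merely rational, and excluding tuples with some $a_i = 0$ (for which the product vanishes and so cannot lie in $\Q^{\times d}$). I do not expect any serious obstacle.
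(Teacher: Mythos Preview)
Your proposal is correct and is essentially the same argument as the paper's: both write $\alpha a_1^{d_1}\cdots a_n^{d_n}=a^d$ for an integer $a$ with $|a|\ll T^{(d_1+\cdots+d_n)/d}$, and then bound the number of tuples for each fixed $a$ by a divisor-function estimate (the paper notes $a_i^{d_i}\mid a^d$ and uses $\tau(a^d)\ll a^{\varepsilon}$, while you use $\tau_n(|a^d/\alpha|)\ll a^{\varepsilon}$). The extra care you take over the integrality of $m$ and the sign count is fine but not essential.
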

\begin{proof}
    Writing $a^d = \alpha a_1^{d_1} \dots a_n^{d_n}$ and $b_i = a_i^{d_i}$, the quantity in the statement is
    $$ \ll \sum_{|a| \leq T^{(d_1 + \dots + d_n)/d}} \sum_{b_i \mid a^d}1
    \ll T^{(d_1 + \dots + d_n)/d + \varepsilon}$$
    on using the standard bound $\tau(a) \ll_\varepsilon a^{\varepsilon}$ for the divisor function.
\end{proof}

\subsection{Algebraic Brauer group}
We now explain how to calculate the algebraic Brauer group of the surfaces $X_\AA$ over $\Q$. Recall from the Hochschild--Serre spectral sequence \cite[Cor.~6.7.8]{Poo17}, that there is an isomorphism
\begin{equation} \label{eqn:Br_1}
    \Br_1 X_\AA / \Br_0 X_\AA \cong \HH^1(G_\AA, \Pic \Xbar).
\end{equation}
From \S\ref{sec:Pic}, the group $G_\AA$ acts via a subgroup of the generic Galois group $G$. Thus to calculate all possible algebraic Brauer groups, it suffices to enumerate the subgroups of $G$ and compute the corresponding group cohomology \eqref{eqn:Br_1}.

\begin{table}
\begin{center}
    \begin{tabular}{ | l | l | l | l |}
    \hline

    Label & Condition &
    \parbox{15em}{Generators of $\Br_1 X_\AA/ \Br_0 X_\AA$\\
         or negligible upper bound}\\ \hline
    (1) & - & 0 \\ \hline
(2a) & $tA_i\in \Q^{\times 2}$. &
    $\begin{array}{@{}l}
    t=-3: \calA_i \\
    \text{otherwise}: 0
    \end{array}$
    
    \\ \hline
(2b) & $tA_iA_j\in \Q^{\times 2}$. & 0 \\ \hline
(2c) & $tA_1A_2A_3 \in \Q^{\times 2}$. & 0 \\ \hline
(3a) & $sA_i/A_j\in \Q^{\times 3}$. & $\begin{array}{@{}l}
    s=1: \calB_k \\
    \text{otherwise}: 0
    \end{array}$
    
    \\ \hline
(3b) & $sA_1A_2A_3\in \Q^{\times 3}$. & $\ll T^{1 + \varepsilon}$\\
\hline
(4a) & $tA_i\in \Q^{\times 2}$. $t'A_j\in \Q^{\times 2}$.&
$\begin{array}{@{}l}
    t=t'=-3: \calA_i,\calA_j\\
    t=-3,t'\neq-3: \calA_i\\
    t,t'\neq-3: 0
    \end{array}$
    
    \\ \hline
(4b) & $tA_i\in \Q^{\times 2}$. $t'A_jA_k\in \Q^{\times 2}$.&
$\begin{array}{@{}l}
    t=-3:\calA_i\\
    \text{otherwise}: 0
    \end{array}$\\ \hline
(4c) & $tA_iA_j\in \Q^{\times 2}$. $t'A_jA_k\in \Q^{\times 2}$.& 
0\\ \hline

(6a) & $tA_i\in \Q^{\times 2}$. $sA_i/A_j\in \Q^{\times 3}$.&
$\begin{array}{@{}l}
    t=-3,s=1: \calA_i,\calB_k\\
    t=-3,s=2: \calA_i\\
    t\neq-3,s=1: \calB_k \\
    t\neq-3,s=2: 0
    \end{array}$
    
    \\ \hline
(6b) & $tA_i\in \Q^{\times 2}$. $sA_j/A_k\in \Q^{\times 3}$.&
$\begin{array}{@{}l}
    t\neq-3,s=1:\calB_i\\
    t=-3,s=2: \calA_i\\
    \text{otherwise}: 0
    \end{array}$\\ \hline
(6c) & $tA_iA_j\in \Q^{\times 2}$. $sA_i/A_j\in \Q^{\times 3}$.& 
$\begin{array}{@{}l}
    t=3,s=1:\calB_k,\calC_k\\
    t=1,-3,s=1: \calB_k\\
    s=2: 0
    \end{array}$\\ \hline
(6d) & $tA_iA_j\in \Q^{\times 2}$. $sA_j/A_k\in \Q^{\times 3}$.& $\ll T^{1+\varepsilon}$\\ \hline
(6e) & $tA_1A_2A_3\in \Q^{\times 2}$. $sA_i/A_j\in \Q^{\times 3}$.& 
$\begin{array}{@{}l}
    t=1,-3,s=1:\calB_k\\
    \textnormal{otherwise}:0
    \end{array}$ \\
    \hline
    
(8) & $tA_1,t'A_2,t''A_3 \in \Q^{\times 2}$. & $\begin{array}{@{}l}
    t=t'=t''=-3:\calA_1,\calA_2,\calA_3\\
    t=t'=-3,t''\neq-3:\calA_1,\calA_2\\
    t=-3,t',t''\neq-3:\calA_1\\
    t,t',t''\neq-3:0
    \end{array}$ \\
    \hline
    
(9)&$sA_1/A_2, s'A_3/A_2 \in \Q^{\times 3}$.& $\ll T^{1 + \varepsilon}$\\ \hline

(12a) & 
$tA_i, t'A_j \in \Q^{\times 2}$. $sA_i/A_j \in \Q^{\times 3}.$ &
$\begin{array}{@{}l}
    t=t'=-3,s=1: \calA_i,
    \calA_j,\calB_k\\
    t=t'=-3,s=2: \calA_i,
    \calA_j\\
    t=-3,t'=3,s\neq 1: \calA_i \\
    t=-3,t'=-1,s\neq 1: \calA_i \\
    t=-3,t'=-1,s=1: \calA_i,\calB_k,\calC_k\\
    t,t'\neq-3,s=1: \calB_k\\
    t,t'\neq-3,s\neq1:0 \\
    \end{array}$
    
    \\ \hline
(12b) & 
$tA_i, t'A_j \in \Q^{\times 2}$. $sA_i/A_k \in \Q^{\times 3}.$ &
$\ll T^{1 + \varepsilon}$ \\ \hline
(12c) & $tA_i, t'A_jA_k \in \Q^{\times 2}$. $sA_i/A_j \in \Q^{\times 3}.$ & $\ll T^{1 + \varepsilon}$\\ \hline
(12d) & $tA_i, t'A_jA_k \in \Q^{\times 2}$. $sA_j/A_k \in \Q^{\times 3}.$ & 
$\begin{array}{@{}l}
    t=t'=-1,3,s=1: \calB_i,\calC_i\\
    t=-3,s=2: \calA_i\\
    \textnormal{otherwise}: 0
    \end{array}$\\ \hline
    \end{tabular}
\end{center}
\caption{Algebraic Brauer groups of low index subgroups}
\label{table:brsub}
\end{table}

We did this for all subgroups of index $\leq 12$ using \texttt{Magma} and Lemma \ref{lem:subgroups}. The results can be found in Table \ref{table:brsub} below. We list the generators for the algebraic Brauer group in terms of the elements $\calA_i,\calB_i,\calC_i$ from \S\ref{sec:Brauer_elements}, or alternatively just state the upper bound of this subgroup if it is negligible for our counting results. One caveat is that we ignore those subgroups for which a coefficient is a square or if  $-A_i/A_j$ is a sixth power, since such surfaces  have rational points. 

We briefly explain how to obtain the stated upper bounds. For (3b) this follows from Lemma \ref{lem:power_count}. For (6d), the conditions imply that $t^3s^4A_i^3A_jA_k^2 \in \Q^{\times6}$, which is $O(T^{1+ \varepsilon})$ by Lemma \ref{lem:power_count}. For (9) the conditions imply that $ss'A_1A_2A_3$ is a cube, which is $O(T^{1+ \varepsilon})$ by Lemma \ref{lem:power_count}. For (12b) we first sum over $A_j$ which gives $O(T^{1/2})$, whilst the remaining conditions imply that $t^3s^4A_iA_k^2 \in \Q^{\times6}$, which is $O(T^{1/2+ \varepsilon})$ by Lemma \ref{lem:power_count}. Finally (12c) is a specialisation of (6d).

\begin{remark}
    If $-A_i/A_j \in \Q^{\times 6}$ then utilising the factorisation of 
    $A_ix_i^6 + A_jx_j^6$ over $\Q$, one may write down more quaternion algebras using a similar method to \S\ref{sec:Brauer_elements}, which do \emph{not} lie
    in the subgroup generated by $\calA_i,\calB_i,\calC_i$. Such surfaces are not
    of interest to us, as they always contain a rational point.
\end{remark}

\section{Proofs}\label{sec:proofs}

We are now ready to prove the main theorems from the introduction.

\subsection{Large index subgroups} 

\begin{lemma}\label{lem:largeindex}
	$$\#\left\{(A_1,A_2,A_3) \in \Z^3 :
    \begin{array}{c}
    |A_i| \leq T,\\
    |G/G_{\mathbf{A}}|=9\textup{ or }>12
    \end{array} \right\}  \ll T^{1+\varepsilon}.$$
\end{lemma}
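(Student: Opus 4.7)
The plan is to first restrict the range of possible indices. From Lemma \ref{lem:subgroups} (via its proof) we have $[G_\AA : G_\AA'] = [\Gal(K/\Q) : 1] \cdot [\text{trivial intersection}]$, and $G_\AA'$ sits inside $(\Z/6\Z)^2 \times \Z/2\Z$, a group of order $72 = 2^3 \cdot 3^2$. Hence $d := |G/G_\AA|$ divides $72$, and the cases ``$d = 9$ or $d > 12$'' reduce to the finite list $d \in \{9, 18, 24, 36, 72\}$. Writing $d = 2^x 3^y$, these are $(x,y) \in \{(0,2),(1,2),(3,1),(2,2),(3,2)\}$. I will handle the cases $y = 2$ and $(x,y) = (3,1)$ separately.

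For $d \in \{9, 18, 36, 72\}$, we have $y = 2$, so by Lemma~\ref{lem:subgroups} the triple $(A_1,A_2,A_3)$ satisfies two independent cube conditions drawn from types (3a) and (3b). Two such independent conditions force all three of $A_1, A_2, A_3$ to be proportional modulo cubes, i.e.~$A_i = s_i b c_i^3$ for $s_i$ ranging over a finite set of constants and $b, c_i \in \Z$. The total count is
\[ \sum_{s_1,s_2,s_3} \sum_{|b| \leq T} \prod_{i=1}^{3} \#\{c_i \in \Z : |s_i b c_i^3| \leq T\} \ll \sum_{|b| \leq T} \frac{T}{|b|} \ll T \log T, \]
which is $O(T^{1+\varepsilon})$.

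For $d = 24$, we have $x = 3$ and $y = 1$. Three independent square conditions force $H_2 \subset (\Z/2)^3$ to be trivial, so each $A_i$ lies in a fixed square class over $K$. By Lemma~\ref{lem:power_base_change} this translates into each $A_i$ belonging to one of finitely many $\Q^\times$-square classes, so we may write $A_i = c_i u_i^2$ with $c_i$ in a finite set and $|u_i| \leq \sqrt{T/|c_i|}$. The remaining cube condition takes one of the forms $s A_i/A_j \in \Q^{\times 3}$ or $s A_1 A_2 A_3 \in \Q^{\times 3}$, which translate to $u_i u_j^2 \in \alpha \Q^{\times 3}$ (for some $i \neq j$) or $u_1 u_2 u_3 \in \alpha \Q^{\times 3}$ respectively, for a constant $\alpha$ depending on the $c_i$'s. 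Applying Lemma~\ref{lem:power_count} with $T$ replaced by $\sqrt{T}$ gives $O(T^{1/2+\varepsilon})$ choices for the pair or triple of $u$'s subject to the cube constraint; multiplying by $O(\sqrt{T})$ for any remaining free $u_k$ yields $O(T^{1+\varepsilon})$ in total.

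The main technical point is the $d = 24$ case: the tension is that three square conditions alone give only $T^{3/2}$ triples, so we genuinely need the extra $T^{1/2}$ saving from the cube condition, which comes via the Kummer-style parametrization and Lemma~\ref{lem:power_count}. The remaining cases are more uniform because two independent cube conditions are intrinsically strong enough to force proportionality modulo cubes.
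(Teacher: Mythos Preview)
Your proof is correct and follows the same strategy as the paper: restrict to $d\in\{9,18,24,36,72\}$ (using that $G_\AA$ surjects onto $\Gal(K/\Q)$, so $d\mid 72$), then treat the $y=2$ cases and the $d=24$ case separately via the square/cube conditions of Lemma~\ref{lem:subgroups}. The only difference is packaging: the paper invokes the pre-computed bounds from Table~\ref{table:brsub} (row~(9) for the $y=2$ cases, and rows~(3b) and (6d) for $d=24$), whereas you parametrise directly and feed into Lemma~\ref{lem:power_count}; since those table entries are themselves derived from Lemma~\ref{lem:power_count}, the two arguments are essentially identical. One small point worth a sentence of justification is your parametrisation $A_i=s_ibc_i^3$ with $s_i$ in a \emph{finite} set: this holds because the cube-free parts of the $A_i$ differ only by bounded powers of $2$, so one may take $b$ to be the cube-free part of $A_3$ divided by a suitable power of $2$.
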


\begin{proof}
    Recall that $|G| = 864 = 2^5 \cdot 3^3$.
 If $9\mid |G/G_{\mathbf{A}}|$, then by Table \ref{table:cond}, we just need to consider the conditions $s_1A_1/A_2$ and $s_2A_2/A_3$ are cubes. There are $\ll T^{1+\varepsilon}$ such surfaces by Table \ref{table:brsub}.

Since $G_{\mathbf{A}}$ surjects onto $\Gal(\Q(\zeta_{12},\sqrt[3]{2})/\Q)$, it follows that $12\mid |G_{\mathbf{A}}|$. This cuts down the possibilities for $|G/G_{\mathbf{A}}|$ we need to consider to $18,24,36,72$. It suffices to just consider $|G/G_{\mathbf{A}}|=24$, since $9$ divides the index in all other cases. We now appeal to Lemma \ref{lem:subgroups} using $24 = 3 \cdot 2^3$. If $sA_1A_2A_3$ is a cube then there are $O(T^{1 + \varepsilon})$ such surfaces. Otherwise $sA_1/A_2$ is a cube and either $t_1A_1,t_2A_2,t_3A_3$ are all squares or $t_1A_1A_2,t_2A_1A_3,t_3A_2A_3$ are all squares. Both these cases are a specialisation of (6d), so are $\ll T^{1 + \varepsilon}$ by Table \ref{table:brsub}
\end{proof}

\subsection{Proof of Theorem \ref{maintranscendental}}

Let $k = \Q(\omega)$. Suppose that $X_\AA$ has a non-trivial transcendental Brauer group element whose order is a power of some prime $\ell$.
If $\ell \neq 2,3$, then it follows from Proposition \ref{invariants} that there is some $\alpha \in \Q$ such that $\alpha A_1A_2A_3 \in k^{\times6}$. Then Lemma \ref{lem:power_base_change} implies that $\alpha' A_1A_2A_3 \in \Q^{\times6}$ for some $\alpha' \in \Q^\times$. But by Lemma \ref{lem:power_count}, the number of such surfaces is $O(T^{1/2 + \varepsilon})$, which is satisfactory. For $\ell=2$, \Cref{prop:2transcendental} yields the same. For $\ell = 3$, the only case in Proposition \ref{invariants} which is not negligible is $-A_1A_2A_3 \in k^{\times2}/k^{\times6}$. But it follows from Proposition \ref{prop:3transcendental} that in fact there is no transcendental $3$-torsion Brauer class over $k$, hence neither over $\Q$, as required. 
\qed

\subsection{Proof of Theorem \ref{mainbrauer}}
By Theorem \ref{maintranscendental}, we may assume that $\Br X_\AA = \Br_1 X_\AA$. By Table \ref{table:brsub}, if $\Br_1 X_\AA/\Br_0 Z_\AA \neq 0$, then $G_\AA$ must be a proper subgroup of $G$. By  Lemma \ref{lem:subgroups}, the most common proper subgroup occurs if $-3A_i$ is a square for some $i$. But we already know that the Brauer group is  non-constant in this case. As the intersection of two of these conditions is $O(T^2)$, the quantity in question is thus
\begin{equation} \tag*{\qed}
	\sim 3 \#\{A_i \in \Z : |A_i| \leq T, -3A_1 \in \Q^{\times 2}\} \sim 3 \cdot (2T)^2 \cdot (T/3)^{1/2} \sim 4 \cdot \sqrt{3}T^{5/2}.
\end{equation}

\subsection{Proof of Theorem \ref{thm:main2}}
All the serious counting was done in Part 1, and the main terms come from Theorems \ref{thm:2} and \ref{thm:3+6}. The proof requires examining any new Brauer elements that pop up when restricting to smaller subfamilies. 
By Lemma \ref{lem:largeindex}, we may ignore those surfaces $X_{\mathbf{A}}$ with $|G/G_{\mathbf{A}}|=9\textup{ or }>12$. Moreover by Theorem \ref{maintranscendental}, we may assume that $\Br X_\AA = \Br_1 X_\AA$. 

\begin{table}[!htb]
\begin{center}
    \begin{tabular}{ | l | l | l | }
    \hline
    Generators for $\Br_1 X_\AA/\Br_0 X_\AA$ & Upper bound & Reference\\ \hline
    $\calA_i$ & $T^{3/2}\log\log T/(\log T)^{2/3}$ & Theorem \ref{thm:2} \\ \hline
    $\calB_i$ & $T^{3/2}/(\log T)^{3/8}$ & Theorem \ref{thm:3+6}\\ \hline
    $\calA_i,\calA_j$ & $T^{4/3+\varepsilon}$ & Theorem \ref{thm:multalg} \\ \hline
    $\calA_i,\calB_k$ & $T^{1+\varepsilon}$ & Theorem \ref{thm:multalg}\\ \hline
    $\calB_k,\calC_k$ & $T^{3/2} \log \log T/(\log T)^{1/2}$ & Theorem \ref{thm:multalg}\\ \hline
    $\calA_1,\calA_2,\calA_3$ & 0 & No $\R$-point \\ \hline
    $\calA_i,\calA_j,\calB_k$ & $T^{1+\varepsilon}$ & Theorem \ref{thm:multalg}\\ \hline
    $\calA_i,\calB_k,\calC_k$ & $T^{1+\varepsilon}$ & Theorem \ref{thm:multalg}\\ \hline
	\end{tabular}
\end{center}
\caption{}
\label{table:bounds}
\end{table}

In Table \ref{table:bounds}, we list all possible generators for $\Br_1 X_\AA/\Br_0 X_\AA$ from Table \ref{table:brsub}, and give an upper bound for how often these generators give a Brauer--Manin obstruction to the Hasse principle using results from the previous sections.

\noindent
$N_1:$ According to Table \ref{table:brsub}, any subgroup where $-3A_i$ is not a square and $A_j/A_k$ is not a cube for all $i,j,k$ is either $O(T^{1+\varepsilon})$ or $\Br X_\AA/\Br_0 X_\AA$ is trivial.

\noindent
$N_2:$ Theorem \ref{thm:2}  proves the lower bound. The upper bound is in Table \ref{table:bounds}.

\noindent
$N_3:$ Theorem \ref{thm:3+6}  proves the lower bound. The upper bound is in Table \ref{table:bounds}.
\qed

\subsection{Proof of Theorem \ref{thm:main1}}

This is a direct consequence of Theorem \ref{thm:main2}. \qed

\subsection{Proof of Theorem \ref{thm:oddtor}}
By Lemma \ref{lem:largeindex} we only need to consider those surfaces $X_{\mathbf{A}}$ with $|G/G_{\mathbf{A}}|\neq 9$ or $\leq 12$. Moreover by Theorem \ref{maintranscendental}, we may assume that $\Br X = \Br_1 X$.
For the lower bound, Theorem \ref{thm:2} shows that
 $$\#\left\{(A_1,A_2,A_3) \in \Z^3 :
    \begin{array}{c}
    |A_i| \leq T,\\
    X_\AA(\Adele_\Q) \neq \emptyset,
X_\AA(\Adele_\Q)^{\Br_{2^\perp}} = \emptyset 
    \end{array} \right\}  \gg \frac{T^{3/2} \log \log T}{(\log T)^{2/3}}.$$
It therefore suffices to show that for most of these surfaces we have $X_\AA(\Adele_\Q)^{\Br_{2^{\phantom{\perp\mkern-15mu}}}}\neq\emptyset$. If $X_\AA(\Adele_\Q)^{\Br_{2^{\phantom{\perp\mkern-15mu}}}}=\emptyset$ as well, then $\Br_1 X_\AA/\Br_0 X_\AA$ contains nontrivial elements of both odd order and even order. However Table \ref{table:bounds} shows that such surfaces with a Brauer--Manin obstruction are $O(T^{1+\varepsilon})$.

For the upper bound, Table \ref{table:brsub} shows that the only time there is a nontrivial odd torsion Brauer class in $\Br X_\AA /\Br_0 X_\AA$ is when $-3A_i$ is a square for some $i$. The upper bound then follows from the upper bound for $N_2(T)$ from Theorem \ref{thm:main2}.
\qed

\end{document}